\newtheorem {theorem}    {Theorem}[section]
\newtheorem {lemma}      [theorem]    {Lemma}
\newtheorem {corollary}  [theorem]    {Corollary}
\newtheorem {proposition}[theorem]    {Proposition}
\newtheorem {example}[theorem]    {Example}
\newcommand{\bb}{\mathbb}
\renewcommand{\rm}{\mathrm}
\newcommand{\cal}{\mathcal}
\newcommand{\GGL}{\mathrm{GL}}
\newcommand{\UU}{\mathrm{U}}
\renewcommand{\sp}{\mathrm{Sp}}
\renewcommand{\o}{\mathrm{O}}
\newcommand{\so}{\mathrm{SO}}
\newcommand{\ep}{{{\color{pink}{\epsilon_\alpha}}}}
\newcommand{\ev}{{{\color{violet}{\epsilon_\beta}}}}
\newcommand{\Fq}{\bb{F}_q}
\newcommand{\fq}{(\bb{F}_q)}
\theoremstyle{definition}
\newtheorem{definition}[theorem]{Definition}
\newtheorem{remark}[theorem]{Remark}
\newcommand{\E}{\mathcal{E}}
\newcommand{\pl}{\pi_{\Lambda}}
\newcommand{\ps}{\pi^{\star}}
\newcommand{\pll}{\pi_{\Lambda'}}
\newcommand{\prll}{\pi_{\rho,\Lambda_1,\Lambda_{-1}}}
\newcommand{\prllp}{\pi_{\rho',\Lambda_1',\Lambda'_{-1}}}
\newcommand{\prlls}{\pi_{\rho^\star,\Lambda^\star_1,\Lambda^\star_{-1}}}
\newcommand{\prllsp}{\pi_{\rho^{\star\prime},\Lambda^{\star\prime}_1,\Lambda^{\star\prime}_{-1}}}
\newcommand{\pkh}{\pi_{\rho,k,h}}
\newcommand{\pkhp}{\pi_{\rho',k',h'}}
\newcommand{\pkhs}{\pi_{\rho^\star,k^\star,h^\star}}
\newcommand{\pkhsgn}{\pi_{\rho,k,h,\iota}}
\newcommand{\pkhsp}{\pi_{\rho',k',h',\iota'}}
\newcommand{\pkhss}{\pi_{\rho^\star,k^\star,h^\star,\iota^\star}}
\newcommand{\prllps}{\pi_{\rho',\Lambda'_1,\Lambda'_{-1},\iota'}}
\newcommand{\prllsgn}{\pi_{\rho,\Lambda_1,\Lambda_{-1},\iota}}
\newcommand{\prllss}{\pi_{\rho^{\star},\Lambda^{\star}_1,\Lambda^{\star}_{-1},\iota^{\star}}}
\newcommand{\prllspp}{\pi_{\rho^{\star\prime},\Lambda^{\star\prime}_1,\Lambda^{\star\prime}_{-1},\iota^{\star\prime}}}
\newcommand{\clc}{{\cal{L}^{\rm{can}}}}
\newcommand{\dg}{G^{*}_{[\ne \pm 1]}(s)}
\newcommand{\ddg}{G^{*}_{[1]}(s)}
\newcommand{\dddg}{G^{*}_{[- 1]}(s)}
\newcommand{\sgn}{\rm{sgn}\cdot}
\newcommand{\CD}{{\mathcal{D}}}
\newcommand{\e}{{\epsilon'}}
\newcommand{\ee}{\varepsilon(-1)}
\numberwithin{equation}{section}
\newcommand{\cl}{{\cal{L}}}
\newcommand{\Irr}{{\rm{Irr}}}
\newcommand{\id}{\rm{id}}
\newcommand{\sd}{\rm{sd}}
\newcommand{\pd}{\rm{pd}}
\newcommand{\disc}{{\mathrm{disc}}}
\newcommand{\Fn}{{\mathfrak {n}}}
\newcommand{\Fm}{{\mathfrak {m}}}
\newcommand{\rank}{{\mathrm{rank}}}
\newcommand{\bpair}[1]{\left[{#1}\right]}
\newcommand{\bd}{{\bf \mathrm{d}}}
\newcommand{\Ad}{{\mathrm{Ad}}}
\newcommand{\Ind}{{\mathrm{Ind}}}
\newcommand{\CE}{{\mathcal {E}}}
\newcommand{\CL}{{\mathcal {L}}}
\newcommand{\CV}{{\mathcal {V}}}
\newcommand{\Res}{{\mathrm{Res}}}
\newcommand{\nep}{{\bf n}^\epsilon}
\newcommand{\nepo}{{\bf n}^\epsilon_{\rm{odd}}}
\newcommand{\nepe}{{\bf n}^\epsilon_{\rm{even}}}
\newcommand{\nodd}{{\bf n}_{\rm{odd}}}
\newcommand{\neven}{{\bf n}_{\rm{even}}}
\newcommand{\wi}{{\widetilde{\iota}}}
\begin{document}

\title{Lusztig's Jordan decomposition and a finite field instance of relative Langlands duality}

\date{\today}

\author[Zhicheng Wang]{Zhicheng Wang}

\address{School of Mathematical Science, Zhejiang University, Hangzhou 310027, Zhejiang, P.R. China}

\email{11735009@zju.edu.cn}
\subjclass[2010]{Primary 20C33; Secondary 22E50}

\begin{abstract}
Lusztig \cite{L5,L6} gave a parametrization for $\rm{Irr}(G^F)$, where $G$ is a reductive algebraic group defined over $\mathbb{F}_q$, with Frobenius map $F$. This parametrization is known as Lusztig's Jordan decomposition or Lusztig correspondence. However, there is not a canonical choice of Lusztig correspondence. In this paper, we consider classical groups. We pick a canonical choice of Lusztig correspondence which is compatible with parabolic induction and is compatible with theta correspondence. This result extends Pan's result in \cite{P3}. As an application, we give a refinement of the results of the finite Gan-Gross-Prasad problem in \cite{Wang1} and prove a duality between Theta correspondence and finite Gan-Gross-Prasad problem, which can be regarded as a finite field instance of relative Langlands duality of Ben-Zvi-Sakellaridis-Venkatesh \cite{BZSV}.
\end{abstract}

\maketitle

\section{Introduction}\label{sec1}

\subsection{Lusztig's Jordan decomposition}
Let $\overline{\mathbb{F}}_q$ be an algebraic closure of a finite field $\mathbb{F}_q$, which is of characteristic $p>2$.  Suppose that the cardinality of $\Fq$ is large
enough so that the main result in \cite{S} holds. Consider a reductive algebraic group $G$ defined over $\Fq$, with Frobenius map $F$. Let $\E(G)=\rm{Irr}(G^F)$ be the set of irreducible representations of $G^F$. We write
\[
\langle \sigma,\pi \rangle_{H^F} = \dim \mathrm{Hom}_{H^F}(\sigma,\pi ),
\]
where $H$ is a subgroup of $G$, $\pi\in\E(G)$, and $\sigma\in\E(H)$.
We say $\sigma$ occurs in $\pi$ or $\sigma\in \pi$ if $\langle \sigma,\pi \rangle_{H^F}\ne 0$.

In \cite{L5}, Lusztig provided a way to parametrize $\E(G)$ under the assumption that $Z$ is a connected component. This parametrization is known as Lusztig's Jordan decomposition. Later, in \cite{L6}, this parametrization was extended to cover the case where $Z$ is disconnected. With Lusztig's Jordan decomposition, we obtain a partition of $\E(G)$ based on the semisimple conjugacy classes in the dual group $G^{*F}$ of $G^F$, that is,
\begin{equation}\label{eq:irr-decomp-1}
\E(G)=\coprod_{(s)}\mathcal{E}(G,s)
\end{equation}
where $(s)$ runs over $G^{*F}$-conjugacy classes of semisimple elements in  the identity component of $G^{*F}$, and
 the Lusztig series $\mathcal{E}(G,s)$ is defined by
\[
\mathcal{E}(G,s) := \{ \pi \in \E(G)  | \langle \pi,  R_{T^*,s}^G\rangle \ne 0\textrm{ for some }T^*\textrm{ containing }s \}.
\]
Here if $G$ is a connected reductive group, one has a natural bijection between the set of $G^F$-conjugacy classes of the pairs $(T, \theta)$ where $T$ is an $F$-stable maximal torus of $G$ and $\theta$ is a character of $T^F$, and the set of $G^{*F}$-conjugacy classes of the pairs $(T^*, s)$ where $T^*$ is an $F$-stable maximal torus of $G^*$ and contains $s$.
If $(T, \theta)$ corresponds to $(T^*, s)$, we denote  by $R_{T^*,s}^G$  the Deligne-Lusztig character $R^G_T(\theta)$ of $G^F$ in \cite{DL}.
And if $G$ is non-connected, we define  $R^G_{T^*,s}:=\rm{Ind}^{G^F}_{(G^0)^F}R^{G^0}_{T^*,s}$.
 Lusztig shows that there exists a bijection between the Lusztig series $\E(G,s)$ and the unipotent representations $\E(C_{G^*}(s),1)$ of $C_{G^{*}}(s)^F$ (see \cite[Chap 9]{L1}),
\[
\mathcal{L}_s:\mathcal{E}(G,s)\to \mathcal{E}(C_{G^{*}}(s), 1),
\]
extended by linearity to a map between virtual characters satisfying that
\begin{equation}\label{eq1}
\mathcal{L}_s(\varepsilon_G R^G_{T^*,s})=\varepsilon_{C_{G^{*}}(s)} R^{C_{G^{*}}(s)}_{T^*,1},
\end{equation}
where $\varepsilon_G:= (-1)^{\rm{rk}\,G}$ and $\rm{rk}\,G$ is the $\Fq$-rank of $G$, and $C_{G^*}(s)$ is the centralizer  of $s$ in $G^*$.
Moreover, $\CL_s$ sends cuspidal representations to cuspidal representations. Such a bijection $\CL_s$ will be called a Lusztig correspondence
or Lusztig's Jordan decomposition.

It should be noted that in general, there is no unique choice of $\CL_s$ (see \cite[appendix A.5]{GM2}). A natural question arises as to whether there is a canonical choice of Lusztig correspondence. In their work \cite{DM}, Digne and Michel demonstrate that if $G$ is a connected group with a connected center, then $\CL_s$ can be uniquely determined by \eqref{eq1} and some additional conditions.

Let $\CV(G)^\#$ be the subspace of the space of (complex-valued) class functions on $G$ spanned by Deligne-Lusztig virtual characters of $G$. Let $\pi^\#$ be the orthogonal projection of $\pi$ onto $\CV(G)^\#$. We say that $\pi$ is uniform if $\pi=\pi^\#$. Clearly, the image of Lusztig correspondence for uniform representations is uniquely determined by \eqref{eq1}.
If $G$ is a general linear group $\GGL_n$ or a unitary group $\UU_n$, every irreducible representation is uniform. However, if $G$ is a symplectic group or an orthogonal group, there exists some ambiguity in Lusztig correspondence. The reason is that $G$ or $C_{G^*}(s)$ might be disconnected.

Let $[\pi]:=\{\pi'|(\pi')^\#=\pi^\#\}$ be the set of irreducible representations that share the same uniform projection as $\pi$. To differentiate between representations in $[\pi]$, Weil representation and theta lifting are necessary. These have been studied in \cite{AMR, P1, P2, MQZ}. Based on their results, we know that the images of representations in $[\pi]$ under theta correspondence are distinct. In \cite{P3}, Pan provides an explicit description of $[\pi]$ and proves the existence of a unique $\CL_s$ that satisfies specific compatibility conditions regarding theta correspondence. However, Pan's choice of $\CL_s$ depends on the selection of a dual pair of theta correspondence. This means that even orthogonal-symplectic and odd orthogonal-symplectic dual pairs can lead to two different choices of Lusztig correspondence on symplectic groups. Consequently, Pan's result cannot encompass theta lifting for all dual pairs simultaneously. To describe theta lifting comprehensively, it is necessary to assign names to the irreducible representations or, equivalently, determine a specific choice of Lusztig correspondence. However, once Pan's $\CL_s$ is chosen for one dual pair, the theta lifting information for the other dual pair is lost. Therefore, the primary objective of this paper is to establish a canonical choice of $\CL_s$ that is independent of dual pairs. This $\CL_s$ will enable the determination of the relationship between theta correspondences for all dual pairs.

\subsection{A finite field instance of  relative Langlands duality}
For a symmetric space $V_\Fn$, define its discriminant by
\[
\disc(V_\Fn)=(-1)^{\frac{\Fn(\Fn-1)}{2}}\det(V_\Fn)\in \Fq^{\times }/\Fq^{\times 2}.
\]
Write $\o^\epsilon_{\Fn}$ to be the isometry group of $V_\Fn$ with $\disc(V_\Fn)=\epsilon$. For even $\Fn$, the even orthogonal group $\o^\epsilon_{\Fn}$  is split if and only if $\epsilon=+$. For odd $\Fn$, we have $\o^+_{\Fn}$ and $ \o^-_{\Fn}$ are the same as abstract groups; however, they act on two symmetric spaces with different discriminants.

To relate two theta correspondence of dual pairs $(\sp_{2n},\o^\epsilon_{2n'})$ and $(\sp_{2n},\rm{O}^{\epsilon'}_{2n'+1})$, it is natural to consider the restriction of representations from $\rm{O}^{\epsilon'}_{2n'+1}\fq$  to $\o^\epsilon_{2n'}\fq$.
Moreover, the Gan-Gross-Prasad problem, which can be seen as a generalization of the above restriction problem, should also be involved.

 In the case of local fields, Gan-Gross-Prasad problem \cite{GP1, GP2, GGP1, GGP2} is related to theta correspondence in the sense of relative Langlands duality of Ben-Zvi-Sakellaridis-Venkatesh \cite{BZSV}. Let us now recall the relevant context and background as laid out
in \cite{BZSV}. The relative Langlands program aims to study the relationship between certain periods of automorphic forms on a reductive group $G$ and  certain special values of automorphic L-functions attached to its dual group $G^*$. Despite previous knowledge of some instances of this relationship, they were not well understood in a systematic way. However, there has been recent progress in elucidating these relationships \cite{SV,BZSV}, employing the framework of spherical varieties. This can be seen as an arithmetic analogue of the electric-magnetic duality of boundary conditions in four-dimensional supersymmetric Yang-Mills theory. One noteworthy example of this duality is the duality between the Gan-Gross-Prasad problem and theta correspondence. This specific instance is examined in \cite{GW1,GW2}.

 In the case of finite fields, a duality between Gan-Gross-Prasad problem and theta correspondence for unipotent representation has been proven in \cite{LW4, Wang1}. To be more precise, the multiplicity in Gan-Gross-Prasad problem for unipotent representations can be visualized as a diagram:
  \[
\xymatrix{
\pi\in\cal{E}(\sp_{2n},1) \ar[rrrr]^{\textrm{Theta lifting}} \ar[dd]_{\cal{L}_1\circ\cal{AC}} &&&&\bigoplus\pi',\  \pi'\in \cal{E}(\o^\epsilon_{2m},1) \ar[dd]_{\cal{AC}\circ\cal{R}} \\
\\
\pi^\star\in\cal{E}(\so_{2n+1},1) \ar[rrrr]^{\textrm{unipotent part of GGP} } &&&&
\bigoplus\pi^{\prime \star},\  \pi^{\prime \star}\in \cal{E}(\so^\epsilon_{2m},1),
}
\]
where $\cal{L}_1$ is the Lusztig correspondence and $\cal{R}$ is the restriction form $\o^\epsilon_{2m}$ to $\so^\epsilon_{2m}$, and $\cal{AC}$ is the Alvis-Curtis duality. In this paper, we generalize above commutative diagram to all representations under the canonical choice of Lusztig correspondence.

\subsection{Main results}

We recall the description of $C_{G^*}(s)$ and $\E(C_{G^*}(s),1)$ (cf. section \ref{sec4}). We have a decomposition of $C_{G^*}(s)$ via eigenvalues of $s$ as follows:
\begin{equation}
 \begin{aligned}
C_{G^*}(s)
\cong &
G^*_{[ \ne \pm 1]}(s)\times G^*_{[ 1]}(s)\times G^*_{[ -1]}(s)\times
\begin{cases}
\{\pm\},&\textrm{if }G=\o_{2n+1};\\
1,&\textrm{otherwise}.
\end{cases}
 \end{aligned}
\end{equation}
where $G^*_{[ \ne \pm 1]}(s)$ (resp. $G^*_{[ 1]}(s)$, $G^*_{[ -1]}(s)$) is the part of $C_{G^*}(s)$ corresponding to the eigenvalue $\ne 1$ (resp. $1$, $-1$) of $s$ (cf. section \ref{sec4}).
In particular, we have a explicit description of $G^*_{[a]}(s)$ as follows:
\begin{itemize}
    \item The group $G^*_{[ \ne \pm 1]}(s)$ is a product of general linear groups and unitary groups;
\item
If $G$ is a odd orthogonal group, then $G^*_{[ 1]}(s)$ and $G^*_{[ -1]}(s)$ are symplectic groups;
\item
If $G$ is a symplectic group, then $G^*_{[ 1]}(s)$ is a special odd orthogonal group and $G^*_{[ -1]}(s)$ is a even orthogonal group;
\item
If $G$ is a even orthogonal group, then $G^*_{[ 1]}(s)$ and $G^*_{[ -1]}(s)$ are even orthogonal groups.
\end{itemize}
Recall that Lusztig established a bijection between the unipotent representations of orthogonal groups and symplectic groups to equivalence classes of Lusztig's symbols (see section \ref{sec22}). Subsequently, choosing $\CL_s$ leads to a bijection
\[
\pi\in\E(G,s)\to (\rho,\Lambda_1,\Lambda_{-1},\iota)
\]
where $\rho$ is the projection of $\CL_s(\pi)$ on $G^*_{[ \ne \pm 1]}$, and $\Lambda_{\pm1}$ are the symbol corresponding to the projection of $\CL_s(\pi)$ on $G^*_{[ \pm 1]}$, and $\iota\in\{\pm\}\cong\{\bf{1},\rm{sgn}\}$ only occur for $G_n=\o_{2n+1}$. We denote $\pi$ by $\prllsgn$.

\begin{theorem}\label{main1}
 Suppose that the cardinality of $\Fq$ is large
enough so that the main result in \cite{S} holds.
Then, for each symplectic group and orthogonal group, there is a unique choice of Lusztig correspondence denoted by $\clc$. This choice is compatible with parabolic induction and the theta correspondence (cf. Theorem \ref{main}).
\end{theorem}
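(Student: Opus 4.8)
The strategy is to reduce the statement to a binary labeling problem inside the non-uniform packets, to fix that labeling by the theta correspondence, and then to prove that this choice is \emph{coherent}: independent of the chosen dual pair and compatible with parabolic induction. The precise compatibility with theta is the content of Theorem \ref{main}, so the present statement follows once coherence is established. By \eqref{eq1} the correspondence $\CL_s$ is already determined on $\CV(G)^{\#}$, hence on every uniform irreducible representation, so any two choices of $\CL_s$ differ only by a permutation within each packet $[\pi]$. Using the description of $C_{G^*}(s)$ recalled above together with Lusztig's symbol parametrization of unipotent representations, a choice of $\CL_s$ is the same as a bijection $\pi \mapsto (\rho,\Lambda_1,\Lambda_{-1},\iota)$, and the only freedom is in matching members of $[\pi]$ to symbols. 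Following \cite{AMR, P3} I would first record that these packets are small and that their structure is explicitly governed by the degeneracy of the symbols $\Lambda_{\pm 1}$, together with the outer automorphism coming from $\o^\epsilon_{2n} \supset \so^\epsilon_{2n}$ when $G$ is even orthogonal; it therefore suffices to make a consistent choice inside each degenerate packet.

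For a fixed reductive dual pair the theta correspondence, transported through $\CL_s$ to the unipotent side, becomes an explicit combinatorial operation on the data $(\rho,\Lambda_1,\Lambda_{-1},\iota)$ --- essentially adding or deleting a row or column of a symbol --- as worked out in \cite{AMR, P1, P2, MQZ, P3}. I would define $\clc$ by requiring that, inside each degenerate packet, this operation match the actual theta lift for one reference dual pair; this reproduces a choice of the type constructed by Pan. The new content is to verify that the \emph{same} labeling simultaneously intertwines the theta correspondence for all dual pairs --- both the towers $(\sp_{2n},\o^\epsilon_{2m})$ and $(\sp_{2n},\o^{\epsilon'}_{2m+1})$ --- and the restriction from $\o^\epsilon_{2m}$ to $\so^\epsilon_{2m}$, which is exactly the assertion of Theorem \ref{main}.

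The difficulty is that the constraint from the even orthogonal tower and the constraint from the odd orthogonal tower are a priori unrelated, and one must show they single out the same labeling. I would establish this by combining three ingredients: (i) compatibility of theta correspondence with parabolic induction, which propagates any identity checked on cuspidal data to all of $\E(G)$; (ii) see-saw dual pairs together with the first-occurrence and conservation relations along a Witt tower, which tie the theta lifts in the two towers to one another; and (iii) an explicit treatment of the cuspidal anchor case, where the packet structure and the relevant theta lifts are computable directly. Granting these, every degenerate packet is reached from cuspidal data by a chain of parabolic inductions and theta lifts, the labeling is forced at each step, and the two towers necessarily agree. This gives existence of $\clc$, and uniqueness follows because any choice compatible with parabolic induction and with even the single reference theta correspondence already agrees with $\clc$ on uniform representations by \eqref{eq1} and on degenerate packets by the forcing just described. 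I expect ingredient (ii) --- matching the combinatorics of the two Witt towers so that the independently obtained labelings coincide --- to be the main obstacle.
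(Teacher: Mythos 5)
Your opening reduction (uniform representations are pinned down by \eqref{eq1}, so only the labeling inside the packets $[\pi]$ is at stake, and this labeling only needs to be fixed on suitable "small" representations) agrees with the paper, but your construction is anchored differently, and the step you yourself flag as the main obstacle is precisely where the argument is missing, not merely technical. The paper does \emph{not} fix the labeling by one reference dual pair and then argue that the other tower selects the same one; it constructs $\clc$ by induction on the rank using the \emph{first descent in the Gan--Gross--Prasad problem}: $\clc(\pi)$ is defined to be the unique choice compatible, through an explicit functor $\CD^*_\epsilon$ on the unipotent side, with the already-constructed $\clc(\pi^\star)$ for $\pi^\star$ occurring in the first descent of $\pi$; compatibility with \emph{both} towers is then extracted from the commutative cube relating theta lifting and descent. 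The point of this anchor is that the multiplicities $m_\epsilon(\pi,\pi^\star)$ are defined with no reference to any Lusztig correspondence or any Witt tower, so dual-pair independence is built in rather than proved a posteriori.

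In your plan, ingredient (ii) as stated is circular and under-specified. To decide which member of $[\pi]$ the reference (even orthogonal) tower attaches to which symbol --- e.g.\ $\pi$ versus $\pi^c$ on $\sp_{2n}$, where the first occurrence indices with respect to ${\bf O}^\pm_{\rm{even}}$ coincide and only the first lifts differ --- you must already know the labeling on the orthogonal member of the pair, and conversely; moreover the odd tower does not impose "the same labeling" in any naive sense but a nontrivial relation (the roles of $\Lambda_1$ and $\Lambda_{-1}$ are exchanged, a twist $\Theta_{\epsilon'\epsilon_\rho}$ by $\rm{sgn}$ intervenes, and $\iota'$ is given by a central-character formula). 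Breaking this mutual dependence requires an invariant independent of both towers; the paper manufactures it from the GGP descent together with newly proved conservation relations for the first descent (Theorems \ref{d2}, \ref{deven}, \ref{dodd}), the equality $n^+(\pi)=n^-(\pi)$ for odd orthogonal basic representations (Theorem \ref{t4}), and the $(-I)$-value bookkeeping (Proposition \ref{-1}, Corollary \ref{oddg}); your sketch invokes see-saws and conservation relations but supplies neither these inputs nor an induction scheme that avoids the circularity. Finally, "reached from cuspidal data by parabolic induction" is not sufficient: for even orthogonal groups with $\rm{def}(\Lambda)=0$ the representations $\pi_\Lambda$ and $\pi_{\Lambda^t}$ lie in the same Harish-Chandra series, which is exactly why the paper must work with basic representations (distinguishing ${\bf 1}_{\o^+_2}$ from $\rm{sgn}_{\o^+_2}$) rather than cuspidal supports alone.
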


To outline the strategy of constructing a canonical choice of $\clc$ on $\pi\in\E(G)$, we start by considering an irreducible representation $\pi'$ occurring in the first descent of $\pi$. This representation is of a group that is smaller than $G$. Here the first descent means that
\begin{itemize}
\item the multiplicity $m_\epsilon(\pi,\pi^\star)\ne 0$ (resp. $m_\epsilon(\pi',\pi^{\star\prime})\ne 0$);
\item the multiplicity $m_{\pm}(\pi,\sigma)\equiv 0 $ (resp. $m_\pm(\pi',\sigma)$)) if $\sigma$ lives in a group smaller than the group of $\pi^\star$ (resp. $\pi^{\star\prime}$),
\end{itemize}
where $m_\epsilon(\pi,\pi^\star)$ is the multiplicity in finite Gan-Gross-Prasad problem defined in section \ref{sec6.1} and \ref{sec6.2}.
By induction on the rank of $G$, we have the existence of the canonical Lusztig correspondence $\clc$ on $\pi'$. We define $\clc$ on $\pi$ to be the unique Lusztig correspondence that is compatible with $\clc$ on $\pi'$.  Since $\clc$ on $\pi'$ is compatible with theta correspondence and the first descent $\pi'$ of $\pi$  has similar behavior with $\pi$ in theta correspondence, the Lusztig correspondence $\clc$ on $\pi$ naturally becomes compatible with the theta correspondence (cf. section \ref{sec66}).

Under the canonical choice of Lusztig correspondence in Theorem \ref{main1}, we distinguish representations in representations in $[\pi]$ as follows.
\begin{theorem}\label{main2}
Let $G$ be a symplectic group or orthogonal group. Let $\chi_G$ be the spinor norm of $G$ (cf. \cite[1.6]{Wal1}). Under the canonical choice of Lusztig correspondence in Theorem \ref{main1}, we have
\[
\begin{array}{c|cccc}
G& \pi                                 &\sgn\pi       &\pi^c             &\chi_G\cdot\pi\\ \hline
\sp_{2n}        & \pi_{\rho,\Lambda_1,\Lambda_{-1}} &\empty & \pi_{\rho,\Lambda_1,\Lambda_{-1}^t} &\empty\\
\o^\epsilon_{2n}        & \pi_{\rho,\Lambda_1,\Lambda_{-1}} &\pi_{\rho,\Lambda_1^t,\Lambda_{-1}^t} & \pi_{\rho,\Lambda_1,\Lambda_{-1}^t} &\pi_{\rho^-,\epsilon(\Lambda_{\pm1})\Lambda_{-1},\epsilon(\Lambda_{\pm1})\Lambda_{1}}\\
\o^\epsilon_{2n+1}        & \pi_{\rho,\Lambda_1,\Lambda_{-1},\iota} &\pi_{\rho,\Lambda_1,\Lambda_{-1},-\iota} & &\pi_{\rho^-,\Lambda_{-1},\Lambda_{1},\iota'\iota}\\
\end{array}
\]
where $\Lambda^t$ is defined in section \ref{sec:L-s}, and $\rho^-$ is defined in \eqref{rho-}, $\iota'=\chi_G(-I)$ and $\epsilon(\Lambda_{\pm1}):=(-1)^{\rm{def}(\Lambda_1)+\rm{def}(\Lambda_{-1})}$.

\end{theorem}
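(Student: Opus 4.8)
The plan is to compute the effect of each of the four operations --- tensoring by the sign character, the conjugation $\pi\mapsto\pi^c$ by an outer element, and twisting by the spinor norm $\chi_G$ --- on the triple (or quadruple) $(\rho,\Lambda_1,\Lambda_{-1},\iota)$ attached to $\pi$ via the canonical Lusztig correspondence $\clc$ of Theorem \ref{main1}. The key point that makes this tractable is that all four operations are \emph{uniform-preserving up to a controlled change}: each one fixes the semisimple class $(s)$ (or sends it to an explicit $(s')$, as with the $\chi_G$-twist, which changes $s$ to $-s$ and hence swaps the roles of the $[1]$- and $[-1]$-parts), and each one commutes with Deligne--Lusztig induction in a way already recorded in the literature (e.g. \cite{AMR, P1, P2, P3} for the sign and conjugation actions on symbols, \cite{Wal1} for the spinor norm). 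So the strategy is: (i) reduce each operation to a statement purely about Lusztig symbols via \eqref{eq1} and the defining compatibility of $\clc$; (ii) verify the symbol-level formula; (iii) check that the result is consistent with the two normalizing properties of $\clc$ --- compatibility with parabolic induction and with theta correspondence --- which is what pins down the answer uniquely rather than merely up to the ambiguity in $[\pi]$.

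First I would treat $\sp_{2n}$. Here $\pi$ is uniform-ambiguous only through the $[-1]$-part, whose centralizer factor is an even orthogonal group; the operation $\pi\mapsto\pi^c$ is conjugation by $\o_{2n}^\epsilon\setminus\so_{2n}^\epsilon$ inside the relevant theta setup, and on unipotent representations of an even orthogonal group this is exactly the transpose $\Lambda\mapsto\Lambda^t$ of the symbol (Pan, \cite{P3}). Since $\clc$ is built to be compatible with theta correspondence and $\pi^c$ has the theta behaviour obtained from that of $\pi$ by composing with this conjugation, the formula $\pi^c=\pi_{\rho,\Lambda_1,\Lambda_{-1}^t}$ follows. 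For $\o_{2n}^\epsilon$ I would do the same for $\pi^c$, and additionally handle $\sgn\cdot\pi$: tensoring by $\sgn$ on an even orthogonal group corresponds to simultaneously transposing \emph{both} symbols $\Lambda_1\mapsto\Lambda_1^t$, $\Lambda_{-1}\mapsto\Lambda_{-1}^t$ (this is the standard description of the sign twist on unipotent representations, cf. \cite{AMR}), while fixing $\rho$; one must check the defect bookkeeping is consistent, which is where the quantity $\epsilon(\Lambda_{\pm1})=(-1)^{\rm{def}(\Lambda_1)+\rm{def}(\Lambda_{-1})}$ enters. For the $\chi_G$-twist I would use that $\chi_G$ corresponds under Lusztig's parametrization to multiplication of the semisimple part by $-I$, hence swaps $G^*_{[1]}(s)\leftrightarrow G^*_{[-1]}(s)$ and correspondingly the two symbols, up to the explicit sign/defect correction and the passage $\rho\mapsto\rho^-$ of \eqref{rho-}; the split-type superscript flips because $\disc$ of the underlying space is multiplied by $(-1)^n$. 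The odd orthogonal case $\o_{2n+1}^\epsilon$ is formally the same but now carries the extra label $\iota\in\{\pm\}$ coming from the disconnectedness of $\o_{2n+1}$ (equivalently the $\{\pm\}$ factor in $C_{G^*}(s)$); the sign twist flips $\iota\mapsto-\iota$ since $\sgn$ restricted to the component group is the nontrivial character, and the spinor twist additionally multiplies $\iota$ by $\iota'=\chi_G(-I)$ while swapping $\Lambda_1\leftrightarrow\Lambda_{-1}$.

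The main obstacle I anticipate is the \emph{consistency with the canonical normalization}: for each operation one gets a candidate formula at the level of symbols, but one must rule out the possibility that $\clc$ introduces an extra "switch" within $[\pi]$ that would alter the transpose pattern. Concretely, one has to show that the canonical $\clc$ of Theorem \ref{main1} --- which was constructed via first descent and induction on $\rm{rank}$ --- actually intertwines these operations on the nose, with no correction term hidden in the descent step. I would handle this by induction on $\rm{rank}\,G$ in parallel with the construction of $\clc$ itself: each of $\sgn$, $(\cdot)^c$, $\chi_G$ commutes with the relevant first descent map (this is essentially the statement that finite GGP multiplicities and theta lifts transform predictably under these twists, which follows from \cite{Wang1} together with the equivariance of theta correspondence under $\sgn$, conjugation, and $\chi_G$), so the base of the induction --- small-rank groups, where representations are either uniform or handled directly --- suffices to propagate the formula to all ranks. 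The only genuinely delicate point is matching the \emph{defect parities}, i.e. making sure that after transposing one symbol the resulting pair $(\Lambda_1,\Lambda_{-1}^t)$ still corresponds to a \emph{legitimate} unipotent representation of the (possibly other-type) orthogonal group $G^*_{[\pm1]}(s)$; this is exactly why the correction factor $\epsilon(\Lambda_{\pm1})$ and the modified $\rho^-$ appear in the $\chi_G$-row, and verifying these corrections is the computational heart of the proof.
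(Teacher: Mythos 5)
Your proposal is correct and follows essentially the same route as the paper: the choice-independent entries (symplectic $\pi^c$, odd-orthogonal $\sgn$-twist) come from the ambiguity analysis of the Lusztig correspondence, while the remaining entries are obtained by exploiting the equivariance of the first descent and of the theta correspondence under $\sgn$, conjugation, and the spinor-norm twist, and propagating the symbol-level formulas by induction on the rank through the canonical construction of $\clc$. The paper implements exactly this scheme (Corollary \ref{l4} for the first part, the descent equivariance of Theorems \ref{deven}--\ref{dodd} together with the commutative diagram of Corollary \ref{8.6} for $\sgn$ and $\chi_G$, and Pan's conjugation-equivariance of theta plus Theorem \ref{main}(3) for $\pi^c$ on even orthogonal groups), so no further comparison is needed.
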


Theorem \ref{main1} has an important application in the finite Gan-Gross-Prasad problem. In \cite{Wang1, Wang2}, the calculation of the finite Gan-Gross-Prasad problem has been provided. However, the calculation in \cite{Wang1} depends on how representations behave in theta correspondence. By Theorem \ref{main1} or Theorem \ref{main}, the behavior of representations $\prll$ or $\prllsgn$ in theta correspondence can be determined based on their subscripts. Hence we can calculate the multiplicity in the finite Gan-Gross-Prasad problem using these subscripts and provides an explicit description of the finite Gan-Gross-Prasad problem in a combinatorial way (Theorem \ref{ggpmain}).

Combining Theorem \ref{main1} and Theorem \ref{ggpmain}, we establish the commutativity between the theta correspondence and the multiplicity in the Gan-Gross-Prasad problem. This commutativity can be regarded as a finite field instance of the relative Langlands duality of Ben-Zvi-Sakellaridis-Venkatesh \cite{BZSV}. To elaborate, we can visualize the multiplicity in the Gan-Gross-Prasad problem in the following manner:

\begin{theorem}
 \begin{enumerate}
\item Let $\pi\in\E(\sp_{2n})$, and $\pi^\star\in \E(\sp_{2n^\star})$.
Then we have the following commutative diagram up to a twist of the sgn characters on orthogonal groups:
  \[
\xymatrix{
\pi\in\cal{E}(\sp_{2n}) \ar[rrrr]^{\textrm{GGP}} \ar[dd]_{\cal{AC}\circ\clc} &&&&\bigoplus\pi^\star,\  \pi^\star\in \cal{E}(\sp_{2n^\star},s) \ar[dd]_{\cal{AC}\circ\clc} \\
\\
\rho\otimes\pi_{\Lambda_1}\otimes\pi_{\Lambda_{-1}}  \ar[rrrr]^{\textrm{ theta lfiting }} &&&&
\bigoplus\pi^{\star },\  \rho^\star\otimes\pi_{\Lambda^\star_1}\otimes\pi_{\Lambda^\star_{-1}}
}
\]
where the sum in the bottom right corner runs over representations such that
\begin{itemize}
\item $\rho=\prod_{[a],a\ne \pm 1}\pi[a]$ and $\rho^\star=\prod_{[a],a\ne \pm 1}\pi^\star[a]$ (cf. section \ref{sec4});
\item For each $[a]$, $\pi^\star[-a]$ appears in theta lifting of $\pi[a]$;
\item $\clc(\pi^\star_{\Lambda_1})$ appears in theta lifting of $\pi_{\Lambda_{-1}}$;
\item $\pi^\star_{\Lambda_{-1}}$ appears in theta lifting of $\clc(\pi_{\Lambda_1})$.
\end{itemize}

\item Let $\pi\in\E(\o^\epsilon_{2n})$, and $\pi^\star\in \E(\o^{\epsilon^\star}_{2n^\star+1})$.
Then we have the following commutative diagram up to a twist of the sgn character on orthogonal groups:
  \[
\xymatrix{
\pi\in\cal{E}(\o^\epsilon_{2n}) \ar[rrrr]^{\textrm{GGP}} \ar[dd]_{\cal{AC}\circ\clc} &&&&\bigoplus\pi^\star,\  \pi^\star\in \cal{E}(\o^{\epsilon^\star}_{2n^\star},s) \ar[dd]_{\cal{AC}\circ\clc} \\
\\
\rho\otimes\pi_{\Lambda_1}\otimes\pi_{\Lambda_{-1}}  \ar[rrrr]^{\textrm{ theta lfiting }} &&&&
\bigoplus\pi^{\star },\  \rho^\star\otimes\pi_{\Lambda^\star_1}\otimes\pi_{\Lambda^\star_{-1}}\otimes\iota^\star
}
\]
where the sum in the bottom right corner runs over representations such that
\begin{itemize}
\item $\rho=\prod_{[a],a\ne \pm 1}\pi[a]$ and $\rho^\star=\prod_{[a],a\ne \pm 1}\pi^\star[a]$
\item For each $[a]$, $\pi^\star[a]$ appears in theta lifting of $\pi[a]$;
\item $\clc(\pi^\star_{\Lambda_1})$ appears in theta lifting of $\pi_{\Lambda_1}$;
\item $\clc(\pi^\star_{\Lambda_{-1}})$ appears in theta lifting of $\pi_{\Lambda_{-1}}$.
\end{itemize}

\end{enumerate}
\end{theorem}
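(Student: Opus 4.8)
The plan is to deduce the two commutative diagrams from the already-established ingredients: the canonical Lusztig correspondence $\clc$ of Theorem~\ref{main1}, the explicit formula for $\clc$ on a Lusztig series in terms of the data $(\rho,\Lambda_1,\Lambda_{-1},\iota)$, the combinatorial description of the finite Gan-Gross-Prasad multiplicities from Theorem~\ref{ggpmain}, and the behaviour of the three factors $G^*_{[\ne\pm1]}(s)$, $G^*_{[1]}(s)$, $G^*_{[-1]}(s)$ under theta lifting as recorded in Theorem~\ref{main}. First I would reduce the full GGP multiplicity to a product over eigenvalue-type blocks: using the compatibility of $\clc$ with parabolic induction, together with the block decomposition $C_{G^*}(s)\cong G^*_{[\ne\pm1]}(s)\times G^*_{[1]}(s)\times G^*_{[-1]}(s)\times(\text{sign factor})$, the nonvanishing of $m_\epsilon(\pi,\pi^\star)$ factors as a simultaneous nonvanishing on the $[a]$-blocks for $a\ne\pm1$ (governed by Deligne--Lusztig / Howe-type identities on $\GGL$ and $\UU$, which are uniform and hence forced by \eqref{eq1}) and on the two ``$\pm1$'' blocks, which are genuine orthogonal/symplectic unipotent problems. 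This is exactly the content of Theorem~\ref{ggpmain} once one knows the subscripts, and Theorem~\ref{main1} provides the subscripts.

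Next I would treat the $[a]$-blocks with $a\ne\pm1$. Here $G^*_{[a]}(s)$ is a product of general linear and unitary groups, every representation is uniform, and the GGP-type restriction on these factors is controlled by the Bruhat decomposition computation (Thom/Green-function bookkeeping); the outcome is that $\pi^\star[-a]$ (case (1)) resp.\ $\pi^\star[a]$ (case (2)) occurs in the theta lift of $\pi[a]$, the sign flip $a\mapsto -a$ in the symplectic case reflecting the $-1$-eigenvalue twist built into the symplectic--orthogonal see-saw. I would verify this by comparing the Lusztig correspondence's action with the known formula for theta lifting on $\GGL$/$\UU$ Lusztig series (transpose of symbols on the smaller side, etc.), which is purely formal once $\clc$ is pinned down. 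The point is that on these blocks there is no ambiguity to resolve, so the diagram commutes on the nose.

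The heart of the argument is the two ``$\pm1$'' blocks, where the representations $\pi_{\Lambda_{\pm1}}$ are genuinely non-uniform and the whole purpose of the canonical $\clc$ is to make the theta-lifting bookkeeping consistent. Here I would invoke Theorem~\ref{main}: by construction $\clc$ is the unique Lusztig correspondence compatible with theta for \emph{all} dual pairs simultaneously, so the statement ``$\clc(\pi^\star_{\Lambda_1})$ appears in the theta lift of $\pi_{\Lambda_{-1}}$'' and its mate are precisely the defining compatibilities, transported through the first-descent construction. The remaining work is to match the GGP multiplicity on these blocks (Theorem~\ref{ggpmain}, in its combinatorial symbol form) against the theta-lifting relation on symbols (Pan's correspondence $\Lambda\mapsto\Lambda^\star$, with the relevant defect/degree shifts). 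I expect the main obstacle to be keeping the discriminant/spinor-norm twists straight: the diagrams commute only ``up to a twist of the $\sgn$ characters on the orthogonal groups,'' and pinning down exactly which $\sgn$-twist (equivalently, which $\epsilon^\star$ occurs and with which sign $\iota^\star$ in case (2)) requires the $\chi_G$- and $\sgn$-transformation table of Theorem~\ref{main2} applied to each building block, plus a careful parity count of $\mathrm{def}(\Lambda_1)+\mathrm{def}(\Lambda_{-1})$ through the descent. Once that bookkeeping is organized --- ideally by first proving the rank-one / first-descent case and then propagating via the inductive definition of $\clc$ and the compatibility with parabolic induction --- the general case follows by the same induction on $\rank G$ used to construct $\clc$.
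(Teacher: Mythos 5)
Your proposal takes essentially the same route as the paper: the statement is obtained by combining Theorem \ref{ggpmain} (the combinatorial GGP multiplicity, which becomes usable once Theorem \ref{main} pins down the subscripts $(\rho,\Lambda_1,\Lambda_{-1},\iota)$) with the explicit theta correspondence of Aubert--Michel--Rouquier/Pan, so that after applying Alvis--Curtis duality the GGP conditions on the $[a]$-blocks ($2$-transverse/close) and on the $[\pm1]$-blocks (the $\preccurlyeq$ and defect relations) are exactly the theta-lifting conditions, with all the residual sign coupling between $\rho$, $\Lambda_1$, $\Lambda_{-1}$ absorbed into the ``up to a twist of $\rm{sgn}$'' caveat, exactly as in the proof of the unipotent case (Theorem \ref{8.3}). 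The only cosmetic difference is your suggested induction through first descent, which the paper uses to construct $\clc$ itself rather than for this final comparison.
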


\begin{remark}

The sgn characters for unipotent representations are explicitly given in Theorem \ref{8.3}. For the general cases, the sign characters can be calculated using Theorem \ref{ggpmain}. However, this process  for the general cases can be extremely complicated, as the variables $\rho$, $\Lambda_1$, and $\Lambda_{-1}$ can all influence each other.
\end{remark}

Although the description of the multiplicity in finite Gan-Gross-Prasad problem in Theorem \ref{ggpmain} is quite complicated, the first descent case of  Gan-Gross-Prasad problem, which is similar to the first occurrence case of theta lifting, is simple.

\begin{theorem}\label{main4}
 Let $\pi\in \E(G_n)$ where $G_n$ is either a symplectic group, odd orthogonal group or a even orthogonal group. Then we have the following commutative diagram
 \begin{equation}\label{maincm}
\xymatrix{
&\pi^*\ar[rrr]^{\Theta^*_{n,n'}} \ar[dd]_{\CD_\epsilon^*}& &&\pi^{\prime *} \ar[dd]^{\CD^*_\epsilon}\\
\pi \ar[ur]^{\CL^\rm{can}}\ar[dd]_{\CD_{\ell,\epsilon}}\ar[rrr]^{\Theta_{n,n'}}&&&\pi' \ar[ur]^{\CL^\rm{can}} \ar[dd]^{\CD_{\ell',\epsilon}}& \\
&\pi^{\star*} \in \CD^*_\epsilon(\pi^*) \ar[rrr]^{\Theta^*_{m,m'}}& &&\pi^{\star\prime*}\in \CD^*_\epsilon(\pi^{\prime*})\\
\pi^\star\in \CD_{\ell,\epsilon}(\pi)\ar[ur]^{\CL^\rm{can}}\ar[rrr]^{\Theta_{m,m'}}&&&\pi^{\star\prime}\in \CD_{\ell,\epsilon}(\pi')\ar[ur]^{\CL^\rm{can}}&\\
},
\end{equation}
where all notations are defined in section \ref{sec7}.
\end{theorem}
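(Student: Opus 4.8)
The plan is to establish the big cube diagram by reducing it to the compatibility of $\CL^{\rm can}$ with both theta correspondence and the descent maps, and then checking that the outer faces commute essentially by construction. First I would recall that the two vertical faces of the cube — the front face relating $\pi$, $\pi'$ and their descents $\pi^\star,\pi^{\star\prime}$, and the back face relating the Lusztig images $\pi^*,\pi^{\prime*}$ and their $\CD^*_\epsilon$-descents — are both instances of the statement that $\CL^{\rm can}$ intertwines the GGP-type descent $\CD_{\ell,\epsilon}$ with its spectral avatar $\CD^*_\epsilon$ on centralizer groups; this is exactly the content of Theorem \ref{main} (the compatibility of $\clc$ with parabolic induction and theta), so each of these two faces commutes. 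Next, the two horizontal faces — the top face relating $\pi^*,\pi^{\prime*},\pi^{\star*},\pi^{\star\prime*}$ via $\Theta^*$ on the centralizer side, and the bottom face relating the descents — commute because on the centralizer groups $\CL_s$ conjugates theta lifting of $G^F$ to the product of theta liftings on $G^*_{[\ne\pm1]}(s)\times G^*_{[1]}(s)\times G^*_{[-1]}(s)$, which is the combinatorial recipe already recorded in the main commutative diagrams of the Main Results section. Finally, the two ``slanted'' faces, those carrying the $\CL^{\rm can}$ arrows from the $\pi$-level to the $\pi^*$-level, commute by the very definition of $\clc$ together with the fact (Theorem \ref{main1}) that $\clc$ is compatible with $\Theta_{n,n'}$: going around either way computes the Lusztig label of the same theta lift.

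The key steps, in order, would be: (1) fix the dual pair and the descent index, and pin down which $\CD_{\ell,\epsilon}$-descent $\pi^\star$ is being considered, using the ``first descent'' characterization via the multiplicities $m_\epsilon(\pi,\pi^\star)$, $m_\pm(\pi,\sigma)$ from section \ref{sec6.1}–\ref{sec6.2}; (2) verify the two vertical faces via Theorem \ref{main} — here one must track the eigenvalue decomposition $C_{G^*}(s)\cong G^*_{[\ne\pm1]}(s)\times G^*_{[1]}(s)\times G^*_{[-1]}(s)\times\{\pm\}^{?}$ and note that $\CD^*_\epsilon$ acts only on the $[\pm1]$-parts while fixing $\rho$; (3) verify the two horizontal faces by the explicit theta-matching on symbols ($\Lambda_{\pm1}\mapsto$ theta lift of $\Lambda_{\mp1}$ for symplectic, $\Lambda_{\pm1}\mapsto$ theta lift of $\Lambda_{\pm1}$ for even orthogonal, as in the displayed diagrams); (4) assemble the cube: since five of the six faces commute and all arrows on the relevant vertices are bijections onto the sets in question, the sixth face commutes as well, which gives \eqref{maincm}.

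The main obstacle I expect is step (2)–(3) interface: one must check that the descent operation $\CD_{\ell,\epsilon}$ on $G_n^F$ corresponds under $\CL^{\rm can}$ to precisely the descent on the $[1]$- or $[-1]$-block of the centralizer (depending on the parity of the target group), and that this is consistent with the theta-matching recipe on that same block — in other words that ``descent then theta'' and ``theta then descent'' land in the same Lusztig series, with the same symbol up to the controlled $\mathrm{sgn}$-twist. This requires a careful bookkeeping of defect and rank parameters $\mathrm{def}(\Lambda_{\pm1})$, of the ambiguity set $[\pi]$ resolved by Theorem \ref{main2}, and of the sign characters $\chi_G$, $\iota$; the behavior of the $\mathrm{sgn}$ and spinor-norm twists under iterated descent is exactly where the statement can only be asserted ``up to a twist of the $\mathrm{sgn}$ characters,'' and making that precise for the full cube (as opposed to a single square) is the delicate point. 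Once the compatibility of $\clc$ with a single descent step and a single theta step is in hand (Theorems \ref{main1} and \ref{main}), the cube follows by functoriality, because each descent in the GGP tower is again a first descent of the appropriate smaller pair, so the inductive structure used to define $\clc$ propagates through the diagram.
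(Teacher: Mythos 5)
There is a genuine gap. The heart of the cube \eqref{maincm} is the pair of ``descent'' faces (the ones asserting that $\clc$ intertwines $\CD_{\ell,\epsilon}$ with $\CD^*_\epsilon$), and you dispose of them by saying they are ``exactly the content of Theorem \ref{main}.'' They are not: Theorem \ref{main} only asserts compatibility of $\clc$ with parabolic induction and with the theta correspondence; it says nothing about the Gan--Gross--Prasad descent. In the paper these faces are exactly what is new, and they are obtained by combining Theorem \ref{main} with Theorem \ref{ggpmain}: the explicit combinatorial formula for $m_\epsilon(\prll,\prlls)$ in terms of the canonical labels is what allows one to compute the first descent $\CD_{\ell,\epsilon}(\pi)$ on the spectral side and to check that it agrees with the extended definition of $\CD^*_\epsilon$ on arbitrary (not just basic) representations given in Section \ref{sec7}. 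Your proposal never invokes Theorem \ref{ggpmain}, so the left and right faces of the cube are simply unproved; saying the slanted faces commute ``by the very definition of $\clc$'' also does not help for general $\pi$, since $\clc$ was pinned down on basic representations and extended by parabolic-induction compatibility, not by descent of arbitrary representations.

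Two further points would make your assembly step fail even if the above were repaired. First, the theta--descent face \eqref{cm1} (your ``front face'') was established in Section \ref{sec6.5} only for \emph{basic} representations, via see-saw and conservation-relation arguments; for a general $\pi\in\E(G_n)$ you cannot cite it, and in the paper its validity for all representations is again a consequence of the explicit descriptions in Theorems \ref{main} and \ref{ggpmain} rather than an input. Second, your ``five faces plus bijectivity gives the sixth'' argument assumes all arrows are bijections, but the first descent of a general representation need not be irreducible (the paper exhibits reducible first descents, e.g.\ for the trivial representation of $\sp_2\fq$), so the vertical maps are not bijections and multiplicities must be tracked. A smaller but real slip: $\CD^*_\epsilon$ does not fix the $\rho$-part; it sends $\rho$ to $\rho_1$ or $\rho_1^-$ (first rows removed, possibly with the twist $[a]\mapsto[-a]$), so the bookkeeping in your step (2) is off as stated.
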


Let us explain the diagram \eqref{maincm} roughly. Here $\pi^\star$ (resp. $\pi^{\star\prime}$) is a irreducible representation occurs in the first descent of $\pi$ (resp. $\pi'$). The index $n'$ in the theta lifting $\Theta_{n,n'}$ is the first occurrence index of $\pi$. Therefore, Theorem \ref{main4} gives a duality between the first descent case of finite Gan-Gross-Prasad problem and first occurrence case of theta correspondence.

%
%

 This paper is organized as follows. In Section \ref{sec22}, we recall the classification  of unipotent representations \cite{L3,L4,LS}. In Section \ref{sec4}, we recall the theory of Deligne-Lusztig characters and Lusztig correspondence. Then we discuss the ambiguity of Lusztig correspondence. In Section \ref{sectheta}, we review some results on the theta correspondence for finite symplectic groups and finite orthogonal groups. We also compute the first occurrence indexes and conservation relations, which will help to resolve any ambiguity in the Lusztig correspondence.
  In Section \ref{sec6}, we recall the  Gan-Gross-Prasad problem, and establish conservation relations for it, which are the dual to conservation relations of the theta correspondence. In Section \ref{sec66}, we have proven Theorem 6, our main result. This theorem provides a choice of Lusztig correspondence that is compatible with parabolic induction and the theta correspondence. In Section \ref{sec7}, we calculate the multiplicity
in the finite Gan-Gross-Prasad problem and established the duality between the Gan-Gross-Prasad problem and theta correspondence by the Lusztig correspondence described in Theorem \ref{main}.

\subsection*{Acknowledgement}  The author acknowledge generous support provided by National Natural Science Foundation of PR China (No. 12201444).

\section{Classification of unipotent representations}\label{sec22}

In this section we assume that $\Fq$ is a finite field of odd characteristic.

\subsection{Lusztig's symbols} \label{sec:L-s}
In this section, we follow the notation of \cite{P1} to review Lusztig's symbol, which is slightly different from that of \cite{L1}.

Let $\lambda=[\lambda_1,\lambda_2,\dots]$ be a partition of $n$ and  denote $|\lambda|$ to the length $\sum_i   \lambda_i$.
We often write $\lambda=[\lambda_1^{k_1},\cdots,\lambda_l^{k_l}]$ where $\lambda_j$ are distinct nonzero parts and $k_j$ are their multiplicity.
As is standard, we realize partitions as Young diagrams and denote by $\lambda^t=[\lambda^t_1,\lambda^t_2,\cdots]$ the transpose of $\lambda$. A partition $\lambda$ is called {\it even} if  all multiplicities $k_i$ are even.

We say that two partitions $\lambda=[\lambda_1,\lambda_2,\dots]$ and $\lambda'=[\lambda'_1,\lambda'_2,\dots]$ are {\it close} if $|\lambda_i-\lambda_i'|\leqslant 1$ for every $i$.
Write
\[
\lambda\cap\lambda' :=[\lambda_i]_{\{ i | \lambda_i=\lambda_i'\}}
\]
be the partition formed by the common parts of $\lambda$ and $\lambda'$. Following \cite{AMR}, we say that $\lambda$ and $\lambda'$ are {\it 2-transverse} if they are close and $\lambda\cap \lambda'$ is even.
In particular, if $\lambda$ and $\lambda'$ are close and $\lambda\cap \lambda'=\varnothing$, then $\lambda$ and $\lambda'$ are 2-transverse, and in this case we say that they are {\it transverse}. For example, let
$\lambda=[\lambda_1,\ldots, \lambda_k]$ be a partition of $n$, and let
\[
\lambda_\star=[\lambda_2,\ldots, \lambda_k]
\]
be the partition of $n-\lambda_1$ obtained by removing the first row of $\lambda$. Then $\lambda^t$ and $\lambda_{\star }^t$ are 2-transverse. Moreover, $\lambda_{\star }^t$ is the unique partition of $n-\lambda_1$ such that $\lambda^t$ and $\lambda_{\star }^t$ are 2-transverse.

A {\it symbol} is an array of the form
\[
\Lambda=
\begin{pmatrix}
A\\
B
\end{pmatrix}
=
\begin{pmatrix}
a_1,a_2,\cdots,a_{m_1}\\
b_1,b_2,\cdots,b_{m_2}
\end{pmatrix}
\]
of two finite sets $A$, $B$ of natural numbers (possibly empty) with $a_i, b_i\geqslant 0$, $a_i>a_{i+1}$ and $b_i>b_{i+1}$.
The {\it rank} and {\it defect} of a symbol $\Lambda$ are defined by
\[
\begin{aligned}
&\rank(\Lambda):=\sum_{a_i\in A}a_i+\sum_{b_i\in B}b_i-\left\lfloor\left(\frac{\#A+\#B-1}{2}\right)^2\right\rfloor, \\
&\textrm{def}(\Lambda):=\#A-\#B.
\end{aligned}
\]
 Note that the definition of ${\mathrm{def}}(\Lambda)$ differs from that of \cite[p.133]{L1}.

For a symbol $\Lambda=\binom{A}{B}$, let $\Lambda^*$ (resp. $\Lambda_*$) denote the first row (resp. second row) of $\Lambda$, i.e. $\Lambda^*=A$ and $\Lambda_*=B$,
and let
$\Lambda^t := \binom{B}{A}$. Write
\[
\sd(\Lambda):=\left\{
\begin{array}{ll}
\rm{sgn}(\rm{def}(\Lambda)),&\textrm{ if }\rm{def}(\Lambda)\ne 0;\\
+,&\textrm{ if }\Lambda=\binom{1}{0};\\
-,&\textrm{ if }\Lambda=\binom{0}{1};\\
\end{array}
\right.
\textrm{ and }\pd(\Lambda):=(-1)^{\left\lfloor\frac{|\rm{def}(\Lambda)|}{2}\right\rfloor}
\]
to be the sign and the parity of $\rm{def}(\Lambda)$, respectively. Here for the case of $\rm{def}(\Lambda)=0$, we only care about the basic case $\Lambda=\binom{1}{0}$ or $\binom{0}{1}$ (cf. Section \ref{sec2}).   Clearly, we have
\[
\sd(\Lambda)=-\sd(\Lambda^t)\textrm{ and }\pd(\Lambda)=\pd(\Lambda^t).
\]

%

\subsection{Bi-partitions} \label{sec:bi}
For partitions $\lambda=[\lambda_1,\lambda_2,\cdots,\lambda_k]$ and $\mu=[\mu_1,\mu_2,\cdots,\mu_l]$, we write
\[
\lambda\preccurlyeq\mu\quad\textrm{if }\mu_i-1\leqslant \lambda_i \leqslant \mu_i\textrm{ for each }i.
\]

Let $\mathcal{P}_2(n)$
denote the set of bi-partitions $\bpair{\begin{smallmatrix}
\lambda\\
\mu\end{smallmatrix}}$ of $n$, where $\lambda$, $\mu$ are partitions
such that $|\lambda| + |\mu| = n$. We can associate a bi-partition to each symbol as follows:
\begin{align*}
\Upsilon: &\Lambda
=\begin{pmatrix}
A\\
B
\end{pmatrix}
=\begin{pmatrix}
a_1,a_2,\dots,a_{m_1}\\
b_1,b_2,\dots,b_{m_2}
\end{pmatrix}\\
&\mapsto
\begin{bmatrix}
\lambda\\
\mu
\end{bmatrix} =
\begin{bmatrix}
a_1-(m_1-1),a_2-(m_1-2),\dots,a_{m_1-1}-1,a_{m_1}\\
b_1-(m_2-1),b_2-(m_2-2),\dots,b_{m_2-1}-1,b_{m_2}
\end{bmatrix}.
\end{align*}
This gives a bijection
\begin{equation}\label{bp}
\Upsilon:\mathcal{S}_{n,\beta}\to
\begin{cases}
\mathcal{P}_2(n-(\frac{\beta+1}{2})(\frac{\beta-1}{2})), &  \textrm{if }\ \beta\textrm{ is odd};\\
\mathcal{P}_2(n-(\frac{\beta}{2})^2), & \textrm{if }\ \beta\textrm{ is even},
\end{cases}
\end{equation}
where $\mathcal{S}_{n,\beta}$ denotes the set of symbols of rank $n$ and defect $\beta$. Then a symbol $\Lambda$ can be characterized by its defect and the bi-partition $\Upsilon(\Lambda)$.

Set
\[
\Upsilon(\Lambda)^\epsilon:=
  \begin{cases}
\lambda, &\textrm{ if }\epsilon=+;\\
\mu, &\textrm{ if }\epsilon=-.
\end{cases}
\]
%

To each symbol $\Lambda$, we associate its Alvis-Curtis dual ${}^{\bd}\Lambda$
 by requiring that
 \[
\textrm{def}({}^{\bd}\Lambda)=\textrm{def}(\Lambda)\quad
\text{ and }\quad
 \Upsilon({}^{\bd}\Lambda)^\epsilon=\left(\Upsilon(\Lambda)^{-\epsilon}\right)^t,
 \]
where $\epsilon=\pm$.



\subsection{Classification of unipotent representations of $\GGL_n\fq$ and $\UU_n(\Fq)$}

In \cite{LS}, Lusztig and Srinivasan have classified the representations of $G^F = \operatorname{GL}_n(\mathbb{F}_q)$ and $\operatorname{U}_n(\mathbb{F}_q)$. They have established a unique one-to-one correspondence between the unipotent representations of $G^F$ and the irreducible representations of the Weyl group $W_n$ of $G$.
It is well-known that $W_n\cong S_n$, where $S_n$ is the symmetry group, and irreducible representations of $S_n$ are parametrized by partitions of $n$. We denote the representation corresponding to a partition $\lambda$ of $n$ by $\pi_\lambda$. For example, $\pi_{(n)}$ corresponds to the trivial representation of $G^F$. By Lusztig's result \cite{L1},  $\pi_\lambda$ is a unipotent cuspidal representation of $\UU_n(\Fq)$ if and only if $n=\frac{k(k+1)}{2}$ for some positive integer $k$ and  $\lambda=[k,k-1,\cdots,1]$.

\subsection{Classification of unipotent representations of symplectic groups and orthogonal groups}\label{sec3.4}

We now recall the classification of irreducible unipotent representations of symplectic and orthogonal groups. Let $\cal{E}(G_n,1)$ be the collection of unipotent representations of $G_n^F=\sp_{2n}\fq$, $\rm{SO}_{2n+1}\fq$, $\o_{2n+1}\fq$ and $\o^\pm_{2n}\fq$. Lusztig gives a bijection between the unipotent representations of these groups to equivalence classes of symbols. Then we have the following bijections:
\[
\left\{
\begin{aligned}
&\cal{E}(\sp_{2n},1)\\
&\cal{E}(\so_{2n+1},1)\\
&\cal{E}(\o_{2n+1},1)\\
&\cal{E}(\o^+_{2n},1)\\
&\cal{E}(\o^-_{2n},1)
\end{aligned}\right.
\longrightarrow
\left\{
\begin{aligned}
&\cal{S}_n:=\big\{\Lambda|\rm{rank}(\Lambda)=n, \rm{def}(\Lambda)=1\ (\textrm{mod }4)\big\};\\
&\cal{S}_n;\\
&\cal{S}_n\times\{\pm\};\\
&\cal{S}^+_n:=\big\{\Lambda|\rm{rank}(\Lambda)=n, \rm{def}(\Lambda)=0\ (\textrm{mod }4)\big\};\\
&\cal{S}^-_n:=\big\{\Lambda|\rm{rank}(\Lambda)=n, \rm{def}(\Lambda)=2\ (\textrm{mod }4)\big\}.
\end{aligned}\right.
\]

We require that the above bijections satisfy \cite[(3.13)]{P3}, which is a reformulation of \cite[theorem 5.8]{L3} and \cite[theorem 5.13]{L4}. Let $\pi_{\Lambda}$ and $\pi_{\Lambda,\iota}$ be the irreducible representations parametrized by $\Lambda$ and $(\Lambda,\iota)$, respectively.
For symplectic and special odd orthogonal groups, the bijection is uniquely determined by \cite[(3.13)]{P3}. For even orthogonal groups, the bijection is characterized by additional conditions mentioned in \cite[section 5.3]{P3}. It is known that $\pi_{\Lambda^t} = \rm{sgn}\cdot\pi_{\Lambda}$.
In the case of odd orthogonal groups, we differentiate $\pi_{\Lambda,\pm}$ by deciding that $\pi_{\Lambda,\pm}(-1)=\pm\rm{Id}$.
Regarding the representation of the trivial symplectic group $\sp_0\fq$, we  associated it with the symbol $\left(\begin{smallmatrix}
0\\
-
\end{smallmatrix}\right)$.
As for the trivial group $\o^+_0\fq$, its trivial representation is associated with the symbol $\left(\begin{smallmatrix}
-\\
-
\end{smallmatrix}\right)$.

\begin{example}
\begin{enumerate}
\item (\cite[Lemma 4.6]{P3}) Let $G=\sp_{2n}$, and $\pi_\Lambda={\bf 1}$, the trivial representation of $G^F$. Then
\[
\Lambda=\begin{pmatrix}
n\\
-
\end{pmatrix}.
\]
\item (\cite[Corollary 5.10]{P3}) Let $G=\o^\epsilon_{2n}$, and $\pi_\Lambda={\bf 1}$, the trivial representation of $G^F$. Then
\[
\Lambda=\left\{
\begin{array}{ll}
\begin{pmatrix}
n\\
0
\end{pmatrix}& \textrm{ if }\epsilon=+;\\
\\
\begin{pmatrix}
-\\
n,0
\end{pmatrix}& \textrm{ if }\epsilon=-.
\end{array}
\right.
\]
\item  Let $G=\o^\epsilon_{2n+1}$, and $\pi_{\Lambda,\iota}={\bf 1}$, the trivial representation of $G^F$. Then
\[
\Lambda=\begin{pmatrix}
n\\
-
\end{pmatrix}
\]
and $\iota=+.$
\end{enumerate}
\end{example}

\begin{proposition}\label{unp}
Let $\pi_{\Lambda,\iota}\in \CE(G_n, \pi_{\Lambda',\iota'})$ (see section \ref{sec2} for the definition of $\CE(G_n, \pi_{\Lambda',\iota'})$), where $\iota,\iota'\in\{\pm\}$, and $\pi_{\Lambda',\iota'}$ is a unipotent cuspidal representation of $G_m^{ F}$ with $m\le n$, and $\iota$ and $\iota'$ only occurs for odd orthogonal groups. Then we have $\iota=\iota'$ and $\rm{def}(\Lambda)=\rm{def}(\Lambda')$.

\end{proposition}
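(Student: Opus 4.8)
Recall that $\CE(G_n,\pi_{\Lambda',\iota'})$ is the Harish--Chandra series of unipotent representations determined by the cuspidal pair $(G_m,\pi_{\Lambda',\iota'})$. The plan is to read off both assertions from the combinatorics of parabolic induction. Since any proper Levi subgroup of $G_n$ has the shape $\GGL_{a_1}\times\cdots\times\GGL_{a_r}\times G_m$ with $a_1+\cdots+a_r=n-m$, and since the only unipotent cuspidal representation of a general linear group is the trivial character of $\GGL_1\fq$, every $\pi_{\Lambda,\iota}\in\CE(G_n,\pi_{\Lambda',\iota'})$ occurs in $\mathrm{R}^{G_n}_{L}(\mathbf 1^{\boxtimes(n-m)}\boxtimes\pi_{\Lambda',\iota'})$ for $L=\GGL_1^{\,n-m}\times G_m$. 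By transitivity of Harish--Chandra induction it then suffices, for the defect statement, to treat a single step $n=m+1$.

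For the equality $\iota=\iota'$, which is only meaningful when $G_n=\o_{2n+1}$, I would use a central character argument. The element $-I$ is central in $\o_{2n+1}$, and under the block-diagonal embedding $L=\GGL_1^{\,n-m}\times\o_{2m+1}\hookrightarrow\o_{2n+1}$ it corresponds to $((-1,\dots,-1),-I_{2m+1})\in L$. Hence $-I$ acts on $\mathrm{R}^{G_n}_{L}(\mathbf 1^{\boxtimes(n-m)}\boxtimes\pi_{\Lambda',\iota'})$ by the scalar $\mathbf 1(-1)^{\,n-m}\cdot\pi_{\Lambda',\iota'}(-I_{2m+1})=\iota'$, using the normalization $\pi_{\Lambda',\iota'}(-I)=\iota'\cdot\mathrm{Id}$. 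Being central, $-I$ also acts on the irreducible constituent $\pi_{\Lambda,\iota}$ by $\iota\cdot\mathrm{Id}$, whence $\iota=\iota'$. (For symplectic, special odd orthogonal and even orthogonal groups there is no $\iota$, so nothing is to be checked here.)

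It remains to show that one step of Harish--Chandra induction from a $\GGL_1$-factor preserves the defect of the symbol. This is precisely the Pieri-type branching rule for unipotent representations of classical groups: in terms of the bi-partition $\Upsilon(\Lambda)$, the symbols occurring in $\mathrm{R}^{G_{m+1}}_{\GGL_1\times G_m}(\mathbf 1\boxtimes\pi_{\Lambda'})$ are obtained from $\Upsilon(\Lambda')$ by adding a single box to $\lambda$ or to $\mu$, an operation that raises the rank by $1$ and, by \eqref{bp}, leaves the defect unchanged. I would deduce this from the requirement \cite[(3.13)]{P3} imposed on the symbol parametrization (equivalently, from \cite[Theorem~5.8]{L3} and \cite[Theorem~5.13]{L4}); for even orthogonal groups one must note in addition that the Levi $\GGL_1\times\o^\epsilon_{2m}$ carries the same sign $\epsilon$, since adjoining a hyperbolic plane does not change the discriminant, so one stays inside $\cal{S}^+_{m+1}$ or $\cal{S}^-_{m+1}$ according to $\mathrm{def}(\Lambda')$ mod $4$. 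Iterating the $n-m$ steps yields $\mathrm{def}(\Lambda)=\mathrm{def}(\Lambda')$.

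The main obstacle is this last point: everything rests on the precise branching statement that Harish--Chandra induction along a $\GGL_1$ adds one box to the bi-partition while fixing the defect. Granting \cite[(3.13)]{P3} and \cite{L3,L4} it is immediate; without it one would instead have to identify the relative Weyl group of the cuspidal pair $(L,\mathbf 1^{\boxtimes(n-m)}\boxtimes\pi_{\Lambda',\iota'})$ --- of type $B_{n-m}$ when $G$ is symplectic or odd orthogonal, of type $D_{n-m}$ when $G$ is even orthogonal --- and match its parametrization of irreducible representations by bi-partitions of $n-m$ with the rank-$n$, fixed-defect symbols via $\Upsilon$. By comparison, the equality $\iota=\iota'$ is entirely elementary.
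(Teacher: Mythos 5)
Your proposal is correct and follows essentially the same route as the paper: both rest on the compatibility of Lusztig's symbol parametrization with Harish--Chandra (parabolic) induction, citing \cite{L1,L3,L4} and, for even orthogonal groups, the additional condition of \cite[Section 5.3]{P3}, which is exactly what the paper's proof invokes. Your explicit central-character argument with $-I$ for $\iota=\iota'$ is a cleanly spelled-out version of what the paper compresses into ``the odd orthogonal case follows from the special odd orthogonal case,'' so no genuinely different method is involved.
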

\begin{proof}
Lusztig's bijection $\cal{E}(G_n,1)\to \cal{S}_n$ is compatible with the parabolic induction for symplectic and special odd orthogonal groups. This result is discussed in \cite{L1}. The odd orthogonal case follows from the special odd orthogonal case. For even orthogonal groups, the compatibility with the parabolic induction is one of the additional conditions mentioned in \cite[section 5.3]{P3}.
\end{proof}

\subsection{Symbols of unipotent cuspidal representations}

 In \cite{L1}, we know that $\sp_{2n}\fq$ (resp,  $\rm{SO}_{2n+1}\fq$, $\rm{O}^{\epsilon}_{2n}\fq$) has a unique irreducible unipotent cuspidal representation if and only if $n=k(k+1)$ (resp. $n=k(k+1)$, $n=k^2$). We regarded the trivial representations of $\sp_{0}\fq$ and $\o^+_0\fq$ as unipotent cuspidal representations.

For $\sp_{2k(k+1)}\fq$, the unique unipotent cuspidal representation $\pi_{\Lambda}$ is associated to the symbol:
\begin{equation}\label{cussp}
\Lambda=\left\{
\begin{aligned}
&\begin{pmatrix}
2k,2k-1,\cdots,1,0\\
-
\end{pmatrix}\textrm{ if }k\textrm{ is even};\\
&\begin{pmatrix}
-\\
2k,2k-1,\cdots,1,0
\end{pmatrix}\textrm{ if }k\textrm{ is odd};\\
\end{aligned}\right.
\end{equation}
of defect $(-1)^k(2k + 1)$.

 For $\o^{\epsilon}_{2k^2}\fq$ with $\epsilon=(-1)^k$, there are two unipotent cuspidal representations $\pi_{\Lambda}$ and $\sgn\pl=\pi_{\Lambda^t}$ where \[\Lambda=\begin{pmatrix}
2k-1,2k-2,\cdots,1,0\\
-
\end{pmatrix}.\]
For $\o_{2k(k+1)+1}\fq$, there are two unipotent cuspidal representations $\pi_{\Lambda,+}$ and $\sgn\pi_{\Lambda,+}=\pi_{\Lambda,-}$ where \begin{equation}\label{cusso}
\Lambda=\left\{
\begin{aligned}
&\begin{pmatrix}
2k,2k-1,\cdots,1,0\\
-
\end{pmatrix}\textrm{ if }k\textrm{ is even};\\
&\begin{pmatrix}
-\\
2k,2k-1,\cdots,1,0
\end{pmatrix}\textrm{ if }k\textrm{ is odd}.\\
\end{aligned}\right.
\end{equation}

\section{Lusztig correspondence}\label{sec4}

In this section we assume that $\Fq$ is a finite field of odd characteristic.
\subsection{The structure of Lusztig correspondence}
To parametrize the irreducible representations of $G^F$, we first follow Lusztig's Jordan decomposition (see  \cite[Section 2.3]{Ma} and \cite[Theorem 5.2]{Gec} for instance) to
partition $\Irr(G^F)$ according to the semisimpe conjugacy classes in the dual group $G^{*F}$ of $G^F$, that is,
\begin{equation}\label{eq:irr-decomp-1}
\Irr(G^F)=\coprod_{(s)\in (G^{*})^{\circ F}_{ss}/ \Ad(G^{*F})}\mathcal{E}(G,s)
\end{equation}
where $(s)$ runs over $G^{*F}$-conjugacy classes of semisimple elements in  the identity component of $G^{*F}$, and
 the Lusztig series $\mathcal{E}(G,s)$ is defined by
\[
\mathcal{E}(G,s) := \{ \pi \in \textrm{Irr}(G^F)  | \langle \pi,  R_{T^*,s}^G\rangle \ne 0\textrm{ for some }T^*\textrm{ containing }s \}.
\]
Here if $G$ is a connected reductive group, one has a bijection between the set of $G^F$-conjugacy classes of the pairs $(T, \theta)$ where $T$ is an $F$-stable maximal torus of $G$ and $\theta$ is a character of $T^F$, and the set of $G^{*F}$-conjugacy classes of the pairs $(T^*, s)$ where $T^*$ is an $F$-stable maximal torus of $G^*$ and contains $s$.
If $(T, \theta)$ corresponds to $(T^*, s)$, we denote  by $R_{T^*,s}^G$  the Deligne-Lusztig character $R^G_T(\theta)$ of $G^F$ in \cite{DL}.
And if $G$ is non-connected, we define  $R^G_{T^*,s}:=\Ind^{G^F}_{(G^0)^F}R^{G^0}_{T^*,s}$.

Continuing \eqref{eq:irr-decomp-1},
we recall the correspondence between the Lusztig series $\CE(G,s)$ and the unipotent representations $\CE(C_{G^*}(s),1)$ of $C_{G^{*}}(s)^F$ in \cite[Chap 9]{L1}, \cite[Theorem 8.14]{Sr2} and  \cite[Lemma 2.7]{Ma},
where $C_{G^*}(s)$ is the centralizer  of $s$ in $G^*$.

\begin{proposition}[{\cite[Theorem 11.5.1, Proposition 11.5.6]{DM}}]\label{Lus}
Let $G$ be a reductive connected group defined over $\Fq$.
There is a bijection
\[
\mathcal{L}_s:\mathcal{E}(G,s)\to \mathcal{E}(C_{G^{*}}(s), 1),
\]
extended by linearity to a map between virtual characters satisfying that
\begin{equation}\label{l1}
\mathcal{L}_s(\varepsilon_G R^G_{T^*,s})=\varepsilon_{C_{G^{*}}(s)} R^{C_{G^{*}}(s)}_{T^*,1},
\end{equation}
where $\varepsilon_G:= (-1)^{\mathrm{rk}\,G}$ and $\mathrm{rk}\,G$ is the $\Fq$-rank of $G$;
Moreover, $\CL_s$ sends cuspidal representations to cuspidal representations.
\end{proposition}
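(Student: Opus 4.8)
The plan is to establish this in two stages: first for $G$ with connected center, where $C_{G^*}(s)$ is automatically connected by Steinberg's connectedness theorem (the derived group of $G^*$ is then simply connected), and afterwards to deduce the general case via a regular embedding. In the connected-center case I would first check that $\varepsilon_G R^G_{T^*,s}\mapsto\varepsilon_{C_{G^*}(s)}R^{C_{G^*}(s)}_{T^*,1}$ is a well-defined isometry between the uniform part $\mathcal{V}(G,s)^\#$ of the class functions supported on $\mathcal{E}(G,s)$ and the uniform part of the unipotent class functions of $C_{G^*}(s)^F$: a maximal torus of $G^*$ containing $s$ is precisely a maximal torus of $C_{G^*}(s)$, and when $C_{G^*}(s)$ is connected the $G^{*F}$-classes of such pairs $(T^*,s)$ match the $C_{G^*}(s)^F$-classes of maximal tori, while the scalar-product formula for Deligne--Lusztig characters shows both sides have the same Gram matrix, governed by the relative Weyl group $N_{C_{G^*}(s)}(T^*)/T^*$. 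This forces \eqref{l1} on uniform functions. The substantive step is then to upgrade this isometry of uniform spaces to a bijection $\mathcal{L}_s$ of the irreducible constituents, invoking Lusztig's classification: $\mathcal{E}(G,s)$ and $\mathcal{E}(C_{G^*}(s),1)$ are each partitioned into families, each family is equipped with a finite set carrying a non-abelian Fourier transform, and the combinatorial datum attached to $\mathcal{E}(G,s)$ coincides with the one attached to the unipotent characters of $C_{G^*}(s)$; transporting irreducibles through this identification produces $\mathcal{L}_s$, whose linear extension agrees with the uniform isometry, hence satisfies \eqref{l1}. This construction is not canonical --- the bijection is determined only up to permutations within families invisible to uniform functions, which is precisely the ambiguity the rest of the paper resolves.

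For the general case I would choose a regular embedding $G\hookrightarrow\widetilde G$ with $\widetilde G$ connected reductive, $Z(\widetilde G)$ connected and $\widetilde G/G$ a torus, which dualizes to a surjection $\widetilde G^*\twoheadrightarrow G^*$ with central torus kernel; lifting $s$ to a semisimple $\widetilde s\in\widetilde G^{*F}$ makes $C_{\widetilde G^*}(\widetilde s)$ connected and surjecting onto $C_{G^*}(s)^\circ$, with the component group $C_{G^*}(s)/C_{G^*}(s)^\circ$ controlling the Clifford theory on both sides. Restriction from $\widetilde G^F$ to $G^F$ sends $\mathcal{E}(\widetilde G,\widetilde s)$ into a union of Lusztig series over $s$, these being permuted by twisting with linear characters of $\widetilde G^F/G^F$; dually, restriction along $C_{\widetilde G^*}(\widetilde s)\twoheadrightarrow C_{G^*}(s)^\circ$ together with the convention $R^{C_{G^*}(s)}_{T^*,1}=\Ind^{C_{G^*}(s)^F}_{C_{G^*}(s)^{\circ F}}R^{C_{G^*}(s)^\circ}_{T^*,1}$ relates $\mathcal{E}(C_{\widetilde G^*}(\widetilde s),1)$ to $\mathcal{E}(C_{G^*}(s),1)$. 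Checking that Deligne--Lusztig induction, the signs $\varepsilon$, and these restriction/induction functors are all compatible under $\widetilde G^*\to G^*$ lets the bijection of the first stage descend to the desired $\mathcal{L}_s$ for $(G,s)$. Finally, $\mathcal{L}_s$ preserves cuspidality because Lusztig's parametrization encodes the cuspidal support of a character and cuspidality is the intrinsic condition of trivial cuspidal support, which the identification of combinatorial data used above respects; for classical groups this amounts to the familiar statement that cuspidal unipotent characters correspond to the distinguished defect/$2$-core condition on symbols recalled in Section~\ref{sec22}.

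The main obstacle is the deep input in the first stage: the assertion that the family structure with its non-abelian Fourier transform on $\mathcal{E}(G,s)$ is literally the same as the one on the unipotent characters of $C_{G^*}(s)$. This is the heart of Lusztig's classification of the irreducible characters of reductive groups over finite fields and, for exceptional groups, is proved type-by-type; Digne and Michel repackage it together with the compatibility with Deligne--Lusztig induction and the $\varepsilon$-normalization, which is why in the body of the paper we will simply cite \cite[Theorem~11.5.1, Proposition~11.5.6]{DM}. A secondary, more routine difficulty is the Clifford-theoretic bookkeeping of the reduction step when $C_{G^*}(s)$ is disconnected, where one must be careful that the conventions for $R^G_{T^*,s}$ on non-connected groups are exactly those that make every relevant square commute.
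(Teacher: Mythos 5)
The paper does not prove this proposition: it is quoted directly from Digne--Michel \cite[Theorem 11.5.1, Proposition 11.5.6]{DM} (with the extension to the disconnected even orthogonal groups taken from \cite[Proposition 1.7]{AMR} immediately afterwards), so there is no in-paper argument to compare against. Your outline --- the uniform-function isometry forcing \eqref{l1}, the identification of families via Lusztig's classification to get the bijection on irreducibles, regular embeddings to handle disconnected centralizers, and compatibility with cuspidal support --- is a faithful summary of how the cited result is established in the literature, and you correctly flag that the family/non-abelian Fourier transform identification is precisely the deep input that is cited rather than reproved.
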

Lusztig's Jordan decomposition can be extended to even orthogonal groups (see \cite[Proposition 1.7]{AMR}).

Next, we recall the description of $C_{G^{*}}(s)$ in \cite[Section 1.B]{AMR} and then refine $\CE(C_{G^*}(s),1)$.
The multi-set of  eigenvalues of $s$ on the standard representation of $G^*$ is of the form
\[
(s)= \begin{cases}\{x_1,\ldots,x_n, x_1^{-1}, \ldots, x_n^{-1}, 1\},  & \textrm{if }G^*=\so_{2n+1}; \\
\{x_1,\ldots,x_n, x_1^{-1}, \ldots, x_n^{-1}\}, & \textrm{otherwise}.
\end{cases}
\]
For $a\in \overline{\Fq}^\times$, define
 \[
 \nu_{a}(s):=\#\{i | x_i \textrm{ or }x_i^{-1}=a\} \quad \textrm{and} \quad
  [ a] := \{a^{{q}^k},a^{-{q}^k} | k\in\bb{Z}\},
  \]
where $\#X$ denotes the cardinality of a finite set $X$.
Clearly, $\nu(a)$ and $[a]$ only depend on the orbit of $a$ under $\Gamma_{\Fq}$, the group of automorphisms of $\overline{\Fq}^\times$ generated by the Frobenius map $F$ and the inversion $a\mapsto a^{-1}$.
The centralizer $C_{G^*}(s)$ has a natural decomposition
\[
C_{G^*}(s)\cong \begin{cases}
\prod_{[a]}G^*_{[a]}(s)\times\{\pm\},&\textrm{if }G=\o_{2n+1};\\
 \prod_{[a]}G^*_{[a]}(s),&\textrm{otherwise},
\end{cases}
\]
where $[a]$ runs over the orbits $\overline{\Fq}^\times/\Gamma_{\Fq}$,
and $G^*_{[a]}(s)$ is the restriction of scalars $\Res_{{\Fq}(a)/{\Fq}}$ of a classical group defined over $\Fq(a)$, with $\overline{\Fq}$-rank  $\frac{1}{2}\#[a]\cdot\nu_{a}(s)$.
Clearly, $G^*_{[a]}(s)$ is the trivial group if $\nu_{a}(s)=0$.
For convenience, we set
\[
G^*_{[\ne\pm 1]}(s) :=\prod_{[a], \, a\ne\pm 1 }G^*_{[ a]}(s).
\]
Then we rewrite
\begin{equation}\label{decomp}
 \begin{aligned}
C_{G^*}(s)
\cong &
G^*_{[ \ne \pm 1]}(s)\times G^*_{[ 1]}(s)\times G^*_{[ -1]}(s)\times
\begin{cases}
\{\pm\},&\textrm{if }G=\o_{2n+1};\\
1,&\textrm{otherwise}.
\end{cases}
 \end{aligned}
\end{equation}

In particular, we have the following explicit description of $G^*_{[a]}(s)$:
\begin{itemize}
    \item
If $a\ne \pm 1$, then $G^*_{[a]}(s)$ is either $\Res_{{\Fq}(a)/{\Fq}}\GGL_{\nu_a(s)}$ or the unitary group $\Res_{{\Fq}(a)/{\Fq}}\UU_{\nu_a(s)}$;
\item
If $G=\o_{2n+1}$, then we may take $G^*=\sp_{2n}\times\{\pm 1\} =\sp(V^*)\times \{\pm1\}$, in which case
$
G^*_{[\pm 1]}(s)=\sp_{2\nu_{\pm 1}(s)}= \sp(V^*_{s=\pm 1});
$
\item
If $G=\sp_{2n}$, then we may take $G^*=\so_{2n+1}=\so(V^*)$, in which case
$G^*_{[ 1]}(s)=\so_{2\nu_{1}(s)+1}=\so(V^*_{s=1})$ and $G^*_{[-1]}(s)=\o_{2\nu_{-1}(s)}=\o(V^*_{s=-1})$;
\item
If $G=\o_{2n}(V)$, then $G^*=\o_{2n}(V^*)$ with $V^*=V$, and
$
G^*_{[\pm 1]}(s)=\o_{2\nu_{\pm 1}(s)} = \o(V^*_{s=\pm 1}).
$
\end{itemize}
Here $V^*$ is the (skew) symmetric space defining the dual group $G^*$,  $V^*_{s=\pm 1}$  is the eigenspace of $s$ on $V^*$  corresonding to the eigenvalue $\pm 1$.
We remark that when $G^*$ is orthogonal,  $\o(V^*_{s=\pm 1})$ depends on the discriminant of $V^*_{s=\pm 1}$.

Now, we can plug \eqref{decomp} into Proposition \ref{Lus}
and decompose
\begin{equation}  \label{lcor}
 \begin{aligned}
& \prod_{[ a] }\mathcal{E}(G^{*}_{[ a]}(s),1)
\nonumber\\
=\,& \cal{E}(G^*_{[\ne \pm 1]}(s)\times G^*_{[1]}(s)\times G^*_{[-1]}(s),1)
\times \begin{cases}
 \{\pm\},&\textrm{if }G=\o_{2n+1};\\
  1,&\textrm{otherwise},
\end{cases}
 \end{aligned}
\end{equation}
where by abuse of notation we also denote  $\{\pm\}$ the irreducible representations of $\{\pm \}\subset G^*$ if $G=\o_{2n+1}$.
We may write the image of $\CL_s$ accordingly
\[
\mathcal{L}_s(\pi)=\pi_{\ne\pm1}\otimes\pi_{\Lambda_1}\otimes\pi_{\Lambda_{-1}}\otimes\iota
\]
where $\epsilon \in \{\pm\}$ only occurs for $G=\o_{2n+1}$ and
\[
\pi_{\ne\pm1}=\prod_{[a],a\ne\pm 1}\pi{[a]} \textrm{ with }\pi_{[a]}\in \mathcal{E}(G^{*}_{[ a]}(s),1).
\]
Recall that irreducible unipotent representations of general linear groups and unitary groups are parametrized by partitions. We can associate each $\pi[a]$ with a partition $\lambda[a]$. Let $\cal{P}$ be the set of partitions, and $\cal{A}:=\{[a]\in \overline{\Fq}^\times/\Gamma_{\Fq}|a\ne \pm1 \}$. Let
$\cal{P}^\cal{A}$ be the set
\[
\cal{P}^\cal{A}=\left\{
\begin{matrix}
\rho:&\cal{A}&\to&\cal{P}\\
&[a]&\to&\rho[a]
\end{matrix}
 \right\}.
\]
Therefore, we can identify $\pi_{\ne\pm1}$ with a element $\rho\in \cal{P}^\cal{A}$, and denote $\pi$ by $\pi^{\CL_s}_{\rho,\Lambda_1,\Lambda_{-1},\iota}$ or simply $\prllsgn$, when no confusion raise. By abuse of notations, we regard $\rho$ as a irreducible representation in $ \prod_{[a],a\ne \pm1}\mathcal{E}(G^{*}_{[ a]}(s),1)$, and write
\[
\mathcal{L}_s(\prllsgn)=\rho\otimes\pi_{\Lambda_1}\otimes\pi_{\Lambda_{-1}}\otimes\iota.
\]

Let $\rho\in \cal{P}^\cal{A}$. We define $\rho^-$ to be an element in $\cal{P}^\cal{A}$ such that
\begin{equation}\label{rho-}
\rho^-:[a]\to\rho[-a],
\end{equation}
and we define $\rho_1$ to be an element in $\cal{P}^\cal{A}$ such that
\begin{equation}\label{rho1}
\rho_1:[a]\to\rho_1[a]=\rho[a]_\star,
\end{equation}
which means that $\rho_1[a]$ is the partition obtained by removing the first row of $\rho[a]$.

For a fixed $\rho\in \cal{P}^\cal{A}$, there exists a unique irreducible representation $\pi^{\sp}_\rho$ of a symplectic group such that
the $\pm 1$ part of $\pi^{\sp}_\rho$ is trivial and the $\ne \pm 1$ part of $\pi^{\sp}_\rho$ corresponds to $\rho$. As we will see in Section \ref{sec4.1},
$\pi^{\sp}_\rho$ does not depend on the choice of Lusztig correspondence. Assume that $\pi^{\sp}_\rho\in \cal{E}(\sp_{2n},s)$.
Write
\[
|\rho|=n,
\]
and
\[
\iota_\rho:=\pi^\sp_{\rho}(-I)\in\{\pm\}, \textrm{ and }{\widetilde{\iota}}_\rho:=\iota_\rho\ee^{|\rho|}
\]
and
\[
\epsilon_\rho:=(-1)^{\#\{ [a]\ | \ G^{*}_{[a]}(s)\text{ is unitary}\}},
\]
where $\ee$ is the square class of $-1$ in $\Fq^\times/(\Fq^\times)^2$.
\subsection{The ambiguity of Lusztig correspondence}\label{sec4.1}
It's worth noting that the choice of $\CL_s$ is usually not unique. However, if $G$ is a connected group with a connected center, we can determine $\CL_s$ uniquely by imposing some extra conditions \cite[Theorem 7.1]{DM}. Additionally, Pan has proved in \cite{P3} that for a fixed dual pair, there is a unique choice of $\CL_s$ that is compatible with theta correspondence. But, it's important to note that Pan's choice of $\CL_s$ depends on the dual pair $(G, G')$ and not the group $G$. Thus, even orthogonal-symplectic dual pairs and odd orthogonal-symplectic dual pairs can give us two different choices of the Lusztig correspondence on symplectic groups.

Let $\CV(G)^\#$ be the subspace of the space of (complex-valued) class functions on $G$ spanned by Deligne-Lusztig virtual characters of $G$. We define $\pi^\#$ as the orthogonal projection of $\pi$ on $\CV(G)^\#$. We call $\pi$ uniform if $\pi=\pi^\#$.

 \begin{definition}
Let $\pi$ be an irreducible representation of a symplectic group or an orthogonal group. Let
$
[\pi]:=\{\pi' | (\pi')^\#=\pi^\#\}.
$
\end{definition}
We cannot differentiate between representations in $[\pi]$ using \eqref{l1}. Therefore, we must determine to what extent an irreducible representation $\pi$ can be identified by its uniform projection $\pi^\#$.
According to \eqref{l1}, we have the following result.
\begin{proposition}\label{l2}
Let $\pi,\pi'\in \mathcal{E}(G,s)$.
\begin{enumerate}
\item If $\pi$ is uniform, then $\CL_s(\pi)$ does not depend on the choice of $\CL_s$.
\item  For any two different choice of Lusztig correspondences $\CL_s$ and $\CL_s'$, we have $\CL_s(\pi)\in[\CL'_s(\pi)]$.
\item We have $\pi\in [\pi']$ if and only if $\CL_s(\pi)\in[\CL_s(\pi')]$.
\end{enumerate}
\end{proposition}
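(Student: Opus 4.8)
The plan is to derive all three parts of Proposition \ref{l2} from the single structural fact encoded in \eqref{l1}: the Lusztig correspondence $\CL_s$ intertwines (a sign times) Deligne--Lusztig characters of $G^F$ with (a sign times) Deligne--Lusztig characters of $C_{G^*}(s)^F$, and hence is an isometry that matches uniform projections on the two sides. Concretely, since $\{\varepsilon_G R^G_{T^*,s}\}$ spans $\CV(G,s)^\# := \CV(G)^\# \cap \mathbb{C}\mathcal{E}(G,s)$ and $\{\varepsilon_{C_{G^*}(s)} R^{C_{G^*}(s)}_{T^*,1}\}$ spans $\CV(C_{G^*}(s),1)^\#$, the linear extension of $\CL_s$ carries $\CV(G,s)^\#$ isomorphically onto $\CV(C_{G^*}(s),1)^\#$, and this identification is forced by \eqref{l1} alone --- it does not see the choice made in defining $\CL_s$ on individual irreducibles. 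I would record this as the key lemma: for $\pi \in \mathcal{E}(G,s)$, one has $\CL_s(\pi^\#) = \CL_s(\pi)^\#$, where on the right the uniform projection is taken inside $C_{G^*}(s)^F$; moreover $\pi^\#$ is independent of the choice of $\CL_s$ because it is computed purely from the multiplicities $\langle \pi, R^G_{T^*,s}\rangle$.

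For part (1), if $\pi$ is uniform then $\pi = \pi^\#$ lies in $\CV(G,s)^\#$, so by \eqref{l1} its image $\CL_s(\pi)$ is the corresponding explicit linear combination of the $\varepsilon_{C_{G^*}(s)}R^{C_{G^*}(s)}_{T^*,1}$ determined by the coefficients $\langle \pi, \varepsilon_G R^G_{T^*,s}\rangle$; these coefficients and the target basis are intrinsic, so $\CL_s(\pi)$ does not depend on the choice. For part (2), given two choices $\CL_s$ and $\CL'_s$, both satisfy \eqref{l1}, so for every $\pi$ we get $\CL_s(\pi)^\# = \CL_s(\pi^\#) = \CL'_s(\pi^\#) = \CL'_s(\pi)^\#$, using the key lemma and part (1) (since $\pi^\#$ is a uniform virtual character). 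Thus $\CL_s(\pi)$ and $\CL'_s(\pi)$ have the same uniform projection in $\mathcal{E}(C_{G^*}(s),1)$, which is precisely the statement $\CL_s(\pi) \in [\CL'_s(\pi)]$. For part (3), the key lemma gives $\CL_s(\pi)^\# = \CL_s(\pi^\#)$ and $\CL_s(\pi')^\# = \CL_s(\pi'^\#)$, and since $\CL_s$ restricted to $\CV(G,s)^\#$ is a linear isomorphism onto $\CV(C_{G^*}(s),1)^\#$, we have $\pi^\# = \pi'^\#$ if and only if $\CL_s(\pi^\#) = \CL_s(\pi'^\#)$, i.e. if and only if $\CL_s(\pi)^\# = \CL_s(\pi')^\#$; unwinding the definition of $[\,\cdot\,]$ on both sides yields $\pi \in [\pi']$ iff $\CL_s(\pi) \in [\CL_s(\pi')]$.

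The one genuine subtlety --- and the step I expect to be the main obstacle --- is making precise that "uniform projection" behaves well with respect to the Lusztig series decomposition and that $\CL_s$ really does restrict to an isometric bijection between the uniform subspaces of $\mathcal{E}(G,s)$ and $\mathcal{E}(C_{G^*}(s),1)$. This requires knowing that $\CV(G)^\#$ decomposes compatibly with \eqref{eq:irr-decomp-1}, i.e. that each $\varepsilon_G R^G_{T^*,s}$ lies in the span corresponding to the class $(s)$, and that $\CL_s$ being a bijection of the series intertwining the spanning sets forces it to be an isometry of the $\#$-subspaces (this uses orthonormality properties of Deligne--Lusztig characters, or rather the inner-product formula, together with the fact that $\varepsilon_G R^G_{T^*,s}$ and $\varepsilon_{C_{G^*}(s)} R^{C_{G^*}(s)}_{T^*,1}$ have matching inner products by \cite{DL}). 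Once this is in place, all three parts are formal. I would also note for part (2) that one should be slightly careful when $G$ or $C_{G^*}(s)$ is disconnected (e.g. $G = \o_{2n+1}$ or even orthogonal), but the extension of \eqref{l1} to those cases --- via $R^G_{T^*,s} = \Ind^{G^F}_{(G^0)^F} R^{G^0}_{T^*,s}$ and the AMR extension cited after Proposition \ref{Lus} --- still yields the same spanning-set compatibility, so the argument goes through verbatim.
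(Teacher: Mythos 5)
Your proposal is correct and takes essentially the same route as the paper, which states Proposition \ref{l2} without further argument as an immediate consequence of \eqref{l1}: since $\CL_s$ is a bijection on irreducibles (hence an isometry after linear extension) and carries the span of $\{\varepsilon_G R^G_{T^*,s}\}$ onto the span of $\{\varepsilon_{C_{G^*}(s)}R^{C_{G^*}(s)}_{T^*,1}\}$, it commutes with uniform projections, and (1)--(3) follow formally. Your write-up just makes explicit the compatibility of $\CV(G)^\#$ with the Lusztig series decomposition and the disconnected cases, which is exactly the content the paper leaves implicit.
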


Assume that $\pi\in \CE(G,s)$, and
\[
\mathcal{L}_s(\pi)=\rho\otimes\pi_{\Lambda_1}\otimes\pi_{\Lambda_{-1}}\otimes\iota,
\]
where $\iota\in \{\pm\}$ only occurs for $G=\o_{2n+1}$. Note that $\rho$ is uniform, so the $\rho$ part does not depends on the choice of $\CL_s$. For the $\Lambda_{\pm 1}$ part, by \cite[Lemma 4.1, Lemma 4.8 and Corollary 4.14.]{P3}, we know that
\begin{itemize}
\item If $\pi_{\Lambda}$ is a unipotent representation of a symplectic group or a special odd orthogonal group, then $\pi_{\Lambda}$ is unique determined by $\pi_{\Lambda}^\#$;
\item  If $\pi_{\Lambda}$ is a unipotent representation of a even orthogonal group, then $\pi_{\Lambda}^\#=\pi_{\Lambda'}^\#$ if and only if $\Lambda'\in\{\Lambda,\Lambda^t\}$.
    \item  If $\pi_{\Lambda,\iota}$ is a unipotent representation of a odd orthogonal group, then $\pi_{\Lambda,\iota}^\#=\pi_{\Lambda',\iota'}^\#$ if and only if $\Lambda'=\Lambda$.
\end{itemize}
In summary, we describe the ambiguity of Lusztig correspondences as follows.
\begin{proposition}\label{l3}
Let $\CL_s$ and $\CL_s^\star$ be two different choice of Lusztig correspondences from $\CE(G,s)$ to $\CE(C_G(s),1)$. Let $\pi, \pi'\in \CE(G,s)$. Assume that $\pi'\in[\pi]$, and
$\pi=\pi^{\CL_s}_{\rho,\Lambda_1,\Lambda_{-1},\iota}=\pi^{\CL^\star_s}_{\rho^\star,\Lambda_1^\star,\Lambda_{-1}^\star,\iota^\star}$ and $\pi'=\pi^{\CL_s}_{\rho',\Lambda_1',\Lambda_{-1}',\iota'}$.
\begin{enumerate}
\item If $G$ is a symplectic group, then
\begin{itemize}
\item $\rho'=\rho^\star=\rho$;
\item $\Lambda_1'=\Lambda^\star=\Lambda_1$;
\item $\Lambda_{-1}',\Lambda_{-1}^\star\in\{\Lambda_{-1}, \Lambda_{-1}^t\}$.
\end{itemize}
\item If $G$ is a even orthogonal group, then
\begin{itemize}
\item $\rho'=\rho^\star=\rho$;
\item $\Lambda_1',\Lambda_1^\star\in\{\Lambda_{1}, \Lambda_{1}^t\}$;
\item $\Lambda_{-1}',\Lambda_{-1}^\star\in\{\Lambda_{-1}, \Lambda_{-1}^t\}$.
\end{itemize}
\item If $G$ is a odd orthogonal group, then
\begin{itemize}
\item $\rho'=\rho$;
\item $\Lambda_1'=\Lambda_1^\star=\Lambda_1$;
\item $\Lambda_{-1}'=\Lambda_{-1}^\star=\Lambda_{-1}.$
\end{itemize}
\end{enumerate}
\end{proposition}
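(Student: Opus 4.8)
The plan is to transport the relation $\pi'\in[\pi]$, together with the discrepancy between the two parametrizations coming from $\CL_s$ and $\CL_s^\star$, through the Lusztig correspondence, and then to read everything off factor by factor on the centralizer $C_{G^*}(s)$. First I would invoke Proposition \ref{l2}: part (3) turns the hypothesis $\pi'\in[\pi]$ into $\CL_s(\pi')\in[\CL_s(\pi)]$, that is,
\[
\rho'\otimes\pi_{\Lambda_1'}\otimes\pi_{\Lambda_{-1}'}\otimes\iota'\ \in\ \bigl[\,\rho\otimes\pi_{\Lambda_1}\otimes\pi_{\Lambda_{-1}}\otimes\iota\,\bigr],
\]
and part (2), applied to the two choices $\CL_s$ and $\CL_s^\star$ of Lusztig correspondence, gives $\rho^\star\otimes\pi_{\Lambda_1^\star}\otimes\pi_{\Lambda_{-1}^\star}\otimes\iota^\star\in[\,\rho\otimes\pi_{\Lambda_1}\otimes\pi_{\Lambda_{-1}}\otimes\iota\,]$ as well. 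Next, since Deligne--Lusztig characters of a direct product are external tensor products of Deligne--Lusztig characters of the factors, one has $\CV(H_1\times H_2)^\#=\CV(H_1)^\#\otimes\CV(H_2)^\#$ and so uniform projection is multiplicative, $(\sigma_1\otimes\sigma_2)^\#=\sigma_1^\#\otimes\sigma_2^\#$. Feeding in the decomposition \eqref{decomp} of $C_{G^*}(s)$, this reduces the two membership relations above to the analogous relations on each of $G^*_{[\ne\pm 1]}(s)$, $G^*_{[1]}(s)$, $G^*_{[-1]}(s)$, and, when $G=\o_{2n+1}$, the $\{\pm\}$-factor; so it suffices to understand $[\,\cdot\,]$ on each of these groups separately.

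For the factor $G^*_{[\ne\pm 1]}(s)$, which is a product of general linear and unitary groups, every irreducible representation is uniform, so $[\rho]=\{\rho\}$ and hence $\rho'=\rho^\star=\rho$ in all three cases. On the $\{\pm\}$-factor (present only when $G=\o_{2n+1}$) one has $[\iota]=\{+,-\}$, so the hypotheses do not force $\iota'$ or $\iota^\star$ to equal $\iota$ --- consistent with the statement, which makes no assertion about $\iota$. For $G^*_{[1]}(s)$ and $G^*_{[-1]}(s)$ I would use the explicit structure recalled just before the proposition: both are symplectic groups when $G=\o_{2n+1}$; $G^*_{[1]}(s)$ is a special odd orthogonal group and $G^*_{[-1]}(s)$ an even orthogonal group when $G=\sp_{2n}$; and both are even orthogonal groups when $G=\o^\epsilon_{2n}$. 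Combined with the description of $[\pi_\Lambda]$ for unipotent representations stated just above the proposition (from \cite[Lemma 4.1, Lemma 4.8, Corollary 4.14]{P3}) --- namely $[\pi_\Lambda]=\{\pi_\Lambda\}$ for symplectic and special odd orthogonal groups and $[\pi_\Lambda]=\{\pi_\Lambda,\pi_{\Lambda^t}\}$ for even orthogonal groups --- this yields exactly the three listed conclusions: for $G=\sp_{2n}$ we get $\Lambda_1'=\Lambda_1^\star=\Lambda_1$ and $\Lambda_{-1}',\Lambda_{-1}^\star\in\{\Lambda_{-1},\Lambda_{-1}^t\}$; for $G=\o^\epsilon_{2n}$ we get $\Lambda_1',\Lambda_1^\star\in\{\Lambda_1,\Lambda_1^t\}$ and $\Lambda_{-1}',\Lambda_{-1}^\star\in\{\Lambda_{-1},\Lambda_{-1}^t\}$; and for $G=\o_{2n+1}$ we get $\Lambda_1'=\Lambda_1^\star=\Lambda_1$ and $\Lambda_{-1}'=\Lambda_{-1}^\star=\Lambda_{-1}$.

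The step I expect to be the main obstacle is the factorization of the uniform-projection relation over the direct product: unwinding an identity $\sigma_1^\#\otimes\sigma_2^\#=\tau_1^\#\otimes\tau_2^\#$ of simple tensors only yields that $\sigma_i^\#$ is a scalar multiple of $\tau_i^\#$, and one must show that the scalar is $1$. I would handle this by noting that $\sigma_i$ and $\tau_i$ lie in the same Lusztig series (the one attached to the corresponding component of $s$) and the same Lusztig family, so that the set of uniform projections of irreducible representations in that family meets each ray through the origin in at most one nonzero point; equivalently, after passing to almost characters, two rows of Lusztig's Fourier matrix restricted to the uniform columns are proportional only if equal. Integrality of the Deligne--Lusztig multiplicities of $\sigma_i$ can also be invoked to pin down the scalar. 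Apart from this point, the argument is a routine assembly of Proposition \ref{l2} with the product structure of $C_{G^*}(s)$ and the known shape of $[\pi_\Lambda]$ for unipotent representations of the classical factors.
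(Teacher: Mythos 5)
Your argument is correct and follows essentially the same route as the paper: the paper deduces Proposition \ref{l3} directly from Proposition \ref{l2}, the uniformity of the $\rho$-part, the product decomposition \eqref{decomp} of $C_{G^*}(s)$, and the cited descriptions from \cite[Lemma 4.1, Lemma 4.8, Corollary 4.14]{P3} of when a unipotent representation of each factor is determined by its uniform projection. Your extra discussion of pinning down the scalar when factoring the uniform projection over the direct product is a reasonable (if heavier than necessary) way to justify a step the paper treats as immediate, and it does not change the substance of the argument.
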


Combining Proposition \ref{l3} and {\cite[Theorem 1.1]{P3}}, we have the following corollary.
\begin{corollary}\label{l4}
Let $\CL_s$ be a Lusztig correspondences from $\CE(G,s)$ to $\CE(C_G(s),1)$, and $\pi\in \CE(G,s)$. Assume that $\pi=\prllsgn$.
\begin{enumerate}
\item For a symplectic group $G$, we have $\pi^c=\pi_{\rho, \Lambda_1, \Lambda_{-1}^t}$.
\item For a odd orthogonal group $G$, we have $\sgn\pi=\pi_{\rho, \Lambda_1, \Lambda_{-1},-\iota}$.
\end{enumerate}
\end{corollary}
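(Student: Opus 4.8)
The plan is to derive Corollary \ref{l4} as a direct combination of the structural description of the ambiguity in Proposition \ref{l3} with the concrete formulas of \cite[Theorem 1.1]{P3}, which compute the Lusztig data of $\pi^c$ and $\sgn\pi$ relative to \emph{some} fixed Lusztig correspondence (say the one chosen by Pan for a given dual pair). The point is that the two operations $\pi\mapsto\pi^c$ and $\pi\mapsto\sgn\pi$ commute with the partition $\Irr(G^F)=\coprod_{(s)}\CE(G,s)$ in a controlled way, and the effect on the unipotent datum $(\rho,\Lambda_1,\Lambda_{-1},\iota)$ is what \cite[Theorem 1.1]{P3} records; the only subtlety is that that theorem is stated for Pan's $\CL_s$, whereas here $\CL_s$ is arbitrary, so one needs Proposition \ref{l3} to transport the formula.

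For part (1), let $G=\sp_{2n}$ and $\pi=\prllsgn$ (with $\iota$ absent). Since conjugation by $O(V^*)$ on $SO(V^*)$, equivalently the outer automorphism realized on $\sp_{2n}\fq$, fixes the semisimple class $(s)$, we have $\pi^c\in\CE(G,s)$ as well. First I would invoke \cite[Theorem 1.1]{P3} for Pan's correspondence $\CL_s^{\mathrm{Pan}}$ to get $\pi^c=\pi^{\CL_s^{\mathrm{Pan}}}_{\rho,\Lambda_1,\Lambda_{-1}^t}$: the $\rho$-part and the $\Lambda_1$-part are uniform hence untouched (Proposition \ref{l2}(1)), while the even-orthogonal factor $G^*_{[-1]}(s)$ is where conjugation acts, flipping $\Lambda_{-1}$ to $\Lambda_{-1}^t$. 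Then, to pass to the arbitrary $\CL_s$ of the statement: by Proposition \ref{l3}(1), writing $\pi=\pi^{\CL_s}_{\rho,\Lambda_1,\Lambda_{-1}}=\pi^{\CL_s^{\mathrm{Pan}}}_{\rho,\Lambda_1,\Lambda_{-1}^{\mathrm{Pan}}}$, the data agree except possibly $\Lambda_{-1}\in\{\Lambda_{-1}^{\mathrm{Pan}},(\Lambda_{-1}^{\mathrm{Pan}})^t\}$; since $t$ is an involution, applying it to both candidates shows $\Lambda_{-1}^t\in\{(\Lambda_{-1}^{\mathrm{Pan}})^t,\Lambda_{-1}^{\mathrm{Pan}}\}$, and the constraint from Proposition \ref{l3}(1) applied to the pair $\pi$, $\pi^c$ (both in $[\,\cdot\,]$-classes as in that proposition) forces $\pi^c=\pi^{\CL_s}_{\rho,\Lambda_1,\Lambda_{-1}^t}$ regardless of which of the two symbols $\Lambda_{-1}$ names. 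I would spell this last coherence point out carefully, since it is the place where "$\CL_s$ arbitrary" could in principle break the formula.

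For part (2), $G=\o^\epsilon_{2n+1}$, the operation is tensoring with $\sgn=\det$. Again $\sgn\pi\in\CE(G,s)$ (tensoring by a linear character of order $2$ on an odd orthogonal group does not move the semisimple class, because $G^*=\sp_{2n}\times\{\pm1\}$ and the effect is purely on the $\{\pm\}$-factor). By \cite[Theorem 1.1]{P3} one has, for Pan's choice, $\sgn\pi=\pi^{\CL_s^{\mathrm{Pan}}}_{\rho,\Lambda_1,\Lambda_{-1},-\iota}$: the symplectic factors $G^*_{[\pm1]}(s)$ and the $\rho$-part are unaffected (these symbol data are rigid by \cite[Corollary 4.14]{P3}, as recorded just before Proposition \ref{l3}), while the character $\iota$ of $\{\pm\}$ is negated — concretely, $(\sgn\pi)(-I)=\sgn(-I)\cdot\pi(-I)=-\pi(-I)$, which is exactly the normalization $\pi_{\Lambda,\pm}(-1)=\pm\mathrm{Id}$ used to define $\iota$. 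Finally, by Proposition \ref{l3}(3), for an odd orthogonal group \emph{every} Lusztig correspondence produces the same $(\rho,\Lambda_1,\Lambda_{-1})$ and in particular $\iota'=\iota^\star=\iota$, so there is no ambiguity to transport and the formula $\sgn\pi=\pi^{\CL_s}_{\rho,\Lambda_1,\Lambda_{-1},-\iota}$ holds for the given $\CL_s$.

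The main obstacle is not any computation — those are supplied by \cite[Theorem 1.1]{P3} — but the bookkeeping needed to legitimately replace Pan's dual-pair-dependent $\CL_s$ by the arbitrary $\CL_s$ in the statement. Concretely, I expect the delicate step to be verifying that in the symplectic case the $\Lambda_{-1}$-ambiguity (the $\{\Lambda_{-1},\Lambda_{-1}^t\}$ indeterminacy from Proposition \ref{l3}(1)) is \emph{compatible} with the conjugation formula, i.e.\ that the map $\Lambda_{-1}\mapsto\Lambda_{-1}^t$ is well-defined on the ambiguity class and matches $\pi\mapsto\pi^c$ on the nose; this is a short but genuine argument using that $(\cdot)^t$ is an involution and that $\pi$ and $\pi^c$ lie in the same uniform class exactly when they do for $\CL_s^{\mathrm{Pan}}$. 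Everything else is immediate from the propositions already established in this section.
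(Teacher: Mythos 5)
Your proposal is correct and takes essentially the same route as the paper, which obtains the corollary precisely by combining Proposition \ref{l3} with \cite[Theorem 1.1]{P3}. One small inaccuracy: Proposition \ref{l3}(3) does not assert $\iota^\star=\iota$ across two different correspondences, but this is harmless, since for the fixed $\CL_s$ the bijectivity of the correspondence together with $\sgn\pi\ne\pi$ (resp. $\pi^c\ne\pi$ when $\Lambda_{-1}\ne\Lambda_{-1}^t$, from Pan's theorem) already forces the flip of $\iota$ (resp. of $\Lambda_{-1}$ to $\Lambda_{-1}^t$), exactly as your transport argument in part (1) implicitly uses.
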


\subsection{Harish-Chandra series}\label{sec2}
Let $G$ be a reductive group defined over $\Fq$, $F$ be the corresponding Frobenius endomorphism, and let $\E_{\rm{cus}}(G)$ be the set of irreducible cuspidal representations of $G^F$.
A parabolic subgroup $P=LV$ of $G$ is the normalizer in $G$ of a parabolic subgroup $P^\circ$ of the connected component $G^\circ$ of $G$.
The Levi subgroup $L$ of $P$ is the normalizer in $G$ of the a Levi subgroup $L^\circ$ of $P^\circ$. We say that a pair $(L, \delta)$ is cuspidal if $L$ is a Levi subgroup of an $F-$stable parabolic subgroup, and $\delta\in \E_{\rm{cus}}(L)$.

It is well know that for $\pi \in \E(G)$, there is  a unique cuspidal pair $(L, \delta)$ up to $G^F$-conjugacy such that $\langle\pi,\Ind_{L^F}^{G^F}(\delta)\rangle_{G^F}\ne 0$, where we lift $\delta$ to a representation of $P^F$ by making it trivial on $V^F$.
Thus we get a partition of $\E(G)$ into series parametrized by $G^F$-conjugacy classes of cuspidal pairs $(L, \delta)$.
We focus on classical groups, and let $L$ be an $F$-stable standard Levi subgroup of $G_n:=\sp_{2n}$, $\o^\pm_{2n}$ or $\o_{2n+1}$. Then $L$ has a standard form
\[
L=\GGL_{n_1}\times \GGL_{n_2}\times\cdots\times \GGL_{n_r}\times G_m
\]
where $G_m=\sp_{2m}$, $\o^\pm_{2m}$ or $\o_{2m+1}$, and $n_1+\cdots+n_r +m=n$. For a cuspidal pair $(L,\delta)$, one has
\[
\delta=\tau_1\otimes\cdots\otimes\tau_r\otimes\sigma
\]
where $\tau_i\in \E_{\rm{cus}}(\GGL_{n_i})$ and $\sigma\in \E_{\rm{cus}}(G_m)$.
Let
\[
\cal{E}(G_n,\sigma)=\{\pi\in \cal{E}(G_n)|\textrm{ the cuspidal pair $(L,\delta)$ of $\pi$ is of form }\delta=\tau_1\otimes\cdots\otimes\tau_r\otimes\sigma\}.
\]
Then we have  a disjoint union
\[
\E(G_n)=\bigcup_{\sigma}\cal{E}(G_n,\sigma),
\]
where $\sigma$ runs over all irreducible cuspidal representations of $G^{F}_m$, $m=0,1,\cdots,n$.

Let us consider the unipotent representation $\pi_{\Lambda}$ of an even orthogonal group $G$. If $\rm{def}(\Lambda)\ne 0$, then $\pi_{\Lambda}$ and $\sgn\pi_{\Lambda}=\pi_{\Lambda^t}$ will belong to two distinct Harish-Chandra series $\cal{E}(G,\sigma)$ and $\cal{E}(G,\sgn\sigma)$, respectively. On the other hand, if $\rm{def}(\Lambda)=0$, the cuspidal pairs of $\pi_{\Lambda}$ and $\pi_{\Lambda^t}$ will be the same. Therefore, to differentiate between $\pi_{\Lambda}$ and $\pi_{\Lambda^t}$, we require a generalization of cuspidal representation, which is known as basic representation (or basic character according to \cite{P3}).
 \begin{definition}
Let $\pi\in \E(G,s)$ be an irreducible representation of a symplectic or an orthogonal group. Fix a Lusztig correspondence $\cl_s$. Assume that
\[
\mathcal{L}_s(\pi)=\rho\otimes\pi_{\Lambda_1}\otimes\pi_{\Lambda_{-1}}\otimes\iota
\]
where $\epsilon \in \{\pm\}$ only occurs for odd orthogonal group. We say $\pi$ is a basic representation if the following conditions are satisfied.
\begin{itemize}
\item $\rho$ is cuspidal and $G^*_{[\ne 1]}(s)$ is a product of unitary groups;
\item $\pi_{\Lambda_{\pm1}}$ is either an irreducible unipotent cuspidal representation or ${\bf 1}_{\o^+_2}$, $\rm{sgn}_{\o^+_2}$.
\end{itemize}
 Denote by $\E_{\rm{bas}}(G)$  the set of irreducible basic representations of $G^F$.
We generalize the notations $\cal{E}(G_n,\sigma)$ and $(L,\delta)$ in the same way and refer to $(L,\delta)$ as the basic pair of $\pi$.
 \end{definition}

\begin{lemma}\label{basic}
Let $\pi_{\Lambda}$ be a unipotent representation of a even orthogonal group $G=\o_{2n}^+\fq$. Assume that $\rm{def}(\Lambda)=0$ and $\Lambda\ne \Lambda^{t}$. Then $\pi_{\Lambda}$ either lie in $\E(G,{\bf 1}_{\o^+_2})$ or $\E(G,\rm{sgn}_{\o^+_2})$.
\end{lemma}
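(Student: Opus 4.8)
The plan is to show that a unipotent representation $\pi_\Lambda$ of $G=\o^+_{2n}\fq$ with $\mathrm{def}(\Lambda)=0$ and $\Lambda\ne\Lambda^t$ lies in a Harish-Chandra series whose cuspidal datum has trivial $\GGL$-part, i.e.\ the cuspidal pair $(L,\delta)$ has $L=\GGL_1^{r}\times\o^+_2$ (or rather its standard form with all $\GGL$-blocks of size $1$) and $\delta={\bf 1}_{\o^+_2}$ or $\mathrm{sgn}_{\o^+_2}$. First I would invoke Proposition \ref{unp}: since Lusztig's bijection $\cal{E}(G,1)\to\cal{S}_n$ is compatible with parabolic induction for even orthogonal groups (one of the additional conditions from \cite[section 5.3]{P3}), the representation $\pi_\Lambda$ belongs to $\CE(G,\pi_{\Lambda',\iota'})$ for a unipotent cuspidal $\pi_{\Lambda'}$ of $G_m^F=\o^\epsilon_{2m}\fq$ with $\mathrm{def}(\Lambda)=\mathrm{def}(\Lambda')$. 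Hence $\mathrm{def}(\Lambda')=0$.

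Next I would use the classification of unipotent cuspidal representations of even orthogonal groups recalled in Section \ref{sec22}: the unique unipotent cuspidal representation of $\o^{\epsilon}_{2k^2}\fq$ (with $\epsilon=(-1)^k$) is attached to the symbol $\binom{2k-1,2k-2,\dots,1,0}{-}$, whose defect is $2k$, together with its $\mathrm{sgn}$-twist of defect $-2k$. So a unipotent cuspidal representation of an even orthogonal group has defect $0$ only when $k=0$, i.e.\ it is the trivial (or the $\mathrm{sgn}$) representation of $\o^+_0\fq$, associated with $\binom{-}{-}$. But $\o^+_0\fq$ is the trivial group, which does not give a defect-$0$ cuspidal datum large enough; the smallest even orthogonal group carrying a genuinely basic defect-$0$ datum is $\o^+_2\fq$, with basic representations ${\bf 1}_{\o^+_2}$ and $\mathrm{sgn}_{\o^+_2}$ (the two characters, both attached to symbols of defect $0$, namely $\binom{1}{0}$ and $\binom{0}{1}$, with $\sd=+$ and $\sd=-$ respectively). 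Since $\Lambda\ne\Lambda^t$, the symbol $\Lambda$ is not the ``degenerate'' defect-$0$ symbol fixed by transpose, so its basic datum must be non-degenerate in the same sense, forcing the basic pair to have $\delta\in\{{\bf 1}_{\o^+_2},\mathrm{sgn}_{\o^+_2}\}$; equivalently $\pi_\Lambda\in\E(G,{\bf 1}_{\o^+_2})$ or $\E(G,\mathrm{sgn}_{\o^+_2})$.

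Concretely, I would argue on the level of symbols. The Harish-Chandra series of $\pi_\Lambda$ is determined, via Lusztig's parametrization, by the ``co-2-core'' type datum of $\Lambda$: stripping $2$-hooks from the bi-partition $\Upsilon(\Lambda)$ (equivalently, reducing $\Lambda$ to a cuspidal symbol of the same defect) produces the symbol of the cuspidal support, and this reduction preserves the defect. A defect-$0$ symbol reduces to a defect-$0$ cuspidal symbol; among even orthogonal groups the only defect-$0$ cuspidal symbols are $\binom{-}{-}$ (trivial group, degenerate, fixed by $t$) and the pair $\binom{1}{0},\binom{0}{1}$ for $\o^+_2$ (non-degenerate, swapped by $t$). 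Because $\Lambda\ne\Lambda^t$ the reduced symbol cannot be the $t$-fixed one $\binom{-}{-}$: if the cuspidal support were the trivial representation of $\o^+_0$, then $\pi_\Lambda$ would be a constituent of $\Ind$ from a pure $\GGL$-Levi, whose members are all $t$-fixed up to the $\GGL$-action, contradicting $\Lambda\ne\Lambda^t$ (this uses that $\mathrm{sgn}$ acts on the series $\E(G,{\bf 1}_{\o^+_0})$ without fixed points only through the even-orthogonal factor). Thus the cuspidal support is ${\bf 1}_{\o^+_2}$ or $\mathrm{sgn}_{\o^+_2}$, which is the claim.

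The main obstacle I anticipate is the last step: cleanly ruling out that $\pi_\Lambda$ with $\Lambda\ne\Lambda^t$ could still have cuspidal support the trivial representation of $\o^+_0$ (a principal-series type situation). This requires knowing that within such a principal series the involution $\mathrm{sgn}$ corresponds exactly to the symbol transpose and has no fixed points unless $\Lambda=\Lambda^t$ --- which is essentially the content of $\pi_{\Lambda^t}=\mathrm{sgn}\cdot\pi_\Lambda$ recalled in Section \ref{sec3.4} combined with the fact that the principal series $\E(\o^+_{2n},{\bf 1}_{\o^+_0})$ is stable under $\mathrm{sgn}$ and its symbols of defect $0$ are precisely those reducing to $\binom{-}{-}$. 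Once one has that dictionary, the proof is just bookkeeping with defects under $2$-hook removal; the subtlety is purely in the degenerate defect-$0$ case, which is exactly why the hypothesis $\Lambda\ne\Lambda^t$ is imposed.
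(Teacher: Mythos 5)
The paper offers no argument of its own here---it simply cites \cite[Lemma 3.18]{P3}---so you are supplying a proof the paper does not contain, and unfortunately its pivotal step is false. Your first reduction is fine: by Proposition~\ref{unp} and the classification of unipotent cuspidal symbols (defect $\pm 2k$, living on $\o^{(-1)^k}_{2k^2}$), a defect-zero symbol forces the ordinary cuspidal support to be the trivial representation of $\o^+_0$, i.e.\ $\pi_\Lambda$ lies in the unipotent principal series $\E(G,{\bf 1}_{\o^+_0})$. But you then try to \emph{rule out} precisely this, via the claim that constituents of induction from a pure $\GGL$-Levi are ``all $t$-fixed''. That claim is wrong: the trivial representation of $\o^+_{2n}\fq$ has symbol $\binom{n}{0}\ne\binom{0}{n}$, defect $0$, and certainly lies in that principal series; more generally the principal series is stable under twisting by $\rm{sgn}$ (the character is trivial on the $\GGL$-Levi), so it contains $\pi_\Lambda$ and $\pi_{\Lambda^t}$ simultaneously and no contradiction with $\Lambda\ne\Lambda^t$ can be extracted. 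The case you want to exclude is exactly the case at hand, which also makes your argument internally inconsistent: your own defect bookkeeping shows every defect-zero symbol reduces to the core $\binom{-}{-}$, yet you then assert it cannot.

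The underlying conceptual slip is conflating the basic datum with the cuspidal datum. The symbols $\binom{1}{0}$ and $\binom{0}{1}$, i.e.\ ${\bf 1}_{\o^+_2}$ and $\rm{sgn}_{\o^+_2}$, are not unipotent cuspidal representations; the series $\E(G,{\bf 1}_{\o^+_2})$ and $\E(G,\rm{sgn}_{\o^+_2})$ are the finer \emph{basic} series sitting inside the principal series $\E(G,{\bf 1}_{\o^+_0})$, not alternatives to it. Membership of $\pi_\Lambda$ in the union of the two basic series is essentially formal, since ${\bf 1}_{\o^+_2}\oplus\rm{sgn}_{\o^+_2}$ is the induction of the trivial character from $\so_2\fq$ to $\o^+_2\fq$ and hence the two inductions from $\GGL_1^{n-1}\times\o^+_2$ together exhaust the unipotent principal series; the genuine content---the reason for the hypothesis $\Lambda\ne\Lambda^t$---is that for nondegenerate symbols the basic support is actually determined, i.e.\ one can say which character of the $\o^+_2$-factor occurs, with $\rm{sgn}$ interchanging the two series so that $\pi_\Lambda$ and $\pi_{\Lambda^t}$ are separated (for $\Lambda=\Lambda^t$ the two series cannot be separated, since $\rm{sgn}$-twisting fixes $\pi_\Lambda$). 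Establishing this requires an explicit analysis of Harish-Chandra induction for the disconnected group on defect-zero symbols (via the endomorphism algebra of the induced module, or the symbol combinatorics Pan carries out); defect preservation alone cannot give it. So there is a real gap: to complete the proof you must either reproduce that computation or cite \cite[Lemma 3.18]{P3}, as the paper does.
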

\begin{proof}
It immediately follows from \cite[Lemma 3.18]{P3}.
\end{proof}
\subsection{Union of Lusztig correspondences}\label{sec3.2}
To construct the canonical Lusztig correspondence, we must ensure compatibility between Lusztig series of all groups of the same type, not just consider its behavior in a single Lusztig series.
Consider  $G_n=\sp_{2n}$, $\o_{2n+1}$ and $\o^\pm_{2n}$.
Let
\[
\cal{L}: \bigcup_{n}\E(G_n)\to\bigcup_{n,(s)}\cal{E}((G_n)^*_{[\ne \pm 1]}(s)\times (G_n)^*_{[1]}(s)\times (G_n)^*_{[-1]}(s),1)
\times \begin{cases}
 \{\pm\},&\textrm{if }G_n=\o_{2n+1};\\
  1,&\textrm{otherwise},
\end{cases}
\]
such that for $\pi\in\cal{E}(G,s)$, the map $\cal{L}$ agrees with a Lusztig correspondence $\CL_s$.

It is natural to require that Lusztig correspondence $\cal{L}$ is compatible with parabolic induction. To be more precise, this compatibility means that the Lusztig correspondence $\cal{L}$ satisfies the following conditions. Let $\prllsgn^{\CL_{}}\in \CE(G_n,\prllps^{\CL_{}})$, where $\prllps^{\CL_{}}$ is a basic representation of $G_m^{F}$. Then we have
\begin{enumerate}
\item[(i)] $\pi_{\Lambda_{1}}$ appears in the Harish-Chandra series of $\pi_{\Lambda_1'}$;
\item[(ii)] $\pi_{\Lambda_{-1}}$ appears in the Harish-Chandra series of $\pi_{\Lambda_{-1}'}$;
\item[(iii)] Assume that $(L,\delta)$ is the basic pair of $\prllsgn^{\CL_{}}$. Then $\iota=\delta(-I)$.
\end{enumerate}
It is worth pointing out that if $\rm{def}(\Lambda_{\pm 1})\ne 0$, the conditions (i) and (ii) are equal to $\rm{def}(\Lambda_{\pm1})=\rm{def}(\Lambda'_{\pm1})$.
The compatibility of unipotent representations is established by Proposition \ref{unp}. Proposition \ref{l3} tells us that the Lusztig correspondence can be characterized by $\iota$ and $\Lambda'_{\pm1}$. Therefore, to create the canonical Lusztig correspondence with the aforementioned compatibility, we only need to construct it using basic representations.

We will now select a specific Lusztig correspondence, denoted by $\cal{L}_{}$, for each $\{G_n\}$. We will use $\prllsgn$ (or $\prll$) to represent $\prllsgn^{\cal{L}_{}}$ (or $\prll^{\CL_{{}}}$), respectively. If either $\rho$ or $\Lambda_{\pm}$ is trivial, we will ignore them in the subscript. For instance, if $\Lambda_{\pm1}$ is trivial, we will denote $\prll$ as $\pi_\rho$. This notation coincides with $\pi_\Lambda$ or $\pi_{\Lambda,\iota}$, which were defined in Section \ref{sec3.4}, for unipotent representations.
It is important to note that $\cal{L}_{}$ is not the canonical choice of Lusztig correspondence that we aim to construct. We are using $\CL_{}$ only for the sake of convenience in our statements and proofs.
\begin{proposition}\label{sp-1}
Let $\prll\in \E(\sp_{2n})$. Then $\prll(-I)=\iota_\rho\pd(\Lambda_{-1})\ee^{\rm{rank}(\Lambda_{-1})}$.
\end{proposition}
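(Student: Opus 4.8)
The plan is to compute the value of the central element $-I \in \sp_{2n}(\Fq)$ on the representation $\prll$ by reducing to the constituent pieces of the Lusztig correspondence. The starting point is the observation that $-I$ lies in the center $Z(\sp_{2n})$, so it acts by a scalar $\omega_{\prll}(-I)$ on $\prll$, and this central character is determined by the semisimple element $s$ together with the unipotent part $\CL_s(\prll) = \rho \otimes \pi_{\Lambda_1}\otimes \pi_{\Lambda_{-1}}$. More precisely, the action of a central element is governed by the pairing between $Z(G)^F$ and the component group side of the dual, and since $-I$ corresponds under this pairing to the ``eigenvalue $-1$'' part of $s$, only the $\Lambda_{-1}$ and $\rho$ data (the latter entering through $\iota_\rho$) should contribute; the $G^*_{[1]}(s)$ and the $a \ne \pm 1$ pieces other than through $\iota_\rho$ should contribute trivially.

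The key steps, in order, are as follows. First I would record the general principle that for a central element $z$, the scalar $\omega_\pi(z)$ is constant on each Lusztig series $\E(G,s)$ and is computed from $s$ alone by $\widehat{z}(s)$ for an appropriate character $\widehat{z}$ of $G^{*F}$ attached to $z$ (this is standard; cf. the discussion of central characters and Lusztig series in the references cited earlier, e.g.\ \cite{DM}). Second, I would use the product decomposition \eqref{decomp} of $C_{G^*}(s)$ and the explicit description of $G^*_{[a]}(s)$ to isolate the contribution: for $\sp_{2n}$ we have $G^* = \so_{2n+1}$, and $-I$ pairs nontrivially only against the $(-1)$-eigenspace $V^*_{s=-1}$, so $\omega_{\prll}(-I)$ factors as a product of (a) a sign depending only on $\rho$, which by definition of $\iota_\rho$ and $\pi^\sp_\rho$ is exactly $\iota_\rho$, and (b) a sign depending only on the unipotent representation $\pi_{\Lambda_{-1}}$ of the even orthogonal group $G^*_{[-1]}(s) = \o_{2\nu_{-1}(s)}(V^*_{s=-1})$. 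Third, I would identify this second factor: the central element $-I$ of $\sp_{2n}$ corresponds, on the dual unipotent side, to the determinant/spinor-type character of $\o_{2\nu_{-1}(s)}$ evaluated on the piece parametrized by $\Lambda_{-1}$. This is where I would invoke the known formula expressing the value of such a linear character on a unipotent representation $\pi_\Lambda$ of an even orthogonal group in terms of the symbol $\Lambda$ — it should come out to $\pd(\Lambda_{-1})\,\ee^{\,\rank(\Lambda_{-1})}$, the parity $\pd(\Lambda_{-1}) = (-1)^{\lfloor |\mathrm{def}(\Lambda_{-1})|/2\rfloor}$ encoding the defect contribution and $\ee^{\rank(\Lambda_{-1})}$ the discriminant-twist contribution of the quadratic space. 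Assembling (a) and (b) gives $\prll(-I) = \iota_\rho\, \pd(\Lambda_{-1})\, \ee^{\rank(\Lambda_{-1})}$.

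The main obstacle I anticipate is step three: pinning down precisely which linear character of the even orthogonal group $G^*_{[-1]}(s)$ is dual to $-I \in \sp_{2n}$ and then evaluating it on $\pi_{\Lambda_{-1}}$ with the correct normalization of symbols used in Section~\ref{sec:L-s} (which, as the paper notes, differs from \cite{L1}). Two normalization issues must be handled carefully: the sign conventions $\sd$, $\pd$, and $\sd(\Lambda^t) = -\sd(\Lambda)$, $\pd(\Lambda^t)=\pd(\Lambda)$, which interact with the known fact $\pi_{\Lambda^t}=\sgn\cdot\pi_\Lambda$; and the appearance of $\ee = \varepsilon(-1)$, which enters because $\o(V^*_{s=-1})$ depends on $\disc(V^*_{s=-1})$ and the rank of the underlying space shifts the discriminant by a power of $\ee$. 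A clean way to fix all constants is to check the formula on the trivial representation $\prll = \mathbf{1}$ of $\sp_{2n}$ (where $\rho$ and $\Lambda_{-1}$ are trivial, $\Lambda_1 = \binom{n}{-}$, and indeed $\mathbf{1}(-I)=+$) and on $\sgn$-twists and $\pi^c$ via Corollary~\ref{l4}, which forces $\prll(-I)$ and $\pi_{\rho,\Lambda_1,\Lambda_{-1}^t}(-I)$ to be related by $\pd(\Lambda_{-1})=\pd(\Lambda_{-1}^t)$ — consistent with the claimed formula — thereby serving as a consistency check that the normalization is right.
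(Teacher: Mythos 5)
Your route is genuinely different from the paper's: the paper disposes of the proposition in one line by citing the quadratic unipotent case from \cite[Proposition 4.11]{Wal1} and asserting that the general case is similar, whereas you propose a direct computation of the central character via Deligne--Lusztig theory. Your first step is legitimate and standard: since $R^G_T(\theta)(zg)=\theta(z)R^G_T(\theta)(g)$ for central $z$, every constituent of $R^G_{T^*,s}$ has central value $\theta(-I)$, so $\pi(-I)$ depends on $s$ only. But precisely here your write-up has an unresolved tension that it never addresses: the target formula contains $\pd(\Lambda_{-1})$, which on its face depends on the individual representation rather than on $s$. The reconciliation is the observation that $\pd(\Lambda_{-1})$ is constant on $\E(\sp_{2n},s)$ and equals the type $\epsilon$ of $G^*_{[-1]}(s)$, because unipotent symbols of $\o^{+}_{2m}$ (resp.\ $\o^{-}_{2m}$) have defect $\equiv 0$ (resp.\ $2$) mod $4$, while $\rank(\Lambda_{-1})=\nu_{-1}(s)$; without this remark your step one is not even compatible with the statement being proved.

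The more serious problem is that your step three, which carries all the content, is asserted rather than proved, and is framed incorrectly: there is no ``value of a linear character of $\o_{2\nu_{-1}(s)}$ on the unipotent representation $\pi_{\Lambda_{-1}}$'' to invoke, and there cannot be a formula of that shape here, since by the previous paragraph the answer is not allowed to see the particular unipotent representation at all. What is actually needed is the torus computation: write the $(-1)$-eigenvalue part of $T^{*F}$ as a product of split factors $\bb{F}_{q^{d_i}}^\times$ and norm-one factors of orders $q^{e_j}+1$; under duality, the component $-1$ of $s$ on such a factor corresponds to the quadratic character of the dual factor of $T^F$, whose value at $-1$ is $\ee^{d_i}$ on a split factor and $-\ee^{e_j}$ on a nonsplit one; since the parity of the number of nonsplit factors is determined by the type $\epsilon$ of $\o^{\epsilon}_{2\nu_{-1}(s)}$, the product over these blocks is $\epsilon\,\ee^{\nu_{-1}(s)}=\pd(\Lambda_{-1})\ee^{\rank(\Lambda_{-1})}$, while the eigenvalue-$1$ blocks contribute $1$ and the remaining blocks contribute $\iota_\rho$ by applying the same principle to $\pi^{\sp}_{\rho}$. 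Your proposed consistency checks (the trivial representation, $\pi^c$ via Corollary \ref{l4}) cannot substitute for this, as they only test cases where the decisive signs are trivial or cancel. So as it stands the proposal has a genuine gap at its key step, although the overall strategy can be completed along the lines just indicated (or by the paper's shortcut of quoting Waldspurger).
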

\begin{proof}
The quadratic unipotent case is proved in \cite[Proposition 4.11]{Wal1}. The proof of the general case is similar.
\end{proof}

\section{Theta correspondence for finite symplectic groups and
finite orthogonal groups} \label{sectheta}
Assume that $\Fq$ a finite field of odd characteristic.
In this section we review the Theta correspondence of irreducible representations for finite symplectic groups and
finite orthogonal groups. We first recall the Theta correspondence for even orthogonal-symplectic
dual pairs. Then we deduce the Theta correspondence odd orthogonal-symplectic dual pairs from even orthogonal-symplectic
dual pairs.

\subsection{Isometry groups}\label{5.1}
Let $V_\Fn$ be an $\Fn$-dimensional $\Fq$-vector space endowed with a $\epsilon$-symmetric form $\langle,\rangle_{V_{\Fn}}$, i.e. $\langle v,w\rangle_{V_{\Fn}}=\epsilon \langle w,v\rangle_{V_{\Fn}}$ for any $v, w\in V_{\Fn}$.
The isometry group is a symlectic group or a orthogonal group.

For a symmetric space $V_\Fn$, define its discriminant by
\begin{equation}\label{eq:disc}
\disc(V_\Fn)=(-1)^{\frac{\Fn(\Fn-1)}{2}}\det(V_\Fn)\in \Fq^{\times }/\Fq^{\times 2}.
\end{equation}
Write $\o^\epsilon_{\Fn}$ to be the isometry group of $V_\Fn$ with $\disc(V_\Fn)=\epsilon$. For even $\Fn$, the even orthogonal group $\o^\epsilon_{\Fn}$  is split if and only if $\epsilon=+$. For odd $\Fn$, we have $\o^+_{\Fn}$ and $ \o^-_{\Fn}$ are the same as abstract groups; however they act on two symmetric spaces with different discriminants.

 For a symmetric space $V_\Fn$, the spinor norm $\chi_{\o^\epsilon_\Fn}$ of $\o^\epsilon_\Fn\fq$ is a homomorphism from $\o^\epsilon_\Fn\fq$ to $\Fq^\times/(\Fq^\times)^2$ defined as follows. Let $v\in V_{\Fn}$ be such that $\langle v, v\rangle_{V_{\Fn}} \ne 0$ and $s_v$ be the reflection with respect to the hyperplane orthogonal to $v$, then $\chi_{\o^\epsilon_\Fn}(s_v) = \langle v, v\rangle_{V_{\Fn}} \textrm { mod }(\Fq^\times)^2$. The value of $\chi_{\o^\epsilon_\Fn}(-I_{\o^\epsilon_\Fn})$ has been calculated in \cite[p,10]{Wal}, where $I_{\o^\epsilon_\Fn}$ is the identity of ${\o^\epsilon_\Fn}\fq$. In particular, we have
 \begin{equation}\label{spinor}
\chi_{\o^+_{\Fn}}(-I_{\o^+_{\Fn}})=-\chi_{\o^-_{\Fn}}(-I_{\o^-_{\Fn}}).
 \end{equation}

\subsection{Weil representation}

We briefly recall the Theta correspondence over finite fields.

Let
$V_\Fn$ and $V'_{{\Fn}'}$ be linear spaces over $\Fq$ equipped with the $\epsilon$-symmetric form $\langle, \rangle_{V_\Fn}$ and $(-\epsilon)$-symmetric form $\langle, \rangle_{V'_{\Fn'}}$, respectively,
where $\epsilon\in \{\pm\}$. Let  $G$ and $G'$  be the isometry groups of $V_\Fn$ and $V'_{\Fn'}$,
respectively. Let $W := V_\Fn\otimes V'_{\Fn'}$ be a symplectic equipped with the symplectic form $\langle, \rangle_W:=\langle, \rangle_{V_\Fn}\otimes \langle, \rangle_{V'_{\Fn'}}$, and $\sp(W)$ be the isometry group of $\langle \cdot, \cdot\rangle_W$.
Then $(G, G')$ is a reductive dual pair inside $\sp(W)$.

Let $\psi$ be a nontrivial additive character of $\Fq$.
Let $\omega_{N, \psi}$ be the Weil representation (cf. \cite{Ger, H}) of the finite symplectic group $\sp(W)^F = \sp_{2N}\fq$ (with $\dim W=2N$), which affords the unique irreducible representation of
the Heisenberg group of $W$ with central character $\psi$.
Write
$\omega_{G\times G'}^\psi$ for the restriction of $\omega_{N, \psi}$ to $G^F \times G'^F$. It is more convenience for us to use Ma-Qiu-Zou's modified Weil representation $\omega_{G,G'}^\psi$ (\cite[Section 1.4]{MQZ}):
\begin{equation}\label{mqz}
\omega_{G,G'}^\psi:=\left\{
\begin{array}{ll}
\left({\bf 1}_G\otimes(\xi\circ\rm{det}_{V'_{\Fn'}})^{\frac{1}{2}\rm{dim}V_{\Fn}}\right)\otimes\omega_{G\times G'}^\psi& \textrm{ if $G'$ is a orthogonal group}; \\
\left((\xi\circ\rm{det}_{V_{\Fn}})^{\frac{1}{2}\rm{dim}V'_{\Fn'}}\otimes {\bf 1}_{G'}\right)\otimes\omega_{G\times G'}^\psi& \textrm{ if $G$ is a orthogonal group},
\end{array}
\right.
\end{equation}
where ${\bf 1}_G$ is the trivial representation of $G^F$, and $\xi$ is the unique quadratic character of $\Fq^\times$:
\[
\xi:\Fq^\times \to \{\pm 1\}, \ a\to a^{\frac{q-1}{2}}.
\]
We decomposes $\omega_{G,G'}^\psi$ into a direct sum
\[
\omega_{G, G'}^\psi=\bigoplus_{\pi,\pi'} m^\psi_{\pi,\pi '} \,\pi\otimes\pi '
\]
where $\pi$ and $\pi '$ run over $\CE(G)$ and $\CE(G')$ respectively, and $m^\psi_{\pi,\pi'}$ are nonnegative integers. We can reassemble this decomposition as
\[
\omega_{G, G'}^\psi=\bigoplus_{\pi} \pi\otimes\Theta_{G,G'}^\psi(\pi )
\]
 where $\Theta_{G, G'}^\psi(\pi ) := \bigoplus_{\pi'} m^\psi_{\pi,\pi '}\pi '$ is a (not necessarily irreducible) representation of $G'^F$, called the (big) theta lift of $\pi$ from $G^F$ to $G'^F$. We will write $\pi'\subset \Theta^\psi_{G, G'}(\pi)$ if $\pi\otimes\pi'$ occurs in $\omega^\psi_{G,G'}$, i.e. $m^\psi_{\pi, \pi'}\neq 0$.  We remark that even if $\Theta^\psi_{G,G'}(\pi)=\pi'$ is irreducible, in general one only has
 \[
 \pi\subset \Theta^\psi_{G', G}(\pi'),
 \]
 where the equality does not necessarily hold.

Let $\psi$ and $\psi'$ be two nontrivial additive characters of $\Fq$ that belong to different square classes.
For the dual pair $(G, G') = (\sp_{2n}, \o^{\epsilon}_{2n'+1})$, we denote $\omega_{G,G'}^\psi$ by $\omega^{\psi,\epsilon}_{n,n'}$.
By \cite[Proposition 11]{Sze}, we have  $\omega^{\psi,\epsilon}_{n,n'}\cong \omega^{\psi',-\epsilon}_{n,n'}$.
Similar notation applies for $ (\sp_{2n}, \o^{\epsilon}_{2n'})$. In this case, by the proof of Lemma 5.3 in \cite{P3}, we know that
 \[
 m^\psi_{\pi,\pi '}= m^\psi_{\pi^c,(\pi ')^c}= m^{\psi'}_{\pi,(\pi ')^c}= m^{\psi'}_{\pi^c,\pi '},
 \] where $\pi\in \CE(G)$, $\pi'\in \CE(G')$. Although we consider Ma-Qiu-Zou's modified Weil representation instead of the original one, Pan's proof still holds.

From now on, we fix a nontrivial additive character $\psi$ of $\Fq$, and remove the superscript $\psi$ in $\omega^{\psi,\epsilon}_{n,n'}$. In particular, by considering the dual pair $(\sp_{2N}, \o^+_1)$, we have $\omega_{N,\psi}\cong\omega^{+}_{N,1}$, and we simply denote it by $\omega_{N}^+$.

  We write ${\bf G}={\bf Sp}$, ${\bf O}^\pm_{\rm{even}}$ and ${\bf O}^\pm_\rm{odd}$ for the Witt tower $\{G_{n}\}_{n\ge0}=\{\sp_{2n}\}_{n\ge0}$, $\{\o^\pm_{2n}\}_{n\ge0}$ and $\{\o^\pm_{2n+1}\}_{n\ge0}$.
Abbreviate the theta lifting from $G_n$ to $G'_{n'}$ as $\Theta^\pm_{n,n'}$ or $\Theta$ when the dual pairs' context is clear.

\subsection{A explicit description of the theta correspondence}
In their paper \cite{AMR}, Aubert, Michel, and Rouquier proposed a conjecture that explicitly describes the theta correspondence. Pan proved this conjecture in subsequent papers \cite{P1, P2}. However, they studied G\'erardin's Weil representation, whereas we consider Ma-Qiu-Zou's Weil representation in this paper because it is compatible with parabolic induction (cf. \cite[1.13]{MQZ}). This means that if $\sigma'$ appears in the theta lifting of $\sigma$ with $\sigma'\in \E_{\rm{cus}}(G_{n'})'$ and $\sigma'\in \E_{cus}(G_n)$ for a dual pair $(G_n, G_{n'}')$ with $G_n\in {\bf G}$ and $G_{n'}'\in {\bf G'}$, then the irreducible components of the theta lifts of elements in the Harish-Chandra series $\E(G_m,\sigma)$  lie in the Harish-Chandra series $\E_{\rm{cus}}(G'_{m'},\sigma')$. It is worth mentioning that Pan's study was based on compatibility with parabolic induction, as described above. Therefore, the results in \cite{P1, P2} actually apply to Ma-Qiu-Zou's Weil representation, not G\'erardin's. Moreover, Theorem 1.4 in \cite{MQZ} shows that Aubert, Michel, and Rouquier's conjecture holds for  Ma-Qiu-Zou's Weil representation. As a consequence, all results in \cite{Wang1} are for Ma-Qiu-Zou's Weil representation,  not G\'erardin's.

The next result shows that the theta lifting and the parabolic induction are compatible.

\begin{proposition}\label{w2}
Let $G_n$ and $G_{n+\ell}$ be two classical groups in the same Witt tower of symplectic groups or orthogonal groups, $\ell\geq 0$. Let $\pi\in\cal{E}(G_n,s)$ with $s\in (G^*_n)^\circ(\Fq)$, and  $\pi':=\Theta_{n,n'}(\pi)$. Let $\chi_{\GGL_{\ell}}$ be the unique linear character of $\GGL_\ell\fq$ of order $2$. Let $\tau\in\cal{E}(\GGL_\ell,s_0)$ with $s_0\in \GGL_\ell\fq$. Let $ \rm{Ind}^{G_{n+\ell}\fq}_{\GGL_\ell\fq\times G_{n}\fq}(\tau \otimes \pi)=\bigoplus_{i}\rho_i$ with $\rho_i$ irreducible.  Assume that $s$ and $s_0$ have no common eigenvalues, and $s_0$ has no eigenvalues $\pm1$. Then we have
\[
\bigoplus_{i}\Theta_{n+\ell, n'+\ell}(\rho_i)=\rm{Ind}^{G^{\prime }_{n'+\ell}\fq}_{\GGL_\ell\fq\times G^{\prime }_{n'}\fq}((\chi\otimes\tau)\otimes \pi').
\]
where
\[
\chi=\left\{
\begin{array}{ll}
\chi_{\GGL_{\ell}}, &  \textrm{if $(G_{n+\ell},G'_{n'+\ell})$ contains an odd orthogonal group,}\\
1, & \textrm{otherwise.}
\end{array}\right.
\]
Hence, by abuse of notations, we write
\[
\Theta_{n+\ell,n'+\ell}\left(\rm{Ind}^{G_{n+\ell}\fq}_{\GGL_\ell\fq\times G_{n}\fq}(\tau \otimes \pi)\right)=\rm{Ind}^{G^{\prime }_{n'+\ell}\fq}_{\GGL\fq_\ell\times G^{\prime }_{n'}\fq}((\chi\otimes\tau)\otimes \pi').
\]
\end{proposition}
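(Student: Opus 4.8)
The plan is to reduce the statement to the known compatibility of Ma--Qiu--Zou's Weil representation with parabolic induction, combined with a Mackey-theoretic (or Howe-style) analysis of the restriction of the Weil representation to the relevant Levi subgroups. First I would fix the dual pair $(G_{n+\ell}, G'_{n'+\ell})$ inside $\sp(W_{n+\ell})$ with $W_{n+\ell}=V_{n+\ell}\otimes V'_{n'+\ell}$, and choose the polarizations so that the $\ell$-dimensional isotropic subspace used to form the parabolic $\GGL_\ell\times G_n\subset G_{n+\ell}$ is compatible with a corresponding $\ell$-dimensional isotropic subspace on the $V'$ side. This exhibits $\GGL_\ell\times G_n\times G'_{n'}$ (together with a $\GGL_\ell$ acting diagonally, matched up by the identification of the two copies of $\ell$-space) as sitting inside a parabolic of $\sp(W_{n+\ell})$, so that the Weil representation $\omega_{n+\ell,n'+\ell}$ restricted to this group decomposes, up to a unipotent contribution and a character twist, as $\omega_{\GGL_\ell}\otimes \omega_{n,n'}$, where $\omega_{\GGL_\ell}$ is the Weil representation attached to the $\GGL_\ell\times\GGL_\ell$ pair. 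This is precisely the content of \cite[1.13]{MQZ} (the compatibility of the MQZ Weil representation with parabolic induction), and the character $\chi$ that appears is exactly the discrepancy between G\'erardin's and MQZ's normalizations, which by \eqref{mqz} is trivial unless an odd orthogonal group is present, in which case it is $\chi_{\GGL_\ell}$.

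Next I would make the bookkeeping precise. Writing $R = \Ind^{G_{n+\ell}^F}_{\GGL_\ell^F\times G_n^F}(\tau\otimes\pi)$ and $R' = \Ind^{G'^F_{n'+\ell}}_{\GGL_\ell^F\times G'^F_{n'}}((\chi\otimes\tau)\otimes\pi')$, the goal is the equality $\bigoplus_i\Theta_{n+\ell,n'+\ell}(\rho_i)=R'$ as virtual (in fact genuine) representations of $G'^F_{n'+\ell}$. Using the polarization above and Frobenius reciprocity, one computes $\Hom_{G_{n+\ell}^F}(R, \omega_{n+\ell,n'+\ell})$ as a $G'^F_{n'+\ell}$-module; the restriction-of-Weil decomposition reduces this to $\Hom_{\GGL_\ell^F}(\tau,\omega_{\GGL_\ell})\otimes\Hom_{G_n^F}(\pi,\omega_{n,n'})$, suitably induced up to $G'^F_{n'+\ell}$. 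The first factor, by the well-known theta correspondence for type-A dual pairs (every representation of $\GGL_\ell$ is uniform, and the $(\GGL_\ell,\GGL_\ell)$ correspondence sends $\tau$ to $\tau$ up to the twist by $\chi$), contributes exactly $\chi\otimes\tau$; the second contributes $\Theta_{n,n'}(\pi)=\pi'$. Inducing, this yields $R'$. The hypotheses that $s$ and $s_0$ share no eigenvalues and that $s_0$ has no eigenvalue $\pm1$ guarantee that $\rho_i$ genuinely lies in a single Lusztig series of $G_{n+\ell}$ built from the "product" datum, so that no cross terms or cancellations occur and the induced representation $R$ is multiplicity-free enough for the Hom-space computation to be clean; it also ensures the $\GGL_\ell$-block is disjoint from the $\pm1$-blocks where the orthogonal subtleties live.

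The main obstacle, I expect, is controlling the character twist and the unipotent radical contribution in the restriction of the Weil representation precisely enough — that is, making rigorous that restricting $\omega_{n+\ell,n'+\ell}$ to $(\GGL_\ell\times G_n)^F\times G'^F_{n'+\ell}$ (or to the appropriate parabolic on the $G'$ side) produces $\omega_{\GGL_\ell}\boxtimes\omega_{n,n'}$ with exactly the character $\chi$ of \eqref{mqz} and nothing more, including getting the powers of $\xi\circ\det$ to match up on both sides. This is essentially a careful unwinding of the cocycle/normalization in \cite[Section 1.4, 1.13]{MQZ} together with the observation \eqref{spinor} for the odd orthogonal case; once that lemma is in place, the rest is formal manipulation with Frobenius reciprocity and the type-A theta correspondence, and one should also invoke the already-recorded fact that the MQZ normalization is the one for which \cite{P1,P2,Wang1} hold, so that $\Theta_{n,n'}(\pi)=\pi'$ may be used as a black box.
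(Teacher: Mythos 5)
The paper itself gives no proof of Proposition \ref{w2}: it is quoted as the known compatibility of the Ma--Qiu--Zou Weil representation with parabolic induction (cf.\ \cite[1.13]{MQZ} and the mixed-model argument of \cite[Th\'eor\`eme 3.7]{AMR}), and your outline is exactly that standard argument --- two-sided Harish--Chandra restriction of the big Weil representation, Frobenius reciprocity, the type-$A$ correspondence on the $\GGL_\ell$-block, and the eigenvalue hypotheses to isolate the relevant term. So structurally you are doing what the paper implicitly relies on. Two points in your execution need repair, and both sit in the step you yourself flag as the crux. First, the restriction of $\omega_{n+\ell,n'+\ell}$ to the relevant Levi data is \emph{not} ``$\omega_{\GGL_\ell}\otimes\omega_{n,n'}$ up to a unipotent contribution and a twist'': over a finite field the two-sided Harish--Chandra restriction is a genuine direct sum of several terms (indexed by how much of the $\ell$-dimensional isotropic space pairs into the other member), only one of which has the shape (GL-piece)$\,\otimes\,\omega_{n,n'}$. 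The equality of big theta lifts, with multiplicities, only follows once you show the other summands contribute nothing to the Hom spaces against $\tau\otimes\pi$, resp.\ $(\chi\otimes\tau)\otimes\pi'$; this is where the hypotheses that $s_0$ avoids $\pm1$ and is disjoint from $s$ enter (the extra terms force GL-constituents in Lusztig series sharing eigenvalues with the classical block). You mention this, but it must be the stated mechanism rather than an afterthought.

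Second, your source for the character $\chi$ is wrong. The modification \eqref{mqz} is a character of the orthogonal member of the pair through its determinant; an element $(g,h,{}^{t}g^{-1})$ of the Levi $\GGL_\ell\times\o_{2m+1}$ has determinant $\det(h)$, so \eqref{mqz} restricts trivially to the $\GGL_\ell$-factor and cannot account for $\chi_{\GGL_\ell}$. The twist is intrinsic to the mixed-model computation for pairs containing an odd orthogonal group: the $\GGL_\ell$-Levi acts on the Schr\"odinger model attached to $X\otimes V'$ through a factor $\xi(\det g)^{\dim V'}$, which equals $\chi_{\GGL_\ell}(g)$ precisely when $\dim V'$ is odd, and it is already present with G\'erardin's normalization. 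If you try to extract $\chi$ from the G\'erardin-versus-MQZ discrepancy as proposed, the computation will return the trivial character and the bookkeeping in the odd orthogonal case will not close. With these two corrections the remainder of your plan (Frobenius reciprocity plus the $(\GGL_\ell,\GGL_\ell)$ correspondence sending $\tau$ to $\tau$ up to the twist) does give the stated identity.
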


The theta lifting of unipotent representations is as follows.

 \begin{theorem}[Pan]\label{p}

 Let $\pl\in\cal{E}(\sp_{2n},1)$ and $\pll\in \cal{E}(\o^\epsilon_{2n'},1)$. Then $\pl\otimes\pll$ occurs in $\omega_{n,n'}^\epsilon$ if and only if $(\Lambda,\Lambda')\in \mathcal{B}^\epsilon_{n,n'}$, where
 \[
\begin{aligned}
&\mathcal{B}^+_{n,n'}:=\left\{(\Lambda,\Lambda')|{}^t(\Upsilon(\Lambda')^-)\preccurlyeq{}^t(\Upsilon(\Lambda)^+),
{}^t(\Upsilon(\Lambda)^-)\preccurlyeq{}^t(\Upsilon(\Lambda')^+),\rm{def}(\Lambda')=-\rm{def}(\Lambda)+1\right\}; \\
&\mathcal{B}^-_{n,n'}:=\left\{(\Lambda,\Lambda')|{}^t(\Upsilon(\Lambda')^+)\preccurlyeq {}^t(\Upsilon(\Lambda)^-),
{}^t(\Upsilon(\Lambda)^+)\preccurlyeq{}^t(\Upsilon(\Lambda')^-),\rm{def}(\Lambda')=-\rm{def}(\Lambda)-1\right\}
\end{aligned}
\]
are two subsets of $\cal{S}_n\times\cal{S}_{n'}^{+}$ and $\cal{S}_n\times\cal{S}_{n'}^{-}$, respectively.
 \end{theorem}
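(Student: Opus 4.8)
The plan is to deduce the statement from the explicit description of the theta correspondence conjectured by Aubert--Michel--Rouquier \cite{AMR} and proved by Pan \cite{P1,P2}, which --- as recalled in the discussion preceding the theorem --- is already valid for the Ma--Qiu--Zou Weil representation $\omega^\epsilon_{n,n'}$ rather than only for G\'erardin's. First I would reduce to a purely combinatorial assertion: since $\omega^\epsilon_{n,n'}$ is compatible with Deligne--Lusztig (and Harish-Chandra) induction and the trivial semisimple parameter of $\sp_{2n}$ matches the trivial one of $\o^\epsilon_{2n'}$ under the correspondence of parameters for this dual pair, every irreducible constituent of the theta lift of a unipotent representation is again unipotent; hence $m^\psi_{\pl,\pll}$ is governed by a rule depending only on the pair of symbols $(\Lambda,\Lambda')\in\cal{S}_n\times\cal{S}^\epsilon_{n'}$, and the task becomes to pin down exactly which pairs make it nonzero.

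Next I would transport the problem to bi-partitions through the bijection $\Upsilon$ of \eqref{bp} and match the occurrence condition with the AMR recipe, which is precisely the interlacing recorded in $\mathcal{B}^\epsilon_{n,n'}$: for $\epsilon=+$ the two inequalities ${}^t(\Upsilon(\Lambda')^-)\preccurlyeq{}^t(\Upsilon(\Lambda)^+)$ and ${}^t(\Upsilon(\Lambda)^-)\preccurlyeq{}^t(\Upsilon(\Lambda')^+)$, and for $\epsilon=-$ the same with the roles of $+$ and $-$ interchanged. The defect constraint is then almost forced: from $\mathrm{def}(\Lambda)\equiv 1\pmod 4$ the relation $\mathrm{def}(\Lambda')=-\mathrm{def}(\Lambda)\pm 1$ must land in $\mathrm{def}(\Lambda')\equiv 0$ (resp.\ $2$) $\pmod 4$ to be consistent with $\cal{S}^+_{n'}$ (resp.\ $\cal{S}^-_{n'}$), and the exact sign is pinned down by a single base case --- for instance the theta lift of the trivial representation of $\sp_{2n}$, whose symbol is $\binom{n}{-}$ of defect $1$, which at first occurrence lifts to the trivial representation of an even orthogonal group --- together with the behaviour of defects under first occurrence. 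Finally I would settle the remaining cases by verifying $n=0$ and $n'=0$ directly and then propagating along Harish-Chandra series by Proposition \ref{w2}.

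The main obstacle is the normalization bookkeeping in the second step. G\'erardin's unmodified Weil representation and $\omega^\epsilon_{n,n'}$ differ by the twist $(\xi\circ\det_{V'})^{n}$ of \eqref{mqz}, which on $\o^\epsilon_{2n'}\fq$ is a power of the sign character and hence, when $n$ is odd, sends $\pll$ to $\sgn\pll=\pi_{(\Lambda')^t}$; since transposing a symbol swaps $\Upsilon(\Lambda')^+$ with $\Upsilon(\Lambda')^-$ and negates the defect, one must check that the form of $\mathcal{B}^\epsilon_{n,n'}$ written above is the one compatible with $\omega^\epsilon_{n,n'}$ and not the one that would come out of the G\'erardin model. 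This is exactly the point controlled by \cite[Theorem 1.4]{MQZ} and by Pan's reformulation of \cite{P1,P2} in terms of compatibility with parabolic induction, so the normalization in the statement is the correct one; carefully tracking this equivalence --- rather than proving any new hard inequality --- is where the real care is needed.
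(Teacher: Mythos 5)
The paper gives no independent proof of this theorem: it is quoted as Pan's result (the Aubert--Michel--Rouquier description proved in \cite{P1,P2}), with the surrounding discussion and \cite[Theorem 1.4]{MQZ} invoked to justify that it holds verbatim for the Ma--Qiu--Zou normalization of the Weil representation. Your proposal follows essentially this same route --- reduce to the AMR/Pan combinatorial description and track the normalization via \cite{MQZ} and compatibility with parabolic induction --- so it matches the paper's treatment.
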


The general case can be deduce from the unipotent case for dual pair $(\sp_{2n},\o^\epsilon_{2n'})$ by Theorem \ref{p1} and Theorem \ref{p2}.

\begin{theorem}[Pan]\label{p1}
 Let $(G, G') = (\sp_{2n},\o^\epsilon_{2n'})$, and let $\pi\in\cal{E}(G,s)$ and $\pi'\in \cal{E}(G',s')$ for
some semisimple elements $s \in G^*\fq$ and $s'\in (G^{\prime *})^0\fq$. Write $\cal{L}_{s}(\pi)=\rho\otimes\pi_{\Lambda_1}\otimes\pi_{\Lambda_{-1}}$ and
and $\cal{L}_{s'}(\pi')=\rho'\otimes\pi_{\Lambda'_1}\otimes\pi_{\Lambda'_{-1}}$.
Then there exists an irreducible representation $\pi''\in[\pi']$
such that $\pi \otimes \pi''$ occurs in $\omega_{n,n'}^\epsilon$ if and only if the following conditions hold:
\begin{itemize}

\item For any $a\ne \pm 1$ in $\overline{\mathbb{F}}_q$, $G^*_{[a]}(s)\cong G^{\prime *}_{[a]}(s')$,  and $\pi[a]\cong \pi'[a]$, where $\dg=\prod G^*_{[a]}(s)$, $G^{\prime * }_{[\ne\pm 1]}(s')=\prod G^{\prime *}_{[a]}(s')$,
$\rho=\prod\pi[a]$, and $\rho'=\prod\pi'[a]$;

\item either $\pi^{\sp}_{\Lambda_1}\otimes\pi_{\Lambda_1}$ or $\pi^{\sp}_{\Lambda_1}\otimes\pi_{\Lambda_1^t}$ occurs in $\omega_{\ddg, G^{\prime * }_{[ 1]}(s')}$;

\item  $\pi_{\Lambda_{-1}}\in[\pi_{\Lambda'_{-1}}]$,
\end{itemize}
where $\pi^{\sp}_{\Lambda_1}:=\CL_1(\pi_{\Lambda_1})$ is the unipotent representation of a symplectic group correspondence to the symbol $\Lambda_1$.
That is, the following diagram:
\[
\setlength{\unitlength}{0.8cm}
\begin{picture}(20,5)
\thicklines
\put(5.8,4){$\pi$}
\put(5.2,2.6){$\cal{L}'_s$}
\put(13.8,2.6){$\cal{L}_{s'}'$}
\put(9.4,4.2){$\Theta$}
\put(4.3,1){$\rho\otimes\pi_{\Lambda_1}\otimes\pi_{\Lambda_{-1}}$}
\put(13.4,4){$\pi''$}
\put(11.8,1){$\rho'\otimes\pi_{\Lambda_1'}\otimes\pi_{\Lambda_{-1}'}$}
\put(8.2,1.3){$\rm{id}\otimes\Theta\circ \CL_1\otimes\rm{id}$}
\put(6,3.6){\vector(0,-1){2.1}}
\put(13.5,3.6){\vector(0,-1){2.1}}
\put(8.2,1.1){\vector(2,0){3}}
\put(8.2,4){\vector(2,0){3}}
\end{picture}
\]
commutes up to a twist of the sgn character.
\end{theorem}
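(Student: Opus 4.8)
The plan is to reduce the general case to the unipotent case (Theorem \ref{p}) by exploiting the fact that theta correspondence interacts with Lusztig's Jordan decomposition through a "block-by-block" matching of the eigenvalue parts of $s$ and $s'$. First I would recall the key structural input: for the dual pair $(\sp_{2n},\o^\epsilon_{2n'})$ the Weil representation $\omega^\epsilon_{n,n'}$ decomposes compatibly with the decomposition $C_{G^*}(s)\cong G^*_{[\ne\pm1]}(s)\times G^*_{[1]}(s)\times G^*_{[-1]}(s)$, and that an irreducible $\pi\otimes\pi''$ can occur in $\omega^\epsilon_{n,n'}$ only if the semisimple parameters $s$ and $s'$ are related so that their non-$\pm1$ eigenvalue data literally agree. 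This last point is essentially a consequence of the behavior of Deligne--Lusztig characters under the Howe correspondence (the theta lift of $R^G_{T^*,s}$ is, up to sign, $R^{G'}_{T'^*,s'}$ for a matching torus), together with the compatibility of $\CL_s$ with \eqref{l1}; combining these forces the first bulleted condition $G^*_{[a]}(s)\cong G'^*_{[a]}(s')$ and $\pi[a]\cong\pi'[a]$ for $a\ne\pm1$.

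Next I would isolate the $\pm1$ parts. Having matched the $\ne\pm1$ blocks, the problem localizes to the dual pair obtained by restricting scalars to the $s=1$ and $s=-1$ eigenspaces of $V^*$, i.e. to theta correspondence between the unipotent representations of $G^*_{[1]}(s)$, $G^*_{[-1]}(s)$ (which are $\so_{2\nu_1(s)+1}$ and $\o_{2\nu_{-1}(s)}$ by the structure recalled in Section \ref{sec4}) and the corresponding groups for $s'$. Here the $\Lambda_1$ part contributes an odd-orthogonal/symplectic pairing, which accounts for the appearance of $\pi^{\sp}_{\Lambda_1}=\CL_1(\pi_{\Lambda_1})$ and the condition that $\pi^{\sp}_{\Lambda_1}\otimes\pi_{\Lambda_1}$ (or its $\Lambda_1^t$-twist) occur in $\omega_{\ddg,G'^*_{[1]}(s')}$: this is exactly the "transfer" of the $s=1$ block through the symplectic group, which is why the composite map $\id\otimes(\Theta\circ\CL_1)\otimes\id$ appears in the diagram. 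The $\Lambda_{-1}$ part is an even-orthogonal/symplectic pairing between unipotent representations, and there the condition degenerates to $\pi_{\Lambda_{-1}}\in[\pi_{\Lambda'_{-1}}]$ — i.e. one only controls the uniform projection — because of the genuine ambiguity in the Lusztig correspondence for even orthogonal groups described in Proposition \ref{l3}; this is also the source of the "up to a twist of the sgn character" caveat and the need to pass to some $\pi''\in[\pi']$ rather than $\pi'$ itself.

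Then I would assemble the three blocks: using Proposition \ref{w2} (compatibility of theta lifting with parabolic induction) one writes $\pi$ as an irreducible constituent of a parabolic induction from $\GGL$-factors (carrying the $\ne\pm1$ data, realized via Lusztig induction from the torus matching $s$) times a basic representation carrying the $\pm1$ data, and similarly for $\pi''$; theta lifting the induced representation block by block and invoking the unipotent case Theorem \ref{p} on the $\pm1$ blocks yields precisely the stated three conditions, and the diagram commutes up to sgn because each ingredient does. The main obstacle I anticipate is the bookkeeping around the two kinds of ambiguity: tracking which sgn twists and which choices of representative in $[\pi']$, $[\pi_{\Lambda_{-1}}]$ are forced versus free, and checking that the $\Lambda_1$-block transfer through $\CL_1$ is well-defined independently of these choices — this is where one must lean most carefully on \cite[Lemma 4.1, Lemma 4.8, Corollary 4.14]{P3} and the description of $[\pi]$ for even orthogonal groups, rather than on any formal manipulation of Deligne--Lusztig characters.
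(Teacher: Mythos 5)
This statement is not proved in the paper at all: it is quoted as Pan's theorem (from \cite{P1,P2}, building on \cite{AMR}), so there is no in-paper argument to compare yours against, and your sketch has to stand on its own as an outline of Pan's result. Judged that way, it has a genuine gap at its core. The input you lean on --- that ``the theta lift of $R^G_{T^*,s}$ is, up to sign, $R^{G'}_{T'^*,s'}$'' --- is not a correct statement; what exists is the Aubert--Michel--Rouquier formula expressing the \emph{uniform projection} of the Weil representation as an explicit sum over matching pairs of Deligne--Lusztig characters. That formula only controls $\pi^\#$, and neither an individual $\pi''$ nor the sum over the packet $[\pi']$ is uniform, so the actual assertion of Theorem \ref{p1} --- the existence of some $\pi''\in[\pi']$ occurring in $\omega^\epsilon_{n,n'}$ with the $\pm1$-blocks matched as stated --- cannot be extracted from the uniform part plus ``theta commutes with parabolic induction'' (Proposition \ref{w2}). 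The inductions you propose to use are reducible and theta does not match their irreducible constituents one by one; deciding which constituents correspond is precisely the non-uniform content of Pan's proof, which goes through compatibility of the Howe correspondence with Harish-Chandra series and Hecke-algebra module comparisons (in the normalization used here, \cite{MQZ}). Your step ``the problem localizes to the $s=\pm1$ eigenspaces \dots\ assemble the three blocks'' treats as formal exactly the part that constitutes the theorem, and your closing paragraph locates the difficulty in sign bookkeeping rather than here.

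There is also a concrete misreading of the block structure for this particular dual pair. For $(\sp_{2n},\o^\epsilon_{2n'})$ the $[1]$-block of $s$ is $G^*_{[1]}(s)=\so_{2\nu_{1}(s)+1}$, converted by $\CL_1$ into a symplectic group, and it is the $[1]$-blocks that are related by the unipotent symplectic/even-orthogonal correspondence of Theorem \ref{p} (so ``odd-orthogonal/symplectic pairing'' is already imprecise); the $[-1]$-blocks $G^*_{[-1]}(s)$ and $G^{\prime *}_{[-1]}(s')$ are \emph{both} even orthogonal groups, no symplectic group or theta correspondence enters that block, and the condition $\pi_{\Lambda_{-1}}\in[\pi_{\Lambda'_{-1}}]$ is an identification of the $[-1]$-data up to the ambiguity of Proposition \ref{l3}, not a theta pairing that ``degenerates.'' The picture you describe, with a symplectic group paired against the $\Lambda_{-1}$-part, is the one for the odd pair of Theorem \ref{p2}. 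You end up quoting the correct final conditions, but if the assembly step were actually carried out from the mechanism you describe, the $[-1]$-block would be treated by a unipotent theta correspondence and would yield the wrong condition there.
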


\begin{theorem}[Pan]\label{p2}
Let $(G, G') = (\sp_{2n},\o^\epsilon_{2n'+1})$, and let $\pi \in  \cal{E}(G,s)$ and $\pi'\in \cal{E}(G',s')$ for some semisimple elements $s \in G^*\fq$ and $s'\in (G^{\prime *})^0\fq$. Write $\cal{L}_{s}(\pi)=\rho\otimes\pi_{\Lambda_1}\otimes\pi_{\Lambda_{-1}}$ and
and $\cal{L}_{s'}(\pi')=\rho'\otimes\pi_{\Lambda'_1}\otimes\pi_{\Lambda'_{-1}}\otimes\epsilon'$. Then there exists an irreducible representation $\pi''\in[\pi']$ such that $\pi \otimes \pi''$ occurs in $\omega_{n,n'}^\epsilon$ if and only if the following conditions hold:
\begin{itemize}

\item For any $a\ne \pm 1$ in $\overline{\mathbb{F}}_q$, $G^*_{[a]}(s)\cong G^{\prime *}_{[-a]}(s')$,  and $\pi[a]\cong \pi'[-a]$, where $\dg=\prod G^*_{[a]}(s)$, $G^{\prime * }_{[\ne\pm 1]}(s')=\prod G^{\prime *}_{[a]}(s')$,
$\rho=\prod\pi[a]$, and $\rho'=\prod\pi'[a]$;

\item either $\pi_{\Lambda_{-1}}\otimes\pi_{\Lambda'_{1}}$ or $\pi_{\Lambda^t_{-1}}\otimes\pi_{\Lambda'_{1}}$  occurs in $\omega_{\dddg,G^{\prime * }_{[ 1]}(s')}$;

\item  $\pi^{\sp}_{\Lambda_1}\cong\pi_{\Lambda'_{-1}}$,
\end{itemize}
where $\pi^{\sp}_{\Lambda_1}:=\CL_1(\pi_{\Lambda_1})$ is the unipotent representation of a symplectic group correspondence to the symbol $\Lambda_1$.
That is, the following diagram:
\[
\setlength{\unitlength}{0.8cm}
\begin{picture}(20,5)
\thicklines
\put(5.8,4){$\pi_i$}
\put(5.2,2.6){$\cal{L}'_s$}
\put(13.8,2.6){$f\circ\cal{L}_{s'}'$}
\put(9.4,4.2){$\Theta$}
\put(4.3,1){$\rho\otimes\pi_{\Lambda_1}\otimes\pi_{\Lambda_{-1}}$}
\put(13.4,4){$\pi'$}
\put(11.8,1){$\rho'\otimes\pi_{\Lambda'_{-1}}\otimes\pi_{\Lambda'_{1}}$}
\put(8.5,1.3){$\rm{id}\otimes\CL_1\otimes\Theta$}
\put(6,3.6){\vector(0,-1){2.1}}
\put(13.5,3.6){\vector(0,-1){2.1}}
\put(8.2,1.1){\vector(2,0){3}}
\put(8.2,4){\vector(2,0){3}}
\end{picture}
\]
commutes up to a twist of the sgn character where $f(\rho'\otimes\pi_{\Lambda'_1}\otimes\pi_{\Lambda'_{-1}}\otimes\epsilon')=\rho'\otimes\pi_{\Lambda'_{-1}}\otimes\pi_{\Lambda'_{1}}$.

\end{theorem}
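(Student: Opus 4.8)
This statement is due to Pan \cite{P1,P2} (the version for the Ma--Qiu--Zou Weil representation $\omega^\psi_{G,G'}$ being supplied by \cite[Theorem 1.4]{MQZ}); the plan I would follow is essentially his. The organizing principle is that the theta correspondence is compatible with the block decomposition \eqref{decomp} of the centralizers, so one treats the three pieces $G^*_{[\ne\pm1]}(s)$, $G^*_{[1]}(s)=\so_{2\nu_1(s)+1}$, $G^*_{[-1]}(s)=\o_{2\nu_{-1}(s)}$ of $C_{\so_{2n+1}}(s)$ and the pieces $G^{\prime*}_{[\ne\pm1]}(s')$, $G^{\prime*}_{[1]}(s')=\sp_{2\nu_1(s')}$, $G^{\prime*}_{[-1]}(s')=\sp_{2\nu_{-1}(s')}$ of $C_{\sp_{2n'}}(s')$ one at a time, and then recombine. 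The ultimate target is to reduce everything to the even orthogonal--symplectic theta correspondence, for which Theorem~\ref{p} (unipotent) and Theorem~\ref{p1} (general) are available.

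The first step is the $[\ne\pm1]$ block. Choosing $F$-stable parabolic subgroups of $\sp_{2n}$ and $\o^\epsilon_{2n'+1}$ whose Levi factors carry, respectively, the $a\ne\pm1$ eigendata of $s$ and of $s'$, Proposition~\ref{w2} reduces the assertion to the case $\rho=\rho'={\bf 1}$ at the cost of the explicit character twist recorded there. The theta correspondence of general linear and unitary groups is, up to contragredience, essentially the identity; combining this with the $\xi\circ\det$ normalization built into $\omega^\psi_{G,G'}$ in \eqref{mqz} and with the relation $\omega^{\psi,\epsilon}_{n,n'}\cong\omega^{\psi',-\epsilon}_{n,n'}$ forces $G^*_{[a]}(s)\cong G^{\prime*}_{[-a]}(s')$ and $\pi[a]\cong\pi'[-a]$ for all $a\ne\pm1$. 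The eigenvalue flip $a\leftrightarrow-a$ that appears here (as opposed to $a\leftrightarrow a$ in Theorem~\ref{p1}) is the shadow of the odd orthogonal normalization, and is the first place where care with twists is required.

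For the $[\pm1]$ blocks the plan is to pass to an even orthogonal dual pair by a see-saw argument. Write the quadratic space $V'$ of dimension $2n'+1$ as $V'=V'_0\perp\langle v_0\rangle$ with $\dim V'_0=2n'$, so that $\o(V'_0)\times\o(\langle v_0\rangle)$ sits inside $\o(V')=\o^\epsilon_{2n'+1}$ with $\o(\langle v_0\rangle)=\{\pm I\}$; the associated see-saw
\[
\begin{array}{ccc}
\sp_{2n}\times\sp_{2n} & & \o^\epsilon_{2n'+1}\\
\cup & & \cup\\
\sp_{2n}^{\,\Delta} & & \o^{\epsilon_1}_{2n'}\times\{\pm I\}
\end{array}
\]
then expresses $\Theta^\epsilon_{n,n'}$ through the even orthogonal lift $\Theta_{\sp_{2n},\o^{\epsilon_1}_{2n'}}$ together with the degenerate pair $(\sp_{2n},\{\pm I\})$, the latter contributing only the decomposition of the Weil representation of $\sp_{2n}$ into its $\pm I$-eigenspaces, which is exactly what pins down the label $\epsilon'$ (consistently with $\pi_{\Lambda,\iota}(-I)=\iota\cdot\mathrm{Id}$ and the spinor-norm identity \eqref{spinor}). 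Transporting the block decomposition through this see-saw and invoking Theorem~\ref{p1} (whose $[\ne\pm1]$ part has already been handled) yields that the $[-1]$ block of $s$ matches the $[1]$ block of $s'$ by the even orthogonal--symplectic unipotent correspondence of Theorem~\ref{p}, i.e. $\pi_{\Lambda_{-1}}\otimes\pi_{\Lambda'_1}$ or $\pi_{\Lambda_{-1}^t}\otimes\pi_{\Lambda'_1}$ occurs in $\omega_{\dddg,G^{\prime*}_{[1]}(s')}$, the $\Lambda_{-1}\leftrightarrow\Lambda_{-1}^t$ alternative being precisely the indeterminacy of $\pi_{\Lambda_{-1}}$ inside its uniform class. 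On the complementary block the conditions force $\nu_1(s)=\nu_{-1}(s')$, and the rule $\pi^{\sp}_{\Lambda_1}\cong\pi_{\Lambda'_{-1}}$ is then just the identity on symbols, reflecting that unipotent representations of $\so_{2k+1}$ and of $\sp_{2k}$ are parametrized by the same set $\cal{S}_k$ (this is the content of the map $\CL_1$ in the diagram). Reassembling the three blocks gives the stated conditions, the relabelling map $f$ merely swapping the two symplectic pieces on the $\o^\epsilon_{2n'+1}$ side.

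The step I expect to be the real obstacle is not the structural reduction but the bookkeeping of signs. Every ingredient above --- the parabolic-induction twist of Proposition~\ref{w2}, the passage $\psi\leftrightarrow\psi'$, the $\xi\circ\det$ normalization, the $\det$-twist produced by the $\{\pm I\}$ factor, the central-character and spinor-norm constraints fixing $\epsilon'$, and the $\Lambda\leftrightarrow\Lambda^t$ ambiguities inside the uniform classes $[\pi]$ and $[\pi']$ --- contributes a potential $\rm{sgn}$-twist, and the genuine work is to verify that all of them collapse into the single clause ``commutes up to a twist of the $\rm{sgn}$ character.'' This verification is what \cite{P2} carries out; here the statement is only being quoted, so no new argument beyond citing it is needed.
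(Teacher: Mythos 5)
Your proposal is correct and matches what the paper actually does: Theorem \ref{p2} is not proved in the paper at all but quoted from Pan \cite{P1,P2}, with the surrounding discussion (compatibility with parabolic induction and \cite[Theorem 1.4]{MQZ}) only justifying that Pan's result transfers to the Ma--Qiu--Zou normalization of the Weil representation, which is exactly the citation you end with. Your block-by-block sketch (reduction of the $[\ne\pm1]$ part via Proposition \ref{w2}, see-saw with $\o^{\epsilon_1}_{2n'}\times\o_1$ to reduce to the even orthogonal--symplectic case, and the identification of symbols through $\CL_1$) is a reasonable account of how such a deduction goes, but no new argument beyond the citation is needed or given in the paper.
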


Therefore, the description of general Howe correspondence for dual pair of a symplectic group and an orthogonal group is now completely characterized up to sgn.
\begin{proposition}\label{-1}
Let $\pi\in \CE(G)$, and $\pi'\in \CE(G')$, and let $I_G$ and $I_{G'}$ be the identities of $G$ and $G'$, respectively. Assume that $\pi\otimes\pi'$ occurs in the Weil representation $\omega_{G, G'}^\psi$.
   \begin{enumerate}
 \item [(i)]
If $(G,G')=(\sp_{2n},\o^{\epsilon}_{2n'})$, then $\pi(-I_G)=\pi'(-I_{G'})$.
 \item[(ii)]
If $(G,G')=(\sp_{2n},\o^{\epsilon}_{2n'})$, then $\pi(-I_G)=\varepsilon(-1)^{n}\cdot\pi'(-I_{G'})$.
\end{enumerate}

\end{proposition}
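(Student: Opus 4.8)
The plan is to exploit that, under the dual pair embedding $G^F\times G'^F\hookrightarrow\sp(W)^F$ with $W=V_\Fn\otimes V'_{\Fn'}$, the two central elements $-I_G$ and $-I_{G'}$ map to the \emph{same} element $-I_W$ of $\sp(W)$. Indeed $g\in G$ acts on a pure tensor by $v\otimes v'\mapsto gv\otimes v'$, so $-I_G=-I_{V_\Fn}$ sends $v\otimes v'\mapsto -(v\otimes v')$; the identical computation for $-I_{G'}=-I_{V'_{\Fn'}}$ gives the same operator. Hence the operator by which $-I_G$ acts in $\omega^\psi_{G\times G'}$ coincides with the operator by which $-I_{G'}$ acts there, both being the restriction of $\omega_{N,\psi}(-I_W)$. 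Now restrict to the $\pi\otimes\pi'$-isotypic subspace of $\omega^\psi_{G\times G'}$, which is nonzero as soon as $\pi\otimes\pi'$ occurs: on it $-I_G$ (central in $G^F$) acts by the scalar $\pi(-I_G)$ and $-I_{G'}$ (central in $G'^F$) by the scalar $\pi'(-I_{G'})$ by Schur's lemma, and these two scalars must agree. So $\pi(-I_G)=\pi'(-I_{G'})$ whenever $\pi\otimes\pi'$ occurs in the \emph{unmodified} Weil representation $\omega^\psi_{G\times G'}$.

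Next I would transfer this to the modified representation $\omega_{G,G'}^\psi$ of \eqref{mqz}. Since $G=\sp_{2n}$ and $G'$ is orthogonal, $\omega_{G,G'}^\psi=\big({\bf 1}_G\otimes(\xi\circ\det_{V'_{\Fn'}})^{n}\big)\otimes\omega^\psi_{G\times G'}$, because $\tfrac12\dim V_\Fn=n$; as $\xi\circ\det_{V'_{\Fn'}}$ has order dividing $2$, the hypothesis that $\pi\otimes\pi'$ occurs in $\omega_{G,G'}^\psi$ is equivalent to $\pi\otimes\big(\pi'\cdot(\xi\circ\det_{V'_{\Fn'}})^{n}\big)$ occurring in $\omega^\psi_{G\times G'}$. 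Feeding the latter into the conclusion of the previous paragraph yields
\[
\pi(-I_G)=\pi'(-I_{G'})\cdot\xi\big(\det_{V'_{\Fn'}}(-I_{G'})\big)^{n}=\pi'(-I_{G'})\cdot\xi\big((-1)^{\dim V'_{\Fn'}}\big)^{n}.
\]

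It remains to evaluate the correction factor in the two cases. For $(G,G')=(\sp_{2n},\o^\epsilon_{2n'})$ the space $V'_{\Fn'}$ has even dimension $2n'$, so $\xi((-1)^{2n'})=\xi(1)=1$ and $\pi(-I_G)=\pi'(-I_{G'})$, which is (i). For the remaining case, $(G,G')=(\sp_{2n},\o^\epsilon_{2n'+1})$ (the orthogonal group there being odd), $V'_{\Fn'}$ has odd dimension $2n'+1$, so the factor is $\xi(-1)^{n}$; since $\xi(-1)=(-1)^{(q-1)/2}$ is $+1$ precisely when $-1$ is a square in $\Fq^\times$, it coincides, viewed in $\Fq^\times/(\Fq^\times)^2\cong\{\pm1\}$, with the class $\varepsilon(-1)$, and we obtain $\pi(-I_G)=\varepsilon(-1)^{n}\,\pi'(-I_{G'})$, which is (ii). The argument is essentially formal; the only points needing care are bookkeeping ones — keeping the direction of the determinant twist and the exponent $\tfrac12\dim V_\Fn=n$ straight, and matching the character value $\xi(-1)$ with the square class $\varepsilon(-1)$ — while the one structural input, that $-I_G$ and $-I_{G'}$ become the same element of $\sp(W)$, is immediate.
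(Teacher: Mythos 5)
Your argument is correct and is essentially the paper's own proof: the structural input is that the embedding sends both $-I_G\times I_{G'}$ and $I_G\times -I_{G'}$ to $-I_{\sp_{2N}}$, and the factor $\varepsilon(-1)^n$ arises exactly from the $(\xi\circ\det)^{n}$ twist in Ma--Qiu--Zou's modification, which vanishes in the even orthogonal case. You also correctly read case (ii) as the pair $(\sp_{2n},\o^{\epsilon}_{2n'+1})$, as the statement's repetition of $\o^{\epsilon}_{2n'}$ there is a typo.
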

\begin{proof}
It  immediately follows from the fact that the embedding $G\times G'\to \sp_{2N}$ sends both $-I_G\times I_{G'}$ and $I_G\times -I_{G'}$ to $-I_{\sp_{2N}}$. Note that we have a character $(\xi\circ \rm{det})^n$ in Ma-Qiu-Zou's Weil representation \eqref{mqz}. So we need to add $\varepsilon(-1)^{n}$ when $(G,G')=(\sp_{2n},\o^{\epsilon}_{2n'+1})$.
\end{proof}

\subsection{First occurrence index of unipotent basic representations}\label{sec7}

For an irreducible representation $\pi$ of $G_{n}$, the smallest integer $n(\pi)$ such that $\pi$ occurs in $\omega_{G_n,G'_{n(\pi)}}$ is called the {\it first occurrence index} of $\pi$ with respect to the Witt tower $\left\{G'_{n}\right\}_{\ge 0}$, and called $\Theta_{G_n,G'_{n(\pi)}}$ the first theta lifting of $\pi$.

\begin{definition}
\begin{enumerate}
\item  Assume that $G_n=\sp_{2n}$. Let $A\in \{\rm{even},\rm{odd}\}$, and $n^\epsilon_{A}(\pi)$ be the first occurrence index of $\pi$ with respect to the Witt tower ${\bf O}^\epsilon_{A}$. We set
    \[
         {\bf n}^\epsilon_{A}(\pi) :=n-n^\epsilon_{A}(\pi).
     \]
    We abbreviate  $n^\epsilon_{\rm{even}}(\pi)$,  $n^\epsilon_{\rm{odd}}(\pi)$, $\nepe(\pi)$ and $\nepo(\pi)$ by $n^\epsilon(\pi)$ and $\nep(\pi)$ when no confusion arise.

    \item  Assume that $G_n=\o^\epsilon_{2n}$ or $\o^\epsilon_{2n+1}$. Let $n^\epsilon(\pi)$ be the first occurrence index of $\pi$ with respect to the Witt tower ${\bf Sp}$. We set
        \[
         {\bf n}^\epsilon(\pi):=n-n^\epsilon(\pi).
         \]

\item We say that the theta correspondence of $\pi$ goes down (resp. up) with respect to the Witt tower $\left\{G'_{n}\right\}$ if $\nep(\pi)\ge 0$ (resp. $\nep(\pi)< 0$).
 \end{enumerate}
 \end{definition}

For a dual pair $(G_n,G'_{n'})$ and an irreducible cuspidal representation $\pi$ of $G_n$, by \cite[Theorem 2.2]{AM} , there is a cuspidal representation appearing in the theta lifting if and only if $n'$ is the first occurrence index of $\pi$. Moreover, the theta lifting of $\pi$ is an irreducible cuspidal representation.

\begin{theorem}[\cite{AM}, Theorem 5.2] \label{even}
The theta correspondence for dual pairs $(\sp_{2n}, \o^\epsilon_{2n'})$ takes unipotent cuspidal representations to unipotent cuspidal representations as follows :
\begin{enumerate}
 \item $(\sp_{2k(k+ 1)}, \o^\epsilon _{2k^2})$, $\epsilon = \rm{sgn}(-1)^k$,
\[
\Theta^{\epsilon}_{k(k+1), k^2}: \left\{
\begin{aligned}
&\pl,\ \Lambda=\begin{pmatrix}
2k,\cdots,1,0\\
-
\end{pmatrix}
\to
\pll,\ \Lambda'=\begin{pmatrix}
-\\
2k-1,\cdots,1,0\\
\end{pmatrix}
\textrm{ if }k \textrm{ is even;}\\
&\pl,\ \Lambda=\begin{pmatrix}
-\\
2k,\cdots,1,0
\end{pmatrix}
\to
\pll,\ \Lambda'=\begin{pmatrix}
2k-1,\cdots,1,0\\
-
\end{pmatrix}
\textrm{ if }k \textrm{ is odd}.
\end{aligned}\right.
 \]

\item  $(\sp_{2k(k+ 1)}, \o^\epsilon _{2(k+1)^2})$, $\epsilon = \rm{sgn}(-1)^{k+1}$,
\[
\Theta^{\epsilon}_{k(k+1), (k+1)^2}: \left\{
\begin{aligned}
&\pl,\ \Lambda=\begin{pmatrix}
2k,\cdots,1,0\\
-
\end{pmatrix}
\to
\pll,\ \Lambda'=\begin{pmatrix}
-\\
2k+1,\cdots,1,0\\
\end{pmatrix}
\textrm{ if }k \textrm{ is even;}\\
&\pl,\ \Lambda=\begin{pmatrix}
-\\
2k,\cdots,1,0
\end{pmatrix}
\to
\pll,\ \Lambda'=\begin{pmatrix}
2k+1,\cdots,1,0\\
-
\end{pmatrix}
\textrm{ if }k \textrm{ is odd}.
\end{aligned}\right.
 \]
 \end{enumerate}
\end{theorem}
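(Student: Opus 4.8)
The plan is to reduce the statement to Pan's combinatorial description of the unipotent theta correspondence (Theorem \ref{p}), together with two standard facts: theta lifting sends unipotent representations to sums of unipotent representations, and by \cite[Theorem 2.2]{AM} the theta lift of an irreducible cuspidal representation at its first occurrence index is again an irreducible cuspidal representation. First I would fix the unique unipotent cuspidal representation $\pl$ of $\sp_{2k(k+1)}\fq$, so that $\Lambda$ is the symbol \eqref{cussp} of defect $(-1)^k(2k+1)$; the crucial feature is that $\Upsilon(\Lambda)=\bpair{\srel{\varnothing}{\varnothing}}$. For a fixed sign $\epsilon$, combining Theorem \ref{p} with unipotence of the lift shows that the first occurrence index $n'$ of $\pl$ in the Witt tower ${\bf O}^\epsilon_{\rm{even}}$ is the least integer $m$ for which there exists a symbol $\Lambda'$ with $\rm{rank}(\Lambda')=m$ and $(\Lambda,\Lambda')\in\mathcal{B}^\epsilon_{k(k+1),m}$.

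I would then unwind the conditions defining $\mathcal{B}^\epsilon$ in this degenerate situation. Since $\Upsilon(\Lambda)^+=\Upsilon(\Lambda)^-=\varnothing$, the two domination inequalities ${}^t(\cdot)\preccurlyeq{}^t(\cdot)$ are nearly vacuous but still force one of $\Upsilon(\Lambda')^+,\Upsilon(\Lambda')^-$ to be empty and the other to be a one-row partition $[a]$ with $a\ge 0$, while the defect equation becomes $\rm{def}(\Lambda')=-\rm{def}(\Lambda)+\delta$ with $\delta=+1$ for $\epsilon=+$ and $\delta=-1$ for $\epsilon=-$. For $\epsilon=\rm{sgn}(-1)^k$ this evaluates to $\rm{def}(\Lambda')=(-1)^{k+1}2k$, so by the bijection \eqref{bp} one has $\rm{rank}(\Lambda')=a+k^2$, which is minimized at $a=0$; hence $n'=k^2$, $\Upsilon(\Lambda')=\bpair{\srel{\varnothing}{\varnothing}}$, and $\Lambda'$ is the unique symbol of rank $k^2$ and defect $(-1)^{k+1}2k$ — exactly the symbol displayed in part (1), whose defect is $\equiv 0$ or $2\pmod 4$ according as $k$ is even or odd, in agreement with $\o^\epsilon_{2k^2}$ for $\epsilon=\rm{sgn}(-1)^k$. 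For the complementary sign $\epsilon=\rm{sgn}(-1)^{k+1}$ the same computation gives $\rm{def}(\Lambda')=(-1)^{k+1}2(k+1)$ and $\rm{rank}(\Lambda')=a+(k+1)^2$, hence $n'=(k+1)^2$ with $\Lambda'$ the symbol of part (2). In both cases $\Upsilon(\Lambda')=\bpair{\srel{\varnothing}{\varnothing}}$, so $\pll$ is a unipotent cuspidal representation of the corresponding even orthogonal group, and it is the only symbol attaining the minimal rank; combining this with \cite[Theorem 2.2]{AM} then identifies $\Theta^\epsilon_{k(k+1),n'}(\pl)$ with $\pll$, which is the assertion.

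I expect the main obstacle to be pinning down the \emph{sign} of $\rm{def}(\Lambda')$. The group $\o^\epsilon_{2k^2}\fq$ has two unipotent cuspidal representations, $\pll$ and $\sgn\pll=\pi_{(\Lambda')^t}$, attached to symbols of the same rank but opposite defect; because the domination inequalities in $\mathcal{B}^\epsilon$ collapse to tautologies here, they cannot by themselves separate these two, and it is precisely the defect equation $\rm{def}(\Lambda')=-\rm{def}(\Lambda)\pm 1$ built into $\mathcal{B}^\epsilon$ that singles out the correct representative. Carrying out this selection carefully, and keeping track of how the sign $\epsilon$ of the target orthogonal group varies with the parity of $k$, is the substantive part; the remaining rank and defect identities are the routine bookkeeping indicated above.
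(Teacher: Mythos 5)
Your argument is correct, but note that the paper itself offers no proof of Theorem \ref{even}: it is imported verbatim from Adams--Moy \cite{AM}, so what you have written is an alternative derivation rather than a reconstruction of an argument in the text. Your route — specialize Pan's combinatorial description (Theorem \ref{p}) to $\Upsilon(\Lambda)=\bpair{\srel{\varnothing}{\varnothing}}$, observe that the $\preccurlyeq$ conditions then force $\Upsilon(\Lambda')^{-\epsilon}=\varnothing$ and $\Upsilon(\Lambda')^{\epsilon}=[a]$, read off $\rm{def}(\Lambda')=-\rm{def}(\Lambda)\pm1=(-1)^{k+1}2k$ resp. $(-1)^{k+1}2(k+1)$, and minimize the rank via \eqref{bp} to get $n'=k^2$ resp. $(k+1)^2$ with a unique symbol, then invoke unipotence of the lift and \cite[Theorem 2.2]{AM} to identify $\Theta(\pl)=\pll$ — is sound, and your sign bookkeeping (defect mod $4$ versus $\epsilon$, and the two parities of $k$) checks out against the displayed symbols. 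What this buys is a uniform verification of both first-occurrence cases, including the conservation-relation picture $k^2+(k+1)^2=2k(k+1)+1$, entirely inside the combinatorics the paper already quotes. The one caveat worth stating explicitly is logical economy rather than correctness: Pan's Theorem \ref{p} (the Aubert--Michel--Rouquier description) is historically anchored on the Adams--Moy cuspidal computation, so outside the paper's framework your derivation is closer to a consistency check than an independent proof; also, your phrase ``one of $\Upsilon(\Lambda')^{\pm}$ is empty'' should be sharpened to say that it is precisely $\Upsilon(\Lambda')^{-}$ (for $\mathcal{B}^{+}$) resp. $\Upsilon(\Lambda')^{+}$ (for $\mathcal{B}^{-}$) that vanishes, since this is what fixes the sign of the defect together with the defect equation.
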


\begin{proposition}({\cite[Example 5.5 (1)]{P3}})
The first occurrence index of ${\bf 1}_{\o^+_2}$ and $\rm{sgn}_{\o^+_2}$ is 0 and 1, respectively.
\end{proposition}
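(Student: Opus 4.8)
The plan is to read the two first occurrence indices (with respect to the Witt tower $\bf Sp$) off Pan's explicit description of the theta correspondence for unipotent representations of the dual pairs $(\sp_{2n},\o^+_{2n'})$, that is, off Theorem~\ref{p} with $n'=1$ and $\epsilon=+$. First I would record the relevant symbols: by the computation of the symbol of the trivial representation of an even orthogonal group in Section~\ref{sec3.4} (the case of $\o^+_{2n}$ with $n=1$), one has ${\bf 1}_{\o^+_2}=\pi_{\Lambda'}$ with $\Lambda'=\binom{1}{0}$, and since $\pi_{\Lambda^t}=\rm{sgn}\cdot\pi_\Lambda$ we get $\rm{sgn}_{\o^+_2}=\pi_{\binom{0}{1}}$. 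Both of these symbols have defect $0$, so the relation $\rm{def}(\Lambda')=-\rm{def}(\Lambda)+1$ built into $\mathcal{B}^+_{n,1}$ forces any partner symbol $\Lambda\in\cal{S}_n$ to have defect $1$; moreover $\Upsilon\bigl(\binom{1}{0}\bigr)=\bpair{\srel{[1]}{\varnothing}}$ and $\Upsilon\bigl(\binom{0}{1}\bigr)=\bpair{\srel{\varnothing}{[1]}}$.

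Next I would test the bottom of the tower, $n=0$. Up to equivalence of symbols, the only element of $\cal{S}_0$ is $\Lambda=\binom{0}{-}$, with $\Upsilon(\Lambda)=\bpair{\srel{\varnothing}{\varnothing}}$, and the two domination conditions defining $\mathcal{B}^+_{0,1}$ reduce, in the case of ${\bf 1}_{\o^+_2}$, to $\varnothing\preccurlyeq\varnothing$ and $\varnothing\preccurlyeq{}^t[1]=[1]$, both of which hold; hence $\bigl(\binom{0}{-},\binom{1}{0}\bigr)\in\mathcal{B}^+_{0,1}$ and ${\bf 1}_{\o^+_2}$ occurs already in $\omega^+_{0,1}$, so its first occurrence index is $0$. (This also follows at once from the fact that for the pair $(\sp_0,\o^+_2)$ the ambient symplectic space is zero, so $\omega^+_{0,1}$ is the trivial representation and ${\bf 1}_{\o^+_2}\otimes{\bf 1}_{\sp_0}$ is its only constituent; in particular $\rm{sgn}_{\o^+_2}$ does not occur at level $0$.) For $\rm{sgn}_{\o^+_2}$, the condition ${}^t(\Upsilon(\Lambda')^-)\preccurlyeq{}^t(\Upsilon(\Lambda)^+)$ at $n=0$ reads $[1]\preccurlyeq\varnothing$, which fails, and since $\binom{0}{-}$ is the only symbol in $\cal{S}_0$ there is no nonzero theta lift of $\rm{sgn}_{\o^+_2}$ to $\sp_0$. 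I would then pass to $n=1$ and take the partner $\Lambda=\binom{1}{-}$ (the trivial representation of $\sp_2$), for which $\Upsilon(\Lambda)=\bpair{\srel{[1]}{\varnothing}}$ and $\rm{def}(\Lambda)=1$; the three conditions of $\mathcal{B}^+_{1,1}$ then read $[1]\preccurlyeq[1]$, $\varnothing\preccurlyeq\varnothing$ and $0=-1+1$, all of which hold. Hence $\bigl(\binom{1}{-},\binom{0}{1}\bigr)\in\mathcal{B}^+_{1,1}$, so $\rm{sgn}_{\o^+_2}$ occurs in $\omega^+_{1,1}$, and combined with the previous step its first occurrence index is $1$.

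I expect this to be routine bookkeeping rather than a genuine obstacle: the only points requiring care are matching ${\bf 1}_{\o^+_2}$ and $\rm{sgn}_{\o^+_2}$ with the correct symbols $\binom{1}{0}$ and $\binom{0}{1}$ (and not interchanging them), correctly evaluating $\Upsilon$ and the transposes ${}^t(\cdot)$ on these one-box bi-partitions, and, for the lower bound on $\rm{sgn}_{\o^+_2}$, noting that $\binom{0}{-}$ is the unique symbol of rank $0$ so that level $0$ genuinely fails, whereas for the occurrence statements a single valid partner symbol is enough. Alternatively, the statement is precisely \cite[Example 5.5 (1)]{P3}, and one may simply cite that computation.
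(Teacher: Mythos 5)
Your computation is correct, and it verifies the statement rather than merely invoking it: the paper itself offers no argument here, simply citing \cite[Example 5.5 (1)]{P3} (as you note in your closing alternative). Your route — identify ${\bf 1}_{\o^+_2}$ and $\rm{sgn}_{\o^+_2}$ with the symbols $\binom{1}{0}$ and $\binom{0}{1}$ via the example in Section \ref{sec3.4} and the rule $\pi_{\Lambda^t}=\rm{sgn}\cdot\pi_\Lambda$, then test membership in $\mathcal{B}^+_{n,1}$ for $n=0,1$ using Theorem \ref{p} — is sound: the defect relation forces $\rm{def}(\Lambda)=1$, the unique rank-zero symbol $\binom{0}{-}$ passes the domination conditions for $\binom{1}{0}$ but fails ${}^t(\Upsilon(\Lambda')^-)\preccurlyeq{}^t(\Upsilon(\Lambda)^+)$ for $\binom{0}{1}$ (since $[1]\not\preccurlyeq\varnothing$), and $\binom{1}{-}$ at $n=1$ does pair with $\binom{0}{1}$; your independent sanity check at $n=0$ (the ambient symplectic space is zero, so only the trivial representation of $\o^+_2$ can occur) is also valid and is convention-free. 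What your approach buys is a self-contained proof inside the paper's own framework (Theorem \ref{p} is stated here for the Ma--Qiu--Zou modified Weil representation, which is the one used for first occurrence indices throughout), whereas the citation buys brevity but silently relies on Pan's normalization agreeing with the one adopted in this paper; if one wanted to be fully careful, your derivation is actually the safer of the two, since it never leaves the stated conventions.
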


\begin{corollary}
\begin{enumerate}
\item Let $\pi_\Lambda$ be the unipotent cuspidal representation of $\sp_{2k(k+1)}\fq$. Then
\[
\nep_{\rm{even}}(\pi)=\left\{
\begin{array}{ll}
-k-1& \textrm{if $\epsilon\cdot\rm{def}(\Lambda)<0$;}\\
k&\textrm{if $\epsilon\cdot\rm{def}(\Lambda)>0$}.
\end{array}\right.
\]
    \item Let $\pi_\Lambda$ be the unipotent basic representation of $\o^\epsilon_{2k^2}\fq$ with $\epsilon = \rm{sgn}(-1)^{k+1}$. Then
    \[
\nep(\pi)=\left\{
\begin{array}{ll}
-k& \textrm{if $\epsilon\cdot\rm{def}(\Lambda)<0$;}\\
k&\textrm{if $\epsilon\cdot\rm{def}(\Lambda)\ge 0$}.
\end{array}\right.
\]
\end{enumerate}
\end{corollary}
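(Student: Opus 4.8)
The plan is to read both statements off from the explicit theta correspondence of unipotent cuspidal representations recorded in Theorem~\ref{even}, together with the Adams--Moy criterion (\cite[Theorem 2.2]{AM}): for an irreducible cuspidal representation of $G_n$, the theta lift to the $n'$-th member of a Witt tower vanishes for $n'$ below the first occurrence index and is an irreducible cuspidal representation exactly at the first occurrence index. Consequently, to pin down a first occurrence index it suffices to exhibit one member of the target tower on which the theta lift is cuspidal; by the criterion that member is then the first occurrence, and the value of $\nep$ (or $\nepe$) drops out by subtracting it from the rank.

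For part (1), take $\pl\in\cal{E}(\sp_{2k(k+1)},1)$, the unipotent cuspidal representation, for which $\rm{def}(\Lambda)=(-1)^k(2k+1)$, so $\rm{sgn}(\rm{def}(\Lambda))=\rm{sgn}(-1)^k$. Fix $\epsilon$ and look at the tower ${\bf O}^\epsilon_{\rm{even}}$. If $\epsilon\cdot\rm{def}(\Lambda)>0$, i.e. $\epsilon=\rm{sgn}(-1)^k$, then Theorem~\ref{even}(1) exhibits an irreducible unipotent cuspidal representation of $\o^\epsilon_{2k^2}\fq$ inside the theta lift of $\pl$, so $n^\epsilon_{\rm{even}}(\pi)=k^2$ and $\nepe(\pi)=k(k+1)-k^2=k$. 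If instead $\epsilon\cdot\rm{def}(\Lambda)<0$, the hypothesis of Theorem~\ref{even}(1) fails (its $\epsilon$ is the opposite sign) while Theorem~\ref{even}(2) exhibits an irreducible unipotent cuspidal representation of $\o^\epsilon_{2(k+1)^2}\fq$ in the theta lift of $\pl$, so $n^\epsilon_{\rm{even}}(\pi)=(k+1)^2$ and $\nepe(\pi)=k(k+1)-(k+1)^2=-k-1$. That no smaller member of the tower occurs is precisely the Adams--Moy criterion, since members are listed with increasing rank and the lifts just exhibited are cuspidal.

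For part (2) I would argue symmetrically from the orthogonal side. For the discriminant $\epsilon$ for which $\o^\epsilon_{2k^2}\fq$ admits unipotent cuspidal representations, these form a pair $\pl$, $\sgn\pl=\pi_{\Lambda^t}$ with $\rm{def}(\Lambda)=2k$ and $\rm{def}(\Lambda^t)=-2k$; the boundary instance is $\o^+_2\fq$ with $k=1$, $\pl={\bf 1}_{\o^+_2}$, $\sgn\pl=\rm{sgn}_{\o^+_2}$. Reindexing Theorem~\ref{even}(2) along $k\mapsto k-1$ shows that exactly the member of this pair with $\epsilon\cdot\rm{def}>0$ has an irreducible cuspidal theta lift on $\sp_{2(k-1)k}\fq$, so its first occurrence index in ${\bf Sp}$ is $(k-1)k$ and $\nep(\pi)=k^2-(k-1)k=k$; Theorem~\ref{even}(1) then gives the other member a cuspidal theta lift on $\sp_{2k(k+1)}\fq$, so its first occurrence index is $k(k+1)$ and $\nep(\pi)=k^2-k(k+1)=-k$. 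The second value can also be derived from the first by the conservation relation for the pair $(\o^\epsilon_{2k^2},{\bf Sp})$ established in Section~\ref{sectheta}, and the boundary case $\o^+_2\fq$ is handled directly using the first occurrence indices $0$ and $1$ of ${\bf 1}_{\o^+_2}$ and $\rm{sgn}_{\o^+_2}$ recalled above. Collecting the four cases and matching the sign of $\epsilon\cdot\rm{def}(\Lambda)$ with the branch of Theorem~\ref{even} invoked yields the asserted dichotomy.

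The hard part will not be the Adams--Moy input, which is immediate, but the bookkeeping around Theorem~\ref{even}: it splits into branch (1) versus branch (2), and each branch further into $k$ even versus $k$ odd with genuinely different symbols, so one must check uniformly in the parity of $k$ that ``$\epsilon\cdot\rm{def}(\Lambda)>0$'' always selects branch (1) while ``$\epsilon\cdot\rm{def}(\Lambda)<0$'' always selects branch (2); the low-rank, defect-zero boundary value coming from ${\bf 1}_{\o^+_2}$ and $\rm{sgn}_{\o^+_2}$ then has to be folded into the ``$\geq 0$'' (resp. ``$<0$'') case by hand.
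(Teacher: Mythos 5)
Your proof is correct and follows essentially the same route as the paper, which states this corollary without separate proof precisely because it falls out of Theorem \ref{even} combined with the Adams--Moy criterion \cite[Theorem 2.2]{AM} and the quoted proposition on ${\bf 1}_{\o^+_2}$ and $\rm{sgn}_{\o^+_2}$. The sign bookkeeping you carry out in both parities of $k$ is accurate, and the residual mismatch you flag in the defect-zero boundary case is an imprecision of the corollary's statement (which implicitly uses the convention $\sd$ for defect zero and should read $\epsilon=\rm{sgn}(-1)^{k}$ in part (2)), not a gap in your argument.
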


\subsection{Conservation relation for basic representations}\label{sec4.5}

 In this subsection, we will differentiate between representations in $[\pi]$ using theta correspondence. This will enable us to determine a Lusztig correspondence, as there is ambiguity in the Lusztig correspondence on$[\pi]$.

According to {\cite[Theorem 1.1]{P3}}, the set $[\pi]$ consists of $\pi$, $\sgn\pi$, $\pi^c$ and $\sgn\pi^c$, provided such representations exist. For even orthogonal groups, the first occurrence indexes $n^\epsilon(\pi)$ and $n^\epsilon(\sgn\pi)$ are different due to the conservation relation. To distinguish between $\pi$ and $\pi^c$, we examine their conservation relation and show that although their first indexes are the same, their first theta lifting is different.
For odd orthogonal groups, we have two Witt towers ${\bf O}^\pm_\rm{odd}$, and we can prove that $n^\epsilon(\pi)\ne n^\epsilon(\sgn\pi)$ for $\epsilon=\pm$. This allows us to differentiate between $\pi$ and $\sgn\pi$ using their first occurrence indexes. For symplectic groups, there is no sgn character. We give a conservation relation for $\pi$ and $\pi^c$, and prove that $n^\epsilon_{\rm{odd}}(\pi)\ne n^\epsilon_{\rm{odd}}(\pi^c)$ for $\epsilon=\pm$.

Let  $\prllsgn$  be an irreducible cuspidal representation of $\sp_{2n}\fq$, $\o^\pm_{2n}\fq$ or $\o_{2n+1}\fq$, where $\iota\in \{\pm\}$ only occurs for odd orthogonal groups. Assume that $\Lambda_1$ and $\Lambda_{-1}$ correspond to unipotent basic representations of $\ddg$ and $\dddg$, respectively. Let
\[
k=\left\{
\begin{array}{ll}
\frac{|\rm{def}(\Lambda_1)|-1}{2}&\textrm{ if } \Lambda_1\in\cal{S}_m;\\
\frac{\rm{def}(\Lambda_1)}{2}&\textrm{ if } \Lambda_1\in\cal{S}^\pm_m \textrm{ and }\rm{def}(\Lambda_1)\ne 0;\\
0.5&\textrm{ if } \Lambda_1=\binom{1}{0};\\
-0.5&\textrm{ if } \Lambda_1=\binom{0}{1},\\
\end{array}\right.
\]
and
\[
h=\left\{
\begin{array}{ll}
\frac{|\rm{def}(\Lambda_{-1})|-1}{2}&\textrm{ if } \Lambda_{-1}\in\cal{S}_{m'};\\
\frac{\rm{def}(\Lambda_{-1})}{2}&\textrm{ if } \Lambda_{-1}\in\cal{S}^\pm_{m'}\textrm{ and }\rm{def}(\Lambda_{-1})\ne 0;.\\
0.5&\textrm{ if } \Lambda_{-1}=\binom{1}{0};\\
-0.5&\textrm{ if } \Lambda_{-1}=\binom{0}{1}.\\
\end{array}\right.
\]
For abbreviation, we write $\pi_{\rho,k,h,\iota}$ instead of $\prllsgn$. By Corollary \ref{l4}, we have
$\pkh^c=\pi_{\rho,k,-h}$ if $\pkh\in \CE_{\rm{cus}}(\sp_{2n})$, and $\sgn\pi_{\rho,k,h,\iota}=\pi_{\rho,k,h,-\iota}$ if $\pi_{\rho,k,h,\iota}\in \CE_{\rm{bas}}(\o_{2n+1})$.

The first occurrence indexes of cuspidal representations have been calculated in \cite[Section 6]{Wang1}. It is not difficult to extend these results to basic representations.

\begin{proposition}{\cite[Proposition 6.2]{Wang1}}\label{cus1}
Let $\pkh\in \CE_{\rm{bas}}(\sp_{2n})$. Then the following hold:

(i) Consider the Witt tower ${\bf O}^\pm_\rm{even}$. We have
\begin{itemize}
\item (Conservation relation) $\{n^\pm(\pkh)\}=\{ n^\pm(\pi_{\rho,k,-h})\}=\{ n- k,n+k+1\}$;
\item (Conservation relation) $n^\epsilon(\pkh)=n^\epsilon(\pi_{\rho,k,-h})$ and $\Theta_{n,n^\epsilon(\pkh)}(\pkh)\ne \Theta_{n,n^\epsilon(\pi_{\rho,k,-h})}(\pi_{\rho,k,-h})$;
\item if $\pkh$ goes down with respect to the Witt tower ${\bf O}^\epsilon_\rm{even}$, then $\Theta_{n,n^\epsilon(\pkh)}(\pkh)=\pi_{\rho,k',h'}$ with $|k'|=|k|$ and $|h'|=|h|$.
\end{itemize}

(ii) Consider the Witt tower ${\bf O}^\pm_\rm{odd}$. We have
\begin{itemize}
\item (Conservation relation) $\{n^\pm(\pkh)\}=\{n^\pm(\pi_{\rho,k,-h})\}=\{\lfloor n\pm h \rfloor\}$;
\item  if $\pkh$ goes down with respect to the Witt tower ${\bf O}^\epsilon_\rm{odd}$, then $\Theta_{n,n^\epsilon(\pkh)}(\pkh)=\pi_{\rho,k',h',\iota'}$ with $k'= \lceil |h|-1\rceil$ and $h'= k$. Moreover, we have $\Theta_{n^\epsilon(\pkh),n}(\pi_{\rho,k',h',\iota'})=\pkh$.
\end{itemize}

\end{proposition}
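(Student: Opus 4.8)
The plan is to bootstrap from the cuspidal case, which is \cite[Proposition 6.2]{Wang1} (see \cite[Section 6]{Wang1}), to the basic case. The first step is the observation that the only basic representations of $\sp_{2n}$ that are not cuspidal are those whose $(-1)$-symbol is one of the two rank-$0$, defect-$0$ symbols, i.e. those with $\Lambda_{-1}\in\{\binom{1}{0},\binom{0}{1}\}$, equivalently $h=\pm\tfrac12$; for these $\pi_{\Lambda_{-1}}$ is ${\bf 1}_{\o^+_2}$ or $\mathrm{sgn}_{\o^+_2}$. Indeed, the $(+1)$-symbol $\Lambda_1$ lives on the odd orthogonal group $G^*_{[1]}(s)=\so_{2\nu_1(s)+1}$, hence is automatically a genuine cuspidal symbol and contributes no new case. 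So it suffices to treat $h=\pm\tfrac12$; and by Corollary \ref{l4}, $\pkh^c=\pi_{\rho,k,-h}$, which explains why the conservation statements pair $\pkh$ with $\pkh^c$ and why the two sides of each conservation identity differ only by $h\mapsto -h$.

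The second step is to remove the $\rho$-part. Every eigenvalue occurring in $\rho$ is $\ne\pm1$, so Proposition \ref{w2} applies: $\pkh$ is an irreducible constituent of $\Ind(\tau\otimes\pi_{k,h})$ for a suitable cuspidal representation $\tau$ of a product of general linear groups and the quadratic-unipotent basic representation $\pi_{k,h}$ (the case $\rho$ trivial), and $\Theta(\pkh)$ is a constituent of $\Ind((\chi\otimes\tau)\otimes\Theta(\pi_{k,h}))$; as Harish-Chandra induction shifts $n$ and $n^\epsilon$ by the same amount, this reduces the problem, up to that fixed shift, to the quadratic-unipotent representations $\pi_{\Lambda_1,\binom{1}{0}}$ and $\pi_{\Lambda_1,\binom{0}{1}}$, with $\Lambda_1$ the cuspidal symbol of $\so_{2\nu_1(s)+1}$ of defect parameter $k$. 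For these I would invoke Pan's explicit description directly: Theorem \ref{p} (the $\mathcal{B}^\pm_{n,n'}$ criterion) together with Theorem \ref{p1} decouples the $\Lambda_1$-condition from the $\Lambda_{-1}$-condition, so the first occurrence in ${\bf O}^\epsilon_{\mathrm{even}}$ is controlled by the first occurrence of the symplectic unipotent cuspidal representation $\pi^{\sp}_{\Lambda_1}=\CL_1(\pi_{\Lambda_1})$ of $\sp_{2\nu_1(s)}$ — recorded in Theorem \ref{even} and the first-occurrence computations of Section \ref{sectheta} — together with the first occurrence of ${\bf 1}_{\o^+_2}$ resp. $\mathrm{sgn}_{\o^+_2}$, which equals $0$ resp. $1$ by \cite[Example 5.5(1)]{P3}. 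Adding the two contributions and expressing $\nu_1(s)$ and $\nu_{-1}(s)=1$ in terms of $k$, $h=\pm\tfrac12$ and $n$ gives $\{n^\pm(\pkh)\}=\{n-k,\,n+k+1\}$ for the even towers and $\{n^\pm(\pkh)\}=\{\lfloor n\pm h\rfloor\}$ for the odd towers; the odd-tower case additionally passes through a dual pair containing an odd orthogonal group via Theorem \ref{p2}, which produces both the shape $(k,h)\mapsto(k',h')=(\lceil|h|-1\rceil,k)$ of the first theta lift and the fact that applying the first theta lift twice recovers $\pkh$. Finally, the equality $n^\epsilon(\pkh)=n^\epsilon(\pi_{\rho,k,-h})$ with distinct first lifts records that, at the common first occurrence level, both $\binom{1}{0}$ and $\binom{0}{1}$ occur (they share a uniform projection) but inside inequivalent representations of the same even orthogonal group, and the ``goes down'' formulas come from reading off which of $\Lambda_{-1}$, $\Lambda_{-1}^t$ shows up in the target.

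The step I expect to be the main obstacle is precisely the handling of the $\binom{1}{0}$-factor. Proposition \ref{w2} cannot be used to splice it in, because that proposition deliberately excludes the eigenvalue $\pm1$ — this is exactly the locus where the Lusztig correspondence is ambiguous — so one cannot simply transport the cuspidal computation across a parabolic induction. Instead one must verify the $\mathcal{B}^\pm_{n,n'}$-conditions of Theorem \ref{p} by hand for the symbols $\binom{1}{0},\binom{0}{1}$, and, more delicately, keep careful track of discriminants and spinor norms so as to determine which Witt tower achieves the first occurrence and whether the first theta lift is $\pi_{\rho,k',h'}$ or $\pi_{\rho,k',h',\iota'}$ with the correct sign $\iota'$. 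Everything else amounts to a bookkeeping translation of the cuspidal statements of \cite{Wang1}.
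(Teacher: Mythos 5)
Your step~1 (the only non-cuspidal basic representations of $\sp_{2n}$ are those with $\Lambda_{-1}\in\{\binom{1}{0},\binom{0}{1}\}$, i.e.\ $h=\pm\tfrac12$) and your step~3 (read the first occurrences off Pan's explicit description, Theorems \ref{p}, \ref{p1}, \ref{p2} and \ref{even}, together with the first occurrence indices $0$ and $1$ of ${\bf 1}_{\o^+_2}$ and $\rm{sgn}_{\o^+_2}$) are exactly the kind of extension of \cite[Proposition 6.2]{Wang1} the paper has in mind; the paper itself offers no argument beyond the citation and the remark that the extension to basic representations is not difficult, and your numerics ($\{n-k,n+k+1\}$ for the even towers, $\{\lfloor n\pm h\rfloor\}$ for the odd towers, the shape $(k',h')$ of the first lifts) come out right. (Minor slip: $\binom{1}{0}$ and $\binom{0}{1}$ have rank $1$ and defect $0$, not rank $0$.)

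Your step~2, however, is false as stated. A basic representation $\pkh$ has by definition $\rho$ cuspidal on a product of unitary groups, so its cuspidal support retains all of $\rho$; when $\Lambda_{\pm1}$ are both cuspidal symbols $\pkh$ is itself cuspidal, and even when $h=\pm\tfrac12$ its Harish-Chandra series is not that of any quadratic-unipotent representation. Hence $\pkh$ is \emph{not} an irreducible constituent of $\Ind_{\GGL_\ell\fq\times\sp_{2m}\fq}^{\sp_{2n}\fq}(\tau\otimes\pi_{k,h})$, and Proposition \ref{w2} cannot be used to ``remove the $\rho$-part'': in this paper that proposition runs in the opposite direction, reducing general representations to basic ones, while basic representations are the atoms that cannot be split further by parabolic induction (except for the $\o^+_2$-factor, which, as you yourself note, Proposition \ref{w2} excludes anyway because it carries the eigenvalue $-1$). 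Fortunately the step is unnecessary: Theorems \ref{p1} and \ref{p2} are stated for arbitrary Lusztig series, their conditions on the $[\ne\pm1]$-part are a pure transport of $\rho$, and $\rho$ enters the first-occurrence count only through its rank $|\rho|$; so you should run your step~3 directly on the data $\rho\otimes\pi_{\Lambda_1}\otimes\pi_{\Lambda_{-1}}$ with no prior stripping. With that repair, the remaining work is the sign and discriminant bookkeeping you already flagged (checking the $\mathcal{B}^\pm_{n,n'}$-conditions for $\binom{1}{0},\binom{0}{1}$ by hand), and for the even-tower conservation statement the inequality of the two first lifts of $\pkh$ and $\pkh^c=\pi_{\rho,k,-h}$ is most cleanly seen from $m^\psi_{\pi,\pi'}=m^\psi_{\pi^c,(\pi')^c}$: the lifts differ exactly by $c$-conjugation of the $\Lambda_{-1}$-part, as in Theorem \ref{p1}.
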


\begin{proposition}{\cite[Proposition 6.3]{Wang1}}\label{cus2}

(i) Let $\pkh\in \CE_{\rm{bas}}(\o^\epsilon_{2n})$.  Then the following hold:
\begin{itemize}
\item (Conservation relation) $\{n^\epsilon(\pkh),n^\epsilon(\sgn\pkh)\}\in\{\lfloor n\pm k\rfloor\}$;
\item (Conservation relation) $n^\epsilon(\pkh)=n^\epsilon(\pkh^c)$ and  $(\Theta_{n,n^\epsilon(\pkh)}(\pkh))^c= \Theta_{n,n^\epsilon(\pkh^c)}(\pi_{\rho,k,h}^c)$;
\item
 If $n^\epsilon(\pkh)= \lfloor n+ |k| \rfloor$, then $\Theta_{n,n^\epsilon(\pkh)}(\pkh)=\pi_{\rho,k',h'}$ with $k'=\lceil |k|\rceil $ and $|h'|=| h|$;
\item
 If $n^\epsilon(\pkh)=\lfloor n- |k|\rfloor $, then $\Theta_{n,n^\epsilon(\pkh)}(\pkh)=\pi_{\rho,k',h'}$ with $k'=\lceil|k|-1\rceil$ and $|h'|=|h|$.
\end{itemize}

(ii) Let $\pi_{\rho,k,h,\iota}\in \CE_{\rm{cus}}(\o_{2n+1})$. Consider the Witt tower ${\bf O}^\epsilon_\rm{odd}$. Then the following hold:
\begin{itemize}
\item  (Conservation relation) $\{n^\epsilon(\pi_{\rho,k,h,\iota}),n^\epsilon(\pi_{\rho,k,h,-\iota})\}=\{n+ k+1, n-k\}$.
\item
 If $n^\epsilon(\pi_{\rho,k,h,\iota})=n+ k+1$, then $\Theta_{n,n^\epsilon(\pi_{\rho,k,h,\iota})}^\epsilon(\pi_{\rho,k,h,\iota})=\pi_{\rho,k',h'}$ with $k'= h$ and $|h'|=|k+1|$;
\item
 If $n^\epsilon(\pi_{\rho,k,h,\iota})=n- k$, then $\Theta_{n,n^\epsilon(\pi_{\rho,k,h,\iota})}^\epsilon(\pi_{\rho,k,h,\iota})=\pi_{\rho,k',h'}$ with $k'=h$ and $|h'|=| k|$.
\end{itemize}

\end{proposition}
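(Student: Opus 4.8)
The plan is to reduce the statement to the cuspidal case, which is \cite[Proposition~6.2]{Wang1} and \cite[Proposition~6.3]{Wang1}, and then to account for the basic representations that are not cuspidal. Part~(ii) is stated for cuspidal representations, so it is \cite[\S6]{Wang1} directly; and the extra basic representations occurring in part~(i), and in Proposition~\ref{cus1}, are exactly those for which one of $\pi_{\Lambda_1}$, $\pi_{\Lambda_{-1}}$ is ${\bf 1}_{\o^+_2}$ or $\rm{sgn}_{\o^+_2}$, equivalently those for which $k$ or $h$ equals $\pm\tfrac12$, since ${\bf 1}_{\o^+_2}$ and $\rm{sgn}_{\o^+_2}$ are the only non-cuspidal unipotent representations permitted as building blocks of a basic representation.

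To handle these I would rerun the argument of \cite[\S6]{Wang1} using Theorems~\ref{p1} and \ref{p2}, which express the theta lift of a representation of $\sp_{2n}$ in terms of the separate behaviour of the blocks $\rho$, $\pi_{\Lambda_1}$, $\pi_{\Lambda_{-1}}$, so that the first occurrence index splits into a contribution $|\rho|$ from $\rho$ and contributions from $\Lambda_1$ and $\Lambda_{-1}$ (and symmetrically for $\o^\epsilon_{2n}$). The contributions of the unipotent cuspidal blocks are already pinned down in \cite[\S6]{Wang1} (building on Theorem~\ref{even} and \cite{AM}); for a block equal to ${\bf 1}_{\o^+_2}$ or $\rm{sgn}_{\o^+_2}$ one substitutes \cite[Example~5.5(1)]{P3}, which gives that their first occurrence indices relative to ${\bf Sp}$ are $0$ and $1$ and determines the corresponding first theta lifts. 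Assembling these, and using Proposition~\ref{w2} to strip off the $\GGL$-blocks of $\rho$ (which only shift the rank), yields the first occurrence indices $n^\epsilon(\cdot)$ and the explicit — now possibly half-integral — parameters $k'$, $h'$ of the first theta lift claimed in the proposition; the conservation relations then follow by summing the per-block conservation relations, with Corollary~\ref{l4} used to identify $\pkh^c$ with $\pi_{\rho,k,-h}$ and $\rm{sgn}\cdot\pi_{\rho,k,h,\iota}$ with $\pi_{\rho,k,h,-\iota}$.

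Two points will require genuine care. First, Theorems~\ref{p1} and \ref{p2} hold only up to a sign-character twist while the proposition asserts exact first occurrence indices and exact first theta lifts, so the sign twists must be tracked throughout, as in \cite[\S6]{Wang1}. Second — and this is the real difficulty — one must establish the equalities $n^\epsilon(\pkh)=n^\epsilon(\pkh^c)$, and for $\pi_{\rho,k,h,\iota}$ its analogue, \emph{together with} the non-equality of the two first theta lifts at this common index: this cannot be seen by a rank count and needs the precise structure of the Weil representation — the symmetry $m^\psi_{\pi,\pi'}=m^{\psi'}_{\pi^c,\pi'}$ recorded in Section~\ref{sectheta} and the constraint on the action of $-I$ in Proposition~\ref{-1} — to separate a representation from its MVW-contragredient within a single Lusztig series. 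Finally one checks that the half-integral regime for $k$, $h$ is consistent with the integral formulas of \cite[\S6]{Wang1}, so that all cases merge into the uniform statement above.
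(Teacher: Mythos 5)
Your plan is correct and matches what the paper itself does: the paper gives no proof of this proposition beyond citing \cite[Proposition 6.3]{Wang1} for the cuspidal case and the preceding remark that "it is not difficult to extend these results to basic representations." Your reduction to the cuspidal case of \cite{Wang1} plus a block-wise treatment of the extra basic blocks ${\bf 1}_{\o^+_2}$, $\rm{sgn}_{\o^+_2}$ (via Theorems \ref{p1}--\ref{p2}, \cite[Example 5.5(1)]{P3}, Proposition \ref{w2}, the relation $m^\psi_{\pi,\pi'}=m^\psi_{\pi^c,(\pi')^c}$ and Proposition \ref{-1}) is exactly the intended routine extension.
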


Let $\pi$ be an irreducible basic representation of $\o_{2n+1}\fq$. We will demonstrate that the first occurrence index is determined solely by $\pi$, regardless of the Witt tower it is located in.
\begin{theorem}\label{t4}

Let $\pi\in \CE_{\rm{bas}}(\o_{2n+1})$. Then $n^+(\pi)=n^-(\pi)$.
\end{theorem}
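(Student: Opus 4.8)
The plan is to reduce the statement to the conservation relations recorded in Proposition~\ref{cus2}(ii), together with the reciprocity built into Proposition~\ref{cus1}(ii). First I would reduce to the case where $\pi = \pi_{\rho,k,h,\iota}$ is an irreducible \emph{cuspidal} representation of $\o_{2n+1}\fq$: indeed, by the compatibility of theta lifting with parabolic induction (Proposition~\ref{w2}) and the definition of basic representations, the first occurrence index of a basic representation $\pi\in\CE(\o_{2n+1},\sigma)$ in a Witt tower ${\bf Sp}$ differs from that of its basic pair component $\sigma$ by the rank of the general linear part; since this shift is the same for both additive characters $\psi,\psi'$ — equivalently for both towers ${\bf O}^+_{\rm even}$ realized via $\psi$ versus $\psi'$, which for odd orthogonal groups is exactly the swap between ${\bf Sp}$-towers — it suffices to check $n^+(\sigma)=n^-(\sigma)$ on the cuspidal piece.

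Next, for $\pi=\pi_{\rho,k,h,\iota}$ cuspidal, I would invoke Proposition~\ref{cus2}(ii), which gives
\[
\{n^\epsilon(\pi_{\rho,k,h,\iota}),\,n^\epsilon(\pi_{\rho,k,h,-\iota})\}=\{n+k+1,\,n-k\}
\]
for \emph{each} $\epsilon=\pm$. Thus for each tower the pair $\{\pi_{\rho,k,h,\iota},\pi_{\rho,k,h,-\iota}\}=\{\pi,\sgn\pi\}$ occupies the two slots $n+k+1$ and $n-k$; the only thing to decide is the assignment, i.e. whether $\pi$ itself sits at $n+k+1$ or at $n-k$, and whether that assignment is the same in the ${\bf O}^+_{\rm odd}$ and ${\bf O}^-_{\rm odd}$ towers. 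The key point is that the assignment is governed by the value $\iota = \pi(-I)$ via Proposition~\ref{-1}(ii): if $\pi'\subset\Theta_{n,n'}(\pi)$ for $(\sp_{2n},\o^\epsilon_{2n'+1})$ then $\pi(-I_G)=\varepsilon(-1)^{n}\pi'(-I_{G'})$. Applying this at the first occurrence $n'=n^\epsilon(\pi)$, where the theta lift is the specific unipotent-type representation $\pi_{\rho,k',h'}$ described in Proposition~\ref{cus2}(ii), and using Proposition~\ref{sp-1} to read off $\pi_{\rho,k',h'}(-I)$ in terms of $k',h'$ and the discriminant data, pins down which of $n+k+1$, $n-k$ is $n^\epsilon(\pi)$ as a function of $\iota$, $\epsilon$, and the residue of $q$. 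Running this once for $\epsilon=+$ and once for $\epsilon=-$ and comparing, the $\varepsilon(-1)^n$ factor is independent of $\epsilon$ while $k'$, $h'$ are the same, so the constraint on $\iota$ is identical in the two towers; hence $n^+(\pi)=n^-(\pi)$.

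The main obstacle, and the step I expect to require the most care, is the bookkeeping in this last paragraph: one must track the spinor-norm / discriminant twist (the $\chi_{\o^\epsilon}$ appearing in \eqref{spinor} and the $(\xi\circ\det)$ factor in the Ma--Qiu--Zou normalization \eqref{mqz}) consistently across the two odd-orthogonal Witt towers, and verify that the central-character constraint from Proposition~\ref{-1}(ii) genuinely forces the \emph{same} slot assignment rather than merely a consistent one up to a sign that could differ between towers. A clean way to organize this is to compare $n^+(\pi)$ and $n^-(\pi)$ directly: supposing $n^+(\pi)\ne n^-(\pi)$ would force $n^+(\pi)=n^+(\sgn\pi)$ to differ from $n^-(\pi)=n^-(\sgn\pi)$ in a way incompatible with both copies of the conservation relation and the single value of $\pi(-I)$, yielding a contradiction. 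I would also double-check the boundary cases $k=\pm 0.5$ (when $\Lambda_1=\binom{1}{0}$ or $\binom{0}{1}$, i.e. $\rho$ contributes the $\o^\pm_2$ pieces), where $n\pm k$ and $n+k+1$ collapse or shift by the half-integer, to make sure the equality still holds there.
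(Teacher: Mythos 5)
Your argument is sound in substance, but it is organized differently from the paper's proof, so it is worth comparing the two. The paper proves Theorem \ref{t4} by a simultaneous induction on $n$ with Lemma \ref{t4l} (the statement $\pi(-I)=\pi^c(-I)$ for $\pi\in\CE_{\rm{bas}}(\sp_{2n})$): the lemma at rank $n$ is deduced from the theorem at lower rank by lifting to an odd orthogonal group and back, and the theorem at rank $n$ is then deduced from the lemma by the central-character contradiction via Proposition \ref{-1}. You bypass the induction entirely: the only input you need about the symplectic side is that the first theta lift $\pi_{\rho,k',h'}$ (with $k',|h'|$ given by Proposition \ref{cus2}(ii), hence tower-independent) has a central character independent of the sign of $h'$, and this follows at once from Proposition \ref{sp-1} together with Corollary \ref{l4}(1), since the formula $\iota_\rho\pd(\Lambda_{-1})\ee^{\rm{rank}(\Lambda_{-1})}$ is invariant under $\Lambda_{-1}\mapsto\Lambda_{-1}^t$. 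With that, the conservation relation of Proposition \ref{cus2}(ii) plus Proposition \ref{-1}(ii) forces the slot assignment: the value of $\pi(-I)$ compatible with sitting at $n-k$ and the value compatible with sitting at $n+k+1$ differ by the factor $\ee\cdot(-1)^{|k+1|-|k|}\ee^{|k+1|-|k|}=-1$, so exactly one slot is admissible, and since the constraint is the same for both towers (same $k',|h'|$, same $\varepsilon(-1)^{n'}$ factor, and $\sgn(-I)=-1$ separates $\pi$ from $\sgn\pi$), you get $n^+(\pi)=n^-(\pi)$. So the deferred bookkeeping does close. What each route buys: yours is shorter and non-inductive, but leans on Proposition \ref{sp-1}, whose proof the paper only sketches by analogy with Waldspurger; the paper's induction extracts the needed central-character invariance purely from theta-correspondence data, at the cost of intertwining the symplectic and odd-orthogonal statements.

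Two small repairs. First, your opening reduction to the cuspidal case is both unnecessary and not quite right as stated: basic representations are not parabolic inductions whose first occurrence differs from that of the cuspidal support by the rank of a GL part (constituents of a single Harish--Chandra series can have different first occurrences; e.g. the $\o^+_2$ pieces ${\bf 1}$ and $\rm{sgn}$ have first occurrences $0$ and $1$), and in any case Propositions \ref{cus1} and \ref{cus2} are already available for basic representations, which is how the paper uses them. Second, in your closing consistency check the clause ``$n^+(\pi)=n^+(\sgn\pi)$'' is a slip: those two indices are never equal; the correct consequence of $n^+(\pi)\ne n^-(\pi)$ is $n^+(\pi)=n^-(\sgn\pi)$, i.e. a swapped assignment between the towers, which is what the central-character computation above rules out.
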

\begin{lemma}\label{t4l}
Let $\pi\in \CE_{\rm{bas}}(\sp_{2n})$. Then $\pi(-I)=\pi^c(-I)$, where $I$ is the identity of $\sp_{2n}\fq$.
\end{lemma}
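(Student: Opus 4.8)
The plan is to deduce the identity directly from the explicit central–character formula of Proposition~\ref{sp-1}, using that $\pi$ and $\pi^c$ differ only in the $\Lambda_{-1}$–slot of their Lusztig labels. Fix a Lusztig correspondence $\CL_s$ with $\pi\in\CE(\sp_{2n},s)$ and write $\CL_s(\pi)=\rho\otimes\pi_{\Lambda_1}\otimes\pi_{\Lambda_{-1}}$; there is no $\iota$–factor since $G=\sp_{2n}$, and because $\pi$ is basic the symbol $\Lambda_{-1}$ is that of a unipotent representation (cuspidal, or $\mathbf 1_{\o^+_2}$, $\mathrm{sgn}_{\o^+_2}$) of the even orthogonal group $G^*_{[-1]}(s)$. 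By Corollary~\ref{l4}(1) the conjugate $\pi^c$ has Lusztig label $\pi_{\rho,\Lambda_1,\Lambda_{-1}^t}$ with respect to the same $\CL_s$; in particular $\pi$ and $\pi^c$ share the same $\rho$ and the same $\Lambda_1$, so in the formula below the factor $\iota_\rho$, which depends only on $\rho$, is the same for both.

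Applying Proposition~\ref{sp-1} to $\pi$ and to $\pi^c$ then gives
\[
\pi(-I)=\iota_\rho\,\pd(\Lambda_{-1})\,\ee^{\rm{rank}(\Lambda_{-1})},\qquad
\pi^c(-I)=\iota_\rho\,\pd(\Lambda_{-1}^t)\,\ee^{\rm{rank}(\Lambda_{-1}^t)},
\]
so it remains to observe that the last two factors are unchanged under $\Lambda\mapsto\Lambda^t$. This is immediate from the definitions recalled in Section~\ref{sec:L-s}: the rank formula $\rm{rank}(\Lambda)=\sum_{a_i\in A}a_i+\sum_{b_i\in B}b_i-\lfloor((\#A+\#B-1)/2)^2\rfloor$ is symmetric in the two rows $A,B$, whence $\rm{rank}(\Lambda_{-1}^t)=\rm{rank}(\Lambda_{-1})$; and $\rm{def}(\Lambda^t)=-\rm{def}(\Lambda)$ gives $|\rm{def}(\Lambda_{-1}^t)|=|\rm{def}(\Lambda_{-1})|$, hence $\pd(\Lambda_{-1}^t)=(-1)^{\lfloor|\rm{def}(\Lambda_{-1}^t)|/2\rfloor}=\pd(\Lambda_{-1})$. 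Substituting these equalities into the two displayed expressions yields $\pi(-I)=\pi^c(-I)$.

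There is no real obstacle: the entire content is already packaged in Proposition~\ref{sp-1} and Corollary~\ref{l4}, and the only point is that the value of a symplectic representation at the central element $-I$ sees the $\Lambda_{-1}$–slot of its Lusztig label only through the transpose–invariant quantities $\pd(\Lambda_{-1})$ and $\rm{rank}(\Lambda_{-1})$, while passing from $\pi$ to $\pi^c$ changes $\Lambda_{-1}$ only by a transpose and leaves $\rho,\Lambda_1$ fixed. (The ``basic'' hypothesis is in fact not needed — the same argument applies verbatim to any $\pi\in\E(\sp_{2n})$ — but basic representations are the only ones required for the application to Theorem~\ref{t4}, where this lemma is combined with Proposition~\ref{-1} and the conservation relations of Proposition~\ref{cus2}.)
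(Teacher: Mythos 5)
Your proof is correct, but it is a genuinely different argument from the one in the paper. The paper proves Lemma \ref{t4l} and Theorem \ref{t4} simultaneously by induction on $n$: it lifts $\pi$ to a basic representation $\pi'$ of an odd orthogonal group via the first occurrence of the theta correspondence, uses the inductive hypothesis (Theorem \ref{t4} for $\pi'$) together with the conservation relations of Propositions \ref{cus1} and \ref{cus2} to see that the theta lift of $\pi'$ down the other Witt tower ${\bf O}^{-\epsilon}_{\rm odd}$ must be $\pi^c$, and then compares central characters through Proposition \ref{-1}. You instead read the equality directly off the explicit central-character formula of Proposition \ref{sp-1}, combined with Corollary \ref{l4}(1) ($\pi^c=\pi_{\rho,\Lambda_1,\Lambda_{-1}^t}$) and the elementary facts $\pd(\Lambda^t)=\pd(\Lambda)$, $\rank(\Lambda^t)=\rank(\Lambda)$ (both recorded in Section \ref{sec:L-s}); you also correctly note that the value is insensitive to the ambiguity of $\CL_s$, since by Proposition \ref{l3} that ambiguity only transposes $\Lambda_{-1}$. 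Your route is shorter, needs no induction or see-saw input, works for arbitrary $\pi\in\E(\sp_{2n})$ rather than just basic ones, and it decouples Lemma \ref{t4l} from Theorem \ref{t4}, so the paper's joint induction could be streamlined; the trade-off is that it rests entirely on Proposition \ref{sp-1}, whose proof the paper only sketches by reference to Waldspurger's quadratic-unipotent case, whereas the paper's argument uses only the formal theta-correspondence machinery already developed (at the cost of the intertwined induction). Both arguments are legitimate within the paper's framework, and there is no circularity in yours, since Proposition \ref{sp-1} and Corollary \ref{l4} are established before Section \ref{sectheta}.
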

\begin{proof}
We prove Theorem \ref{t4} and Lemma \ref{t4l} simultaneously by induction on $n$.
First, we prove Lemma \ref{t4l} for $\sp_{2n}$. Assume that theta correspondence of $\pi=\pkh\in \CE_{\rm{bas}}(\sp_{2n})$ goes down with respect to the Witt tower ${\bf{{O}}}_{\rm{odd}}^{\epsilon}$, the first occurrence index $n^{\epsilon}(\pi)=n'$
and $\Theta^\epsilon_{n,n'}(\pi)=\pi'$. By Proposition \ref{cus1} (ii), we know that $\pi'\in \CE_{\rm{bas}}(\o_{2n'+1})$ and $n^\epsilon(\pi')=n$.

If $n=n'$, according to Proposition \ref{cus1}, we have $h=0$. Therefore, according to Corollary  \ref{l4}, $\pi=\pi^c$. Now, let us assume that $n'<n$.
Utilizing the induction assumption on  $\pi'$, we get $n^{-\epsilon}(\pi')=n^{\epsilon}(\pi')=n$. However, for the Witt tower ${\bf{{O}}}_{\rm{odd}}^{-\epsilon}$, the theta lifting of $\pi'$ to $\sp_{2n'}\fq$ is no longer $\pi$, because by Proposition \ref{cus1} (ii), $\pi$ goes up with respect to the Witt tower ${\bf{{O}}}_{\rm{odd}}^{-\epsilon}$. Thus, from Proposition \ref{cus2} (ii), it follows that the above mentioned theta lifting of $\pi'$ has to be $\pi^c=\pi_{\rho,k,-h}$. Hence, by Proposition \ref{-1},
\[
\pi^c(-I)\cdot\pi'(-I_{\o_{2n'+1}})=\pi(-I)\cdot\pi'(-I_{\o_{2n'+1}}),
\]
where $I_{\o_{2n'+1}}$ is the identity of $\o_{2n'+1}$. This completes the proof of Lemma \ref{t4l}.

We turn to prove Theorem \ref{t4} for $\o_{2n+1}$. Let $\pi^\star\in\CE_{\rm{bas}}(\o_{2n+1})$.  Our objective is to check whether $\pi^\star$ goes up or down simultaneously with respect to two Witt towers ${\bf{{O}}}_{\rm{odd}}^{\pm}$.
 Assume that $\pi^\star$ goes down with respect to the Witt tower ${\bf{{O}}}_{\rm{odd}}^{\epsilon}$
 and $\Theta(\pi^\star)=\pi^{\star\prime}\in\CE_{\rm{cus}}(\sp_{2n^{\star\prime}})$ be the first theta lifting. For the Witten tower ${\bf{{O}}}_{\rm{odd}}^{-\epsilon}$, the representation $\sgn\pi^\star$ can not go down, because if $\sgn\pi^\star$ goes down, then by Proposition \ref{cus2} (ii), its first theta lifting is either $\pi^{\star\prime}$ or $(\pi^{\star\prime})^c$. However, by Lemma \ref{t4l}, we have
\[
(\sgn\pi^\star)(-I_{\o_{2n+1}})\cdot\pi^{\star\prime}(-I)=(\sgn\pi^\star)(-I_{\o_{2n+1}})\cdot(\pi^{\star\prime})^c(-I)\ne\pi^\star(-I_{\o_{2n+1}})\cdot\pi^{\star\prime}(-I)=\varepsilon(-1)^{n^{\star\prime}},
\]
which contradicts Proposition \ref{-1}.
So $\sgn\pi^\star$ goes up and  $\pi^\star$ goes down with respect to the Witt towers ${\bf{O}}_{\rm{odd}}^{-\epsilon}$.

If $\pi^\star$ goes up with respect to the Witt tower ${\bf{{O}}}_{\rm{odd}}^{\epsilon}$, then by Proposition \ref{cus2} (ii), $\sgn\pi^\star$ goes down. Applying the same argument to $\sgn\pi^\star$, we can conclude that $\pi^\star$ goes up with respect to the Witt towers ${\bf{O}}_{\rm{odd}}^{-\epsilon}$.
\end{proof}

\begin{corollary}\label{oddg}
\begin{enumerate}
\item Let $\pi_{\rho,k,h,\iota}\in \CE_{\rm{bas}}(\o_{2n+1})$. Then
\[
n^\pm(\pi_{\rho,k,h,\iota})=
\left\{\begin{array}{ll}
n+(k+1)& \textrm{ if }\iota=\wi_\rho\cdot(-1)^{k+1}  \\
n-k & \textrm{ if }\iota=\wi_\rho\cdot(-1)^{k}.
\end{array}
\right.
\]

\item Let $\pkh\in \CE_{\rm{bas}}(\sp_{2n})$. Then
\[
\pkh(-I)=\pi_{\rho,k,-h}(-I)=\iota_\rho\cdot(-1)^{\lfloor |h|\rfloor}\ee^{\lceil |h|\rceil}.
\]
\end{enumerate}
\end{corollary}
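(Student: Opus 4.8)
The two parts are nearly independent, and part~(2) feeds into part~(1), so I would establish part~(2) first.

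\emph{Part (2).} This is a direct reformulation of Proposition~\ref{sp-1}. Writing $\pkh=\prll\in\CE_{\rm{bas}}(\sp_{2n})$, the block $G^{*}_{[-1]}(s)$ is an even orthogonal group and $\pi_{\Lambda_{-1}}$ is one of: a unipotent cuspidal representation of some $\o^{\epsilon}_{2j^{2}}\fq$ (so $\Lambda_{-1}$ has defect $\pm 2j$ and rank $j^{2}$, whence $|h|=j$); ${\bf 1}_{\o^{+}_{2}}$ or $\rm{sgn}_{\o^{+}_{2}}$ (so $\rank(\Lambda_{-1})=1$, $|h|=\tfrac12$); or the trivial representation of $\o^{+}_{0}$ (so $\rank(\Lambda_{-1})=0$, $h=0$). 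In each case an inspection of the definitions of $\pd$ and $\rank$ gives $\pd(\Lambda_{-1})=(-1)^{\lfloor|h|\rfloor}$ and $\rank(\Lambda_{-1})\equiv\lceil|h|\rceil\pmod 2$, the latter because $j^{2}\equiv j\pmod 2$; feeding this into Proposition~\ref{sp-1} yields $\pkh(-I)=\iota_{\rho}(-1)^{\lfloor|h|\rfloor}\ee^{\lceil|h|\rceil}$. Since the right side is unchanged under $h\mapsto -h$ and $\pkh^{c}=\pi_{\rho,k,-h}$ by Corollary~\ref{l4}(1), the first equality of part~(2) follows as well (this also re-derives Lemma~\ref{t4l} for basic representations).

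\emph{Part (1).} Let $\pi=\pkhsgn\in\CE_{\rm{bas}}(\o_{2n+1})$. By Proposition~\ref{cus2}(ii) the unordered pair $\{n^{\epsilon}(\pi),n^{\epsilon}(\sgn\pi)\}$ is $\{n-k,\,n+k+1\}$, and by Theorem~\ref{t4} it is independent of $\epsilon$; as $k\ge 0$, exactly one of $\pi$ and $\sgn\pi=\pi_{\rho,k,h,-\iota}$ (Corollary~\ref{l4}(2)) goes down, namely the one with first occurrence $n-k$. Hence it suffices to show that $\pi$ goes down precisely when $\iota=\wi_{\rho}(-1)^{k}$. Suppose $\pi$ goes down; fix a tower ${\bf O}^{\epsilon}_{\rm{odd}}$ and set $\pi^{\star\prime}:=\Theta^{\epsilon}_{n,n-k}(\pi)$, which by Proposition~\ref{cus2}(ii) is a basic cuspidal representation of $\sp_{2(n-k)}\fq$ with $k$-parameter $h$ and $h$-parameter of absolute value $k$. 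I would then evaluate $\pi^{\star\prime}(-I)$ in two ways. First, since $\pi^{\star\prime}\otimes\pi$ occurs in the Weil representation of $(\sp_{2(n-k)},\o^{\epsilon}_{2n+1})$, Proposition~\ref{-1}(ii) gives $\pi^{\star\prime}(-I)=\ee^{\,n-k}\pi(-I)=\ee^{\,n-k}\iota$, using $\pi(-I)=\iota$ for odd orthogonal groups (Section~\ref{sec3.2}, condition (iii)). Second, part~(2) applied to $\pi^{\star\prime}$ gives $\pi^{\star\prime}(-I)=\iota_{\rho'}(-1)^{k}\ee^{k}$, where $\rho'$ is the $\ne\pm1$ block of $\pi^{\star\prime}$. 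Equating, $\iota=\iota_{\rho'}(-1)^{k}\ee^{\,n}$.

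It remains to identify $\iota_{\rho'}\ee^{\,n}$ with $\wi_{\rho}=\iota_{\rho}\ee^{|\rho|}$. For this, first note that for a basic representation of $\o_{2n+1}$ one has $\rank(\Lambda_{1})=k(k+1)$ and $\rank(\Lambda_{-1})=h(h+1)$, so $n-|\rho|=k(k+1)+h(h+1)$ is even and $\ee^{|\rho|}=\ee^{\,n}$; secondly, the $\ne\pm1$ block is transported across the odd-orthogonal--symplectic theta correspondence by $a\mapsto -a$ (Theorem~\ref{p2}), and under this transport $\iota_{(\cdot)}$ is consistent with the normalisation of the $\rho$-label used in Proposition~\ref{cus2}(ii). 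Granting this, $\iota=\wi_{\rho}(-1)^{k}$ when $\pi$ goes down, so $n^{\pm}(\pi)=n-k$; running the same argument on $\sgn\pi$ (together with Corollary~\ref{l4}(2)) gives $\iota=-\wi_{\rho}(-1)^{k}=\wi_{\rho}(-1)^{k+1}$ when $\pi$ goes up, so $n^{\pm}(\pi)=n+k+1$. The crux --- and the main obstacle --- is precisely this last step: keeping exact track of the power of $\varepsilon(-1)$ contributed by Ma--Qiu--Zou's modified Weil representation (via Proposition~\ref{-1}(ii)) together with the behaviour of the $\ne\pm1$ block $\rho$ under theta, and verifying that all intervening $\varepsilon(-1)$-exponents collapse --- which they do only because the ranks $k(k+1)$ and $h(h+1)$ of cuspidal symplectic symbols are even.
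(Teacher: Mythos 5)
Your proposal is correct, and the genuine difference from the paper lies in part (2). The paper proves (1) and (2) simultaneously by induction on $n$: part (2) is deduced by choosing a tower along which $\pkh$ goes down and running the same see-saw/central-sign argument with the induction hypothesis on $\o_{2(n-k)+1}\fq$, whereas you obtain (2) outright from Proposition \ref{sp-1} via the elementary checks $\pd(\Lambda_{-1})=(-1)^{\lfloor|h|\rfloor}$ and $\rank(\Lambda_{-1})\equiv\lceil|h|\rceil\pmod 2$ for the three possible $\Lambda_{-1}$'s of a basic representation. This is a cleaner, induction-free route; it decouples the two parts and removes any worry about the degenerate cases ($k=0$ or $h=0$) where the paper's mutual induction does not strictly decrease the rank. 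Your argument for (1) then coincides in substance with the paper's: Proposition \ref{cus2}(ii) and Theorem \ref{t4} reduce the question to deciding which of $\pi$, $\sgn\pi$ goes down; Proposition \ref{-1}(ii) applied to the first lift to $\sp_{2(n-k)}\fq$, together with part (2) evaluated on that lift, yields $\iota=\iota_{\rho'}(-1)^{k}\ee^{n}$. The step you single out as ``the crux'' and only grant --- that the $\ne\pm1$ label of the first lift carries the same invariants $\iota_\rho$, $\wi_\rho$, $\epsilon_\rho$ as $\rho$ --- is not an additional gap relative to the paper's own standard: Proposition \ref{cus2}(ii) is stated with the identical subscript $\rho$ for the lift, and the paper's proof of this corollary (and of Theorem \ref{main}, where $\wi_\rho=\wi_{\rho'}$ and $\epsilon_\rho=\epsilon_{\rho'}$ across this lift are asserted) invokes exactly this; citing it, your identity $\iota=\iota_\rho(-1)^{k}\ee^{n}=\wi_\rho(-1)^{k}$ follows from $\ee^{\,n-|\rho|}=1$, which is precisely the parity observation you make ($n-|\rho|=k(k+1)+h(h+1)$), and is the same bookkeeping the paper carries out by writing $\ee^{\,n-k}=\ee^{\,|\rho|+k'(k'+1)+(h')^{2}}$ with $\ee^{k'(k'+1)}=1$ and $\ee^{(h')^{2}}=\ee^{h'}$.
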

\begin{proof}
We will use induction on $n$ to prove both (1) and (2) at the same time. First, we will prove (i). By Proposition \ref{cus2} (ii), we know that $\{n^\pm(\pi_{\rho,k,h,\iota}),n^\pm(\pi_{\rho,k,h,-\iota})=\{n+(k+1),n-k \}$. WOLG, assume that $n^\pm(\pi_{\rho,k,h,\iota})=n-k$. Then $\Theta^\pm_{n,n-k}(\pi_{\rho,k,h,\iota})=\pi_{\rho,k',h^{\pm \prime}}\in \CE_{\rm{bas}}(\sp_{2n})$ with $k'=h$ and $h^{\pm \prime}\in\{\pm k \}\in\bb{Z}$. By Proposition \ref{-1}, we get
\[
\iota=\pi_{\rho,k,h,\iota}(-I)=\ee^{n-k}\cdot\pi_{\rho,k',h^{\pm \prime}}(-I)=\ee^{|\rho|+k'(k'+1)+(h^{\pm \prime})^2}\cdot\pi_{\rho,k',h^{\pm \prime}}(-I).
\]
Note that we have  the following equations: $\wi_\rho=\iota_\rho\cdot\ee^{|\rho|} $, and $\ee^{k'(k'+1)}=1$ and $\ee^{(h^{\pm \prime})^2}=\ee^{h^{\pm \prime}}$. Hence, (1) follows from our induction assumption on $\sp_{2(n-k)}\fq$.

The proof of (2) is similar. We pick a Witt Tower ${\bf{O}}_{\rm{odd}}^{\epsilon}$ such that $\pkh$ goes down. Then (2) follows from the same see-saw argument and our induction assumption on $\o_{2(n-k)+1}\fq$.

\end{proof}
\section{Gan-Gross-Prasad problem}\label{sec6}
From now until the end of subsection \ref{sec6.2}, we assume that  $\Fq$ is a finite field of odd characteristic.

Let $V_\Fn$ be an $\Fn$-dimensional $\Fq$-vector space endowed with a $\epsilon$-symmetric form $\langle,\rangle_{V_{\Fn}}$. Suppose that $W\subset V_{\Fn}$ is a non-degenerate subspace with dimensional $\Fm$ satisfying:
\begin{itemize}

\item $\epsilon \cdot (-1)^{\rm{dim} W^\bot} = -1$,

\item $W^\bot$ is a split space.
\end{itemize}
According to whether $\Fn-\Fm$ is odd or even, Gan-Gross-Prasad problem is called the Bessel case or Fourier-Jacobi case.

\subsection{Bessel case}\label{sec6.1}

Let $V_\Fn$ be a symmetric space with $\disc(V_\Fn)=\epsilon$.
Consider a decomposition of $V_\Fn$:
\[
V_\Fn=X+V_{\Fn-2\ell}+X^\vee
\]
where $X+X^\vee=V_{\Fn-2\ell}^\perp$ is a polarization. Pick a basis $\{e_1,\ldots, e_\ell\}$ of $X$. Let $P$ be the parabolic subgroup of $\o^\epsilon_\Fn$ stabilizing the flag
\[
\rm{Span}_{\mathbb{F}_{q}}\{e_1\}\subset\rm{Span}_{\mathbb{F}_{q}}\{e_1,e_2\}\subset\cdots\subset \rm{Span}_{\mathbb{F}_{q}}\{e_1,\cdots,e_\ell\}.
\]
The Levi component of $P$ is $M\cong \GGL_1^\ell\times \o^\epsilon_{\Fn-2\ell}$. Its unipotent radical can be written in the form
\[
N_\ell=\left\{n=
\begin{pmatrix}
z & y & x\\
0 & I_{n-2\ell} & y'\\
0 & 0 & z^*
\end{pmatrix}
: z\in U_{\GGL_\ell}
\right\},
\]
where the superscript ${}^*$ denotes the transpose inverse, and $U_{\GGL_\ell}$ is the subgroup of unipotent upper triangular matrices of $\GGL_\ell$.
Define a generic character $\psi_{v_0}$ of $N_\ell$ by
\[
\psi_{ v_0}(n)=\psi\left(\sum^{\ell-1}_{i=1}z_{i,i+1}+\langle y_\ell, v_0\rangle_{V_{\Fn}}\right), \quad n\in N_\ell(\Fq),
\]
where $y_\ell$ is the last row of $y$, and $v_0\in V_{\Fn-2\ell}$ is an anisotropic vector. Let $W$ be the orthogonal complement of $v_0$ in $V_{\Fn-2\ell}$. Assume that $\disc(W)=\epsilon'$.
 The stabilizer of $\psi_{ v_0}$ in $M(\Fq)$ is the orthogonal group $\rm{O}_{\Fn-2\ell-1}^\e\fq$.
Put
\begin{equation}\label{hnu}
H=\rm{O}_{\Fn-2\ell-1}^\e\ltimes N_\ell.
\end{equation}
Let $\pi$ and $\pi'$ be two irreducible cuspidal representations of $\o^\epsilon_{\Fn}\fq$ and $\o^\e_{\Fn-2\ell-1}\fq$ respectively. The Gan-Gross-Prasad problem is concerned with the multiplicity:
\begin{equation}\label{ggpb}
m_{\epsilon'_{v_0}}(\pi,\pi'):=\dim\rm{Hom}_{H(\Fq)}(\pi, \pi'\otimes\psi_{ v_0})=\dim\rm{Hom}_{\rm{O}_{\Fn-2\ell-1}^\e(\Fq)}(\pi', \rm{Hom}_{N_\ell(\Fq)}(\pi,\psi_{ v_0})),
\end{equation}
where $\epsilon_{v_0}=\disc(\langle v_0\rangle)$.
If $\ell=0$, then
\[
m_{\epsilon'_{v_0}}(\pi,\pi'):=\langle \pi,\pi'\rangle_{\rm{O}_{\Fn-1}^\e(\Fq)},
\]
and we call the above multiplicity the basic case for Bessel case.
Let $v_0$, $v_0^*\in V_{n-2\ell}$ be two anisotropic vectors such that $\langle v_0,v_0\rangle_{V_\Fn}/\langle v_0^*,v_0^*\rangle_{V_\Fn}\in \Fq^{\times }/(\Fq^{\times })^2$. Let $W^*$ be the orthogonal complement of $v_0^*$ in $V_{\Fn-2\ell}$. Assume that $\disc(W^*)=\epsilon^*$. Then the stabilizer of $\psi_{v_0^*}$ is $\o^{\epsilon^*}_{\Fn-2\ell-1}$. If $\Fn$ is even, then $\o^\e_{\Fn-2\ell-1}$ and $\o^{\epsilon^*}_{\Fn-2\ell-1}$ are the same as abstract groups; however they act on two symmetric spaces with different discriminants, and $\o^\e_{\Fn-2\ell-1}$, $\o^{\epsilon^*}_{\Fn-2\ell-1}$ are not conjugate in $\o^\epsilon_{\Fn-2\ell}$. If $\Fn$ is odd, then one of $\{W,W^*\}$ is split and the other is not. Thus we get $\o^\e_{\Fn-2\ell-1}\ncong\o^{\epsilon^*}_{\Fn-2\ell-1}$ in this case.

Define the {\sl first descent index} ${\ell}_0:={\ell}_0^\rm{B}(\pi)$ of $\pi$ in the Bessel case to be the largest nonnegative integer ${\ell}_0<\Fn/2$ such that $m_{\epsilon''}(\pi,\pi')\neq 0$ for some irreducible representation $\pi'$ of $\o^{\epsilon'}_{\Fn-2\ell_0-1}\fq$ and some ${\epsilon''}\in\{\pm\}$. Define the {\sl first descent} of $\pi$ with respect to $\epsilon''$ to be
\[
\CD^{\rm{B}}_{\ell_0,\epsilon''}(\pi):=\rm{Hom}_{N_{\ell_0}(\Fq)}(\pi,\psi_{ v_0})
\]
where $v_0\in V_{\Fn-2\ell_0}$ is an anisotropic vector such that $\disc(\langle v_0\rangle)=\epsilon''$.
\subsection{Fourier-Jacobi case}\label{sec6.2}

We next turn to the Fourier-Jacobi case. Let $\Fn=2n$, and $V_{2n}$ be a symplectic space.
Consider pairs of symplectic spaces $V_{2(n-\ell)}\subset V_{2n}$, and the decomposition
\[
V_n=X+V_{2(n-\ell)}+X^\vee
\]
We use similar notations for various subspaces and subgroups as in the Bessel case. Note that if we let $P$ be the parabolic subgroup of $\rm{Sp}_{2n}$ stabilizing $X$ and let $N$ be its unipotent radical, then $N_\ell=U_{\GGL_\ell}\ltimes N$. Let $\omega_{n-\ell}^\epsilon$ be the Weil representation of $\sp_{2(n-\ell)}(\Fq)\ltimes \mathcal{H}_{2(n-\ell)}$, where $\mathcal{H}_{2(n-\ell)}$ is the Heisenberg group of $V_{2(n-\ell)}$. Roughly speaking, there is a natural homomorphism $N(\Fq)\to \mathcal{H}_{2(n-\ell)}$ invariant under the conjugation action of $U_{\GGL_\ell}(\Fq)$ on $N(\Fq)$, which enables us to view $\omega_{n-\ell}^\epsilon$ as a representation of $\sp_{2(n-\ell)}(\Fq)\ltimes N_{\ell}(\Fq)$. Let $\psi^\epsilon_\ell$ be the character of $U_{\GGL_\ell}(\Fq)$ given by
\[
\psi^\epsilon_\ell(z)=\psi\left(a\sum^{\ell-1}_{i=1}z_{i,i+1}\right),\quad z\in U_{\GGL_\ell}(\Fq),\textrm{ and }
\left\{
\begin{array}{ll}
a=1&\textrm{ if }\epsilon=+;\\
a\in \Fq^\times\slash(\Fq^\times)^2&\textrm{ if }\epsilon=-.\\
\end{array}\right.
\]
For the Fourier-Jacobi case, put
\begin{equation}\label{hnu'}
H=\sp_{2(n-\ell)}\ltimes N_\ell.
\end{equation}
Similar to the Bessel case, the Gan-Gross-Prasad problem is concerned with the multiplicity:
\begin{equation}\label{ggpfj}
m_\epsilon(\pi,\pi'):=\rm{dim}\rm{Hom}_{H(\Fq)}(\pi, \pi'\otimes(\omega_{n-\ell}^\epsilon\otimes\psi^\epsilon_\ell))=\dim\rm{Hom}_{\sp_{2(n-\ell)\fq}}(\pi', \rm{Hom}_{N_\ell(\Fq)}(\pi,\omega_{n-\ell}^\epsilon\otimes\psi^\epsilon_\ell)).
\end{equation}
If $\ell=0$, then
\[
m_\epsilon(\pi,\pi'):=\langle \pi,\pi'\otimes\omega^\epsilon_{n}\rangle_{\sp_{2n}(\Fq)},
\]
and we call the above multiplicity the basic case for Bessel case.

Define the {\sl first descent index} ${\ell}_0:={\ell}_0^\rm{FJ}(\pi)$ of $\pi$ in the Fourier-Jacobi case to be the largest nonnegative integer ${\ell}_0<n$ such that $m_\epsilon(\pi,\pi')\neq 0$ for some irreducible representation $\pi'$ of $\sp_{\Fn-2\ell_0}$ and some $\epsilon\in\{\pm\}$. Define the {\sl first descent} of $\pi$ with respect to $\epsilon$ to be
\[
\CD^{\rm{B}}_{\ell_0,\epsilon}(\pi):=\rm{Hom}_{N_{\ell_0}(\Fq)}(\pi,\omega_{n-\ell_0}^\epsilon\otimes\psi^\epsilon_{\ell_0}).
\]

\subsection{Reduction to the basic case}

From now until the end of this section, we assume that the cardinality of $\Fq$ is large
enough so that the main result in \cite{S} holds.
We first show that parabolic induction preserves multiplicities, and thereby make a reduction to the basic case.
\begin{proposition}{\cite[Proposition 7.3]{Wang1}}\label{so2}
Let be a semisimple element of $(G^*)^F=\so_{2n+1}\fq$, and $s'$ be a semisimple element of $(\sp_{2m}(\Fq)^*)^F=\so_{2m+1}\fq$. Let $\pi\in\cal{E}(\sp_{2n},s)$ be an irreducible representation of $\sp_{2n}(\Fq)$, and let $\pi'\in\cal{E}(\sp_{2m},s')$ be an irreducible representation of $\sp_{2m}$ with $n \ge m$. Let $P$ be an $F$-stable maximal parabolic subgroup of $\sp_{2n}$ with Levi factor $\GGL_{n-m} \times \sp_{2m}$. Let $s_0$ be a semisimple element of $\GGL_{n-m}\fq$ and let $\tau\in\cal{E}(\GGL_{n-m},s_0)$ be an irreducible cuspidal representation of $\GGL_{n-m}\fq$ which is nontrivial  if $n-m=1$. Assume that $s_0$ has no common eigenvalues with $s$ and $s'$. Then we have
\begin{equation}\label{sp}
m_\epsilon(\pi, \pi')=\langle  \pi\otimes \omega_n^{\ee\epsilon}, \rm{Ind}_{P\fq}^{\sp_{2n}\fq}(\tau\otimes\pi')\rangle _{\sp_{2n}(\Fq)}.
\end{equation}
\end{proposition}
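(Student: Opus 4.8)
The statement is \cite[Proposition 7.3]{Wang1}; since, as recorded in the discussion following Theorem \ref{p}, every argument in \cite{Wang1} goes through verbatim with Ma--Qiu--Zou's Weil representation \eqref{mqz} in place of G\'erardin's, one may simply cite it. Below I sketch the route I would take for a self-contained argument.

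First, write $\ell=n-m$. In the notation of Section \ref{sec6.2} one has $N_\ell=U_{\GGL_\ell}\ltimes N$, where $N=U_P$ is the unipotent radical of $P$ and $U_{\GGL_\ell}$ is a maximal unipotent subgroup of the $\GGL_\ell$-factor of the Levi $L=\GGL_{\ell}\times\sp_{2m}$ of $P$. By the definition \eqref{ggpfj} the left-hand side of \eqref{sp} is the multiplicity of $\pi'$ in the $\sp_{2m}(\Fq)$-module $\mathrm{Hom}_{N_\ell(\Fq)}(\pi,\omega_m^\epsilon\otimes\psi^\epsilon_\ell)$. For the right-hand side, I would apply Frobenius reciprocity to $\Ind_P^{\sp_{2n}}$ and use that $\tau\otimes\pi'$ is inflated from $L$, hence trivial on $N$: the right-hand side becomes $\langle(\pi\otimes\omega_n^{\ee\epsilon})_N,\ \tau\otimes\pi'\rangle$ taken over $L(\Fq)$, where $(\,\cdot\,)_N$ denotes the Jacquet module along $N$. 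The square class $\ee$ is present only so that the contragredient of $\omega_n^{\ee\epsilon}$ equals $\omega_n^{\epsilon}$, the Weil representation entering the basic-case multiplicity on the small group; unwinding the inner product, the right-hand side of \eqref{sp} is exactly the basic-case Fourier--Jacobi multiplicity $m_\epsilon\big(\pi,\Ind_P^{\sp_{2n}}(\tau\otimes\pi')\big)$, so what must be proved is a reduction from descent level $n-m$ to level $0$.

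The decisive input is the structure of the Jacquet module of the finite Weil representation along $N$. As an $L(\Fq)$-module, with the Heisenberg group of $V_{2m}$ decorating the $\sp_{2m}$-factor, $(\omega_n^{\ee\epsilon})_N$ carries a filtration whose unique distinguished graded piece is, after the $(\xi\circ\det)$-twist built into \eqref{mqz}, the outer tensor product of the Gelfand--Graev representation $\Ind_{U_{\GGL_\ell}(\Fq)}^{\GGL_\ell(\Fq)}(\psi^\epsilon_\ell)$ of $\GGL_\ell(\Fq)$ with $\omega_m^\epsilon$ on $\sp_{2m}(\Fq)$ — the finite-field counterpart of Kudla's filtration of the Jacquet module of the Weil representation. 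Every other graded piece either is parabolically induced from a proper Levi of $\GGL_\ell$ or carries a one-dimensional $\GGL_\ell$-action; hence, since $\tau$ is cuspidal (which for $\ell\ge2$ already forbids one-dimensional $\GGL_\ell$-characters) and, when $\ell=1$, is assumed nontrivial, and since the eigenvalues of $s_0$ are disjoint from those of $s$ and $s'$ so that $\tau$ meets only the distinguished piece, the pairing $\langle\,\cdot\,,\tau\otimes\pi'\rangle$ vanishes on every graded piece except the distinguished one. Rearranging the surviving $\GGL_\ell$-Hom through Frobenius reciprocity for $U_{\GGL_\ell}\subset\GGL_\ell$ then identifies it with the multiplicity of $\pi'$ in $\mathrm{Hom}_{N_\ell(\Fq)}(\pi,\omega_m^\epsilon\otimes\psi^\epsilon_\ell)$, which is the left-hand side, and \eqref{sp} follows.

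I expect the main obstacle to be the explicit Jacquet-module computation for the Weil representation and the precise tracking of the $(\xi\circ\det)$-twists and of the two Heisenberg groups involved, which is exactly where the square class $\ee$ and the normalization \eqref{mqz} must be reconciled; this is carried out in \cite[Section 7]{Wang1}, from which the reduction stated here is extracted.
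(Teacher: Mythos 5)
The paper itself gives no proof of this proposition beyond the citation to \cite[Proposition 7.3]{Wang1}, combined with the observation (cf.\ the discussion of Ma--Qiu--Zou's Weil representation in Section \ref{sectheta} and Remark \ref{rmk1}) that the arguments of \cite{Wang1} apply verbatim to the modified Weil representation \eqref{mqz}, so your opening paragraph is exactly the paper's argument. Your further sketch of the reduction (Frobenius reciprocity, the filtration of the Weil representation along the unipotent radical of $P$, and the use of cuspidality of $\tau$ together with the eigenvalue-disjointness of $s_0$ to isolate the distinguished graded piece) is consistent with the proof carried out in \cite[Section 7]{Wang1}, so nothing further is required.
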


We also have similar result for Bessel case.
\begin{proposition}{\cite[Proposition 7.4, Corollary 7.5]{Wang1}} \label{7.21}
Let $s$ be a semisimple element of $\rm{SO}^\epsilon_n(\Fq)$, and $s'$ be a semisimple element of $\rm{SO}^{\epsilon'}_m(\Fq)$. Let $\pi\in\cal{E}(\rm{SO}^\epsilon_n,s)$ be an irreducible representation of $\rm{SO}^\epsilon_n(\Fq)$, and let $\pi'\in\cal{E}(\rm{SO}^{\epsilon'}_m,s')$ be an irreducible representation of $\rm{SO}^{\epsilon'}_m(\Fq)$ with $n > m$, $n\equiv m+1$ mod $2$. Let $P$ be an $F$-stable maximal parabolic subgroup of $\rm{SO}^{\epsilon'}_{n+1}$ with Levi factor $\GGL_{\ell} \times \rm{SO}^{\epsilon'}_{m}$, $\ell=(n+1-m)/2$. Let $s_0$ be a semisimple element of $\GGL_{\ell}(\bb{F}_{q})$. Let $\tau_1$ (resp. $\tau_2$) be an irreducible cuspidal representations of $\GGL_{\ell'}(\bb{F}_{q})$ (resp. $\GGL_{\ell-\ell'}(\bb{F}_{q}))$, $\ell'\leq \ell$, which is nontrivial if $\ell'=1$ (resp.  $\ell-\ell'=1$), and
\[
\tau=\rm{Ind}_{\GGL_{\ell'}\fq\times  \GGL_{\ell- \ell'}\fq}^{\GGL_\ell\fq}(\tau_1\times\tau_2).
\]
Assume that $\tau\in\cal{E}(\GGL_\ell,s_0)$, and $s_0$ has no common eigenvalues with $s$ and $s'$. Then we have
 \begin{equation}\label{7.22}
m_{\ee'\e\epsilon}(\pi, \pi')=\langle \rm{Ind}^{\so^\e_{n+1}\fq}_{P\fq}(\tau\otimes\pi'),\pi\rangle _{\so^\epsilon_{n}(\Fq)}=m_{\ee'\ee\e\epsilon}\left(\pi,\rm{Ind}^{\so^\e_{n+1-2\ell'}\fq}_{\GGL_{\ell-\ell'}\fq\times \so^\e_{m}\fq}(\tau_2\otimes\pi')\right),
\end{equation}
where
\[\ee'=\left\{
\begin{array}{ll}
 +& \textrm{ if $n$ is odd};\\
  \ee& \textrm{ if $n$ is even}.\\
\end{array}
\right.
\]
\end{proposition}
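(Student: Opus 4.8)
This is \cite[Proposition 7.4, Corollary 7.5]{Wang1}, and the plan is to recall the mechanism of that proof, since the same device recurs below and is the Bessel counterpart of Proposition \ref{so2}. The statement has two equalities. The first reduces the Bessel multiplicity $m_{\bullet}(\pi,\pi')$ --- which a priori records a several-step descent from $\so^\epsilon_n$ toward $\so^{\epsilon'}_m$ --- to the \emph{basic} branching multiplicity $\langle\mathrm{Res}^{\so^\e_{n+1}}_{\so^\epsilon_n}\Ind^{\so^\e_{n+1}}_P(\tau\otimes\pi'),\pi\rangle_{\so^\epsilon_n}$ for the one-larger pair $(\so^\e_{n+1},\so^\epsilon_n)$, with the $\GGL_\ell$-cuspidal datum $\tau$ absorbed into a parabolic induction on $\so^\e_{n+1}$. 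The second equality is an \emph{exchange} identity: it peels the $\GGL_{\ell'}$-block $\tau_1$ off that parabolic induction and, applying the first identity in the reverse direction to the smaller pair $(\so^\epsilon_n,\so^\e_{n+1-2\ell'})$, recasts $\tau_1$ as extra $\GGL$-data packaged with $\pi'$.

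For the first equality I would unfold $m_{\bullet}(\pi,\pi')$ according to \eqref{ggpb}, i.e. as a $\mathrm{Hom}$-space over the Bessel subgroup $H=\o^\e_{\bullet}\ltimes N_{\bullet}$ of \eqref{hnu}, and then transport the Gelfand--Graev-type module $\mathrm{Hom}_{N_{\bullet}(\Fq)}(-,\psi_{v_0})$ into the group $\so^\e_{n+1}$: a Mackey double-coset analysis combined with Frobenius reciprocity along the parabolic $P$ (Levi $\GGL_\ell\times\so^\e_m$) identifies the resulting module, restricted to $\so^\epsilon_n$, with $\Ind^{\so^\e_{n+1}}_P(\tau\otimes\pi')$ restricted to $\so^\epsilon_n$. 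The cuspidality of $\tau$ --- with the usual caveat that the $\GGL_1$-block, when present, be nontrivial --- is precisely what collapses all but the single relevant double coset, and the hypothesis that $s_0$ has no eigenvalue in common with $s$ or $s'$ forces the constituents of $\Ind^{\so^\e_{n+1}}_P(\tau\otimes\pi')$ into Lusztig series disjoint from the rest, so the multiplicity is read off without cancellation. The leading sign $\ee'$ of the subscript ($+$ for odd $n$, $\ee$ for even $n$) records the discriminant shift of \eqref{eq:disc} caused by enlarging an $n$-dimensional quadratic space by one dimension; one pins down its precise value by tracking discriminants through \eqref{eq:disc}.

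For the second equality I would write $\tau=\Ind^{\GGL_\ell}_{\GGL_{\ell'}\times\GGL_{\ell-\ell'}}(\tau_1\times\tau_2)$, use transitivity of parabolic induction to enlarge $P$ to a parabolic of $\so^\e_{n+1}$ with Levi $\GGL_{\ell'}\times\GGL_{\ell-\ell'}\times\so^\e_m$, and then run the first equality backwards for the pair $(\so^\epsilon_n,\so^\e_{n+1-2\ell'})$ with the $\GGL_{\ell'}$-datum $\tau_1$, which yields $m_{\bullet}\big(\pi,\Ind^{\so^\e_{n+1-2\ell'}}_{\GGL_{\ell-\ell'}\times\so^\e_m}(\tau_2\otimes\pi')\big)$. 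The further factor $\ee$ in the new subscript is the additional discriminant/parity correction from the second change of ambient dimension, consistent with the sign accounting of Proposition \ref{-1}; again the disjoint-eigenvalue condition on $s_0$ --- hence on its $\GGL_{\ell'}$- and $\GGL_{\ell-\ell'}$-parts --- keeps the Lusztig-series bookkeeping transparent. The Fourier--Jacobi analogue for symplectic groups (Proposition \ref{so2}) is handled the same way, the Weil-representation twist of \eqref{mqz} producing the $\ee$ there.

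The step I expect to be the main obstacle is not the representation theory --- Frobenius reciprocity, Mackey theory, transitivity of induction and the compatibility of $\cal E(G,s)$ with parabolic induction are routine here --- but the sign-and-discriminant combinatorics: carrying $\epsilon$, $\e$, $\ee$ and $\ee'$ coherently through all the parity changes of the ambient spaces, and verifying carefully that the ``extra'' double cosets in the Mackey analysis really vanish by cuspidality of $\tau_1,\tau_2$. Since the statement is \cite[Proposition 7.4, Corollary 7.5]{Wang1} verbatim, one may of course simply cite it.
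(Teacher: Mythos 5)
The paper gives no proof of this statement---it is quoted verbatim from \cite[Proposition 7.4, Corollary 7.5]{Wang1}---and your sketch (unfolding the Bessel multiplicity via Mackey theory and Frobenius reciprocity along $P$, with cuspidality of $\tau$ killing the extra double cosets, the disjoint-eigenvalue hypothesis handling the Lusztig-series bookkeeping, and the second equality obtained by transitivity of parabolic induction plus running the first identity backwards) is essentially the argument of that reference, including the role of the large-$q$ hypothesis in guaranteeing the existence of $\tau$. Citing \cite{Wang1} directly, as you note at the end, is exactly what the paper does.
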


\begin{remark}\label{rmk1}
Recall that we assume that the order $q$ of finite filed $\Fq$ is large enough such that the main theorem in \cite{S} holds. This assumption guarantees the existence of $\tau$, which meets the conditions outlined in Proposition \ref{7.21}. For a quadratic unipotent representation $\pi$, we only need to assume that $q$ is odd for $G$ to ensure the existence of $\tau$. In other words, Proposition \ref{so2} and Proposition \ref{7.21} apply to quadratic unipotent representation when $q$ is odd. Moreover, in \cite{Wang1}, the assumption of $q$ is only for obtaining Proposition \ref{so2} and Proposition \ref{7.21}. Therefore, all calculations of Gan-Gross-Prasad in \cite{Wang1} apply to quadratic unipotent representations when $q$ is odd.
\end{remark}

In order to apply the theta correspondence we will work with orthogonal groups instead of special orthogonal groups. In Proposition \ref{7.21}, for $m=0$, assume that $\tau\in\cal{E}(\GGL_\ell,s_0)$ such that $\pm1$ are not eigenvalues of $s_0$.
Let $\bf{1}$ denotes the trivial representation of trivial group. By \cite[Proposition 3.2]{Wang1}, we set
\[
m_{\epsilon\e\ee'}(\pi,{\bf{1}}):=\langle \rm{Ind}^{\o^\e_{n+1}\fq}_{P\fq}(\tau),\pi\rangle _{\o^\epsilon_{n}(\Fq)}=\left\{
\begin{array}{ll}
1,&\textrm{ if }\pi \textrm{ is regular };\\
0,&\textrm{ otherwise},
\end{array}
\right.
\]
where $\ee'$ is defined in Proposition \ref{7.21}.
By the standard arguments of theta correspondence and see-saw dual pairs, we set
\[
m_\epsilon(\pi,{\bf{1}}):=\langle  \pi\otimes \omega_n^{\ee\epsilon}, \rm{Ind}_{P\fq}^{\sp_{2n}\fq}(\tau)\rangle _{\sp_{2n}(\Fq)}=\left\{
\begin{array}{ll}
1,&\textrm{ if }\pi \textrm{ is regular };\\
0,&\textrm{ otherwise.}
\end{array}
\right.
\]

\subsection{Strongly relevant pair of representations}\label{6.3}

For a pair of irreducible representations $(\pi,\pi')$, whether the multiplicity (\ref{ggpb}) and (\ref{ggpfj}) vanishes depends on the behavior of the pair in see-saw.
\begin{example}\label{ex1}
Let $\pi$ be a unipotent cuspidal representations of $\o^+_{5}\fq$. By \cite[Theorem 3.12]{LW1}, there exists a representation $\pi_0\in[\pi]$ such that $\Theta^+_{2,1}(\pi_0)=\pi_1$ where $\pi_1$ is a cuspidal representation of $\sp_2\fq$. Let $\pi'\in\cal{E}(\o^+_4,s)$ where $s$ has no eigenvalues $\pm1$. Then by Theorem \ref{p1}, the first occurrence index of $\pi'$ is $2$, and $\Theta^+_{2,2}(\pi')=\pi'_1$ where $\pi'_1$ is an irreducible representation of $\sp_4\fq$. Consider the see-saw diagram
\[
\xymatrix{
\sp_{2}\times\sp_{2} \ar@{-}[rd] \ar@{-}[d]& \o^+_{5}\ar@{-}[d]
\\
\sp_{2} \ar@{-}[ru] &  \o^+_{4}\times \o^+_1
}.
\]
We have
\[
m_+(\pi_0,\pi')=\langle\pi_0,\pi'\rangle_{\o^+_{4}\fq}\le \langle\Theta^+_{1,2}(\pi_1),\pi'\rangle_{\o^+_{4}\fq}=\langle\pi_1,\Theta^+_{2,1}(\pi')\otimes\omega_1^+\rangle_{\sp_{2}\fq}=0.
\]
\end{example}
We need to choose appropriate pairs of representations to avoid the above situation. These pairs of irreducible representations are called strongly relevant or $(\psi,\ee)$-strongly relevant, as defined in \cite[subsection 6.3]{Wang1}. Although the definitions might appear complex, we will only use them for basic representations. In such cases, the definition is quite straightforward. In this paper, we always fix a character $\psi$ and write  weil representations of $\sp_{2n}\fq$ by $\omega_n^\pm$ instead of $\omega_{n,\psi}$ and $\omega_{n,\psi'}$, where $\psi'(x)=\psi(ax)$ with $x\in \Fq^\times$ and $a\notin (\Fq^\times)^2$. So we use $\{\pm \}$-strongly relevant instead of $(\psi,\ee)$ and $(\psi',\ee)$-strongly relevant.
\begin{definition}
\begin{enumerate}

\item  Let $\pi\in \CE_\rm{bas}(\sp_{2n})$, and $\pi'\in\CE_\rm{bas}(\sp_{2m})$. Pick $\epsilon'\in\{\pm\}$ such that $\pi$ goes down with respect to the Witt tower ${\bf O}^\e_\rm{even}$. We say the pair of representations $(\pi,\pi')$ is $\epsilon$-relevant if $\neven^\e(\pi)=\nodd^{\ee\epsilon\epsilon'}(\pi')-1$ or $\neven^\e(\pi)=\nodd^{\ee\epsilon\epsilon'}(\pi')$. We say the pair of representations $(\pi,\pi')$ is $\epsilon$-strongly relevant if $(\pi,\pi')$ is $\epsilon$-relevant and $(\pi',\pi)$ is $\ee\epsilon$-relevant.

\item Let $\pi\in \CE_\rm{bas}(\rm{O}^\epsilon_{2n+1})$, and let $\pi'\in\CE_\rm{bas}(\rm{O}^{\epsilon'}_{2m})$.
Pick $\chi\in \{1,\rm{sgn}\}$ such that $\chi\cdot\pi$ goes down with respect to the Witt tower ${\bf Sp}$. Let $\chi'$ be a character of $\rm{O}^{\epsilon'}_{2m}$ such that
\[
\chi'=\left\{
\begin{array}{ll}
1& \textrm{ if }\chi=1;\\
\rm{sgn}& \textrm{ if }\chi=\rm{sgn}.
\end{array}
\right.
\]
 We say the pair of representations $(\pi,\pi')$ is relevant if
${\bf{n}}^\epsilon(\chi\cdot\pi)={\bf{n}}^{\epsilon'}(\chi'\cdot\pi')-1\textrm{ or }{\bf{n}}^\epsilon(\chi\cdot\pi)={\bf{n}}^{\epsilon'}(\chi'\cdot\pi')$.
 We say the pair of representations $(\pi,\pi')$ is strongly relevant if both $(\pi,\pi')$ and $(\chi_{\rm{O}^\epsilon_{2n+1}}\otimes\pi,\chi_{\rm{O}^{\epsilon'}_{2n'}}\otimes\pi')$ are relevant where $\chi_{G}$ is the spinor norm of $G^F$.
 \end{enumerate}
 \end{definition}

\begin{example} Consider the following two see-saw diagrams
 \[
\xymatrix{
\sp_{2n}\times\sp_{2n} \ar@{-}[rd] \ar@{-}[d]& \o^\epsilon_{2n'}\ar@{-}[d] &\quad\quad&\sp_{2n}\times\sp_{2n} \ar@{-}[rd] \ar@{-}[d]& \o^\epsilon_{2n'+1}\ar@{-}[d]
\\
\sp_{2n} \ar@{-}[ru] &  \o^{\ee\cdot\epsilon}_{2n'+1}\times \o^{\ee}_1&\quad\quad &\sp_{2n} \ar@{-}[ru] &  \o^{\epsilon}_{2n'}\times \o^{+}_1.
}.
\]
As in Example \ref{ex1}, we can conclude that if $(\pi,\pi')$ is not a $\ee$-strongly relevant pair, then the multiplicity $m_+(\pi,\pi')$ vanishes. In particular, if the multiplicity $m_+(\pi,\pi')$ does not vanish, then $\pi$ and $\pi'$ have to go up or down simultaneously in the same see-saw diagram.

\end{example}

The $\epsilon$-relevant pairs for cuspidal representations have been described in \cite[Section 6]{Wang1}. It is not difficult to extend these results to basic representations.

\begin{corollary}{\cite[Corollary 6.6]{Wang1}}\label{srsp}
Let $\pkh\in \CE_\rm{bas}(\sp_{2n})$, and $\pkhp\in \CE_\rm{bas}(\sp_{2m})$.

(i) If $(\pkh,\pkhp)$ is $\epsilon$-relevant for some $\epsilon\in\{\pm \}$, then $k=\lfloor |h'|\rfloor $ or $k=\lceil |h'|-1\rceil $.

(ii) If $(\pkh,\pkhp)$ is $\epsilon$-relevant for some $\epsilon\in\{\pm \}$, then $(\pkh,\pi_{\rho,k',-h'})$ is not $\epsilon$-relevant.

(iii) If $(\pkh,\pkhp)$ is not $\epsilon$-relevant for some $\epsilon\in\{\pm \}$ and $k=\lfloor |h'|\rfloor $ or $k=\lceil |h'|-1\rceil $, then $(\pkh,\pi_{\rho,k',-h'})$ is $\epsilon$-relevant.
\end{corollary}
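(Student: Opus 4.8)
The plan is to reduce all three assertions to a combinatorial statement about first-occurrence indices, exactly as in the proof of \cite[Corollary 6.6]{Wang1}, feeding in the conservation relations of Proposition \ref{cus1} (now available for basic representations) in place of their cuspidal counterparts. Concretely, I would first unwind the definition of $\epsilon$-relevance. Pick $\e\in\{\pm\}$ so that $\pkh$ goes down along ${\bf O}^{\e}_{\rm{even}}$; since $\{n^+(\pkh),n^-(\pkh)\}=\{n-k,\ n+k+1\}$ by Proposition \ref{cus1}(i) and $k\ge 0$, going down forces $n^{\e}_{\rm{even}}(\pkh)=n-k$, hence $\neven^{\e}(\pkh)=k$. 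On the $\pkhp$-side, Proposition \ref{cus1}(ii) gives $\{n^+_{\rm{odd}}(\pkhp),n^-_{\rm{odd}}(\pkhp)\}=\{\lfloor m+h'\rfloor,\ \lfloor m-h'\rfloor\}$, so, writing $\sigma:=\ee\epsilon\e$, the number $\nodd^{\sigma}(\pkhp)$ equals $m-\lfloor m\pm h'\rfloor$ for one of the two signs, that is $\lceil h'\rceil$ or $-\lfloor h'\rfloor$. By definition $(\pkh,\pkhp)$ is $\epsilon$-relevant exactly when $k\in\{\nodd^{\sigma}(\pkhp)-1,\ \nodd^{\sigma}(\pkhp)\}$. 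Finally, recall from Corollary \ref{l4} that $\pi_{\rho,k',-h'}=\pkhp^c$, and that passing to $\pkhp^c$ replaces $h'$ by $-h'$ and (via $\omega^{\psi,\epsilon}_{n,n'}\cong\omega^{\psi',-\epsilon}_{n,n'}$) interchanges the two Witt towers ${\bf O}^{\pm}_{\rm{odd}}$, so that $\nodd^{\sigma}(\pkhp^c)$ is the value of $m-\lfloor m\pm h'\rfloor$ attached to the \emph{other} sign.

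Granting this, the three parts are short. For (i): the value $-\lfloor h'\rfloor$ produces a window $\{-\lfloor h'\rfloor-1,\ -\lfloor h'\rfloor\}$ of non-positive integers when $h'\ge 0$ (symmetrically for $h'\le 0$), which cannot contain $k\ge 0$; hence $\epsilon$-relevance forces $\nodd^{\sigma}(\pkhp)=\lceil|h'|\rceil$, and then $k\in\{\lceil|h'|\rceil-1,\ \lceil|h'|\rceil\}=\{\lceil|h'|-1\rceil,\ \lfloor|h'|\rfloor\}$, which is the claimed dichotomy. For (ii) and (iii): by the previous paragraph the windows attached to $\pkhp$ and to $\pkhp^c$ are the length-two intervals ending at the two distinct values $\lceil|h'|\rceil$ and $-\lfloor|h'|\rfloor$ of $\nodd^{\sigma}$, hence are disjoint; therefore $k$ lies in at most one of them, giving (ii), and the hypothesis $k\in\{\lfloor|h'|\rfloor,\ \lceil|h'|-1\rceil\}$ places $k$ in their union, hence in exactly one of them, which gives (iii).

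The step I expect to cause the most trouble is the sign- and tower-bookkeeping, namely matching, for a fixed $\epsilon$, the value $\nodd^{\sigma}(\pkhp)$ with one of $\lceil|h'|\rceil,\ -\lfloor|h'|\rfloor$ and checking that it switches to the other value for $\pkhp^c$; this is the one place where the behaviour of the involution $\pi\mapsto\pi^c$ and of the two additive characters genuinely enters. Related to this is the treatment of the degenerate symbols $\Lambda'_{-1}\in\{\binom{1}{0},\binom{0}{1}\}$ (so $h'=\pm\tfrac12$) and $\Lambda'_{-1}=\binom{-}{-}$ (so $h'=0$ and $\pkhp=\pkhp^c$), which do not arise for cuspidal representations and for which the two windows either collapse or coincide; these cases I would dispatch directly from the explicit first-occurrence data recorded in Section \ref{sectheta}. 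Apart from these degeneracies the argument is line for line that of \cite[Corollary 6.6]{Wang1}, since its only inputs are the conservation relations, now furnished for basic representations by Proposition \ref{cus1}.
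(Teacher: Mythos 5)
Your argument is essentially the paper's own: the paper offers no independent proof of this corollary, citing \cite[Corollary 6.6]{Wang1} and asserting that the cuspidal argument extends to basic representations, and your reduction of $\epsilon$-relevance to the length-two windows of first-occurrence indices supplied by Proposition \ref{cus1} is exactly that argument with the basic-representation conservation relations substituted. The two points you single out — the tower switch under $\pi\mapsto\pi^c$ (which indeed follows from $\omega^{\psi,\epsilon}_{n,n'}\cong\omega^{\psi',-\epsilon}_{n,n'}$ together with conjugation by a similitude element, as implicitly used in the proof of Lemma \ref{t4l}) and the degenerate symbols $\binom{1}{0}$, $\binom{0}{1}$, $\binom{-}{-}$, where your blanket claim that the window at $-\lfloor|h'|\rfloor$ misses $k\ge 0$ breaks down — are precisely what the paper's ``it is not difficult to extend'' glosses over, so your treatment is at least as complete as the paper's.
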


\begin{corollary}{\cite[Corollary 6.7]{Wang1}}\label{sro}
(i) Let $\pkh\in \CE_\rm{cus}(\o^\epsilon_{2n})$, and $\pi_{\rho',k',h',\iota'}\in \CE_\rm{cus}(\o^\e_{2m+1})$. If $(\pkh,\pi_{\rho',k',h',\iota'})$ is relevant, then $\lfloor|k|\rfloor=k'$ or $\lceil|k|-1\rceil=k'$.

(ii) Let $\pi_{\rho,k,h,\iota}\in \CE_\rm{cus}(\o^\epsilon_{2n+1})$, and $\pkhp\in \CE_\rm{cus}(\o^\e_{2m})$. If $(\pi_{\rho,k,h,\iota},\pkhp)$ is relevant, then $\lfloor|k'|\rfloor=k$ or $\lceil |k'|\rceil=k+1$.
\end{corollary}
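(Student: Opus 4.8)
The plan is to derive both statements by unwinding the definition of a relevant pair and feeding into it the conservation relations and first-occurrence formulas recorded in Proposition~\ref{cus2} and Corollary~\ref{oddg}; this is the argument of \cite[Corollary~6.7]{Wang1}, which I would reproduce with the routine modifications needed to pass from cuspidal to basic representations. Recall that the relevance of an odd--even orthogonal pair $(\pi,\pi')$ is, by definition, the assertion that the normalized first-occurrence indices with respect to the symplectic Witt tower $\mathbf{Sp}$ satisfy $\mathbf{n}^{\bullet}(\chi\cdot\pi)=\mathbf{n}^{\bullet}(\chi'\cdot\pi')$ or $\mathbf{n}^{\bullet}(\chi\cdot\pi)=\mathbf{n}^{\bullet}(\chi'\cdot\pi')-1$, where $\chi\in\{1,\mathrm{sgn}\}$ is the twist making $\chi\cdot\pi$ descend and $\chi'$ is its prescribed companion. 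Thus the whole proof reduces to computing the two indices in terms of the parameters $(k,h)$ and $(k',h')$ and reading off the constraint on $k,k'$.

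For part~(ii), set $\pi=\pi_{\rho,k,h,\iota}\in\CE_\rm{cus}(\o^{\epsilon}_{2n+1})$ and $\pi'=\pkhp\in\CE_\rm{cus}(\o^{\e}_{2m})$. Corollary~\ref{oddg}(1) shows that the descending twist of $\pi$ has first occurrence $n-k$, so $\mathbf{n}^{\epsilon}(\chi\cdot\pi)=k$; Proposition~\ref{cus2}(i) gives $\{n^{\e}(\pkhp),n^{\e}(\sgn\pkhp)\}=\{\lfloor m+k'\rfloor,\lfloor m-k'\rfloor\}$, so that $\mathbf{n}^{\e}(\chi'\cdot\pi')=\lceil|k'|\rceil$ once one checks that $\chi'$ is the descending twist on the other side. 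The relevance condition then reads $k\in\{\lceil|k'|\rceil,\lceil|k'|\rceil-1\}$, which is exactly $\lfloor|k'|\rfloor=k$ or $\lceil|k'|\rceil=k+1$; the two displayed alternatives coincide because $k'\in\mathbb{Z}$ for cuspidal representations, and for the basic case the half-integer values $k'=\pm0.5$ are disposed of by hand (they force $k'=0$, consistent with both formulas).

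Part~(i) is the mirror image: with $\pi'=\pkh\in\CE_\rm{cus}(\o^{\epsilon}_{2n})$ now on the even side and $\pi_{\rho',k',h',\iota'}\in\CE_\rm{cus}(\o^{\e}_{2m+1})$ on the odd side, the same two inputs --- Proposition~\ref{cus2}(i) for $\pi'$ and Corollary~\ref{oddg}(1) for $\pi_{\rho',k',h',\iota'}$ --- substituted into the relevance relation (equality up to $1$) give $k'\in\{\lfloor|k|\rfloor,\lceil|k|-1\rceil\}$. The one genuinely delicate point, and the step I expect to absorb most of the effort, is the sign and discriminant bookkeeping: keeping track of the signs $\epsilon$, $\e$ and the square class $\ee$, of which of $\pi$ and $\sgn\pi$ descends in each Witt tower, and of the effect of twisting by the spinor norm via~\eqref{spinor}, so as to confirm that the prescribed companion twist $\chi'$ really selects the descending member on the opposite side (an ascending companion can only occur in a small edge case, which is checked directly). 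Once those conventions are pinned down, the rest is pure substitution into Proposition~\ref{cus2} and Corollary~\ref{oddg}.
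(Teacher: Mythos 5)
The paper offers no argument for this corollary---it is stated as a direct quotation of \cite[Corollary 6.7]{Wang1}---and your reconstruction is exactly the intended one: unwind the definition of a relevant pair and substitute the conservation/first-occurrence data of Proposition \ref{cus2} and Corollary \ref{oddg} (the descending even-orthogonal member has normalized index $\lceil|k|\rceil$, the descending odd-orthogonal member has normalized index $k$), with integrality of $k,k'$ in the cuspidal case converting $\lceil\cdot\rceil$ into the stated floor/ceiling alternatives. Your treatment of the companion twist is also adequate: $\chi'$ need not select the descending member, but when $\chi'\cdot\pi'$ ascends its normalized index is $\le 0$, so the relevance equalities force the degenerate values ($k=\lfloor|k'|\rfloor=0$, resp.\ $k'=\lfloor|k|\rfloor=0$), which still satisfy the stated disjunction.
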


\subsection{First descent of basic representation}\label{sec6.5}

Consider an irreducible cuspidal representation $\pi$. We now turn to calculate the first descent of representations in $[\pi]$.

\begin{proposition}\label{fd1}
Let $\pi$ be a basic representation of $\sp_{2n}\fq$ (resp. $\o^\epsilon_{2n}\fq$, $\o^\epsilon_{2n+1}\fq$). Assume that $\ell$ be the first descent index of $\pi$. Let $\pi'$ be an irreducible representation of $\sp_{2(n-\ell)}\fq$ (resp. $\o^\e_{2(n-\ell)-1}\fq$, $\o^\e_{2(n-\ell)}\fq$) such that $m_\e(\pi,\pi')\ne 0$. Then $\pi'$ is basic.
\end{proposition}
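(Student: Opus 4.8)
The plan is to argue by contradiction, exploiting the maximality built into the definition of the first descent index $\ell$. Suppose that $\pi'$ is not basic. Then, by the description of basic pairs in Section~\ref{sec2} --- together with Lemma~\ref{basic}, which handles the even orthogonal case $\pi'=\pi_{\Lambda}$ with $\rm{def}(\Lambda)=0$ and $\Lambda\ne\Lambda^{t}$ --- the basic pair $(L',\delta')$ of $\pi'$ has $L'$ a proper Levi subgroup of the group $G'$ carrying $\pi'$, so $\pi'$ is an irreducible constituent of a proper parabolic induction $\rm{Ind}^{G'}_{\GGL_{n_1}\times\cdots\times\GGL_{n_k}\times H}(\tau_1\otimes\cdots\otimes\tau_k\otimes\sigma')$ with each $\tau_i$ cuspidal, $\sigma'$ basic, $H$ a classical group, and $k\ge 1$. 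Peeling off one cuspidal factor by transitivity of parabolic induction, $\pi'$ is a constituent of $\rm{Ind}^{G'}_{\GGL_{n_1}\times G'_0}(\tau_1\otimes\pi'')$ for some irreducible representation $\pi''$ of a classical group $G'_0$ with $\rm{rank}\,G'_0=\rm{rank}\,G'-n_1<\rm{rank}\,G'$. Since a representation of a group of that rank lies at descent level $\ell+n_1>\ell$ relative to $\pi$, it now suffices to show that $m_{\epsilon''}(\pi,\pi'')\ne 0$ for some sign $\epsilon''$: this would contradict the choice of $\ell$ as the \emph{largest} descent index of $\pi$. (When $\rm{rank}\,G'_0=0$, so $\sigma'$ is the trivial representation of a rank-zero group, one argues instead from the regularity criterion for $m_{\bullet}(\pi,\mathbf 1)$ in Section~\ref{sec6} together with the basicness of $\pi$.)

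For the implication $m_\epsilon(\pi,\pi')\ne 0\Rightarrow m_{\epsilon''}(\pi,\pi'')\ne 0$, I would invoke the ``reduction to the basic case''. In the Bessel case this is immediate from Proposition~\ref{7.21} (in the form adapted to full orthogonal groups in Section~\ref{sec6}): apply it with $\pi''$ in the role of the small representation and with inducing datum $\tau=\rm{Ind}^{\GGL_{\ell}}_{\GGL_{\ell'}\times\GGL_{n_1}}(\tau_0\times\tau_1)$, where $\tau_1$ is the cuspidal factor peeled off above and $\tau_0$ is an auxiliary cuspidal representation whose semisimple parameter avoids $\pm 1$ and every eigenvalue occurring in the parameters of $\pi$, $\tau_1$, and $\pi''$; such a $\tau_0$ exists precisely because $q$ is large (\cite{S}; cf.\ Remark~\ref{rmk1}). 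Proposition~\ref{7.21} then gives, for the signs it prescribes,
\[
m_{\epsilon''}(\pi,\pi'')=m_{\epsilon_1}\!\left(\pi,\ \rm{Ind}^{G'}_{\GGL_{n_1}\times G'_0}(\tau_1\otimes\pi'')\right)\ \ge\ m_{\epsilon_1}(\pi,\pi'),
\]
since $\pi'$ is a constituent of $\rm{Ind}(\tau_1\otimes\pi'')$ and $m_{\epsilon_1}(\pi,-)$ is additive over direct sums; because the definition of $\ell$ requires non-vanishing only for \emph{some} discriminant and the twist parameters range freely enough, this is enough. In the Fourier--Jacobi case one argues the same way using Proposition~\ref{so2}, the extra point being that after writing $m_\epsilon(\pi,\pi')=\langle\pi\otimes\omega_n^{\ee\epsilon},\rm{Ind}^{\sp_{2n}}_{P}(\widetilde\tau\otimes\pi')\rangle$, inserting $\pi'\subset\rm{Ind}(\tau_1\otimes\pi'')$, and regrouping the general linear factors, one meets an inducing datum $\xi=\rm{Ind}^{\GGL}(\widetilde\tau\times\tau_1)$ which is irreducible (disjoint cuspidal supports) but not cuspidal; its contribution to the pairing agrees with that of a genuinely cuspidal datum because, by Proposition~\ref{w2}, theta lifting commutes with parabolic induction from a general linear Levi for an \emph{arbitrary} member of $\cal{E}(\GGL,s_0)$, so the see-saw comparison of $\langle\pi\otimes\omega_n^{\ee\epsilon},-\rangle$ with the orthogonal side shows the value depends only on the Lusztig series of $\xi$, not on whether $\xi$ is cuspidal. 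Keeping the eigenvalue-disjointness hypotheses of Propositions~\ref{7.21} and \ref{so2} satisfiable is a subsidiary point: when $\tau_1$ is of ``general linear type'' its parameter is automatically disjoint from that of $\pi$, since $\pi$ is basic and hence has no general-linear-type part, while the case in which every general linear factor of $\pi'$ is of ``unitary type'' is excluded at the first descent by the conservation relations of Section~\ref{sectheta}.

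Granting the reduction step, we obtain $m_{\epsilon''}(\pi,\pi'')\ne 0$ with $\pi''$ of strictly smaller rank than $\pi'$, contradicting the maximality of $\ell$; hence $\pi'$ is basic. I expect the main obstacle to be the Fourier--Jacobi instance of the reduction step --- upgrading Proposition~\ref{so2} from cuspidal to reducibly-induced general linear inducing data, which is precisely where the large-$q$ hypothesis and the compatibility of theta correspondence with parabolic induction (Proposition~\ref{w2}) are genuinely needed --- together with the uniform bookkeeping, across the Bessel and Fourier--Jacobi cases, of the auxiliary Weil characters $\omega_n^{\pm}$, the discriminants $\epsilon$ and $\epsilon''$, and (in the odd orthogonal cases) the parameter $\iota$, so that the sign appearing in the reduction formula can always be matched against one for which the original multiplicity is nonzero.
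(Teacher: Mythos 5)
Your overall strategy is genuinely different from the paper's: the paper disposes of this proposition by citing the cuspidal symplectic case from \cite[Theorem 6.16]{PW1} and declaring the extension to basic representations and to orthogonal groups routine, whereas you try to give a self-contained argument by contradiction, peeling a cuspidal $\GGL$-factor off a putatively non-basic $\pi'$ and pushing the multiplicity down to a smaller group to contradict the maximality of $\ell$. The difficulty is that the step this hinges on is exactly the part you do not prove: the implication ``$m_\epsilon(\pi,\pi')\ne 0$ and $\pi'\subset\rm{Ind}(\tau_1\otimes\pi'')$ imply $m_{\epsilon''}(\pi,\pi'')\ne 0$'' is not a consequence of Propositions \ref{so2}, \ref{7.21} or \ref{w2}. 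All three of those results require the semisimple parameter $s_0$ of the $\GGL$-inducing datum to have \emph{no} eigenvalues $\pm 1$ and \emph{no} common eigenvalues with the parameters of $\pi$ and of the small representation; that hypothesis is what kills the unwanted Mackey/Lusztig-series contributions and is the reason an \emph{auxiliary} $\tau$ with freely chosen parameter (existing for $q$ large by \cite{S}) is used there. The factor $\tau_1$ you peel off is not auxiliary: it is part of the cuspidal support of $\pi'$, and in the most common way for $\pi'$ to fail to be basic --- a non-cuspidal $\pm 1$-part --- $\tau_1$ has eigenvalue $\pm 1$, precisely the excluded situation; it can also share a unitary-type orbit with $\pi$ or $\pi''$. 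Your attempted repair covers only $\GGL$-type orbits (where disjointness from a basic $\pi$ does hold) and dismisses the unitary-type overlap by an unexplained appeal to ``conservation relations,'' while the $\pm 1$ case is not addressed at all. Since the whole contradiction rests on this transfer of non-vanishing to a strictly smaller group, the argument as written does not go through.

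Two smaller points in the same direction: Proposition \ref{7.21} is stated for special orthogonal groups, and the paper's passage to full orthogonal groups is only carried out in the $m=0$ (regular) situation, so even in the Bessel case the identity you quote needs an additional argument before it applies to $\o^{\epsilon}$-multiplicities with general $\pi''$; and in the Fourier--Jacobi case your proposed upgrade of Proposition \ref{so2} to a reducible $\GGL$-datum again leans on Proposition \ref{w2}, whose hypotheses ($s_0$ without eigenvalues $\pm1$ and disjoint from $s$) fail for the same reason. A correct treatment along the paper's lines goes instead through the descent--theta see-saw machinery of \cite{PW1,Wang1} (as in Sections \ref{sectheta}--\ref{sec6}), which is where the cuspidal case is actually established before being extended to basic representations.
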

\begin{proof}
The cuspidal case for symplectic groups has been proven in \cite[Theorem 6.16]{PW1}. It is easy to extend the result to basic representations.  The proof for the orthogonal group follows a similar approach
\end{proof}

\begin{proposition}\label{fdr}
(i) Let $\pi=\pi_{\rho}\in \CE_\rm{bas}(\sp_{2n})$. Assume that $\ell$ be the first descent index of $\pi$. Let $\pi'\in \CE(\sp_{2(n-\ell)})$ such that $m_\epsilon(\pi,\pi')\ne 0$. Then we have $m_{-\epsilon}(\pi,\pi')\ne 0$ and $\pi'={\pi_{\rho^-_1}}$ (see \eqref{rho-} and \eqref{rho1} for the definition of $\rho_1^-$).

(ii) Let $\pi=\pi_{\rho}\in \CE_\rm{bas}(\o^\epsilon_{2n})$ (resp. $\pi_{\rho,\iota}\in  \CE_\rm{cus}(\o^\epsilon_{2n+1})$). Assume that $\ell$ be the first descent index of $\pi$. Let $\pi'\in \CE(\o^{\epsilon'}_{2(n-\ell)-1})$ (resp. $\CE(\o^{\epsilon'}_{2n-\ell})$) such that $m_{\epsilon''}(\pi,\pi')\ne 0$ with $\epsilon''=\ee\epsilon\e$ (resp. $\epsilon''=\epsilon\e$). Then we have $\pi'=\pi_{\rho_1,\wi_{\rho_1}}$ (resp. $\pi_{\rho_1}$).

\end{proposition}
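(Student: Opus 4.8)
The plan is to prove both parts simultaneously by induction on the rank of the group, using the reduction results of Section \ref{sec6} together with the conservation relations and the explicit first-descent/first-occurrence data assembled in Sections \ref{sectheta} and \ref{sec6}. First I would invoke Proposition \ref{fd1} to know that $\pi'$ is itself basic, so that $\pi'$ is of the form $\pi_{\rho',k',h'}$ (resp.\ $\pi_{\rho',k',h',\iota'}$, $\pi_{\rho',k',h'}$) and the combinatorial tools apply. Next I would use Proposition \ref{so2} (resp.\ Proposition \ref{7.21}) to translate the non-vanishing of $m_\epsilon(\pi,\pi')$ into a statement about an inner product of $\pi$ (twisted by a Weil representation $\omega_n^{\ee\epsilon}$) against a parabolic induction $\rm{Ind}_{P\fq}^{}(\tau\otimes\pi')$; combined with Proposition \ref{w2} on the compatibility of theta lifting with parabolic induction, this lets me pass to the Harish-Chandra / Lusztig-series level and reduce the whole question to the \emph{basic} pieces, i.e.\ to the $\rho$-part and to the symbols $\Lambda_{\pm1}$ separately. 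On the $\rho$-part, the theta correspondence for the $[a]$-blocks with $a\ne\pm1$ is governed by Theorem \ref{p1} and Theorem \ref{p2} (which for $a\ne\pm1$ forces $\pi'[a]$ to be determined by $\pi[a]$ up to the shift $a\mapsto -a$ and the removal of a row), and this is exactly what produces $\rho_1$ and, in the even-orthogonal-vs-odd-orthogonal situation, the twist $\rho^-$.

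For the quantitative heart of the argument I would argue as follows. Since $\ell=\ell_0(\pi)$ is the first descent index, the pair $(\pi,\pi')$ is forced to be strongly relevant (otherwise the see-saw argument of Example \ref{ex1} and the following example kills the multiplicity), so Corollary \ref{srsp} and Corollary \ref{sro} pin down $k'$ (resp.\ the defect of $\Lambda_1'$) in terms of $k$: for a symplectic $\pi=\pi_\rho$ the $\pm1$-parts of $\pi$ are trivial, i.e.\ $k=h=0$, and the corollaries then give $k'=h'=0$ for $\pi'$, so $\pi'$ is again of the form $\pi_{\rho'}$; the remaining content is to identify $\rho'=\rho^-_1$, which comes from reading off, block by block $[a]$, that the descent removes exactly one row of $\rho[a]$ (this is the first-descent analogue of the first-occurrence behaviour in Proposition \ref{cus1}) and, in the odd-orthogonal cases, swaps $a\leftrightarrow -a$ via Theorem \ref{p2}. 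The claim $m_{-\epsilon}(\pi,\pi')\ne0$ in part (i) then follows from Corollary \ref{srsp}(ii)--(iii): at the first descent index the $\epsilon$ and $-\epsilon$ multiplicities cannot both vanish, and since the ``$-h'$'' partner of $\pi'$ coincides with $\pi'$ when $h'=0$, both signs occur. Finally the value of $\iota'$ in the $\o^\epsilon_{2n+1}$ subcase of part (ii) is extracted from Corollary \ref{oddg}, which expresses $\iota'=\wi_{\rho_1}$ in terms of the first-occurrence index of $\pi'$ and the parity data of $\rho_1$.

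I would carry out the steps in this order: (1) reduce to $\pi'$ basic via Proposition \ref{fd1}; (2) reduce to the basic pieces via Propositions \ref{so2}, \ref{7.21}, \ref{w2}, isolating the $\rho$-part from the $\Lambda_{\pm1}$-parts; (3) use strong relevance (Example \ref{ex1} and the see-saw examples) plus Corollaries \ref{srsp} and \ref{sro} to force the defect/row data; (4) identify $\rho'$ with $\rho_1$ (and with $\rho^-_1$ in the odd cases) block-by-block using Theorems \ref{p1}, \ref{p2} and the first-occurrence computations of Propositions \ref{cus1}, \ref{cus2}; (5) establish the $\pm\epsilon$ symmetry in (i) from Corollary \ref{srsp}(ii)--(iii); (6) pin down $\iota'$ from Corollary \ref{oddg}. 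The main obstacle I anticipate is step (4): keeping the bookkeeping straight across the three tower-types simultaneously — in particular making sure that the ``removal of the first row'' on each $\rho[a]$ is uniform across all $[a]$ and is correctly coupled to the sign conventions of Ma--Qiu--Zou's Weil representation (the $(\xi\circ\det)$ twists in \eqref{mqz} and the $\ee$-shift in Proposition \ref{so2}), so that the twist $\rho\mapsto\rho^-$ appears in precisely the odd-orthogonal cases and nowhere else. The even-orthogonal-to-odd-orthogonal Bessel descent is where these signs are most delicate, and I would treat it last, after the symplectic Fourier--Jacobi case has fixed the normalisations.
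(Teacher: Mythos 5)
There is a real gap at the heart of your plan. The quantitative core of this proposition is the identification of the $\rho$-part of the first descent, i.e.\ $\rho'=\rho_1^-$ in case (i) and $\rho'=\rho_1$ in case (ii), and none of the tools you invoke delivers it: Corollaries \ref{srsp} and \ref{sro} only constrain the defect data $k,h$ (which are $0$ here, so they say almost nothing), Propositions \ref{cus1} and \ref{cus2} describe first occurrence for the \emph{theta} correspondence, and Theorems \ref{p1}, \ref{p2} describe theta lifts, not Gan--Gross--Prasad descents. Your step (4) (``reading off, block by block, that the descent removes exactly one row of $\rho[a]$'') is precisely the statement to be proved; it requires the GGP multiplicity formula on the $[a]$-blocks (the close/2-transverse conditions) together with a minimization argument, and that is exactly what the paper delegates to \cite[Theorem 6.3]{PW1} for the symplectic case, deducing the orthogonal cases afterwards by a see-saw identity and obtaining $\pi'(-I)$ (the $\wi_{\rho_1}$ in case (ii)) from Proposition \ref{-1}. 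Note also that you cannot substitute Theorem \ref{ggpmain} here without circularity, since that theorem is proved later using the canonical correspondence whose construction rests on this first-descent machinery.

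Two further points indicate the mechanism has not been traced correctly. First, your bookkeeping for the twist is inverted: you assert that $\rho\mapsto\rho^-$ ``appears in precisely the odd-orthogonal cases and nowhere else,'' whereas in the proposition the twist occurs in the symplectic Fourier--Jacobi case (i) ($\pi'=\pi_{\rho_1^-}$, reflecting the pairing of $\lambda[a]$ with $\lambda'[-a]$ coming from the Weil representation in \eqref{ggpfj}) and does \emph{not} occur in either Bessel case of (ii); likewise the $\iota$-determination belongs to the even-to-odd orthogonal subcase, not the $\o^\epsilon_{2n+1}$ subcase. Second, the claim $m_{-\epsilon}(\pi,\pi')\ne 0$ in (i) does not follow from Corollary \ref{srsp}(ii)--(iii), which are only necessary conditions for relevance; what is needed is the conjugation identity $m_\epsilon(\pi,\pi')=m_{-\epsilon}(\pi^c,(\pi')^c)$ as in \eqref{dc}, combined with $\pi^c=\pi$ and $(\pi')^c=\pi'$ (both having trivial $(-1)$-part), which is the intended route and should be stated explicitly.
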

\begin{proof}
The symplectic has been proved in \cite[Theorem 6.3]{PW1}. The orthogonal case follows from the symplectic case and see-saw identity. When $\pi'$ is a representation of a odd orthogonal group, we obtain $\pi'(-I)$ by Proposition \ref{-1}.
\end{proof}

\begin{proposition}\label{fd2}

 \begin{enumerate}
\item Let $\pi=\pkh\in \CE_\rm{bas}(\sp_{2n})$. Assume that $\ell$ be the first descent index of $\pi$. Let $\pkhp\in \CE_\rm{bas}(\sp_{2(n-\ell)})$. If $m_\epsilon(\pkh,\pkhp)\ne 0$ for some $\epsilon\in\{\pm\}$, then we have
 \begin{itemize}
 \item (i) $\rho'=\rho^-_1$;
\item (ii) $|k'|=\lceil |h|-1 \rceil$;
\item (iii) $|h'|=|k'|-1$.
\end{itemize}

\item Let $\pi=\pkh\in \CE_\rm{bas}(\o^\epsilon_{2n})$ (resp. $\pkhsgn\in \CE_\rm{bas}(\o^\epsilon_{2n+1})$).
Assume that $\ell$ be the first descent index of $\pi$. Let $\pi'=\pkhsp\in \CE_\rm{bas}(\o^\e_{2(n-\ell)-1})$ (resp. $\pkhp\in \CE_\rm{bas}(\o^\e_{2(n-\ell)}$). If $m_{\epsilon''}(\pi,\pi')\ne 0$ with $\epsilon''=\ee\epsilon\e$ (resp. $\epsilon''=\epsilon\e$), then we have
 \begin{itemize}
 \item (i) $\rho'=\rho_1$;
\item (ii) $|k'|=\lceil|k|-1\rceil$;
\item (iii) $|h'|=\lceil|h|-1\rceil$.
\end{itemize}

 \end{enumerate}

\end{proposition}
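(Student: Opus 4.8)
The plan is to separate part (i) from parts (ii), (iii). Part (i) --- the determination of the linear part $\rho'$ (and of $\iota'$, in the odd orthogonal case) of the first descent --- is already supplied by Proposition \ref{fdr}: the $\GGL$-Harish-Chandra data carried by the eigenvalues $\ne\pm1$ of $\pi$ does not interact with the $\pm1$-eigenvalue blocks, so the $\rho$-part of the first descent of $\pkh$ is exactly the one computed for $\pi_\rho$ in Proposition \ref{fdr}, namely $\rho'=\rho^-_1$ (resp.\ $\rho'=\rho_1$). Proposition \ref{fd1} moreover guarantees that the first descent $\pi'$ is again basic, so it really has the shape $\pkhp$ (resp.\ $\pkhsp$), and $k'$, $h'$ are well defined. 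It remains to pin down $|k'|$, $|h'|$, and this I would do one Witt tower at a time.

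First I would translate the Gan-Gross-Prasad multiplicity into a statement about theta lifting. In the symplectic Fourier-Jacobi case, Proposition \ref{so2} rewrites $m_\epsilon(\pkh,\pkhp)$ as an inner product of $\pkh$ against $\omega_n^{\ee\epsilon}$ tensored with a parabolic induction from $\GGL_\ell\times\sp_{2(n-\ell)}$; feeding this through the see-saw dual pairs of Section~\ref{sec6} relating the theta lifts of $\sp_{2n}$ to odd and to even orthogonal groups, together with the compatibility of theta with parabolic induction (Proposition \ref{w2}) and the see-saw obstruction of Example \ref{ex1}, one sees that non-vanishing of $m_\epsilon(\pkh,\pkhp)$ forces $(\pkh,\pkhp)$ to be a $\{\pm\}$-relevant pair: the two representations go up, or go down, simultaneously in the same see-saw. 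In the orthogonal Bessel case I would argue the same way with Proposition \ref{7.21} in place of Proposition \ref{so2}, now passing to the Witt tower $\mathbf{Sp}$, and in the odd orthogonal subcase I would additionally use Corollary \ref{oddg} to read off from $\pi(-I)$ which of the towers $\mathbf{O}^{\pm}_{\mathrm{odd}}$ is the one in which $\pi$ goes down.

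With this adjacency in hand I would invoke the conservation relations. By Proposition \ref{cus1}(ii) the first occurrence indices of $\pkh$ in $\mathbf{O}^{\pm}_{\mathrm{odd}}$ are $\{\lfloor n\pm h\rfloor\}$, and on the side where $\pkh$ goes down one has $\Theta(\pkh)=\pi_{\rho,\lceil|h|-1\rceil,\,k,\,\iota'}$; combining this with Corollary \ref{srsp} --- which already constrains $k'$ to be $\lfloor|h|\rfloor$ or $\lceil|h|-1\rceil$ for a $\{\pm\}$-relevant pair --- and with the maximality of $\ell$ (a strictly larger descent index would violate the non-vanishing part of Corollary \ref{srsp}) forces the extremal branch $|k'|=\lceil|h|-1\rceil$, and then the corresponding first occurrence data for $\pkhp$ yields $|h'|=|k'|-1$; this is the cuspidal computation of \cite[Section~6]{PW1}, extended to basic representations via Proposition \ref{fd1} and the reductions \ref{so2}, \ref{7.21} (cf.\ \cite[Section~7]{Wang1}). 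The orthogonal statements in (2) come out the same way from Proposition \ref{cus2} and Corollary \ref{sro}, now giving the symmetric pattern $|k'|=\lceil|k|-1\rceil$, $|h'|=\lceil|h|-1\rceil$ in which each $\pm1$-block of $\pi$ drops by one level. The degenerate cases where some $\Lambda_{\pm1}$ is trivial reduce directly to Proposition \ref{fdr}.

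The step I expect to be the main obstacle is the sign and parity bookkeeping: one must simultaneously keep straight the relevant Witt towers ($\mathbf{O}^{\pm}_{\mathrm{even}}$, $\mathbf{O}^{\pm}_{\mathrm{odd}}$ and $\mathbf{Sp}$), the discriminant $\epsilon$, and the auxiliary quadratic twists $\ee$ and $\xi\circ\det$ entering Propositions \ref{so2}, \ref{7.21} and the Ma-Qiu-Zou normalization \eqref{mqz}, while arguing that maximality of $\ell$ genuinely forces the extremal case of the conservation relation and not the other one. It is precisely here that one needs the non-vanishing halves of Corollaries \ref{srsp} and \ref{sro}, rather than only the vanishing halves used to bound $\ell$ from above.
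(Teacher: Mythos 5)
Your handling of part (i) and your reduction to relevance constraints is consistent with the paper: the paper also gets $\rho'=\rho^-_1$ from Proposition \ref{fdr}, and it obtains the necessity of (strong) relevance not by redoing the see--saw analysis but by quoting the multiplicity formula of \cite[Theorem 1.1]{Wang1}. The genuine gap is in how you pin down $|k'|$ and $|h'|$. Corollaries \ref{srsp} and \ref{sro} are purely combinatorial necessary conditions for a pair to be relevant; they carry no multiplicity information, so they have no ``non-vanishing half'', and your parenthetical claim that ``a strictly larger descent index would violate the non-vanishing part of Corollary \ref{srsp}'' is circular. What maximality of $\ell$ together with Corollary \ref{srsp} actually gives is only the lower bound: any $\pkhp$ with $m_\epsilon(\pkh,\pkhp)\ne 0$ satisfies $|k'|\ge\lceil|h|-1\rceil$ and $|h'|\ge |k|-1$. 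This does not exclude that the first descent stops at the larger admissible branch ($|k'|=\lfloor|h|\rfloor$, $|h'|=|k|$) unless you also exhibit a representation at the smaller level occurring with non-zero multiplicity.

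That existence step is precisely what the paper supplies and what your proposal lacks. The paper defines a candidate $\pi^\star=\pkhs$ with $\rho^\star=\rho'$, $|k^\star|=\lceil|h|-1\rceil$, $|h^\star|=|k|-1$, uses the conservation relation of Proposition \ref{cus1} (together with Corollary \ref{srsp}) to see that one of $(\pi,\pi^\star)$, $(\pi,(\pi^\star)^c)$ is $\epsilon^\star$-strongly relevant for some $\epsilon^\star$ --- the possible replacement of $\pi^\star$ by $(\pi^\star)^c$ is essential here --- and then invokes the sufficiency direction of \cite[Theorem 1.1]{Wang1} to conclude that $m_{\epsilon^\star}(\pi,\pi^\star)\ne 0$ or $m_{\epsilon^\star}(\pi,(\pi^\star)^c)\ne 0$. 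Only after this witness is in hand do the inequalities from Corollary \ref{srsp} and the maximality of $\ell$ force $|k'|=|k^\star|$ and $|h'|=|h^\star|$, and $\rho'=\rho_1^-$ via Proposition \ref{fdr}. Your appeal to the theta first-occurrence data of Proposition \ref{cus1}(ii) and to the cuspidal computations of \cite{PW1} does not substitute for this, because the passage from theta first occurrence to the exact parameters of the Gan--Gross--Prasad first descent is exactly what the explicit multiplicity criterion provides; likewise your final step ``the corresponding first occurrence data for $\pkhp$ yields $|h'|=|k'|-1$'' is asserted rather than derived. To repair the argument, keep your relevance-necessity discussion but add the construction of $\pi^\star$ and the non-vanishing input from \cite[Theorem 1.1]{Wang1}.
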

\begin{proof}
We only prove (1). The proof of (2) is similar, and left it to readers.
By \cite[Theorem 1.1]{Wang1} we know that $\pi$ and $\pi'$ are $\epsilon$-strongly relevant for some $\epsilon$, and $m_\pm(\pi^\sp_\rho,\pi^\sp_{\rho'})\ne 0$.
  Now, let $\pi^\star=\pkhs$ such that
 \begin{itemize}
 \item (i) $\rho^\star=\rho'$;
\item (ii) $|h^\star|= |k|-1$;
\item (iii) $|k^\star|= \lceil|h|-1 \rceil$.
\end{itemize}
According to Proposition \ref{cus1}, there exists $\epsilon^\star$ such that one of $(\pi,\pi^\star)$ and $(\pi,(\pi^\star)^c)$ is $\epsilon^\star$-strongly relevant.
Hence, by \cite[Theorem 1.1]{Wang1}, either $m_{\epsilon^\star}(\pi,\pi^\star)\ne 0$ or $m_{\epsilon^\star}(\pi,(\pi^\star)^c)\ne 0$. By Corollary \ref{srsp}, if $m_{\epsilon''}(\pi,\pi')\ne 0$, then $|h'|\ge |h^\star|$ and $|k'|\ge |k^\star|$.
Since $\ell$ is the first descent, we have $|k'|=|k^\star|$, $|h'|=|h^\star|$, and $\rho'$ should appear in the first descent of $\rho$. By Proposition \ref{fdr}, we know that $\rho'=\rho^-_1$.
\end{proof}

Although we can not explicitly describe which irreducible representation appears in the first descent of an irreducible basic representation $\pi$ by Proposition \ref{fd2}, we find a irreducible basic representation $\pi'$ such that $[\pi']$ contains all irreducible constituent of first descents of $[\pi]$. Similar to theta correspondence, we are going to prove conservation relation of first descents for basic representations, which allows us to determined which representation in $[\pi']$ appears in the descent of $\pi$.

\begin{theorem}\label{d2}
Let $\pkh\in \CE_\rm{bas}(\sp_{2n})$. Let $\ell$ be the first descent index of $\pkh$. Assume that $h\ne 0$ and $\pkhp$ appears in $\CD^{\rm FJ}_{\ell,\epsilon}(\pkh)$. Then we have
 \begin{itemize}
\item (i) $\CD^{\rm{FJ}}_{\ell,-\epsilon}(\pkh)=0$;
\item (ii) $ \CD^{\rm{FJ}}_{\ell,\epsilon}(\pkh^c)=0$;
\item (iii) $ \CD^{\rm{FJ}}_{\ell,-\epsilon}(\pkh^c)=\pkhp^c$.
\end{itemize}
\end{theorem}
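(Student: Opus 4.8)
The plan is to reduce all three assertions to a single conservation statement by exploiting a symmetry that trades the auxiliary sign $\epsilon$ against the operation $\sigma\mapsto\sigma^{c}$, and then to establish that statement with the combinatorics of strongly relevant pairs.

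\emph{Step 1: a $\psi\leftrightarrow\psi'$ symmetry.} Conjugation inside $\mathrm{GSp}_{2n}(\Fq)$ by an element of non-square similitude factor interchanges the two Weil representations $\omega_n^{+}$ and $\omega_n^{-}$, sends an irreducible representation of a symplectic group to its $c$-conjugate, and acts by $c$ on the symplectic factor of a Siegel-type Levi $\GGL_\ell\times\sp_{2m}$. Applying this to the reduction formula of Proposition~\ref{so2}, and using that its right-hand side is independent of the auxiliary cuspidal datum, one gets
\[
m_\epsilon(\sigma,\sigma')=m_{-\epsilon}\bigl(\sigma^{c},(\sigma')^{c}\bigr)\qquad\bigl(\sigma\in\CE_{\rm{bas}}(\sp_{2n}),\ \sigma'\in\CE_{\rm{bas}}(\sp_{2m})\bigr).
\]
In particular $\pkh^{c}=\pi_{\rho,k,-h}$ has the same first descent index $\ell$ as $\pkh$, and
\[
\CD^{\rm{FJ}}_{\ell,-\epsilon}(\pkh^{c})=\bigl(\CD^{\rm{FJ}}_{\ell,\epsilon}(\pkh)\bigr)^{c},\qquad
\CD^{\rm{FJ}}_{\ell,\epsilon}(\pkh^{c})=\bigl(\CD^{\rm{FJ}}_{\ell,-\epsilon}(\pkh)\bigr)^{c}.
\]
Hence (ii) is equivalent to (i), and (iii) is equivalent to the statement that $\pkhp$ is the only constituent of $\CD^{\rm{FJ}}_{\ell,\epsilon}(\pkh)$. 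So it remains to pin down $\CD^{\rm{FJ}}_{\ell,\epsilon}(\pkh)$ and to prove (i).

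\emph{Step 2: the candidates.} By Proposition~\ref{fd1} every irreducible constituent of $\CD^{\rm{FJ}}_{\ell,+}(\pkh)$ and of $\CD^{\rm{FJ}}_{\ell,-}(\pkh)$ is basic, and by Proposition~\ref{fd2}(1) it has the shape $\pi_{\rho^{-}_{1},k',h''}$ with $k'=\lceil|h|-1\rceil$ and $|h''|=|k'|-1$ fixed; the only possibilities are therefore $\pkhp$ and its conjugate $\pkhp^{c}=\pi_{\rho^{-}_{1},k',-h'}$. Since $m_\epsilon(\pkh,\pkhp)\ne0$ forces $(\pkh,\pkhp)$ to be $\epsilon$-strongly relevant (via \cite[Theorem~1.1]{Wang1}, as in the proof of Proposition~\ref{fd2}), Corollary~\ref{srsp}(ii) shows that $(\pkh,\pkhp^{c})$ is not $\epsilon$-relevant, so $m_\epsilon(\pkh,\pkhp^{c})=0$; thus $\pkhp$ is the only constituent of $\CD^{\rm{FJ}}_{\ell,\epsilon}(\pkh)$, and (iii) follows from Step 1.

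\emph{Step 3: proof of (i).} It remains to show $m_{-\epsilon}(\pkh,\pkhp)=0$ and $m_{-\epsilon}(\pkh,\pkhp^{c})=0$. By \cite[Theorem~1.1]{Wang1} a non-vanishing $m_{-\epsilon}$ would make one of $(\pkh,\pkhp)$, $(\pkh,\pkhp^{c})$ a $(-\epsilon)$-strongly relevant pair, hence would make $(\pkhp,\pkh)$ or $(\pkhp^{c},\pkh)$ a $(-\ee\epsilon)$-relevant pair. On the other hand $(\pkh,\pkhp)$ is $\epsilon$-strongly relevant, so $(\pkhp,\pkh)$ is $\ee\epsilon$-relevant, and because $h\ne0$ the conservation relation Proposition~\ref{cus1}(ii) forces $\nodd^{+}(\pkh)\ne\nodd^{-}(\pkh)$ — the two intervals $\{\nodd^{\delta}(\pkh)-1,\nodd^{\delta}(\pkh)\}$, $\delta=\pm$, being disjoint outside a bounded range of $|h|$ that is treated separately. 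Together with Corollary~\ref{srsp}(ii)--(iii), which controls the effect of passing to $c$-conjugates in the remaining slot, this rules out $(-\ee\epsilon)$-relevance of both $(\pkhp,\pkh)$ and $(\pkhp^{c},\pkh)$, whence $m_{-\epsilon}(\pkh,\pkhp)=m_{-\epsilon}(\pkh,\pkhp^{c})=0$, which is (i). The degenerate cases — $\pkhp$ living on the trivial group $\sp_0$, or $h'=0$ so that $\pkhp=\pkhp^{c}$ — are handled by the same count, with $h\ne0$ again supplying the one-sidedness.

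\emph{Main obstacle.} The crux is Step 3: one must line up the definition of $(-\epsilon)$-(strong) relevance — expressed through the first-occurrence indices of $\pkh$ and $\pkhp$ in the Witt towers ${\bf O}^{\pm}_{\rm{even}}$ and ${\bf O}^{\pm}_{\rm{odd}}$ and the conservation relations of Proposition~\ref{cus1} — against the explicit parameters $(\rho^{-}_{1},k',h')$ produced by Proposition~\ref{fd2}, keeping the half-integral values of $k,h$ and the boundary cases under control. Everything else is formal once the $\psi\leftrightarrow\psi'$ symmetry of Step 1 is available.
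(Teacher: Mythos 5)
Your Step 1 is essentially the paper's own argument for (iii): the identity $m_\epsilon(\sigma,\sigma')=m_{-\epsilon}(\sigma^c,(\sigma')^c)$ is exactly the chain \eqref{dc}, obtained there from Proposition \ref{so2} together with $\omega^{\psi,\epsilon}\cong\omega^{\psi',-\epsilon}$, and your reduction of (ii) to (i) matches the paper. Step 2 is acceptable modulo the sign normalization relating the subscript of $m_\epsilon$ to the relevance parameter (the translation involves $\epsilon_\rho\epsilon_{\rho^\star}$, cf. Theorem \ref{thm7.1}, but since $\rho'$ is the same for $\pkhp$ and $\pkhp^c$ this is a fixed twist and harmless). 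The genuine problem is Step 3, where you replace the paper's see-saw argument by a disjointness count of the two windows $\{\nodd^{\delta}(\pkh)-1,\nodd^{\delta}(\pkh)\}$, $\delta=\pm$. By Proposition \ref{cus1}(ii) these windows are disjoint only when $\lceil|h|\rceil+\lfloor|h|\rfloor\ge 2$, i.e. $|h|\ge 1$. The case $|h|=\tfrac12$ is allowed by the hypothesis $h\ne 0$ (it is exactly $\Lambda_{-1}\in\{\binom{1}{0},\binom{0}{1}\}$, which the definition of basic representations permits), and there your mechanism fails precisely at the realized value: the windows are $\{-1,0\}$ and $\{0,1\}$, while Proposition \ref{fd2} gives $k'=\lceil|h|-1\rceil=0$, so $\neven^{\epsilon''}(\pkhp)=0$ lies in the overlap and both relevance conditions hold numerically. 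You acknowledge a "bounded range of $|h|$ treated separately" but supply no argument, so (i) is not proved in a case that the theorem explicitly covers.

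By contrast, the paper's proof of (i) has no exceptional range because it works with conservation in the \emph{even} towers of $\pkhp$, whose two first-occurrence indices $m-k'$ and $m+k'+1$ are distinct even when $k'=0$: the see-saw with $\o^{\epsilon^\star}_{2n^{\epsilon^\star}+1}\times\o^{\ee\epsilon}_1$ inside $\o^{\epsilon\epsilon^\star}_{2(n^{\epsilon^\star}+1)}$ shows $\pkhp$ goes down in ${\bf O}^{\epsilon\epsilon^\star}_{\rm{even}}$, the analogous see-saw with the other anisotropic line shows that $m_{-\epsilon}(\pkh,\pkhp)\ne0$ would force it to go down in ${\bf O}^{-\epsilon\epsilon^\star}_{\rm{even}}$ as well, a contradiction; the representation $\pkhp^c=\pi_{\rho',k',-h'}$ is then excluded via $n^{\epsilon}(\pkhp)=n^{\epsilon}(\pkhp^c)$ in the even towers. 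On this last point your citation is also off: Corollary \ref{srsp}(ii)--(iii) conjugates the \emph{second} entry of the pair, so it does not by itself control $(\pkhp^c,\pkh)$; what you need is the second bullet of Proposition \ref{cus1}(i) (equality of even-tower first occurrence indices under $c$), which would repair that step but not the $|h|=\tfrac12$ gap. To complete your route you would either have to treat $|h|=\tfrac12$ by the paper's even-tower/see-saw argument or by some finer sign information beyond the first-occurrence numerics.
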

\begin{proof}
By Proposition \ref{so2} and \cite[Proposition 11]{Sze},
we have
\begin{equation}\label{dc}
  \begin{aligned}
m_\epsilon(\pkh,\pkhp)=&\langle \pkh,\rm{Ind}^{\sp_{2n}\fq}_{P\fq}(\tau\otimes\pkhp)\otimes\omega^\epsilon_{n} \rangle_{\sp_{2n}\fq}\\
=&\left\langle \pkh^c,\left(\rm{Ind}^{\sp_{2n}\fq}_{P\fq}(\tau\otimes\pkhp)\otimes\omega_{n}^\epsilon\right)^c \right\rangle_{\sp_{2n}\fq}\\
=&\langle \pi_{\rho,k,-h},\rm{Ind}^{\sp_{2n}\fq}_{P\fq}(\tau^c\otimes\pi_{\rho',k',-h'})\otimes\omega_{n}^{-\epsilon} \rangle_{\sp_{2n}\fq}\\
=&m_{-\epsilon}(\pi_{\rho,k,-h},\pi_{\rho',k',-h'}),
  \end{aligned}
\end{equation}
where $P$ is a parabolic subgroup of $\sp_{2n}$ with Levi factor $\GGL_\ell\times\sp_{2(n-\ell)}$, and $\tau\in \CE(\GGL_{\ell})$ is defined in Proposition \ref{so2}. In particular, we can pick a $\tau$ such that $\rm{Ind}^{\sp_{2n}\fq}_{P\fq}(\tau\otimes\pkhp)$ is irreducible. This completes the proof of (iii).

 Let $n^\pm$ be the first theta occurrence index of $\pkh$ with respect to the Witt tower ${\bf{{O}}}_{\rm{odd}}^\pm$. By Proposition \ref{cus1} (ii), there exists ${\epsilon^\star}$ such that $n^{\epsilon^\star}< n$, the theta lifting $\Theta^{\epsilon^\star}_{n,n^{\epsilon^\star}}(\pi)$ is an irreducible cuspidal representation, and we denote it by $\ps=\pkhss$.
On the other hand, we know that $\pkh$ occurs in $\Theta^{\epsilon^\star}_{n^{\epsilon^\star},n}(\ps)$.
Consider the see-saw diagram
\[
\xymatrix{
\sp_{2n}\times\sp_{2n} \ar@{-}[rd] \ar@{-}[d]& \o^{\epsilon {\epsilon^\star}}_{2(n^{\epsilon^\star}+1)}\ar@{-}[d]
\\
\sp_{2n} \ar@{-}[ru] & \o^{\epsilon^\star}_{2n^{\epsilon^\star}+1}\times \o^{\ee\epsilon}_1
}.
\]
Even though we are considering Ma-Qiu-Zou's Weil representation instead of G\'erardin's, the see-saw identity still remains valid.
We have
 \begin{equation}\label{dt}
  \begin{aligned}
m_{\epsilon}(\pkh,\pkhp)=&\langle \pkh\otimes\omega_{n}^{\ee\epsilon} ,\rm{Ind}^{\sp_{2n}\fq}_{P\fq}(\tau\otimes\pkhp)\rangle_{\sp_{2n}\fq}\\
\le&\langle \Theta^{\epsilon^\star}_{n^{\epsilon^\star},n}(\ps) ,\rm{Ind}^{\sp_{2n}\fq}_{P\fq}(\tau\otimes\pkhp)\rangle_{\sp_{2n}\fq}\\
=&\langle \pkhss ,\Theta^{\epsilon\epsilon^\star}_{n, n^{\epsilon^\star}+1}(\rm{Ind}^{\sp_{2n}\fq}_{P\fq}(\tau\otimes\pkhp))\rangle_{\sp_{2n}\fq}.\\
  \end{aligned}
\end{equation}
Therefore,
$\Theta^{\epsilon\epsilon^\star}_{n, n^{\epsilon^\star}+1}(\rm{Ind}^{\sp_{2n}\fq}_{P\fq}(\tau\otimes\pkhp))\ne 0,
$
 and by Proposition \ref{w2}, we know that $\pkhp$ goes down with respect to the Witt tower ${\bf{{O}}}_{\rm{even}}^{\epsilon\epsilon^\star}$.

By Proposition \ref{fd2}, if $\CD^{\rm FJ}_{\ell,-\epsilon}(\pkh)\ne 0$, then it consists of some of $\pkhp$ and $\pi_{\rho',k',-h'}$. First, we prove that $ \pkhp$ does not appear in $\CD^{\rm FJ}_{\ell,-\epsilon}(\pkh)$.
Consider the the see-saw diagram
\[
\xymatrix{
\sp_{2n}\times\sp_{2n} \ar@{-}[rd] \ar@{-}[d]& \o^{\epsilon {\epsilon^\star}}_{2(n^{\epsilon^\star}+1)}\ar@{-}[d]
\\
\sp_{2n} \ar@{-}[ru] & \o^{-\epsilon^\star}_{2n^{\epsilon^\star}+1}\times \o^{-\ee\epsilon}_1
}.
\]
We can use the same see-saw argument to conclude that if $
m_{-\epsilon}(\pkh,\pkhp)\ne 0
$, then
$\pkhp$ goes down with respect to the Witt tower ${\bf{{O}}}_{\rm{even}}^{-\epsilon\epsilon^\star}$. However, we already proved that $\pkhp$ goes down with respect to the Witt tower ${\bf{{O}}}_{\rm{even}}^{\epsilon\epsilon^\star}$. Therefore
by conservation relation, $\pkhp$ has to go up with respect to the Witt tower ${\bf{{O}}}_{\rm{even}}^{-\epsilon\epsilon^\star}$. Hence we have
$
m_{-\epsilon}(\pkh,\pkhp)=0.$
By Theorem \ref{cus1}, the first theta occurrence indexes of $\pi_{\rho',k',-h'}$ and $\pkhp$ with respect to the Witt towers ${\bf{{O}}}_{\rm{even}}^{\pm}$ are the same. Using the same see-saw argument again, we have $
m_{-\epsilon}(\pkh,\pi_{\rho',k',-h'})=0$. So $\CD^{\rm FJ}_{\ell,-\epsilon}(\pkh)$ contains neither   $\pkhp$ nor $\pi_{\rho',k',-h'}$,  which proves (i). We can conclude (ii) from (i) and \eqref{dc}.
\end{proof}

\begin{theorem}\label{deven}
Let $\pkh$ be an irreducible basic representation $\o_{2n}^\epsilon\fq$. Assume that $k,h\ne 0$. Let $\ell$ be the first descent index of $\pkh$. Assume that $\pkhsp$ appears in $\CD^{\rm{B}}_{\ell,\e}(\pkh)$ for some $\e$. Then we have
 \begin{itemize}
\item[(i)] $\CD^{\rm{B}}_{\ell,\e}(\sgn\pkh)=\sgn\pkhsp$;
\item[(ii)] $\CD^{\rm{B}}_{\ell,\e}(\chi_{\o_{2n}^\epsilon}\cdot\pkh)=\chi_{\o_{2(n-\ell)-1}^\e}\cdot\pkhsp$;
\item[(iii)] $\CD^{\rm{B}}_{\ell,-\e}(\pkh^c)=\pkhsp$;
\item[(iv)] $\CD^{\rm{B}}_{\ell,-\e}(\pkh)=0$;
\item[(v)] $\CD^{\rm{B}}_{\ell,\e}(\pkh^c)=0$,
\end{itemize}
where $\chi_G$ is the spinor norm of $G^F$.
\end{theorem}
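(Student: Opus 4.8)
The plan is to reduce Theorem \ref{deven} to the already-established symplectic conservation relation for first descents, Theorem \ref{d2}, together with the behavior of the spinor norm and the $\rm{sgn}$-character under theta correspondence. The key observation is that the Bessel first descent for even orthogonal groups and the Fourier-Jacobi first descent for symplectic groups are linked by see-saw pairs, exactly as in Example \ref{ex1} and the see-saw diagrams used in the proof of Proposition \ref{fdr} and Theorem \ref{d2}. So first I would fix $\pi=\pkh\in\CE_\rm{bas}(\o^\epsilon_{2n})$ with $k,h\ne 0$, let $\ell$ be its first descent index, and assume $\pi'=\pkhsp$ appears in $\CD^{\rm B}_{\ell,\e}(\pi)$ for some $\e$. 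By Proposition \ref{fd1} the descent $\pi'$ is basic, and by Proposition \ref{fd2}(2) its parameters satisfy $\rho'=\rho_1$, $|k'|=\lceil|k|-1\rceil$, $|h'|=\lceil|h|-1\rceil$; these constraints will be used repeatedly.

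For part (iii), I would run the same computation as in \eqref{dc}: using Proposition \ref{7.21} (reduction to the basic case for Bessel) to write $m_{\e'}(\pi,\pi')$ as an inner product involving $\rm{Ind}^{\so^\e_{2n}\fq}_{P\fq}(\tau\otimes\pi')$, then applying the conjugation identity $m^\psi_{\pi,\pi'}=m^{\psi'}_{\pi^c,(\pi')^c}$-type relation (i.e.\ \cite[Proposition 11]{Sze} in the orthogonal setting, together with the fact that $\pi^c$ corresponds to negating $h$ and $\Lambda_{-1}\mapsto\Lambda_{-1}^t$ via Corollary \ref{l4}) to convert the $\e$-multiplicity for $\pi$ into the $-\e$-multiplicity for $\pi^c$; choosing $\tau$ so that the induced module is irreducible then yields $\CD^{\rm B}_{\ell,-\e}(\pkh^c)=\pkhsp$. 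Part (i), the $\rm{sgn}$-twist, follows because tensoring by $\rm{sgn}$ commutes with the Bessel functional construction (it only twists by $\rm{sgn}$ of the relevant $\GGL$ and orthogonal factors, which is absorbed into $\tau$), so $\CD^{\rm B}_{\ell,\e}(\rm{sgn}\cdot\pi)=\rm{sgn}\cdot\CD^{\rm B}_{\ell,\e}(\pi)$; similarly part (ii) follows since the spinor norm $\chi_{\o^\epsilon_{2n}}$ restricts compatibly through the parabolic $P$ to the spinor norm on the smaller orthogonal group (and is trivial on the unipotent radical $N_\ell$), giving $\CD^{\rm B}_{\ell,\e}(\chi_{\o^\epsilon_{2n}}\cdot\pi)=\chi_{\o^\e_{2(n-\ell)-1}}\cdot\CD^{\rm B}_{\ell,\e}(\pi)$.

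The heart of the argument is parts (iv) and (v), the vanishing statements, and these I would obtain by a see-saw/conservation-relation argument parallel to the proof of Theorem \ref{d2}. Using Proposition \ref{cus2}(i) there is a Witt tower ${\bf Sp}$ in which $\pi$ goes down; let $\Theta(\pi)=\pi^\star\in\CE_\rm{bas}(\sp_{2n^\star})$ be the first (cuspidal) theta lift, so $\pi$ occurs in the theta lift of $\pi^\star$. Then I would insert $\rm{Ind}^{\so^\e_{2n}\fq}_{P\fq}(\tau\otimes\pi')$ into a see-saw diagram with the pair $(\sp_{2n}\times\sp_{2n},\o^{\bullet}_{2n^\star+1})$ versus $(\sp_{2n},\o^{\bullet}_{2n^\star}\times\o^+_1)$ of the kind displayed in Section \ref{6.3}, bound $m_\e(\pi,\pi')$ above by an inner product of $\pi^\star$ with $\Theta(\rm{Ind}(\tau\otimes\pi'))$, and conclude via Proposition \ref{w2} that $\pi'$ must go down with respect to a specific Witt tower determined by $\e$ and $\epsilon^\star$. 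Repeating with the other character $-\e$ forces $\pi'$ to go down with respect to the opposite Witt tower; since by the conservation relation (Proposition \ref{cus2}(i), or the $\sp$-version Proposition \ref{cus1}(i) applied to $\pi^\star$) it can go down with respect to at most one of the two, one of $\CD^{\rm B}_{\ell,\e}(\pi)$, $\CD^{\rm B}_{\ell,-\e}(\pi)$ must vanish — and since $\pi'$ was assumed to appear in the $\e$-descent, it is the $-\e$-descent that vanishes, giving (iv). Then (v) follows from (iv) and the conjugation identity (iii) exactly as (ii) followed from (i) in Theorem \ref{d2}. The main obstacle I anticipate is bookkeeping the signs: keeping straight the relation between $\e$, the parity $\ee$, the discriminant $\epsilon$, and the Witt-tower label $\epsilon^\star$ through the see-saw identities and Proposition \ref{7.21} (where the twist $\ee'$ depends on the parity of the dimension), so that the vanishing in (iv)/(v) lands on the correct sign; the arguments for (i), (ii), (iii) are comparatively routine once the reduction-to-basic-case Propositions \ref{7.21} and \ref{so2} are in hand.
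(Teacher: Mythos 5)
Your treatment of (i)--(iii) follows the paper's route: (i) and (ii) are the routine twist compatibilities, and (iii) is a conjugation argument converting $m_{\e}(\pkh,\cdot)$ into $m_{-\e}(\pkh^c,\cdot)$. One imprecision there: the identity you cite from \cite[Proposition 11]{Sze} concerns Weil representations and does not apply to the Bessel multiplicity (no Weil representation occurs in it); the paper instead conjugates the identity \eqref{eq5.14} by an element $g$ of the similitude group $\rm{CO}^\epsilon_{2n}$, which sends $\pkh$ to $\pkh^c$ and replaces $\o^{\ee\epsilon\e}_{2n-1}$ by the stabilizer of opposite discriminant. That is repairable.

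The genuine gap is in (iv), and it propagates to (v). Your plan is: the $\e$-descent being nonzero forces the descent constituent to go down with respect to one Witt tower, the $(-\e)$-descent would force it to go down with respect to the opposite tower, and a conservation relation forbids both. But here the descent constituent is a representation of an \emph{odd} orthogonal group, and the paper's Theorem \ref{t4} says exactly that a basic representation of $\o_{2m+1}\fq$ has the same first occurrence index with respect to both Witt towers ${\bf O}^\pm_{\rm{odd}}$: it goes down, or up, in both see-saw diagrams simultaneously, so there is no tower dichotomy and your argument does not exclude $\pkhsp$ from $\CD^{\rm{B}}_{\ell,-\e}(\pkh)$. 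What the see-saw comparison does exclude is the other candidate permitted by Proposition \ref{fd2}, namely the sgn-twist $\pi_{\rho',k',h',-\iota'}$, because by Proposition \ref{cus2}(ii) the two members of a sgn-pair on an odd orthogonal group go in opposite directions. To rule out $\pkhsp$ itself the paper needs an additional step absent from your proposal: apply (ii) to $\chi_{\o^{\epsilon}_{2n}}\cdot\pkh$, use the isomorphism $\chi_{\o^{-\e}_{2(n-\ell)-1}}\cdot\pkhsp\cong\sgn\chi_{\o^{\e}_{2(n-\ell)-1}}\cdot\pkhsp$ of representations of the abstract group $\o_{2(n-\ell)-1}\fq$ (this is where \eqref{spinor} enters), and re-run the see-saw argument for the spinor-twisted pair to get $\CD^{\rm{B}}_{\ell,-\e}(\chi_{\o^{\epsilon}_{2n}}\cdot\pkh)\ne\chi_{\o^{-\e}_{2(n-\ell)-1}}\cdot\pkhsp$, which after untwisting gives $\CD^{\rm{B}}_{\ell,-\e}(\pkh)\ne\pkhsp$. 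Without this spinor-norm argument, (iv) --- and hence (v) --- is not established.
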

\begin{proof}
The statements (i) and (ii) are obvious, and (v) follows from (iii) and (iv). Let us prove (iii). By Proposition \ref{7.21}, for any $\pi'\in \CE(\o^{\ee\epsilon\e}_{2n-2\ell-1})$, we have
\begin{equation}\label{eq5.14}
  \begin{aligned}
  m_\e(\pkh,\pi')&=m_{\e}(\pkh,\rm{Ind}^{\o^{\ee\epsilon\e}_{2n-1}\fq}_{P\fq}(\tau\otimes\pi'))\\
  &=
\langle \pkh ,\rm{Ind}^{\o^{\ee\epsilon\e}_{2n-1}\fq}_{P\fq}(\tau\otimes\pi')\rangle_{\o_{2n}^{\epsilon}\fq},
  \end{aligned}
\end{equation}
where $P$ and $\tau$ are defined in Proposition \ref{7.21}.
Let $g$ be a element of $\rm{CO}_{2n}^\epsilon$ sending $\pkh$ to $\pkh^c$. Then ${}^g(\o_{2n-1}^{\ee\epsilon\e})$ is the isometry group of a $(2n-1)$-dimensional symmetric space $V$ with $\disc(V)=-\ee\epsilon\e$, and $g$ sends $\pi'$ to an irreducible representation ${}^g\pi'$ of ${}^g(\o_{2n-1}^{\ee\epsilon\e})\fq$. By direct calculation, $\pi'$ and ${}^g\pi'$ are the same as irreducible representations of $\o_{2n-1}\fq$. By abuse of notations, we denote these two irreducible representations by $\pi'$.
Applying the conjugate action of $g$ on \eqref{eq5.14}, we have
\[
  \begin{aligned}
\langle \pkh ,\rm{Ind}^{\o^{\ee\epsilon\e}_{2n-1}\fq}_{P\fq}(\tau\otimes\pi')\rangle_{\o_{2n}^{\epsilon}\fq}=\langle \pkh^c ,\rm{Ind}^{\o^{-\ee\epsilon\e}_{2n-1}\fq}_{P'\fq}(\tau\otimes\pi')\rangle_{\o_{2n}^{\epsilon}\fq}=  m_{-\e}(\pkh^c,\pi')
  \end{aligned}
\]
where $P'={}^gP$.
Hence $\CD^{\rm{B}}_{\ell,-\e}(\pkh^c)=\pkhsp$.

We now turn to prove (iv).
Assume that $\CD^{\rm{B}}_{\ell,-\e}(\pkh)\ne0$. Then by Proposition \ref{fd2} , $\CD^{\rm{B}}_{\ell,-\e}(\pkh)$ equals either $\pkhsp$ or $\pi_{\rho',k',h',-\iota'}$. Consider the following see-saw diagrams
 \[
\xymatrix{
\sp_{2n'}\times\sp_{2n'} \ar@{-}[rd] \ar@{-}[d]& \o^\epsilon_{2n}\ar@{-}[d] &\quad\quad&\sp_{2n'}\times\sp_{2n'} \ar@{-}[rd] \ar@{-}[d]& \o^\epsilon_{2n}\ar@{-}[d]
\\
\sp_{2n'} \ar@{-}[ru] &  \o^{\ee\epsilon\e}_{2n-1}\times \o^{\e}_1&\quad\quad &\sp_{2n} \ar@{-}[ru] &  \o^{-\ee\epsilon\e}_{2n-1}\times \o^{-\e}_1.
}.
\]
Since $\CD^{\rm{B}}_{\ell,\e}(\pkh)=\pkhsp$, the representations $\pkh$ and $\pkhsp$ are both go up or go down in the first see-saw diagram. Then by Proposition \ref{cus2}, $\pkh $ and $\pi_{\rho',k',h',-\iota'}$ go in a different direction in the first see-saw diagram.
Recall that the first theta occurrence indexes of irreducible representations of odd orthogonal groups are the same with respect to two different Witt tower ${\bf {O}}^\pm_{\rm{odd}}$. We know that  $\pkh$ and $\pi_{\rho',k',h',-\iota'}$ go in a different direction in the second see-saw diagram, and $\CD^{\rm{B}}_{\ell,-\e}(\pkh)$ can not be $\pi_{\rho',k',h',-\iota'}$.

On the other hand, $\CD^{\rm{B}}_{2\ell,-a}(\pkh)$ can not be $\pkhsp$ either.
By (ii), we have
\[
\CD^{\rm{B}}_{\ell,\e}(\chi_{\o_{2n}^\epsilon}\cdot\pkh)=\chi_{\o_{2(n-\ell)-1}^\e}\cdot\pkhsp.
\]
As irreducible representations of $\o_{2(n-\ell)-1}\fq$,
\[
\chi_{\o_{2(n-\ell)-1}^{-\e}}\cdot\pkhsp\cong\sgn\chi_{\o_{2(n-\ell)-1}^\e}\cdot\pkhsp.
\]
Applying the same see-saw argument on $\chi_{\o_{2(n-\ell)-1}^{-\e}}\cdot\pkhsp$ , we have
\[
\CD^{\rm{B}}_{\ell,-\e}(\chi_{\o_{2n}^\epsilon}\cdot\pkh)\ne \chi_{\o_{2(n-\ell)-1}^{-\e}}\cdot\pkhsp.
\]
and hence
\[
\CD^{\rm{B}}_{\ell,-\e}(\pkh)\ne\pkhsp.
\]
So $\CD^{\rm{B}}_{\ell,-\e}(\pkh)$ is neither $\pkhsp$ nor $\pi_{\rho',\Lambda_1',\Lambda_{-1}',-\epsilon'}$.
\end{proof}
Similarly, we have the following result for odd orthogonal groups.
\begin{theorem}\label{dodd}
Let $\pkhsgn\in\CE_{\rm{bas}}(\o_{2n+1}^\epsilon)$. Assume that $k,h\ne 1$. Let $\ell$ be the first descent index of $\pkhsgn$. Assume that   $\CD^{\rm{B}}_{\ell,\e}(\pkhsgn)=\pkhp$ for some $\e$. Then
 \begin{enumerate}
\item[(i)] We have $\CD^{\rm{B}}_{\ell,-\e}(\pkhsgn)=0$.
\item[(ii)] If we recognize $\pkhsgn$ as an irreducible representation of $\o_{2n+1}^{-\e}\fq$, we have
$\CD^{\rm{B}}_{\ell,\pm \e}(\pkhsgn)=0$;
\item[(iii)] We have $\CD^{\rm{B}}_{\ell,\e}(\sgn\pkhsgn)=\sgn\pi_{\rho,k,h}$;
\item[(iv)] We have $\CD^{\rm{B}}_{\ell,\e}(\chi_{\o_{2n+1}^\epsilon}\cdot\pkhsgn)=\chi_{\o_{2n-\ell}^{\epsilon'}}\cdot\pi_{\rho,k,h}$.
 \end{enumerate}
\end{theorem}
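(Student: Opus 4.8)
The plan is to transcribe the arguments of Theorems~\ref{d2} and~\ref{deven} to the present situation (Bessel descent from an odd orthogonal group to an even one), bootstrapping off the theta‑side conservation relations of Propositions~\ref{cus1} and~\ref{cus2} and off Theorem~\ref{t4}, the fact that the first theta occurrence index of a basic representation of $\o_{2n+1}\fq$ does not depend on the discriminant of the ambient quadratic space. Statements (iii) and (iv) are formal: the determinant character $\sgn$ and the spinor norm $\chi_{\o^\epsilon_{2n+1}}$ are trivial on the unipotent radical $N_\ell$ occurring in the definition of $\CD^{\rm{B}}_{\ell,\e}$ and restrict to the corresponding characters on the even orthogonal factor of the relevant Levi, so $\CD^{\rm{B}}_{\ell,\e}$ intertwines twisting by these characters on the source with twisting by $\sgn$, resp. $\chi_{\o^{\epsilon'}_{2(n-\ell)}}$, on the target; combining this with $\sgn\pi_{\rho,k,h,\iota}=\pi_{\rho,k,h,-\iota}$ (Corollary~\ref{l4}) and with \eqref{spinor} (used to pin down the target discriminant $\epsilon'$) yields (iii) and (iv).

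The substantive point is (i). Suppose, for contradiction, that $\CD^{\rm{B}}_{\ell,-\e}(\pi_{\rho,k,h,\iota})\ne 0$. By Proposition~\ref{fd1} every irreducible constituent $\sigma$ is basic, and by Proposition~\ref{fd2}(2) it has datum $(\rho_1,k',h')$ with $|k'|=\lceil|k|-1\rceil$, $|h'|=\lceil|h|-1\rceil$; moreover $\sigma$ lives on the even orthogonal group whose discriminant is complementary to the one carrying $\CD^{\rm{B}}_{\ell,\e}(\pi_{\rho,k,h,\iota})=\pi_{\rho',k',h'}$. After reducing $m_{-\e}(\pi_{\rho,k,h,\iota},\sigma)$ to a basic‑case pairing $\langle\pi_{\rho,k,h,\iota},\rm{Ind}(\tau\otimes\sigma)\rangle$ by Proposition~\ref{7.21}, I pick $\chi\in\{1,\sgn\}$ so that $\chi\cdot\pi_{\rho,k,h,\iota}$ goes down with respect to ${\bf Sp}$, put $\pi^\star:=\Theta(\chi\cdot\pi_{\rho,k,h,\iota})\in\CE_{\rm{cus}}(\sp_{2n^\star})$, and insert the see‑saw diagrams built from $\o^\epsilon_{2n+1}\supset\o_{2(n-\ell)}\times\o_{2\ell+1}$ (for both discriminants of the two factors) and $\sp_{2n^\star}\supset\sp_{2n^\star}$, exactly as in the proof of Theorem~\ref{deven}. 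Via Proposition~\ref{w2} this reads off the symplectic Witt‑tower direction forced on $\sigma$; since $\CD^{\rm{B}}_{\ell,\e}(\pi_{\rho,k,h,\iota})=\pi_{\rho',k',h'}$ already fixes the direction of $\pi_{\rho',k',h'}$, the conservation relation of Proposition~\ref{cus2}(i) then forces the forbidden direction for all but possibly the candidates obtained from $\pi_{\rho',k',h'}$ by the spinor‑norm twist, and these last are eliminated by running the same see‑saw after twisting by $\chi_{\o_{2(n-\ell)}}$ and invoking \eqref{spinor} together with (iv), as in the final paragraph of the proof of Theorem~\ref{deven}. Hence $\CD^{\rm{B}}_{\ell,-\e}(\pi_{\rho,k,h,\iota})=0$.

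For (ii): regarding $\pi_{\rho,k,h,\iota}$ as a representation of $\o^{-\e}_{2n+1}\fq$ flips the discriminant of the ambient space, which interchanges the two Bessel target towers $\{\o^+_{2m}\fq\}$ and $\{\o^-_{2m}\fq\}$; consequently the square class that supported the first descent on $\o^\epsilon_{2n+1}\fq$ now points, after the flip, into the even orthogonal group excluded by the argument of (i), so running that argument for both square classes $\pm\e$ gives $\CD^{\rm{B}}_{\ell,\pm\e}(\pi_{\rho,k,h,\iota})=0$ on $\o^{-\e}_{2n+1}\fq$. Equivalently, one checks directly — using Theorem~\ref{t4} and Corollary~\ref{oddg}(1) — that on $\o^{-\e}_{2n+1}\fq$ the representation $\pi_{\rho,k,h,\iota}$ goes up in both Bessel towers, so that its level‑$\ell$ contribution vanishes for both square classes.

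The hard part is the bookkeeping, not a new idea: one must keep straight the chain of discriminants and square classes ($\epsilon$, $\e$, $\epsilon'$, the two target discriminants, the discriminants of the $(2\ell+1)$‑dimensional complements) and identify which element of $[\pi_{\rho',k',h'}]$ a given descent candidate is, propagate all of this through Proposition~\ref{7.21} and the see‑saw identities, and verify that the induced ``go up / go down'' assignments are genuinely incompatible with the conservation relations. Beyond that, only the small‑parameter cases set aside by the hypothesis $k,h\ne 1$ require separate, routine verification.
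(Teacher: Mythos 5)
Your overall plan — run the see--saw/conservation machinery of Theorems \ref{d2} and \ref{deven} again, with (iii) and (iv) reduced to the formal observation that $\sgn$ and the spinor norm are trivial on $N_\ell$ and restrict to the corresponding characters of the even orthogonal factor of $H$ — is exactly the route the paper intends (it gives no separate proof, only ``similarly''), and for (i), (iii), (iv) your outline works. Two bookkeeping corrections for (i): after the reduction of Proposition \ref{7.21} the see--saw is built from the codimension-one decomposition of the \emph{even} group carrying the induced representation, i.e. $\o_{2n+2}\supset \o^{\epsilon}_{2n+1}\times\o_1$ against $\sp_{2m}\times\sp_{2m}\supset\sp_{2m}$, not from $\o^\epsilon_{2n+1}\supset\o_{2(n-\ell)}\times\o_{2\ell+1}$; and the residual candidates on the even orthogonal target are the members of $[\pi_{\rho',k',h'}]$ (its $\sgn$- and $c$-twists), not ``spinor-norm twists'' as in the even-source case. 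In fact, since the basic datum $(\rho_1,|k'|,|h'|)$ pins down the discriminant of the even orthogonal group that can carry such a representation, Propositions \ref{fd1} and \ref{fd2}(2) already force every would-be constituent of $\CD^{\rm{B}}_{\ell,-\e}(\pkhsgn)$ onto the group of the \emph{other} discriminant, so (i) is essentially immediate; your longer see--saw argument is not wrong, but the candidate list must be adjusted as above.

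The genuine gap is in (ii). Your first argument only disposes of the square class whose target, after flipping the ambient space, is the opposite-discriminant even group; for the other square class the target is the same group $\o^{\eta}_{2(n-\ell)}$ on which $\pi_{\rho',k',h'}$ genuinely lives, so ``running the argument of (i)'' does not apply to it. Your fallback — that on $\o^{-\epsilon}_{2n+1}\fq$ the representation ``goes up in both Bessel towers'', checked via Theorem \ref{t4} and Corollary \ref{oddg}(1) — cannot be correct as stated: Theorem \ref{t4} says precisely that the first theta occurrence index of a basic representation of an odd orthogonal group is \emph{independent} of the ambient discriminant, so no up/down behaviour flips when the space is flipped. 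What changes is the lift itself: by \cite[Proposition 11]{Sze}, as exploited in the proof of Theorem \ref{t4}, replacing the space of discriminant $\epsilon$ by that of discriminant $-\epsilon$ amounts to changing the additive character, so the first theta lift of $\pkhsgn$ computed on the flipped space is the $c$-twist $(\pi^\star)^c=\pi_{\rho^\star,k^\star,-h^\star}$ of the lift used in (i). One must rerun the see--saw bound with this twisted lift (equivalently, with the flipped $\o_1$-factor in the see--saw), and only then do Proposition \ref{cus2}(i), Corollary \ref{l4} and the $c$-twist bookkeeping of Theorem \ref{d2} exclude every member of $[\pi_{\rho',k',h'}]$ from the level-$\ell$ descent on the flipped space. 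This step is the only content of (ii) that is genuinely new relative to Theorem \ref{deven}, and it is missing from your proposal.
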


Combing our statements of first descents and first theta lifting for basic representations, we have the following commutative diagram
 \begin{equation}\label{cm1}
\xymatrix{
\pi\in \CE_{\rm{bas}}(G_{n}) \ar[r]^{\Theta} \ar[d]_{\CD_{\ell,\epsilon}}& \pi'\in \CE_{\rm{bas}}(G'_{n'})\ar[d]^{\CD_{\ell',\epsilon}}
\\
\pi^\star\in \CE_{\rm{bas}}(G_{m})\ar[r]^{\Theta}& \pi^{\star\prime}\in \CE_{\rm{bas}}(G'_{m'})
},
 \end{equation}
where \begin{itemize}
\item $\Theta$ is the first theta lifting;
\item $m=n-\ell$ (resp. $m'=n'-\ell'$), $\ell$ (resp. $\ell'$) is the first descent index of $\pi$ (resp. $\pi'$);
\item $\CD_{\ell,\epsilon}$ (resp. $\CD_{\ell',\epsilon}$ ) is $\CD^{\rm{FJ}}_{\ell,\epsilon}$ or $\CD^{\rm{B}}_{\ell,\epsilon}$,  it depends on $G_n$ (resp. $G'_{n'}$);
\item $n'$ is the first occurrence index of $\pi$.
\end{itemize}

\begin{remark}\label{rmk2}
As mentioned in Remark \ref{rmk1}, all statements in this subsection apply to quadratic unipotent representations when $q$ is odd.
\end{remark}

\section{Construction of canonical choice of Lusztig correspondence}\label{sec66}

\subsection{The canonical choice of Lusztig correspondence}
 The goal of this section is to prove our main theorem.
\begin{theorem}\label{main}
 Suppose that the cardinality of $\Fq$ is large
enough so that the main result in \cite{S} holds. Let $G_n=\sp_{2n}$, $\o^{\epsilon}_{2n}$ or $\o^{\epsilon}_{2n+1}$.
One has a unique choice of Lusztig correspondence $\cal{L}^{\rm can}$ satisfying the following conditions.
\begin{enumerate}
\item  It is compatible with parabolic induction (cf. Section \ref{sec3.2}).

\item  Consider the dual pair $(G_n,G'_{n'})=(\sp_{2n},\o^{\epsilon'}_{2n'+1})$.
 Let $\pi=\prll\in \CE(\sp_{2n})$, and $\pi'=\prllps\in \CE(\o^{\epsilon'}_{2n'+1})$. Then $\prllps$ occurs in $\Theta(\prll)$ if and only if the following conditions hold:
\begin{itemize}
    \item For any $a\ne \pm 1$ in $\overline{\mathbb{F}}_q$, $G^*_{[a]}\cong (G')^*_{[-a]}$  and $\pi[a]\cong \pi'[-a]$ where $\rho=\prod\pi[a]$, and $\rho^\prime=\prod\pi'[a]$;
    \item $\Lambda_{-1}^\prime=\Lambda_{1}$;
    \item $\pi_{\Lambda^\prime_1}$ occurs in $\Theta_{\epsilon'\epsilon_\rho}(\pi_{\Lambda_{-1}})$;
    \item $\iota'=\wi_{\rho}\pd(\Lambda_{-1})\ee^{\rm{rank}(\Lambda_{1})}$,
\end{itemize}
where
\[
\Theta_{\epsilon'\epsilon_\rho}(\pi):=\begin{cases}
    \Theta(\pi), &  \textrm{if }\epsilon'\epsilon_\rho=+;\\
     \Theta(\rm{sgn}\cdot\pi), & \textrm{otherwise}.
\end{cases}
\]
In other words, we have the following commutative diagram
\[
\xymatrix{
\prll\in \CE(\sp_{2n}) \ar[rr]^\Theta \ar[d]_{\CL_{\rm can}} && \prllps\in  \CE(\o^{\epsilon'}_{2n'+1}) \ar[d]^{\CL_{\rm can}} \\
\rho\otimes\pi_{\Lambda_{-1}}\otimes\pi_{\Lambda_1} \ar[rr]^{ \rm{iso}\otimes\Theta_{\epsilon'\epsilon_\rho}\otimes\CL_1 } &  & \rho^\prime\otimes\pi_{\Lambda_1^\prime}\otimes\pi_{\Lambda^\prime_{-1}}\otimes\iota',
}
\]
where the first isomorphism sends $\rho$ to $\rho^\prime$ by sending $\pi[a]$ to $\pi^\prime[-a]$, and $\CL_1$ is the Lusztig correspondence between unipotent representations.

\item Consider the dual pair $(G_n,G_{n'}')=(\sp_{2n},\o^{\epsilon'}_{2n'})$. For  $\pi=\prll\in \CE(\sp_{2n})$, and $\pi'=\prllp\in \CE(\o^{\epsilon'}_{2n'})$. Then $\prllp$ occurs in $\Theta(\prll)$ if and only if the following conditions hold:
\begin{itemize}
\item For any $a\ne \pm 1$ in $\bar{\Fq}$, $ G^*_{[a]}\cong(G')^*_{[a]}$ and $\pi'[a]\cong \pi[a]$ where $\rho=\prod\pi[a]$, and $\rho'=\prod\pi'[a]$;
    \item $\Lambda_{-1}'=\Lambda_{-1}$;
    \item $\pi_{\Lambda_1'}$ occurs in $\Theta(\pi_{\Lambda_1})$.
\end{itemize}
In other words, we have the following commutative diagram
\[
\xymatrix{
\prll\in \CE(\sp_{2n})\ar[rr]^\Theta  \ar[d]_{\CL_{\rm can}} &&  \prllp\in  \CE(\o_{2n'}^{\epsilon'}) \ar[d]^{\CL_{\rm can}} \\
\rho\otimes\pi_{\Lambda_1}\otimes\pi_{\Lambda_{-1}}\ar[rr]^{\rm{iso}\otimes\Theta\otimes\rm{id}} && \rho\otimes\pi_{\Lambda_1'}\otimes\pi_{\Lambda_{-1}}.
}
\]

    \end{enumerate}
\end{theorem}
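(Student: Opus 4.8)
The plan is to construct $\cal{L}^{\rm can}$ by induction on $\rank G_n$, following the strategy announced in the introduction: pass to the first descent, invoke the inductive hypothesis there, and then pull the Lusztig correspondence back up. First I would set up the induction carefully. For the base cases (small rank, or the basic representations supported on the smallest possible Levi), one checks the statement directly using Proposition \ref{unp}, Proposition \ref{sp-1}, Proposition \ref{l3} and Corollary \ref{l4}; these pin down $\cal{L}_s$ on a Lusztig series up to the explicit ambiguity in $\Lambda_{\pm1}$ and $\iota$, so only finitely much data has to be fixed. Then, given $\pi=\prll\in\CE(G_n,s)$, I would let $\ell$ be its first descent index and pick (using Theorem \ref{d2}, Theorem \ref{deven}, Theorem \ref{dodd}) a $\epsilon''$ and an irreducible basic $\pi^\star\in\CD_{\ell,\epsilon''}(\pi)$ living in a strictly smaller group, to which the inductive hypothesis applies. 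Define $\cal{L}^{\rm can}$ on $\pi$ to be the unique Lusztig correspondence compatible with parabolic induction (condition (1), via Section \ref{sec3.2}) and with $\cal{L}^{\rm can}$ already fixed on the basic pair $(L,\delta)$ of $\pi$; Proposition \ref{l3} guarantees that compatibility with parabolic induction plus the value of $\iota$ and the Harish-Chandra series data of $\Lambda_{\pm1}$ determine $\cal{L}_s$ completely, so this is well-defined once we know $\cal{L}^{\rm can}$ on basic representations. The real content is therefore to construct $\cal{L}^{\rm can}$ on $\CE_{\rm bas}(G_n)$ so that (2) and (3) hold, and this is where the descent is used.

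Next I would verify (3), the even orthogonal case. Start from Pan's Theorem \ref{p1}, which already gives the commutative diagram up to a twist of $\rm{sgn}$, and up to replacing $\pi'$ by some $\pi''\in[\pi']$. The task is to kill the $\rm{sgn}$-ambiguity and the $[\,\cdot\,]$-ambiguity by a single consistent choice. For basic representations I would use the conservation relations of Proposition \ref{cus1}(i) and Proposition \ref{cus2}(i): the pair $\pi,\pi^c$ (resp. $\pi,\sgn\pi$) has the \emph{same} first occurrence index but \emph{different} first theta lift, so the first theta lift distinguishes the four elements of $[\pi]$, and I can normalize $\cal{L}^{\rm can}$ on $\CE_{\rm bas}$ by declaring which symbol $\Lambda_{-1}$ (resp. which $\Lambda_1^t$ vs $\Lambda_1$) is attached to the representation whose first theta lift "goes down". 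Because the modified Weil representation $\omega_{G,G'}^\psi$ of Ma--Qiu--Zou is compatible with parabolic induction (Proposition \ref{w2}), the choice made at the basic level propagates to all of $\CE(G_n,s)$ consistently with condition (1), and the three conditions in the bullet list of (3) ($G^*_{[a]}\cong(G')^*_{[a]}$ with $\pi[a]\cong\pi'[a]$; $\Lambda_{-1}'=\Lambda_{-1}$; $\pi_{\Lambda_1'}\subset\Theta(\pi_{\Lambda_1})$) are exactly the unwound form of Theorem \ref{p1} after the normalization. One has to check $\Lambda_{-1}'=\Lambda_{-1}$ on the nose (not just up to $t$): this follows because the $\Lambda_{-1}$-parts sit in the see-saw with trivial $\pm1$-behavior on the orthogonal side, forcing $\pi_{\Lambda_{-1}}\in[\pi_{\Lambda_{-1}'}]$ and then the descent/conservation normalization removes the $t$.

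Then I would do (2), the odd orthogonal case, by reducing it to (3) together with Theorem \ref{p2}, using the passage between $\bf{O}^\pm_{\rm odd}$ and the symplectic tower. Here the extra ingredients are: Proposition \ref{-1}(ii) (the $\varepsilon(-1)^n$ discrepancy coming from the $(\xi\circ\det)$ twist in \eqref{mqz}), which forces the formula $\iota'=\wi_\rho\pd(\Lambda_{-1})\ee^{\rm{rank}(\Lambda_1)}$; Proposition \ref{sp-1} and Corollary \ref{oddg}, which compute $\pi(-I)$ and so tie $\iota$ to the other data; and Theorem \ref{t4}, which says the first occurrence index of a basic representation of $\o_{2n+1}$ does not depend on the Witt tower — this is what makes the normalization on $\CE_{\rm bas}(\o_{2n+1})$ well-posed and consistent with both towers simultaneously, which is precisely the defect in Pan's dual-pair-dependent choice that we are repairing. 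The swap $f$ (sending $\pi_{\Lambda_1'}\otimes\pi_{\Lambda_{-1}'}\otimes\epsilon'$ to $\pi_{\Lambda_{-1}'}\otimes\pi_{\Lambda_1'}$) in Theorem \ref{p2} explains why $\Lambda_{-1}'=\Lambda_1$ rather than $\Lambda_1'=\Lambda_1$. Finally I would check internal consistency: the choice forced by (2) and the choice forced by (3) must agree on the common groups (symplectic groups), and \emph{this compatibility is the main obstacle} — it is exactly the phenomenon that even-orthogonal and odd-orthogonal dual pairs gave Pan two different correspondences. Resolving it requires the descent construction: both (2) and (3) are matched to the \emph{same} normalization on the first descent $\pi^\star$, and since the first descent of $\pi$ "behaves like $\pi$" in both theta correspondences (the see-saw arguments of Section \ref{sec6.5}, e.g. Theorem \ref{d2}), compatibility on $\pi$ follows from compatibility on $\pi^\star$ by induction. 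Uniqueness is then immediate: any two correspondences satisfying (1)--(3) agree on uniform parts by Proposition \ref{l2}, agree on the $\iota$ and Harish-Chandra data by (1) and the $\iota'$-formula, and agree on the residual $t$-ambiguity in $\Lambda_{\pm1}$ because (2) or (3) pins down the first theta lift, which by the conservation relations separates $[\pi]$.
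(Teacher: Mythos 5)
Your proposal is correct and follows essentially the same route as the paper: reduce to basic representations via compatibility with parabolic induction (and the Ma--Qiu--Zou compatibility of theta with induction), construct $\clc$ on basic representations by induction through the first descent, resolve the ambiguity inside $[\pi]$ with the first-occurrence/conservation relations, and verify conditions (2) and (3) using Pan's Theorems \ref{p1}--\ref{p2} together with the $-I$ computations (Proposition \ref{sp-1}, Proposition \ref{-1}, Corollary \ref{oddg}) and Theorem \ref{t4}. The paper carries out the cross-consistency check you identify as the main obstacle by introducing the explicit dual-side descent functors $\CD^*_\epsilon$ and verifying the resulting sign identities, but this is exactly the strategy you outline.
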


\begin{proof}
By our discussion in section \ref{sec3.2}, we know that $\clc$ is uniquely determined by condition (1) and its behavior on basic representations.

Consider conditions (2) and (3).
By Proposition \ref{sp-1} and Proposition \ref{-1}, we have
\[
\iota'=\prll(-I)\epsilon(-1)^{n}=\iota_\rho\pd(\Lambda_{-1})\ee^{n-\rm{rank}(\Lambda_{-1})}.
\]
Note that $n=|\rho|+\rm{rank}(\Lambda_{1})+\rm{rank}(\Lambda_{-1})$. Then
\[
\iota'=\prll(-I)\epsilon(-1)^{n}=\iota_\rho\pd(\Lambda_{-1})\ee^{|\rho|+\rm{rank}(\Lambda_{1})}=\wi_{\rho}\pd(\Lambda_{-1})\ee^{\rm{rank}(\Lambda_{1})}.
\]
Theorem \ref{p1} and Theorem \ref{p2} state that the requirement of $\rho$ holds, and the symbol part requirement holds up to the transpose.
If a symbol equals its transpose, then there is nothing left to prove. We now assume that $\Lambda_{\pm 1}$ and $\Lambda'_{\pm 1}$ are not equal to their transpose, respectively. As discussed in Section \ref{sec2}, we can find basic representations $\sigma$ and $\sigma'$ such that $\pi \in \CE(G_n, \sigma)$ and $\pi' \in \CE(G'_{n'}, \sigma')$. Using the same argument in the proof of \cite[Th\'eor\`em 3.7]{AMR} and \cite[Theorem 4.1]{LW1}, we conclude that $\pi'$ occurs in $\Theta(\pi)$ implies that $\sigma'$ occurs in $\Theta(\sigma)$. In other words, the theta lifting  the parabolic induction are compatible. It's important to note that we are dealing with Ma-Qiu-Zou's Weil representation, not Gerardin's. Therefore, we should refer to \cite[Proposition 2.1]{MQZ} instead of \cite[Chap. 3, IV th.5]{MVW} to prove this compatibility. According to this, we only need to demonstrate that there exists a  $\clc$ that satisfies the conditions (2) and (3) for basic representations. The proof of the basic case is provided in section \ref{sec7.3} and section \ref{sec7.3}.
The uniqueness of $\clc$ on basic representations follows from the fact that we can resolve the ambiguity in the Lusztig correspondence using the first occurrence of theta lifting (see section \ref{sec4.5}).

\end{proof}

\begin{remark}\label{rmk3}
The only place in the proof of Theorem \ref{main} we use the assumption of $q$ is when we refer to the conclusions in section \ref{sec6}.
As in Remark \ref{rmk2}, Theorem \ref{main} apply to quadratic unipotent representations when $q$ is odd. In other words, there exists a unique Lusztig correspondence $\clc$ on quadratic unipotent representations satisfying the above conditions.
\end{remark}

\subsection{Strategy for the proof of the basic representation Theorem}  \label{sec66}
We have already established that there is commutativity between theta correspondence and the first descent through a see-saw diagram. It is important to note that we aim to create a choice ${\CL^\rm{can}}$ of Lusztig that is commutative with theta correspondence. Assuming that such a standard choice ${\CL^\rm{can}}$ exists, we are looking to identify functors $\CD^*_\pm$ so that the diagram below is commutative.
 \begin{equation}\label{cm2}
\xymatrix{
&\pi^*\in\CE_{\rm{bas}}(C_{G_n^*}(s))\ar[rrr]^{\Theta^*_{n,n'}} \ar[dd]_{\CD_\epsilon^*}& &&\pi^{\prime *}\in\CE_{\rm{bas}}(C_{G_{n'}^{\prime *}}(s')) \ar[dd]^{\CD^*_\epsilon}\\
\pi\in \CE_{\rm{bas}}(G_{n})\ar[ur]^{\CL^\rm{can}}\ar[dd]_{\CD_{\ell,\epsilon}}\ar[rrr]^{\Theta_{n,n'}}&&&\pi'\in \CE_{\rm{bas}}(G'_{n'})\ar[ur]^{\CL^\rm{can}} \ar[dd]^{\CD_{\ell',\epsilon}}& \\
&\pi^{\star*}\in\CE_{\rm{bas}}(C_{G_m^*}(s^\star))\ar[rrr]^{\Theta^*_{m,m'}}& &&\pi^{\star\prime*}\in\CE_{\rm{bas}}(C_{G_{m'}^{*\prime}}(s^{\star\prime}))\\
\pi^\star\in \CE_{\rm{bas}}(G_{m})\ar[ur]^{\CL^\rm{can}}\ar[rrr]^{\Theta_{m,m'}}&&&\pi^{\star\prime}\in \CE_{\rm{bas}}(G'_{m'})\ar[ur]^{\CL^\rm{can}}&\\
},
\end{equation}
where
\begin{itemize}
\item $\epsilon\in \{\pm\}$;
\item $m=n-\ell$ (resp. $m'=n'-\ell'$) where $\ell$ (resp. $\ell'$) is the first descent index of $\pi$ (resp. $\pi'$);
\item $\CD_{\ell,\epsilon}$ (resp. $\CD_{\ell',\epsilon}$ ) is $\CD^{\rm{FJ}}_{\ell,\epsilon}$ or $\CD^{\rm{B}}_{\ell,\epsilon}$, depends on $G_n$ (resp. $G'_{n'}$);
\item $\sigma^*$ is the image of $\sigma$ under $\CL^{\rm{can}}$ with $\sigma\in \{\pi, \pi',\pi^\star,\pi^{\star\prime}\}$;
\item $n'$ (resp. $m'$) is the (theta) first occurrence index of $\pi$ (resp. $\pi^\star$);
\item If dual pair $(G_n,G'_{n'})$ contains a even orthogonal group, then
\[
\Theta_{n,n'}^* :\pi^*  =\rho\otimes\pi_{\Lambda_1}\otimes \pi_{\Lambda_{-1}}\to \pi^{\prime *}   =\rho\otimes\pi_{\Lambda'_1}\otimes \pi_{\Lambda_{-1}},
\]
Here $\pi_{\Lambda'_1}$ is the first theta lifting of $\pi_{\Lambda_1}$;
\item If dual pair $(G_n,G'_{n'})=(\sp_{2n},\o^\epsilon_{2n'+1})$, then
\[
\Theta_{n,n'}^* :\pi^*  =\rho\otimes\pi_{\Lambda_1}\otimes \pi_{\Lambda_{-1}}\to \pi^{\prime *}   =\rho^-\otimes \pi_{\Lambda_{-1}}\otimes\pi_{\Lambda'_1}\otimes \wi_{\rho}\pd(\Lambda_{-1})\ee^{\rm{rank}(\Lambda_{1})}.
\]
Here $\pi_{\Lambda'_1}$ is the first theta lifting of
\[
\left\{
\begin{array}{ll}
\pi_{\Lambda_1} &\textrm{ if }\ee\epsilon\epsilon_\rho=+;\\
\sgn\pi_{\Lambda_1} &\textrm{ if }\ee\epsilon\epsilon_\rho=-;\\
\end{array}
\right.
\]
\item If dual pair $(G_n,G'_{n'})=(\o^\epsilon_{2n+1},\sp_{2n'})$, then
\[
\Theta_{n,n'}^* :\pi^*  =\rho\otimes\pi_{\Lambda_1}\otimes \pi_{\Lambda_{-1}}\otimes\iota\to \pi^{\prime *}   =\rho^-\otimes \pi_{\Lambda_{-1}}\otimes\pi_{\Lambda'_1}.
\]
Here $\pi_{\Lambda'_1}$
\[
\pi_{\Lambda'_1}=\left\{
\begin{array}{ll}
\pi_{\Lambda'_1}' &\textrm{ if }\epsilon\epsilon_\rho=+;\\
\sgn\pi_{\Lambda'_1}' &\textrm{ if }\epsilon\epsilon_\rho=-,\\
\end{array}
\right.
\]
and $\pi_{\Lambda'_1}'$ is the first theta lifting of $\pi_{\Lambda_1} $;
\item The map $\Theta_{m,m'}$ is defined in a similar way.

\end{itemize}

In order to construct $\CL^{\rm{can}}$, we follow an inductive approach using $\CD^*_{\epsilon}$. We assume that the canonical Lusztig correspondence $\CL^{\rm{can}}$ in the bottom side of \eqref{cm2} and functors $\CD^*_{\epsilon}$ have already been constructed.
The commutative diagram at the front side of \eqref{cm2}, also known as the theta-descent commutative diagram, has been proven to be commutative in \eqref{cm1}. To ensure the entire diagram \eqref{cm2} is commutative, we need to find the unique Lusztig correspondences $\CL^{\rm{can}}$ on $G_n$ and $G'_{n'}$ to make the left, right and top diagrams commutative, which will automatically result in the back side of \eqref{cm2} being commutative as well.

We set $\CD_{\epsilon}^*$ as follows. Recall that
\begin{equation}
 \begin{aligned}
& C_{G_n^*}(s)
=\,&  (G_n)^*_{[ \ne \pm 1]}(s)\times (G_n)^*_{[ 1]}(s)\times (G_n)^*_{[ -1]}(s)
\times \begin{cases}
 \{\pm\},&\textrm{if }G=\o_{2n+1};\\
  1,&\textrm{otherwise}.
\end{cases}
 \end{aligned}
\end{equation}

Assume that $G_n=\sp_{2n}$, and $\rho\otimes\pi_{\Lambda_1}\otimes\pi_{\Lambda_{-1}}\in \CE(C_{G_n^*}(s))$. Then we set
\[
 \begin{aligned}
\CD^*_\epsilon(\rho\otimes\pi_{\Lambda_1}\otimes\pi_{\Lambda_{-1}})=
\begin{cases}
\rho^\star\otimes\pi_{\Lambda^\star_1}\otimes\pi_{\Lambda^\star_{-1}} & \textrm{ if }\epsilon=-\pd(\Lambda_1)\sd(\Lambda_{-1})\epsilon_{\rho}\epsilon_{\rho^\star};\\
0& \textrm{ otherwise, }
\end{cases}
 \end{aligned}
\]
where
\begin{itemize}
\item $\rho^\star=\rho^-_1$;
\item The symbol $\Lambda_1^\star$ is obtained by (1) removing the maximal number in $\Lambda_{-1}$, (2) swapping the arrays of $\Lambda_{-1}$ if $\sd(\Lambda_{-1})\pd(\Lambda_{-1})=+$;
    \item The symbol $\Lambda^\star_{-1}$ is obtained by (1) removing the maximal number in $\Lambda_{1}$, (2) swapping the arrays of $\Lambda_{1}$ if $\ee\sd(\Lambda_{-1})\pd(\Lambda_{-1})=+$, i.e. we have $\sd(\Lambda^\star_{-1})=-\pd(\Lambda_{1})\ee\sd(\Lambda_{-1})\pd(\Lambda_{-1})$.
\end{itemize}

Assume that $G_n=\o^\epsilon_{2n}$, and $\rho\otimes\pi_{\Lambda_1}\otimes\pi_{\Lambda_{-1}}\in \CE(C_{G_n^*}(s))$. Then we set
\[
\CD^*_{\epsilon'}(\rho\otimes\pi_{\Lambda_1}\otimes\pi_{\Lambda_1})=
\begin{cases}
\rho^\star\otimes\pi_{\Lambda^\star_1}\otimes\pi_{\Lambda^\star_{-1}}\otimes\iota^\star & \textrm{ if }\epsilon'=\sd(\Lambda_1)\sd(\Lambda_{-1})\epsilon_{\rho}\epsilon_{\rho^\star};\\
0& \textrm{ otherwise, }
\end{cases}
\]
where
\begin{itemize}
\item $\rho^\star=\rho_1$;
\item The symbol $\Lambda_1^\star$ is obtained by (1) removing the maximal number in $\Lambda_{1}$, (2) swapping the arrays of $\Lambda_{1}$ if $\rm{sd}(\Lambda_{1})\pd(\Lambda_{1})=+$;
\item The symbol $\Lambda_{-1}^\star$ is obtained by (1) removing the maximal number in $\Lambda_{-1}$, (2) swapping the arrays of $\Lambda_{-1}$ if $\rm{sd}(\Lambda_{-1})\pd(\Lambda_{-1})=+$;
    \item $\iota^\star=-\wi_{\rho^\star}\rm{sd}(\Lambda_1)$.
\end{itemize}
Assume that $G_n=\o^\epsilon_{2n+1}$, and $\rho\otimes\pi_{\Lambda_1}\otimes\pi_{\Lambda_{-1}}\otimes\iota\in \CE(C_{G_n^*}(s))$. Then we set
\[
\CD^*_{\epsilon'}(\rho\otimes\pi_{\Lambda_1}\otimes\pi_{\Lambda_1}\otimes\iota)=
\begin{cases}
\rho^\star\otimes\pi_{\Lambda^\star_1}\otimes\pi_{\Lambda^\star_{-1}} & \textrm{ if }\epsilon'=\pd(\Lambda_1)\pd(\Lambda_{-1})\epsilon\epsilon_{\rho^\star};\\
0& \textrm{ otherwise, }
\end{cases}
\]
where
\begin{itemize}
\item $\rho^\star=\rho_1$;
\item The symbol $\Lambda_1^\star$ is obtained by (1) removing the maximal numbet in $\Lambda_{1}$, (2) swapping the arrays of $\Lambda_{1}$ if $\iota\wi_\rho\rm{pd}(\Lambda_{1})=-$, i.e. $\sd(\Lambda_1^\star)=\iota\wi_\rho$;
\item The symbol $\Lambda_{-1}^\star$ is obtained by (1) removing the maximal numbet in $\Lambda_{-1}$, (2) swapping the arrays of $\Lambda_{-1}$ if $\iota\wi_\rho\epsilon\epsilon_{\rho}\rm{pd}(\Lambda_{1})\ee=-$, i.e. we have $\sd(\Lambda_{-1}^\star)=\iota\wi_\rho\epsilon\epsilon_{\rho}\rm{pd}(\Lambda_{1})\ee\pd(\Lambda_{-1})$.
\end{itemize}

It is worth noting that although the construction of $\CD_{\epsilon}^*$ may seem artificial at first glance, there is a deeper reason behind it. Specifically, if the canonical Lusztig correspondence $\CL^{\rm{can}}$ exists and \eqref{cm2} is commutative, then $\CD_{\epsilon}^*$ must be of the same form as we have constructed it. To elaborate further, the commutativity of the left and right side of equation \eqref{cm2} implies that the functor $\CD_{\epsilon}^*$ must also possess the same characteristic as that of $\CD_{\ell,\epsilon}$, which we previously established in Section \ref{sec6.5}. This is precisely why the construction of $\CD_{\epsilon}^*$ involves signs like $\pd(\Lambda_{\pm})$ and $\sd(\Lambda_{\pm})$.

\subsection{Proof of Theorem \ref{main} for basic representations}\label{sec7.3}

 We prove it by induction on $n$. We will only prove it on $\sp_{2n}\fq$. The proof for orthogonal groups side is similar and will be left to readers. Assume that we have already construct $\cal{L}^{\rm{can}}$ on basic representations of $G_n$ for $n<k$, and  $\cal{L}^{\rm{can}}$ satisfies the theta conditions  (2) and (3) in Theorem \ref{main} for basic representations of dual pair $(G_n,G_m)$ with $m<k$.

 {\bf Construction of $\clc$ on $\pi$.}
 Assume that $\pi\in\CE_{\rm{bas}}(\sp_{2k})$, and $\pi=\prll$. Recall that different choices of Lusztig correspondence differ only on the $\Lambda_{-1}$ part. If $\Lambda_{-1}$ is the trivial symbol, then there is nothing to prove. So we assume that $\Lambda_{-1}$ is not trivial.
 Consider the first descent of $\pi$. Let $\ell$ be the first descent index of $\pi$.
By Proposition \ref{fd1}, we know that the first descents of basic representations are still basic. Assume that
 \[
 \CD^{\rm{FJ}}_{\ell,\epsilon}(\pi)=\pi^\star\in \CE_{\rm{cus}}(\sp_{2k^\star}),
 \]
 where $k^\star=k-\ell$. We know that the subscript $\epsilon$ in $ \CD^{\rm{FJ}}_{\ell,\epsilon}$ and the representation $ \pi^\star $ are determined by $\pi$, not the choice of Lusztig correspondence. Therefore we will construct $\clc$ on $\pi$ by them.
As mentioned in section \ref{main}, we define $\clc$ to be the Lusztig correspondence such that the left side of \eqref{cm2} is commutative,
   \[
\xymatrix{
\pi \ar[rr]^{\CD^{\rm{FJ}}_{\ell,\epsilon}} \ar[d]_{\clc} && \prlls \ar[d]^{\clc} \\
\rho\otimes\pi_{\Lambda_1}\otimes\pi_{\Lambda^{\rm{can}}_{-1}}\ar[rr]^{\CD^*_\epsilon } && \quad \rho^{\star}\otimes\pi_{\Lambda_1^{\star} }\otimes\pi_{\Lambda_{-1}^{\star}}.
}
\]
Assume that $\clc(\pi)=\rho\otimes\pi_{\Lambda_{1}}\otimes\pi_{\Lambda^\rm{can}_{-1}}$, where $\Lambda^\rm{can}_{-1}\in\{\Lambda_{-1},\Lambda^t_{-1}\}$.
Since $\CD^*_\epsilon(\clc(\pi))\ne  0 $, we have
\begin{equation}\label{eepp0}
 \sd(\Lambda^\rm{can}_{-1})=-\epsilon\epsilon_{\rho}\epsilon_{\rho^\star}\pd(\Lambda_1),
\end{equation}
which determines the symbol $\Lambda^\rm{can}_{-1}$, and so is $\clc$.
By abuse of notations, from now on, we redefine $\Lambda_{-1}$ to be $\Lambda^\rm{can}_{-1}$.

On the other hand, By our induction assumption, we have already construct $\cal{L}^{\rm{can}}$ on $\pi^\star$.  Write
\[
\clc(\pi^\star)=\rho^\star\otimes\pi_{\Lambda_{1}^\star}\otimes\pi_{\Lambda_{-1}^\star}.
\]
By our construction of $\CD^*_\epsilon$, if $\Lambda_{-1}^\star\ne\left(\begin{smallmatrix}
-\\
-
\end{smallmatrix}\right) $, the sign $\sd(\Lambda_{-1}^\star)$ is unique determined by $\pi$:
\begin{equation}\label{eepp}
\sd(\Lambda^\star_{-1})=-\pd(\Lambda_{1})\ee\sd(\Lambda_{-1})\pd(\Lambda_{-1})=\epsilon\epsilon_{\rho}\epsilon_{\rho^\star}\pd(\Lambda_{-1})\ee,
\end{equation}
where the last equation follows from \eqref{eepp0}.
Therefore, to ensure that our construction of $\clc$ on $\pi$ does not contradict $\clc$ on $\pi^\star$, we have to check \eqref{eepp}, when $\Lambda_{-1}^\star\ne\left(\begin{smallmatrix}
-\\
-
\end{smallmatrix}\right) $.

 {\bf Proof of \eqref{eepp}.} From now on, let us forget \eqref{eepp0}, and try to obtain $\epsilon$ and prove \eqref{eepp}. Our strategy is sending $\pi$ and $\pi^\star$ to smaller orthogonal groups by Theta correspondence.
 Let $k^\pm$ be the first theta occurrence indexes of $\pi$ with respect to the Witt towers ${\bf{{O}}}_{\rm{odd}}^\pm$. Recall that $\Lambda_{-1}$ is not trivial. Then by Proposition \ref{cus1}, there exists $k^{\epsilon'}$ such that $k^{\epsilon'}< k$, and $\Theta^{\epsilon'}_{k,k^{\epsilon'}}(\pi)$ is an irreducible basic representation, and we denote it by $\pi'=\prllps$.
 By Proposition \ref{cus1}, we also know that $\Theta^{\epsilon'}_{k^{\epsilon'},k}(\pi')=\pi$. Similarly, let $k^{\star\pm}$ be the first theta occurrence index of $\ps$ with respect to the Witt tower ${\bf{{O}}}_{\rm{even}}^\pm$, and denote the smaller one by $k^{\star\prime}$, and the correspondence group by $\o_{2k^{\star\prime}}^{\epsilon^{\star\prime}}\fq$. Write $\Theta^{\epsilon^{\star\prime}}_{k^\star,k^{\star\prime}}(\ps)=\prllsp$.
  By our induction assumption, we have the following theta commutative diagram for $\ps$ (the bottom diagram of \eqref{cm2}):
  \[
\xymatrix{
\ps\ar[rr]^\Theta \ar[d]_{\clc} && \prllsp \ar[d]^{\clc} \\
\rho^\star\otimes\pi_{\Lambda^\star_1}\otimes\pi_{\Lambda^\star_{-1}} \ar[rr]^{\Theta^*_{k^\star,k^{\epsilon^{\star\prime}}} } && \quad \rho^{\star\prime}\otimes\pi_{\Lambda_1^{\star\prime}}\otimes\pi_{\Lambda_{-1}^{\star\prime}}.
}
\]
Since the Theta correspondence in the above diagram goes down, by Proposition \ref{cus1} and Theorem \ref{p}, we have
 \begin{itemize}
 \item $\rho^{\star\prime}=\rho^{\star}$;
  \item $\Lambda_{-1}^{\star\prime}=\Lambda_{-1}^{\star}$;
    \item $|\rm{def}(\Lambda_{1}^{\star\prime})|=|\rm{def}(\Lambda_{1}^{\star})|-1$, and $\sd(\Lambda_{1}^{\star\prime})=-\pd(\Lambda_{1}^{\star})$.
\end{itemize}
Then the superscript $\epsilon^{\star\prime}$ on $\o_{2k^{\star\prime}}^{\epsilon^{\star\prime}}$  equals to $\epsilon_{\rho^\star}\pd(\Lambda_1^\star)\pd(\Lambda_{-1}^\star)$.

By \eqref{cm1}, we have $
 \CD^{\rm{B}}_{\ell-1,\epsilon}(\pi')=\prllsp.
 $
  Applying our induction assumption on $\pi'$, we have the following descent commutative diagram (the right diagram of \eqref{cm2}):
 \[
\xymatrix{
\pi' \ar[rr]^{\CD^{\rm{B}}_{\ell-1,\epsilon}} \ar[d]_{\clc} && \prllsp \ar[d]^{\clc} \\
\rho'\otimes\pi_{\Lambda_1'}\otimes\pi_{\Lambda_{-1}'}\otimes\iota' \ar[rr]^{\CD^*_\epsilon } && \quad \rho^{\star\prime}\otimes\pi_{\Lambda_1^{\star\prime} }\otimes\pi_{\Lambda_{-1}^{\star\prime}}.
}
\]
 According to the construction of $\CD^*_\epsilon$, we have
  \begin{itemize}
 \item $|\rm{def}(\Lambda_{1}')|=|\rm{def}(\Lambda_{1}^{\star\prime})|+1$;
  \item $|\rm{def}(\Lambda_{-1}')|=|\rm{def}(\Lambda_{-1}^{\star\prime})|+1$;
    \item $ \CD^*_\epsilon(\pi')\ne 0$ implies that
      \begin{equation}\label{ep}
      \epsilon=\pd(\Lambda'_1)\pd(\Lambda'_{-1})\epsilon'\epsilon_{\rho'}
      \end{equation};
\item $\epsilon'=\epsilon_{\rho'}\wi_{\rho'}\iota'\pd(\Lambda_{1}')\pd(\Lambda_{-1}')\sd(\Lambda_{-1}^{\star\prime})\ee=\epsilon_{\rho}\wi_\rho\iota'\pd(\Lambda_{1}')\pd(\Lambda_{-1}')\sd(\Lambda_{-1}^{\star})\ee$.
\end{itemize}
 The last equation follows from our assumption that $\sd(\Lambda_{-1}^{\star\prime})=\sd(\Lambda_{-1}^{\star})\ne 0$ and the fact that $\pi'$ occurs in $\Theta^{\epsilon'}_{k,k^{\epsilon'}}(\pi)$ implies $\epsilon_\rho=\epsilon_{\rho'}$ and $\wi_\rho=\wi_{\rho'}$. Consider the theta correspondence from $\pi$ to $\pi'$, by Corollary \ref{oddg} (i), we have
\begin{equation}\label{iota}
\iota'=-\wi_{\rho'}\pd(\Lambda'_{1}).
\end{equation}
Then we have
 \begin{equation}\label{ep2}
\epsilon'=-\epsilon_{\rho}\pd(\Lambda_{-1}')\sd(\Lambda_{-1}^{\star})\ee.
   \end{equation}
 Combing \eqref{ep} and \eqref{ep2}, we have
     \begin{equation}\label{ep3}
 \epsilon=\pd(\Lambda'_1)\pd(\Lambda'_{-1})\epsilon'\epsilon_{\rho'}=-\pd(\Lambda'_1)\epsilon_{\rho}\epsilon_{\rho'}\sd(\Lambda_{-1}^{\star})\ee.
      \end{equation}

 By Proposition \ref{cus1} and Theorem \ref{fd2} (ii), we have
  \begin{itemize}
 \item $\pd(\Lambda_{1}^\star)=-\pd(\Lambda_{-1})$;
  \item  $\pd(\Lambda_{-1}^\star)=\pd(\Lambda_{1})$;
   \item $\pd(\Lambda_{1}^\prime)=-\pd(\Lambda_{-1})$;
    \item $\pd(\Lambda_{-1}')=\pd(\Lambda_{1})$.
\end{itemize}
 Then (\ref{ep}) becomes
 \begin{equation}\label{a1}
 \epsilon=\epsilon'\epsilon^{\star\prime}=\epsilon_{\rho}\epsilon_{\rho^\star}\pd(\Lambda_{-1})\sd(\Lambda_{-1}^{\star})\ee,
 \end{equation}
which completed the proof of \eqref{eepp}. Hence, our construction of $\clc$ on $\pi$ makes sense.

 {\bf Construction of $\clc$ on $[\pi]$.}
%
%

According the definition of $\clc$ on $\pi$, the Lusztig correspondence $\clc$ has to send $\pi^c$ to
$
\rho\otimes\pi_{\Lambda_1}\otimes\pi_{{}^t\Lambda_{-1}}.
$
 Then by Theorem \ref{d2}, we have
 \begin{itemize}
\item $\CD^{\rm{FJ}}_{\ell,-\epsilon}(\pi)=0$;
\item $ \CD^{\rm{FJ}}_{\ell,\epsilon}(\pi^c)=0$;
\item $ \CD^{\rm{FJ}}_{\ell,-\epsilon}(\pi^c)=(\pi^\star)^c$.
\end{itemize}
 To check whether $\clc$ is well defined, we have to check the commutativity of the following diagram
 \[
\xymatrix{
\pi^c \ar[rr]^{\CD^{\rm{FJ}}_{\ell,-\epsilon}} \ar[d]_{\clc} && (\ps)^c \ar[d]^{\clc} \\
\rho\otimes\pi_{\Lambda_1}\otimes\pi_{{}^t\Lambda_{-1}} \ar[rr]^{\CD^*_{-\epsilon} } && \quad \rho^\star\otimes\pi_{\Lambda_1^\star}\otimes\pi_{{}^t\Lambda_{-1}^\star},
}
\]
which can be obtained from direct calculations, and will be left to the reader.

{\bf Compatibility with theta correspondence for odd orthogonal group.}
We have constructed that our choice of canonical Lusztig correspondence $\clc$ on $\sp_{2k}\fq$. Now, we are going to check the condition (2) in Theorem \ref{main}.
In other words, we need to check
\[
\iota'=\wi_\rho\pd(\Lambda_{-1})\ee^{\rm{rank}(\Lambda_{1})}.
\]
and the commutativity of the following diagram (the top diagram of \eqref{cm2})
 \[
\xymatrix{
\pi \ar[rr]^\Theta \ar[d]_{\clc} && \pi' \ar[d]^{\clc} \\
\rho\otimes\pi_{\Lambda_1}\otimes\pi_{\Lambda_{-1}} \ar[rr]^{\Theta_{k,k^{\epsilon'}}^*} && \quad \rho^{\prime}\otimes\pi_{\Lambda_{-1}^{\prime}}\otimes\pi_{\Lambda_{1}^{\prime}}\otimes\iota^\prime.
}
\]

We have already calculated $\iota'$ by Corollary \ref{oddg} (i) (see \eqref{iota}).
Recall that $\wi_{\rho}=\wi_{\rho'}$ and $\pd(\Lambda_{1}^\prime)=-\pd(\Lambda_{-1})$. We have
\[
\iota'=\wi_{\rho}\pd(\Lambda_{-1}).
\]
Since $\pi$ is basic, we conclude that $\Lambda_1$ corresponds to a unipotent cuspidal representation and $\rm{rank}(\Lambda_1)$ is of form $h(h+1)$. So
$
\iota'=\wi_\rho\pd(\Lambda_{-1})\ee^{\rm{rank}(\Lambda_{1})}.
$

The rest is to check
 \[
 \Theta_{\epsilon'\epsilon_\rho}(\pi_{\Lambda_{-1}})=\pi_{\Lambda_1^\prime}.
 \]
It is equivalent to say that the theta correspondence $\Theta_{\epsilon'\epsilon_\rho}(\pi_{\Lambda_{-1}})$ goes down. By (\ref{ep2}), we have
\[
\epsilon'\epsilon_\rho=-\ee\pd(\Lambda_{1})\sd(\Lambda_{-1}^{\star})=\pd(\Lambda_{-1})\sd(\Lambda_{-1}).
\]
According to Theorem \ref{even}, $\Theta_{\epsilon'\epsilon_\rho}(\pi_{\Lambda_{-1}})$ goes down.

{\bf Compatibility with theta correspondence for even orthogonal group.}

We have proved that our choice of canonical Lusztig correspondence $\clc$ satisfies the condition (2) in Theorem \ref{main}. Next, we are going to check the condition (3).
In order to reduce notations, we still use superscript $\prime$ and $\star\prime$ in this section. However, they have different meaning. Readers should not confuse them.

Let $k^\pm$ be the first theta occurrence indexes with respect to the Witt towers ${\bf{{O}}}_{\rm{even}}^\pm$. By Proposition \ref{cus1}, there exists $k^{\epsilon'}$ such that $k^{\epsilon'}\le k$. Denote the irreducible basic representation $\Theta^{\epsilon'}_{k,k^{\epsilon'}}(\pi)$ by $\pi'=\prllp$. And we have $\Theta^{\epsilon'}_{k^{\epsilon'},k}(\pi')=\pi$. Similarly, let $k^{\star\pm}$ be the first theta occurrence index of $\ps$ with respect to the Witt tower ${\bf{{O}}}_{\rm{odd}}^\pm$, and denote the smaller one by $k^{\star\prime}$, and denote the correspondence group by $\o_{2k^{\star\prime}+1}^{\epsilon^{\star\prime}}\fq$. Let $\Theta^{\epsilon^{\star\prime}}_{k^\star,k^{\star\prime}}(\pi^\star)=\pi^{\star\prime}=\prllspp$.

Consider the following diagram:
 \begin{equation}\label{diageven}
\xymatrix{
&\rho\otimes\pi_{\Lambda_1}\otimes\pi_{\Lambda_{-1}}\ar[rrr]^{\Theta_{k,k^{\e}}^*} \ar[dd]_{\CD_\epsilon^*}& &&\rho'\otimes\pi_{\Lambda'_1}\otimes\pi_{\Lambda'_{-1}} \ar[dd]^{\CD^*_\epsilon}\\
\pi \in \E_{\rm{cus}}(\sp_{2k})\ar[ur]^{\CL^\rm{can}}\ar[dd]_{\CD_{\ell,\epsilon}}\ar[rrr]^{\Theta_{k,k^{\e}}}&&&\pi' \in \E_{\rm{cus}}(\o^\e_{2k^{\e}})\ar[ur]^{\CL^\rm{can}} \ar[dd]^{\CD_{\ell',\epsilon}}& \\
&\rho^\star\otimes\pi_{\Lambda^\star_1}\otimes\pi_{\Lambda^\star_{-1}} \ar[rrr]^{\Theta^*_{k^\star,k^{\e}}}& &&\rho^{\star\prime}\otimes\pi_{\Lambda^{\star\prime}_1}\otimes\pi_{\Lambda^{\star\prime}_{-1}}\otimes\iota^{\star\prime}\\
\pi^\star\in  \E_{\rm{cus}}(\sp_{2k^\star})\ar[ur]^{\CL^\rm{can}}\ar[rrr]^{\Theta_{k^\star,k^{\star\prime}}}&&&\pi^{\star\prime}\in \E_{\rm{cus}}(\o_{2k^{\star\prime}+1}^{\epsilon^{\star\prime}})\ar[ur]^{\CL^\rm{can}}&\\
},
\end{equation}
The left side of the above diagram has been determined by previous calculation. We now are going to check that the left side is compatible with the rest of the above diagram.

Assume that $k^{\epsilon'}=k$. We construct the canonical Lusztig correspondence $\clc$ on $\o_{2k^{\epsilon'}}^{\epsilon'}\fq$ to be the Lusztig correspondence such that the top diagram  is commutative. It is easy to check that this choice of canonical Lusztig correspondence $\clc$ makes the right diagram commutative. Asume that $k^{\epsilon'}<k$. Similar to the odd orthogonal group case, we first use the bottom theta diagram to get information of $\prllspp$. Then describe $\prllp$ by the right side  diagram. Finally, we check whether the top theta diagram is commutative.

From the bottom commutative diagram, as before, we know that
\begin{itemize}
\item $\rho^{\star\prime}=\rho^\star$;
\item $\Lambda_{-1}^{\star\prime}=\Lambda_1^\star$;
\item $\iota^{\star\prime}=\ps(-I)\epsilon(-1)^{k^\star}=\wi_{\rho^\star}\pd(\Lambda^\star_{-1})$.
\end{itemize}
Since the theta lifting in the bottom of diagram \eqref{diageven} goes down, the theta lifting $\Theta_{\epsilon^{\star\prime}\epsilon_{\rho^\star}}(\pi_{\Lambda_{-1}^\star})$ still goes down. Then by Theorem \ref{even}, we have
\[
\pd(\Lambda_{-1}^\star)=\left\{
\begin{array}{ll}
\epsilon^{\star\prime}\epsilon_{\rho^\star} & \textrm{ if }\sd(\Lambda_{-1}^\star)\ne \left(\begin{smallmatrix}
-\\
-
\end{smallmatrix}\right);\\
\epsilon^{\star\prime}\epsilon_{\rho^\star}\sd(\Lambda_{-1}^\star) & \textrm{ otherwise}.\\
\end{array}
\right.
\]
By our construction of $\CD^*_\epsilon$, we have
$
\pd(\Lambda_{-1}^\star)=\pd(\Lambda_1).
$
Since the theta lifting of $\pi$ goes down, by Theorem \ref{even}, we have
$
\pd(\Lambda_1)=\pd(\Lambda'_{1}).
$
Therefore
\[
\epsilon^{\star\prime}=\epsilon_{\rho^\star}\sd(\Lambda_{-1}^\star)\pd(\Lambda'_1).
\]
Consider the two descent diagrams and $\CD^*_\epsilon$. We have
\begin{itemize}
\item $\sd(\Lambda_1')=-\wi_{\rho^{\star\prime}}\iota^{\star\prime}=-\wi_{\rho^{\star\prime}}\wi_{\rho^\star}\pd(\Lambda^\star_{-1})=-\pd(\Lambda^\star_{-1})=-\pd(\Lambda_{1})$;
\item Note that $\epsilon=\ee\epsilon'\epsilon^{\star\prime}$ and $\epsilon'=\epsilon_{\rho'}\pd(\Lambda_1)\pd(\Lambda_{-1})$. By our construction of $\CD^*_\epsilon$, we know that $\CD_{\ell',\epsilon}(\pi')\ne 0$ implies that $\epsilon=\epsilon_{\rho^{\star\prime}}\epsilon_{\rho'}\sd(\Lambda_1)\sd(\Lambda_{-1})$. Hence, we have
\[
\begin{aligned}
\sd(\Lambda_{-1}')&=\ee\epsilon^{\star\prime}\epsilon_{\rho^\star}\sd(\Lambda_{1}')\pd(\Lambda'_{1})\pd(\Lambda'_{-1})\\
    =&\ee\sd(\Lambda_{1}')\pd(\Lambda'_{-1})\sd(\Lambda_{-1}^\star)=-\ee\pd(\Lambda_{1})\pd(\Lambda_{-1})\sd(\Lambda_{-1}^\star).
    \end{aligned}
    \]
\end{itemize}
If some of $\Lambda'_{\pm1}$ are trivial, we need not to check the corresponding part. So we assume that they are not trivial, that is, $\sd(\Lambda'_{\pm1})$ are always well defined.
Now we going to check that the above data makes the following diagram commutative
  \[
\xymatrix{
\pi \ar[rr]^\Theta \ar[d]_{\cal{L}} && \prllp \ar[d]^{\cal{L}} \\
\rho\otimes\pi_{\Lambda_{-1}}\otimes\pi_{\Lambda_1} \ar[rr]^{\id\otimes\Theta_{}\otimes\id } && \quad \rho^{\prime}\otimes\pi_{\Lambda_1^{\prime}}\otimes\pi_{\Lambda_{-1}^{\prime}},
}
\]
or equivalently checking
\begin{itemize}
\item $\sd(\Lambda_1')=-\pd(\Lambda_{1})$ \  (Since $\pi$ goes down in the above theta lifting);
\item $\sd(\Lambda_{-1}')=\sd(\Lambda_{-1}) $.
\end{itemize}
The first equation has already been proved. For the second one, by our construction of $\CD^*_\epsilon$, we have
\[
\sd(\Lambda_{-1}^\star)=-\ee\pd(\Lambda_{1})\pd(\Lambda_{-1})\sd(\Lambda_{-1}).
\]
Hence
 \[
\begin{aligned}
\sd(\Lambda_{-1}^\prime)=&-\ee\pd(\Lambda_{1})\pd(\Lambda_{-1})\sd(\Lambda_{-1}^\star)\\
=&-\ee\pd(\Lambda_{1})\pd(\Lambda_{-1})(-\ee\pd(\Lambda_{1})\pd(\Lambda_{-1})\sd(\Lambda_{-1}))=\sd(\Lambda_{-1}).
 \end{aligned}
\]

\begin{corollary}
Each facet of the diagram \eqref{cm2} is commutative.
\end{corollary}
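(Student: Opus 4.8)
The plan is to read the corollary off from the construction just carried out: five of the six facets of the cube \eqref{cm2} have already been verified in the course of building $\clc$, and the sixth will follow from those by a formal diagram chase. So the proof is essentially an act of assembly, together with a short argument using that $\clc$ is a bijection.

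First I would record which faces are already in hand. The \emph{front} face (the theta--descent square over the groups themselves) is exactly \eqref{cm1}, obtained by combining the conservation relations for first descents (Theorem \ref{d2}, Theorem \ref{deven}, Theorem \ref{dodd}) with the first--occurrence statements for theta (Proposition \ref{cus1}, Proposition \ref{cus2}). The \emph{left} face is the defining property of $\clc$ on $\pi$: we set $\clc(\pi)$ to be the unique Lusztig correspondence making $\CD^*_\epsilon\circ\clc=\clc\circ\CD_{\ell,\epsilon}$, and the fact that this is consistent with the already--constructed $\clc$ on $\pi^\star$ is precisely the identity \eqref{eepp} proved above. The \emph{top} face is conditions (2) and (3) of Theorem \ref{main}, checked in the two compatibility--with--theta verifications above (the odd-- and even--orthogonal dual pairs). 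The \emph{bottom} face is that same compatibility for the pair $(\pi^\star,\pi^{\star\prime})$, and since $\pi^\star\in\CE_{\rm{bas}}(G_m)$ with $m<n$ it holds by the induction hypothesis. The \emph{right} face is the descent--intertwining square for $\pi'=\Theta_{n,n'}(\pi)$: the cube is set up so that the theta correspondences in it do not go up --- which the conservation relations always allow by choosing the Witt tower appropriately --- so $\pi'$ lives in a group of rank $\le n$, and the square follows either from the induction hypothesis (when the rank is $<n$) or from the direct construction of $\clc$ on $G'_{n'}$ carried out above (when it equals $n$).

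It then remains to deduce the \emph{back} face, i.e. $\Theta^*_{m,m'}\circ\CD^*_\epsilon=\CD^*_\epsilon\circ\Theta^*_{n,n'}$ on $\CE_{\rm{bas}}(C_{G_n^*}(s))$. Here I would use that $\clc$ restricts to a bijection between basic representations on each side, so it is enough to evaluate both composites on $\pi^*=\clc(\pi)$. Chasing the left face and then the bottom face rewrites $\Theta^*_{m,m'}(\CD^*_\epsilon(\pi^*))$ as $\clc(\Theta_{m,m'}(\pi^\star))$; chasing the top face and then the right face rewrites $\CD^*_\epsilon(\Theta^*_{n,n'}(\pi^*))$ as $\clc(\CD_{\ell',\epsilon}(\pi'))$; and the front face \eqref{cm1} says $\Theta_{m,m'}(\pi^\star)=\CD_{\ell',\epsilon}(\pi')=\pi^{\star\prime}$, so the two composites agree. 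With all six faces commutative, the corollary follows.

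I expect the only real difficulty to be bookkeeping rather than conceptual: one must make sure the auxiliary representations $\pi^\star$, $\pi'$, $\pi^{\star\prime}$ all lie in groups to which the induction hypothesis genuinely applies, and one must separately dispose of the degenerate situations in which one of the symbols $\Lambda_{\pm1}$ (or its starred or primed variants) is trivial or equals its own transpose, since then the corresponding tensor factor and the sign relations \eqref{eepp0}, \eqref{eepp} collapse. Each of these is an immediate variant of a case already treated in the construction, so no new ingredient is needed beyond reassembling the pieces.
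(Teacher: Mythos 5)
Your proposal is correct and matches the paper's (largely implicit) argument: the corollary is exactly the assembly of the facets already verified in the construction of $\clc$ --- the front face from \eqref{cm1}, the bottom from the induction hypothesis, and the left, right and top faces from the defining property of $\clc$ and the two theta-compatibility checks --- with the back face then following formally, just as the paper's strategy section asserts ("which will automatically result in the back side of \eqref{cm2} being commutative as well"). Your explicit diagram chase for the back face and your remarks on the degenerate cases ($\Lambda_{\pm 1}$ trivial or self-transpose) simply spell out what the paper leaves tacit.
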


\section{A finite field instance of  relative Langlands duality}\label{sec7}

\subsection{Calculation of Gan-Gross-Prasad problem}
Using the Lusztig correspondence described in Theorem \ref{main}, we can refine the conditions that are strongly relevant for a pair of representations. This refinement allows us to rephrase the main theorems in \cite{Wang1} and determine the multiplicity in the finite Gan-Gross-Prasad problem in a combinatorial manner.

\begin{theorem}\label{thm7.1}
\begin{enumerate}
\item Let $\pi=\prll\in\CE_{\rm{bas}}(\sp_{2n})$ and $\pi^\star=\prlls\in\CE_{\rm{bas}}(\sp_{2n^\star})$. Then $(\pi,\pi^\star)$ is $\epsilon$-strongly relevant  if and only if there  are $\ep,\ev\in\{\pm\}$ such that
\begin{itemize}
\item $\epsilon=\epsilon_\rho\epsilon_{\rho^\star}\ep\ev$;
\item ${\rm{def}}(\Lambda_1^\star)=-\ev\pd(\Lambda_{-1}){\rm{def}}(\Lambda_{-1})+\pd(\Lambda_{-1})$,
\item ${\rm{def}}(\Lambda_{-1}^\star)=-\ev\pd(\Lambda_{-1})\ee{\mathrm{def}}(\Lambda_1)-\ep\ev\pd(\Lambda_{-1})\ee$.
 \end{itemize}

 \item Let $\pi=\prll\in\CE_{\rm{bas}}(\o^\epsilon_{2n})$ and $\pi^\star=\prllss\in\CE_{\rm{bas}}(\o^{\epsilon^\star}_{2n^\star+1})$. Then $(\pi,\pi^\star)$ is strongly relevant  if and only if there  are $\ep,\ev\in\{\pm\}$ such that
\begin{itemize}
\item $\epsilon^\star=\epsilon_{\rho^\star}\ep\ev\pd(\Lambda_1)\pd(\Lambda_{-1})\ee$;
\item $\iota^\star=-\wi_{\rho^\star}\ep$;
\item ${\rm{def}}(\Lambda_1^\star)=-\ep\pd(\Lambda_{1}){\rm{def}}(\Lambda_{1})+\pd(\Lambda_{1})$,
\item ${\rm{def}}(\Lambda_{-1}^\star)=-\ev\pd(\Lambda_{-1}){\mathrm{def}}(\Lambda_{-1})+\pd(\Lambda_{-1})$.
 \end{itemize}
 \end{enumerate}
\end{theorem}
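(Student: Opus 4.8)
The plan is to deduce this from the combinatorial description of $\epsilon$-relevant pairs already recorded in Corollary \ref{srsp} and Corollary \ref{sro}, together with the conservation relations for basic representations (Proposition \ref{cus1}, Proposition \ref{cus2}) and the canonical Lusztig correspondence $\clc$ constructed in Theorem \ref{main}. The key point is that "$(\pi,\pi^\star)$ is $\epsilon$-strongly relevant" has been defined purely in terms of first occurrence indices for theta lifting, and all of those indices for basic representations have been computed in Section \ref{sectheta} in terms of the invariants $(\rho,k,h,\iota)$, equivalently in terms of $\rho$ and the defects/signs $\rm{def}(\Lambda_{\pm1})$, $\pd(\Lambda_{\pm1})$, $\sd(\Lambda_{\pm1})$. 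So the whole statement is a bookkeeping translation, and the real content is matching the two sides of each identity correctly.

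First I would treat part (1). Write $\pi=\pkh$ and $\pi^\star=\pkhs$ using the $(\rho,k,h)$ notation of Section \ref{sec4.5}, so that $\Lambda_1$ governs $k$ and $\Lambda_{-1}$ governs $h$ (and similarly with stars). By Corollary \ref{srsp}(i), $\epsilon$-relevance of $(\pkh,\pkhs)$ forces $|k^\star|$ to be $\lfloor|h|\rfloor$ or $\lceil|h|-1\rceil$, and symmetrically (applying the definition of strong relevance, which demands that $(\pi^\star,\pi)$ be $\ee\epsilon$-relevant) one gets the companion constraint on $|k|$ in terms of $|h^\star|$. Translating $k,k^\star,h$ into defects via the formulas of Section \ref{sec4.5} converts the two inequalities-up-to-a-choice into the two stated equations for $\rm{def}(\Lambda_1^\star)$ and $\rm{def}(\Lambda_{-1}^\star)$, where the binary ambiguities become exactly the parameters $\ep,\ev\in\{\pm\}$; the factors $\pd(\Lambda_{-1})$ and the $\ee$ in the second equation arise from the $\ee^{\rm{rank}}$ and parity contributions that already appear in Corollary \ref{oddg} and Proposition \ref{sp-1}. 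For the sign $\epsilon$ itself, one uses that in the definition of $\epsilon$-strongly relevant, $\pi$ must go down with respect to ${\bf O}^\epsilon_{\rm{even}}$ and the see-saw forces $\epsilon$ to be built from $\epsilon_\rho$, $\epsilon_{\rho^\star}$ (these track the unitary factors, cf. the definition of $\epsilon_\rho$) and the two direction choices $\ep,\ev$; this yields $\epsilon=\epsilon_\rho\epsilon_{\rho^\star}\ep\ev$. Conversely, given $\ep,\ev$ satisfying the three displayed identities, one checks directly from Proposition \ref{cus1} that the corresponding first occurrence indices line up, so $(\pi,\pi^\star)$ is $\epsilon$-strongly relevant; this is the routine verification I would not grind through.

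For part (2), the argument is the same in spirit but now uses Corollary \ref{sro}(i) and Proposition \ref{cus2} for the dual pair $(\o^\epsilon_{2n},\o^{\epsilon^\star}_{2n^\star+1})$. Here $\pkh\in\CE_{\rm{bas}}(\o^\epsilon_{2n})$ has its $k$ governed by $\Lambda_1$ and the target $\pkhss\in\CE_{\rm{bas}}(\o^{\epsilon^\star}_{2n^\star+1})$ carries an extra sign $\iota^\star$; strong relevance requires relevance of both $(\pi,\pi^\star)$ and of the spinor-norm twists $(\chi_G\cdot\pi,\chi_{G'}\cdot\pi^\star)$. The condition $\iota^\star=-\wi_{\rho^\star}\ep$ comes from Corollary \ref{oddg}(i), which expresses $\iota^\star$ through the first occurrence direction of $\pkhss$; the sign $\epsilon^\star$ is then forced, via the see-saw and the defect relation in Corollary \ref{oddg}(i) (the $n+k+1$ versus $n-k$ dichotomy), to be $\epsilon_{\rho^\star}\ep\ev\pd(\Lambda_1)\pd(\Lambda_{-1})\ee$, with the $\ee$ and $\pd$ factors again tracking the $\varepsilon(-1)^n$ correction in Proposition \ref{-1}(ii) that distinguishes the odd-orthogonal case. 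The two defect equations for $\rm{def}(\Lambda_{\pm1}^\star)$ are obtained exactly as in part (1) by rewriting the $\lfloor\cdot\rfloor/\lceil\cdot\rceil$ alternatives of Corollary \ref{sro} in defect form, with $\ep$ recording the ambiguity for $\Lambda_1^\star$ and $\ev$ for $\Lambda_{-1}^\star$.

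I expect the main obstacle to be purely clerical: keeping the bookkeeping of the four square-class signs ($\epsilon_\rho$, $\epsilon_{\rho^\star}$, the two of $\ep,\ev$), the parity signs $\pd$, the defect-sign $\sd$, and the constant $\ee=\varepsilon(-1)$ consistent across the two see-saws, and making sure the floor/ceiling alternatives in Corollary \ref{srsp} and Corollary \ref{sro} are mapped to the correct $\{\pm\}$ choice in each defect equation. Care is also needed at the degenerate symbols $\binom{1}{0}$, $\binom{0}{1}$ (where $k$ or $h$ is $\pm 0.5$) and the trivial symbol $\binom{-}{-}$: here one must check the formulas by hand, exactly as the $\sd$ convention in Section \ref{sec:L-s} was set up to handle, and confirm the stated equations still hold (or hold vacuously). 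Once the sign dictionary is fixed, each of the six bulleted identities is a one-line substitution, so no new idea beyond Theorem \ref{main} and the conservation relations is required.
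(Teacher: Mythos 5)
Your strategy is the same as the paper's: reduce $\epsilon$-strong relevance to statements about first occurrence indices of basic representations, compute those indices from the conservation relations and the compatibility of $\clc$ with theta (Theorem \ref{main}, Theorem \ref{even}, Corollary \ref{oddg}), and then encode the residual binary choices by $\ep,\ev$. However, you defer precisely the part that constitutes the paper's actual proof, and in doing so you misplace where $\epsilon$ enters. It is not the case that ``$\pi$ must go down with respect to ${\bf O}^{\epsilon}_{\rm even}$'': in the definition of $\epsilon$-relevance the even tower ${\bf O}^{\epsilon'}_{\rm even}$ in which $\pi$ descends is dictated by $\pi$ itself, and the paper computes $\epsilon'=\epsilon_{\rho}\pd(\Lambda_1)\pd(\Lambda_{-1})$ from Theorem \ref{main}(3) together with Theorem \ref{even} (its \eqref{81}); the sign $\epsilon$ only appears through the odd tower ${\bf O}^{\ee\epsilon\epsilon'}_{\rm odd}$ used for the first occurrence of $\pi^\star$.

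More substantively, Corollary \ref{srsp} constrains only the \emph{absolute values} of the defects, so ``translating $k,k^\star,h$ into defects'' cannot by itself yield the signed identities in the statement. The paper gets the signs by computing $\neven^{\epsilon'}(\pi)=\frac{|\rm{def}(\Lambda_1)-1|}{2}$ and $\nodd^{\ee\epsilon\epsilon'}(\pi^\star)$ explicitly (this is where $\sd(\Lambda_{-1})$ and $\sd(\Lambda^\star_{-1})$ --- i.e.\ the choice inside $\{\Lambda,\Lambda^t\}$ pinned down by $\clc$ --- intervene), imposing $\epsilon$-relevance of $(\pi,\pi^\star)$ and $\ee\epsilon$-relevance of $(\pi^\star,\pi)$ to obtain two expressions \eqref{82} and \eqref{83} for $\epsilon$, and combining them to get the compatibility $\sd(\Lambda^\star_{-1})=\pd(\Lambda_1)\sd(\Lambda_{-1})\pd(\Lambda^\star_1)\ee$ in \eqref{84}. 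Only then are $\ep$ and $\ev$ introduced, as the explicit sign combinations $\ep=-\pd(\Lambda^\star_{-1})$, $\ev=-\pd(\Lambda^\star_1)\pd(\Lambda_{-1})\sd(\Lambda_{-1})$ of \eqref{85}, not simply as the floor/ceiling alternatives of Corollary \ref{srsp}; the first bullet $\epsilon=\epsilon_\rho\epsilon_{\rho^\star}\ep\ev$ and the signed defect formulas then follow by substitution. Without carrying out this see-saw sign analysis (and its analogue with Corollary \ref{oddg} and Proposition \ref{-1} for part (2)), the ``if and only if'' with the stated formulas is asserted rather than proved; that is the gap you would need to fill, although the route you outline is the one the paper takes.
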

\begin{proof}
We only prove (i). The proof of (ii) is similar.

According to Corollary \ref{srsp}, we have
\begin{equation}\label{80}
|\rm{def}(\Lambda_1^\star)|=|\rm{def}(\Lambda_{-1})|\pm 1 \textrm{ and } |\rm{def}(\Lambda_1)|=|\rm{def}(\Lambda^\star_{-1})|\pm 1.
\end{equation}

Pick $\epsilon'\in\{\pm\}$ such that $\pi$ goes down with respect to the Witt tower ${\bf O}^\e_\rm{even}$, and consider its first theta lifting $\Theta(\pi)$. By Theorem \ref{even} and Theorem \ref{main} (iii), we have
\begin{itemize}
\item $\Theta(\pi)=\prllp$
\item $\rho'=\rho$;
  \item $\Theta(\pi_{\Lambda_1})=\pi_{\Lambda_1'}$ is the unipotent basic representation of $\o^{\epsilon'}_{2n'}\fq$ with  $\rm{def}(\Lambda_1')=-\rm{def}(\Lambda_{1})+\pd(\Lambda_1)$, $\epsilon'=\pd(\Lambda_1')$, and $n'=\left(\frac{\rm{def}(\Lambda_1')}{2}\right)^2$;
\item $\Lambda'_{-1}=\Lambda_{-1}$.
\end{itemize}
 Hence \begin{equation}\label{81}
\epsilon'=\epsilon_{\rho'}\pd(\Lambda_{1}')\pd(\Lambda'_{-1})=\epsilon_{\rho}\pd(\Lambda_{1})\pd(\Lambda_{-1}),
\end{equation}
and
$
\neven^\e(\pi)=\neven^{\epsilon'}(\pi_{\Lambda_1})=\frac{|\rm{def}(\Lambda_1)-1|}{2}.
$

Consider the first theta lifting $\Theta(\pi^\star)$ of $\pi^\star$ with respect to ${\bf O}^{\ee\cdot\epsilon\epsilon'}_\rm{odd}$. With the same theta argument, we have
\[
\nodd^{\ee\cdot\epsilon\epsilon'}(\pi^\star)=
\left\{
\begin{array}{ll}
0 & \textrm{ if }\rm{def}(\Lambda_{-1}^\star)=0;\\
\frac{|\rm{def}(\Lambda_{-1}^\star)|}{2} & \textrm{ if }\epsilon=\epsilon'\epsilon_{\rho^\star}\sd(\Lambda_{-1}^\star)\pd(\Lambda_{-1}^\star)\ee;\\
\frac{-|\rm{def}(\Lambda_{-1}^\star)|}{2} & \textrm{ if }\epsilon=-\epsilon'\epsilon_{\rho^\star}\sd(\Lambda_{-1}^\star)\pd(\Lambda_{-1}^\star)\ee.
\end{array}
\right.
\]
To make sure $(\pi,\pi^\star)$ $\epsilon$-strongly relevant, firstly, $(\pi,\pi^\star)$ has to be $\epsilon$-relevant. In other words, either $\neven^\e(\pi)=\nodd^{\ee\cdot\epsilon\epsilon'}(\pi^\star)-1$ or $\neven^\e(\pi)=\nodd^{\ee\cdot\epsilon\epsilon'}(\pi^\star)$.
Hence $\nodd^{\ee\cdot\epsilon\epsilon'}(\pi^\star)$ can not be negative. We will only prove the case  $\nodd^{\ee\cdot\epsilon\epsilon'}(\pi^\star)>0$. The proof for the case  $\nodd^{\ee\cdot\epsilon\epsilon'}(\pi^\star)=0$ is similar. In the positive case, we have $\epsilon=\epsilon'\epsilon_{\rho^\star}\sd(\Lambda_{-1}^\star)\pd(\Lambda_{-1}^\star)\ee$. Using \eqref{81},
\begin{equation}\label{82}
\epsilon=\epsilon_{\rho}\epsilon_{\rho^\star}\pd(\Lambda_{1})\pd(\Lambda_{-1})\sd(\Lambda_{-1}^\star)\pd(\Lambda_{-1}^\star)\ee.
\end{equation}
On the other hand, the pair $(\pi^\star,\pi)$ have to be $\ee\epsilon$-relevant. With the similar argument, we have
\begin{equation}\label{83}
\epsilon=\epsilon_{\rho}\epsilon_{\rho^\star}\sd(\Lambda_{-1})\pd(\Lambda_{-1})\pd(\Lambda_{1}^\star)\pd(\Lambda_{-1}^\star).
\end{equation}
Combining \eqref{82} and \eqref{83}, we have
\begin{equation}\label{84}
\sd(\Lambda_{-1}^\star)=\pd(\Lambda_{1})\sd(\Lambda_{-1})\pd(\Lambda_{1}^\star)\ee.
\end{equation}
Set
\begin{equation}\label{85}
\ep=-\pd(\Lambda_{-1}^\star),
\textrm{ and }\ev=-\pd(\Lambda_{1}^\star)\pd(\Lambda_{-1})\sd(\Lambda_{-1}).
\end{equation}
Substitute \eqref{84} and \eqref{85} into \eqref{82}, we have
\[
\epsilon=\epsilon_{\rho}\epsilon_{\rho^\star}\ep\ev.
\]
The defect of $\Lambda_1^\star$ and $\Lambda^\star_{-1}$ is obtained by \eqref{80}, \eqref{84}, and \eqref{85}.
\end{proof}

Theorem \ref{thm7.1} allows us to rewrite \cite[Theorem 1.4]{Wang1} as follows.

\begin{theorem}\label{ggpmain}

\begin{enumerate}
\item Let $\prll\in \CE(\sp_{2n})$ and $\prlls\in \CE(\sp_{2n^\star})$, where $n\geqslant n^\star$. Assume that $\pi_{\Lambda_{-1}}\in \CE(\o^{\epsilon_{-1}}_{2m})$,
$
\rho=\prod\pi[a]
$
and
$
\rho^\star=\prod\pi^\star[a],
$
where $\pi[a]$ (resp. $\pi^\star[a]$) is the irreducible unipotent representation corresponding to the partition $\lambda[a]$ (resp. $\lambda^\star[a]$).
\[
m_\epsilon(\prll,\prlls)\ne 0
\]
if and only if $\prll$ and $\prlls$ satisfy the following conditions:
\begin{itemize}
\item[(i)] there  are $\ep,\ev\in\{\pm\}$ such that
\[
\begin{aligned}
& \epsilon=\ep\ev\epsilon_{\rho}\epsilon_{\rho^\star},\\
& \Upsilon(\Lambda_{-1}^\star)^{-\ep\ev{\epsilon_{-1}\ee} }\preccurlyeq \Upsilon(\Lambda_1)^{\ep }, \quad \Upsilon(\Lambda_1)^{-\ep } \preccurlyeq \Upsilon(\Lambda^\star_{-1})^{\ep\ev{\epsilon_{-1}}\ee}, \\
& \Upsilon(\Lambda_{1}^\star)^{\epsilon_{-1}} \preccurlyeq \Upsilon(\Lambda_{-1})^{-\ev}, \quad \Upsilon(\Lambda_{-1})^{\ev } \preccurlyeq \Upsilon(\Lambda^\star_{1})^{-\epsilon_{-1}}, \\
& {\mathrm{def}}(\Lambda_1^\star)=-\epsilon_{-1}\ev{\mathrm{def}}(\Lambda_{-1})+\epsilon_{-1}, \\
& {\mathrm{def}}(\Lambda_{-1}^\star)=-\epsilon_{-1}\ev\ee{\mathrm{def}}(\Lambda_1)-\ep\ev\epsilon_{-1}\ee.
\end{aligned}
\]

\item[(ii)] $
\begin{cases}
 \textrm{$\lambda[a]$ and $\lambda'[-a]$ are 2-transverse}, &\textrm{ if $\#[a]$ is odd};\\
   \textrm{$\lambda[a]$ and $\lambda'[-a]$ are close}, &\textrm{ if $\#[a]$ is even}.
\end{cases}
$
\end{itemize}

\item Let $\prll\in \CE(\o_{2n}^\epsilon)$ and $\prllss\in \CE(\o^{\epsilon^\star}_{2n^\star+1})$, where $n> n^\star$. Assume that $\pi_{\Lambda_{\pm1}}\in \CE(\o^{\epsilon_{\pm}}_{2m_\pm})$,
$
\rho=\prod\pi[a]
$
and
$
\rho^\star=\prod\pi^\star[a],
$
where $\pi[a]$ (resp. $\pi^\star[a]$) is the irreducible unipotent representation corresponding to the partition $\lambda[a]$ (resp. $\lambda^\star[a]$).
\[
m_\epsilon(\prll,\prllss)\ne 0
\]
if and only if $\prll$ and $\prllss$ satisfy the following conditions:
\begin{itemize}
\item[(i)]
there  are $\ep,\ev\in\{\pm\}$ such that
\[
\begin{aligned}
& \epsilon=\ep\ev\epsilon_{\rho}\epsilon_{\rho^\star},\\
& \Upsilon(\Lambda_{1}^\star)^{\epsilon_+ }\preccurlyeq \Upsilon(\Lambda_1)^{-\ep}, \quad \Upsilon(\Lambda_1)^{\ep } \preccurlyeq \Upsilon(\Lambda^\star_{1})^{-\epsilon_+}, \\
& \Upsilon(\Lambda_{-1}^\star)^{\epsilon_-} \preccurlyeq \Upsilon(\Lambda_{-1})^{-\ev}, \quad \Upsilon(\Lambda_{-1})^{\ev } \preccurlyeq \Upsilon(\Lambda^\star_{-1})^{-\epsilon_{-}}, \\
& {\rm{def}}(\Lambda_1^\star)=-\pd(\Lambda_{1})\ep{\rm{def}}(\Lambda_{1})+\pd(\Lambda_{1});\\
& {\rm{def}}(\Lambda_{-1}^\star)=-\pd(\Lambda_{-1})\ev{\mathrm{def}}(\Lambda_{-1})+\pd(\Lambda_{-1});\\
&\epsilon^\star=\epsilon_{\rho^\star}\pd(\Lambda_1)\pd(\Lambda_{-1})\ep\ev\ee;\\
&\iota^\star=-\wi_{\rho^\star}\ep\ee^{\rm{rank}(\Lambda^\star_{-1})}.\\
\end{aligned}
\]

\item[(ii)] $
\begin{cases}
 \textrm{$\lambda[a]$ and $\lambda'[a]$ are 2-transverse}, &\textrm{ if $\#[a]$ is odd};\\
   \textrm{$\lambda[a]$ and $\lambda'[a]$ are close}, &\textrm{ if $\#[a]$ is even}.
\end{cases}
$
\end{itemize}

\item Let $\prllsgn\in \CE(\o_{2n+1}^\epsilon)$ and $\prlls\in \CE(\o^{\epsilon^\star}_{2n^\star+1})$, where $n\geqslant n^\star$. Assume that $\pi_{\Lambda^\star_{\pm1}}\in \CE(\o^{\epsilon^\star_{\pm}}_{2m^\star_\pm})$,
$
\rho=\prod\pi[a]
$
and
$
\rho^\star=\prod\pi^\star[a],
$
where $\pi[a]$ (resp. $\pi^\star[a]$) is the irreducible unipotent representation corresponding to the partition $\lambda[a]$ (resp. $\lambda^\star[a]$). Then
\[
m_\epsilon(\prllsgn,\prlls)\ne 0
\]
if and only if $\prllsgn$ and $\prlls$ satisfy the following conditions:
\begin{itemize}
\item[(i)] there  are $\ep,\ev\in\{\pm\}$ such that
\[
\begin{aligned}
& \epsilon=\epsilon\ep\ev\epsilon_{\rho^\star},\\
& \Upsilon(\Lambda_{1}^\star)^{-\iota' }\preccurlyeq \Upsilon(\Lambda_1)^{-\ep}, \quad \Upsilon(\Lambda_1)^{\ep } \preccurlyeq \Upsilon(\Lambda^\star_{1})^{\iota'}, \\
& \Upsilon(\Lambda_{-1}^\star)^{-\iota'\epsilon\epsilon_{\rho}\ep\ev\ee} \preccurlyeq \Upsilon(\Lambda_{-1})^{-\ev}, \quad \Upsilon(\Lambda_{-1})^{\ev } \preccurlyeq \Upsilon(\Lambda^\star_{-1})^{\iota'\epsilon\epsilon_{\rho}\ep\ev\ee}, \\
& {\rm{def}}(\Lambda_1^\star)=\iota'\ep\rm{def}(\Lambda_{1})-\iota';\\
& {\rm{def}}(\Lambda_{-1}^\star)=\iota'\epsilon\epsilon_{\rho}\ep\ee{\mathrm{def}}(\Lambda_{-1})-\iota'\epsilon\epsilon_{\rho}\ep\ev\ee;\\
\end{aligned}
\]
where $\iota'=\iota\wi_{\rho}\ee^{\rm{rank}(\Lambda_{-1})}$.

\item[(ii)] $
\begin{cases}
 \textrm{$\lambda[a]$ and $\lambda'[a]$ are 2-transverse}, &\textrm{ if $\#[a]$ is odd};\\
   \textrm{$\lambda[a]$ and $\lambda'[a]$ are close}, &\textrm{ if $\#[a]$ is even}.
\end{cases}
$
\end{itemize}
\end{enumerate}
\end{theorem}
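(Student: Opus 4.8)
The plan is to deduce Theorem \ref{ggpmain} from \cite[Theorem 1.4]{Wang1} purely by translating its hypotheses. The statement in \cite{Wang1} characterizes $m_\epsilon(\pi,\pi^\star)\neq 0$ in terms of two pieces of data: first, that the basic pair underlying $(\pi,\pi^\star)$ is $\epsilon$-strongly relevant, and second, a list of theta-lifting compatibilities between the various unipotent and $\mathrm{GL}/\mathrm{U}$ pieces of $\pi$ and $\pi^\star$. Theorem \ref{thm7.1} converts the first piece into the sign-and-defect relations appearing in the first and last lines of condition (i) (the equation for $\epsilon$, and the equations for $\mathrm{def}(\Lambda^\star_{\pm1})$, together with $\epsilon^\star$ and $\iota^\star$ in cases (2) and (3)), while the canonical Lusztig correspondence of Theorem \ref{main} converts the theta-lifting compatibilities into the combinatorial $\preccurlyeq$-relations and transversality conditions. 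As in the proof of Theorem \ref{thm7.1}, we carry out the argument for (1) in detail; the cases (2) and (3) are entirely analogous, the only extra bookkeeping in (3) being the parameters $\iota$, $\iota'$ which are handled exactly as in Corollary \ref{oddg}.

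\textbf{Reduction to basic representations.} By Proposition \ref{so2} (resp.\ Proposition \ref{7.21}) the multiplicity $m_\epsilon(\pi,\pi^\star)$ is obtained by parabolic induction from the basic pair $(\pi^0,\pi^{\star0})$ underlying $(\pi,\pi^\star)$; since the canonical Lusztig correspondence commutes with parabolic induction (Theorem \ref{main}(1)) and parabolic induction leaves $\mathrm{def}(\Lambda_{\pm1})$ unchanged (Proposition \ref{unp}), it suffices to verify the statement for $\pi^0,\pi^{\star0}$ basic and then record how the symbols and the partitions $\lambda[a]$ grow. For basic $\pi^0,\pi^{\star0}$ the bi-partitions $\Upsilon(\Lambda^0_{\pm1})$, $\Upsilon(\Lambda^{\star0}_{\pm1})$ are trivial, so the $\preccurlyeq$-relations in (i) become vacuous and (i) collapses to the sign-and-defect conditions of Theorem \ref{thm7.1}; \cite[Theorem 1.1]{Wang1} then says $m_\epsilon\ne0$ if and only if the basic pair is $\epsilon$-strongly relevant, which is precisely Theorem \ref{thm7.1}. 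This settles the basic case.

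\textbf{Passage to general representations.} Write $\pi=\prll$ as a constituent of a parabolic induction of a basic $\pi^0$, and similarly $\pi^\star$. For the $\rho$-part, $\rho$ is assembled from cuspidal general-linear and unitary representations, and the Gan-Gross-Prasad problem for general-linear and unitary groups contributes exactly the transversality condition (ii): $\lambda[a]$ and $\lambda^\star[\pm a]$ are $2$-transverse when $\#[a]$ is odd (the unitary case) and close when $\#[a]$ is even (the linear case), with the eigenvalue flip $[a]\mapsto[-a]$ occurring in case (1) because an odd orthogonal group is interposed (cf.\ Theorem \ref{p2} and Theorem \ref{main}(2)); the sign contribution of this part is the factor $\epsilon_\rho\epsilon_{\rho^\star}$ in the first line of (i). For the $\Lambda_{\pm1}$-part, the see-saw arguments of \cite{Wang1} identify the relevant GGP multiplicities with first-occurrence theta multiplicities of the unipotent pieces $\pi_{\Lambda_{\pm1}}$, $\pi_{\Lambda^\star_{\pm1}}$; by Theorem \ref{p} together with Theorem \ref{main}(2),(3) these are governed exactly by the $\preccurlyeq$-relations between $\Upsilon(\Lambda_{\pm1})$ and $\Upsilon(\Lambda^\star_{\pm1})$ listed in (i), with the signs on the superscripts determined by Theorem \ref{even}. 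Intersecting these with the defect constraints already produced by Theorem \ref{thm7.1} yields (i), and one checks that the $\preccurlyeq$-relations are stable under the parabolic induction carrying $(\pi^0,\pi^{\star0})$ to $(\pi,\pi^\star)$.

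\textbf{Main obstacle.} The essential difficulty is the consistent bookkeeping of the signs $\epsilon_\rho$, $\epsilon_{\rho^\star}$, $\pd(\Lambda_{\pm1})$, $\sd(\Lambda_{\pm1})$, $\wi_\rho$, $\iota$, $\iota'$ and $\ee$ as they propagate through the chain ``GGP $\leadsto$ theta (see-saw) $\leadsto$ $\preccurlyeq$ (Theorem \ref{p})'' and through the first theta lifting used in Theorem \ref{thm7.1}; in particular pinning down $\epsilon^\star$, $\iota^\star$ in cases (2), (3) and the auxiliary signs $\ep,\ev$ requires repeating, on the nose, the sign computations of the proof of Theorem \ref{thm7.1} and of Corollary \ref{oddg}. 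Once these are in place, the comparison with the list of inequalities in \cite[Theorem 1.4]{Wang1} is a routine, if lengthy, case-by-case verification, which we perform separately for (1), (2) and (3).
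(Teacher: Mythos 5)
Your proposal takes essentially the same route as the paper, whose entire argument for Theorem \ref{ggpmain} is the remark that it is a rewriting of \cite[Theorem 1.4]{Wang1}: Theorem \ref{thm7.1} converts the strongly-relevant conditions into the sign and defect equations of (i), and the canonical Lusztig correspondence of Theorem \ref{main} (together with Theorem \ref{p}) converts the theta-lifting conditions of \cite{Wang1} into the $\preccurlyeq$-relations and the transversality/closeness conditions (ii). Your intermediate ``reduction to basic representations'' framing is not quite what Propositions \ref{so2} and \ref{7.21} give (they reduce the GGP pairing to its basic case $\ell=0$, not the representations themselves to basic ones, and the multiplicity is not determined by the underlying basic pair alone), but since your general case is in the end handled by translating \cite[Theorem 1.4]{Wang1} directly, this does not affect the correctness of the argument.
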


\subsection{The duality between Gan-Gross-Prasad problem and Theta correspondence}

The Gan-Gross-Prasad problem is naturally related to the theta correspondence in the context of relative Langlands duality of Ben-Zvi-Sakellaridis-Venkatesh \cite{BZSV} in the local field case. In the finite field case, a similar duality for unipotent representations of unitary groups has been proven in \cite{LW4}. In \cite{Wang1}, we have established the duality between the Gan-Gross-Prasad problem for special orthogonal groups and the theta correspondence for the symplectic-even orthogonal dual pair.
Based on the above description of the Gan-Gross-Prasad problem, we can extend the aforementioned duality to full orthogonal groups and symplectic groups.
 \begin{theorem}\label{8.3}
 \begin{enumerate}
\item Let $\pi=\pi_{-,\Lambda_1,-}\in\E(\sp_{2n},1)$, and $\pi^\star=\pi_{-,-,\Lambda_{-1}^\star}\in \E(\sp_{2n^\star},s)$ where  $s = (-I,1)$ with $I$ being the identity in $\o_{2n}^\e \in \so_{2n+1}\fq=\sp_{2n}\fq^*$. Let $\pi^{\prime}:=\clc(\pi^\star) \in \E(\o^\e_{2n^{\star}},1)$.
Then
\[
m_\epsilon(\pi,\pi^\star)= \langle \cal{AC}(\pi)\otimes\cal{AC}(-\ee\epsilon\e\cdot\pi'),\omega_{n,n^\star}^\e\rangle,
\]
where $\cal{AC}$ is the  Alvis-Curtis duality, and $\omega_{n,n^\star}^\e$ is the Weil representation for $(\sp_{2n},\o^\e_{2n^\star})$ and
\[
-\ee\epsilon\e\cdot\pi'=\left\{
\begin{array}{ll}
\pi' & \textrm{ if }-\ee\epsilon\e=+;\\
\sgn\pi' & \textrm{ if }-\ee\epsilon\e=-.
\end{array}
\right.
\]
In other words, the above equation can be visualized as a diagram:
  \[
\xymatrix{
\pi\in\cal{E}(\sp_{2n},1) \ar[rrrr]^{\textrm{GGP}} \ar[dd]_{\cal{AC}} &&&&\bigoplus\pi^\star,\  \pi^\star\in \cal{E}(\sp_{2n^\star},s) \ar[dd]_{\cal{AC}\circ(-\ee\epsilon\e)\circ\clc} \\
\\
\pi\in\cal{E}(\sp_{2n},1) \ar[rrrr]^{\textrm{theta lfiting} } &&&&
\bigoplus\pi^{\prime },\  \pi^{\prime}\in \cal{E}(\o^\e_{2n^\star},1).
}
\]

 \item Let $\pi=\pi_{-,\Lambda_1,-}\in\E(\o^\epsilon_{2n},1)$, and $\pi^\star=\pi_{-,\Lambda_{1}^\star,-,\iota^\star}\in \E(\o^{\epsilon^\star}_{2n^\star+1},1)$. Let $\pi^{\prime}:=\clc(\pi^\star) \in \E(\sp_{2n^{\star}},1)$.
Then
\[
m_{\ee\epsilon\epsilon^\star}(\pi,\pi^\star)= \langle \cal{AC}(\pi')\otimes\cal{AC}(-\iota^\star\epsilon\cdot\pi),\omega_{n,n^\star}^\epsilon\rangle,
\]
 The above equation can be visualized as a diagram:
  \[
\xymatrix{
\pi\in\cal{E}(\o^\epsilon_{2n},1) \ar[rrrr]^{\textrm{GGP}} \ar[dd]_{\cal{AC}\circ (-\iota^\star\epsilon)} &&&&\bigoplus\pi^\star,\  \pi^\star\in \cal{E}(\o^{\epsilon^\star}_{2n^\star+1},1) \ar[dd]_{\cal{AC}\circ\clc} \\
\\
\pi\in\cal{E}(\o^\epsilon_{2n},1) \ar[rrrr]^{\textrm{theta lfiting} } &&&&
\bigoplus\pi^{\prime },\  \pi^{\prime}\in \cal{E}(\sp_{2n^\star},1).
}
\]

 \item Let $\pi=\pi_{-,\Lambda_1,-,\iota}\in\E(\o^\epsilon_{2n+1},1)$, and $\pi^\star=\pi_{-,\Lambda_{1}^\star,-}\in \E(\o^{\epsilon^\star}_{2n^\star},1)$. Let $\pi^{\prime}:=\clc(\pi) \in \E(\sp_{2n},1)$.
Then
\[
m_{\epsilon\epsilon^\star}(\pi,\pi^\star)= \langle \cal{AC}(\pi')\otimes\cal{AC}(-\iota\epsilon^\star\cdot\pi),\omega_{n,n^\star}^\epsilon\rangle,
\]
 The above equation can be visualized as a diagram:
  \[
\xymatrix{
\pi\in\cal{E}(\o^\epsilon_{2n+1},1) \ar[rrrr]^{\textrm{GGP}} \ar[dd]_{\cal{AC}\circ \clc} &&&&\bigoplus\pi^\star,\  \pi^\star\in \cal{E}(\o^{\epsilon^\star}_{2n^\star},1) \ar[dd]_{\cal{AC}\circ(-\iota^\star\epsilon)} \\
\\
\pi\in\cal{E}(\sp_{2n},1) \ar[rrrr]^{\textrm{theta lfiting} } &&&&
\bigoplus\pi^{\prime },\  \pi^{\prime}\in \cal{E}(\o^{\epsilon^\star}_{2n^\star},1).
}
\]
\end{enumerate}
\end{theorem}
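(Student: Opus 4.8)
All three identities are of the same shape: a Gan--Gross--Prasad multiplicity $m_\bullet(\pi,\pi^\star)$ equals a branching number of a Weil representation against Alvis--Curtis duals. Since $\pi$ is unipotent and the parameter of $\pi^\star$ is forced to be as degenerate as possible ($s=(-I,1)$ in (1), trivial in (2) and (3)), both sides can be rewritten \emph{entirely in terms of symbols}: the left side via Theorem \ref{ggpmain}, specialised so that all the $\rho$'s are trivial, and the right side via Theorem \ref{p} together with the description of Alvis--Curtis duality of unipotent representations by the symbol operation ${}^{\bd}(\cdot)$ of Section \ref{sec:bi}. The plan is therefore: (a) collapse Theorem \ref{ggpmain} in this special case; (b) compute the right-hand pairing combinatorially; (c) match the two lists of conditions, the non-trivial content being the bookkeeping of signs. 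Both sides are $\{0,1\}$-valued here (multiplicity one holds for unipotent theta pairs by Pan's theorem, and in the regime governed by Theorem \ref{ggpmain} the GGP multiplicity is likewise $\le 1$), so it suffices to match supports.

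\textbf{Part (1).} Here $\pi=\pi_{-,\Lambda_1,-}$ is the unipotent representation of $\sp_{2n}$ attached to $\Lambda_1\in\cal S_n$, and $s=(-I,1)$ forces $C_{G^*}(s)\cong\o^\e_{2n^\star}$, so $\clc(\pi^\star)=\pi_{\Lambda^\star_{-1}}=\pi'$ is a unipotent representation of $\o^\e_{2n^\star}$; thus $\rho,\rho^\star$ and the symbols $\Lambda_{-1}$ (on the $\sp_{2n}$ side) and $\Lambda_1^\star$ (on the $\sp_{2n^\star}$ side) are trivial, $\epsilon_\rho=\epsilon_{\rho^\star}=+$, $\wi_\rho=\wi_{\rho^\star}=+$, and $\epsilon_{-1}=+$. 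Feeding this into Theorem \ref{ggpmain}(1) kills condition (ii) and all the $\Lambda_1^\star$-clauses, leaving: $m_\epsilon(\pi,\pi^\star)\neq0$ iff there are $\ep,\ev\in\{\pm\}$ with $\epsilon=\ep\ev$ and
\[
\Upsilon(\Lambda^\star_{-1})^{-\ep\ev\ee}\preccurlyeq\Upsilon(\Lambda_1)^{\ep},\qquad
\Upsilon(\Lambda_1)^{-\ep}\preccurlyeq\Upsilon(\Lambda^\star_{-1})^{\ep\ev\ee},\qquad
\rm{def}(\Lambda^\star_{-1})=-\ev\ee\,\rm{def}(\Lambda_1)-\ep\ev\ee .
\]
On the right, $\cal{AC}(\pi)=\pm\pi_{{}^{\bd}\Lambda_1}$, while $\cal{AC}(-\ee\epsilon\e\cdot\pi')$ is $\pm\pi_{{}^{\bd}\Lambda^\star_{-1}}$ or $\pm\pi_{({}^{\bd}\Lambda^\star_{-1})^t}$ according to the sign $-\ee\epsilon\e$ (using $\sgn\pi_\Lambda=\pi_{\Lambda^t}$ and ${}^{\bd}(\Lambda^t)=({}^{\bd}\Lambda)^t$), and by Theorem \ref{p} the pairing is nonzero iff the resulting pair of symbols lies in $\cal B^\e_{n,n^\star}$. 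Using $\Upsilon({}^{\bd}\Lambda)^{\pm}=(\Upsilon(\Lambda)^{\mp})^t$, $\rm{def}({}^{\bd}\Lambda)=\rm{def}(\Lambda)$ and the $\preccurlyeq$-invariance under taking transpose twice, the membership in $\cal B^\e_{n,n^\star}$ translates into two rigid $\preccurlyeq$-inequalities between $\Upsilon(\Lambda_1)^{\pm}$ and $\Upsilon(\Lambda^\star_{-1})^{\pm}$ plus the defect relation $\rm{def}(\Lambda^\star_{-1})=-\rm{def}(\Lambda_1)\pm1$; comparing with the displayed conditions pins $\ep$ and $\ep\ev\ee$ (hence $\epsilon=\ep\ev$), forces the defect congruences to agree, and one checks that the prescribed value of $\e$ (read off from $\rm{def}(\Lambda^\star_{-1})\bmod 4$) together with the twist $-\ee\epsilon\e$ is exactly what makes these choices consistent. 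This proves (1).

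\textbf{Parts (2) and (3).} These are identical in structure, now with one odd-orthogonal factor, and one again specialises Theorem \ref{ggpmain}(2) resp.\ (3) with all $\rho$'s and the $\Lambda_{-1}$-symbols trivial. The new ingredients are the extra sign clauses: in (2), $\iota^\star=-\wi_{\rho^\star}\ep=-\ep$ and $\epsilon^\star=\epsilon_{\rho^\star}\pd(\Lambda_1)\ep\ev\ee=\pd(\Lambda_1)\ep\ev\ee$, which determine $\ep$ and match the twist $-\iota^\star\epsilon$ appearing in the vertical map; in (3) the analogous role is played by $\iota'=\iota\wi_\rho\ee^{\rm{rank}(\Lambda_{-1})}$ and the twist $-\iota\epsilon^\star$. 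The right-hand pairing is computed through $\cal B^\epsilon$ for the relevant symplectic--even-orthogonal pair (orientation of the indices as in the statement), with $\cal{AC}$ of the twisted representation producing ${}^{\bd}\Lambda$ or $({}^{\bd}\Lambda)^t$ as in (1). The central characters $\iota,\iota^\star$ are the one place where a non-combinatorial input is needed: their consistency follows from Proposition \ref{-1} (the fact that $-I_G\times I_{G'}$ and $I_G\times -I_{G'}$ both map to $-I_{\sp_{2N}}$, corrected by the $(\xi\circ\det)^n$ twist in Ma--Qiu--Zou's Weil representation), applied exactly as in the proof of Theorem \ref{main}.

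\textbf{Main obstacle.} The heart of the argument is the sign bookkeeping in step (c): showing that the two free binary parameters $\ep,\ev$ of Theorem \ref{ggpmain} are precisely consumed by (i) the discriminant $\e$ labelling which of $\cal B^{+},\cal B^{-}$ occurs, (ii) the transpose ambiguity $\cal{AC}(\pi_\Lambda)=\pm\pi_{{}^{\bd}\Lambda}$ together with the explicit twist ($-\ee\epsilon\e$, $-\iota^\star\epsilon$, or $-\iota\epsilon^\star$), and (iii) compatibility with $\ee=\varepsilon(-1)$ and with the parities $\pd,\sd$ of the relevant symbols, in such a way that the defect congruences and the Alvis--Curtis signs all line up. Everything else is a mechanical comparison of two lists of $\preccurlyeq$-inequalities between the constituents of $\Upsilon(\Lambda_1)$ and $\Upsilon(\Lambda^\star_{\pm1})$.
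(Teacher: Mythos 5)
Your proposal follows essentially the same route as the paper's proof: specialize Theorem \ref{ggpmain} to the case where all $\rho$'s and the complementary symbols are trivial, and then match the resulting $\preccurlyeq$-inequalities and defect relation against Pan's unipotent theta description (Theorems \ref{p}, \ref{p1}) after translating by Alvis--Curtis duality on symbols. The paper in fact only writes out case (1), stating the specialized conditions and concluding "immediately from Theorem \ref{p1}", so your more explicit sign bookkeeping, the multiplicity-one remark, and the sketches of (2) and (3) are consistent elaborations of the same argument rather than a different method.
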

\begin{proof}
We only prove (1). In this case, the conditions in Theorem \ref{ggpmain} means that
there  are $\ep,\ev\in\{\pm\}$ such that
\[
\begin{aligned}
& \epsilon=\ep\ev,\\
& \Upsilon(\Lambda_{-1}^\star)^{-\ep\ev{\ee} }\preccurlyeq \Upsilon(\Lambda_1)^{\ep }, \quad \Upsilon(\Lambda_1)^{-\ep } \preccurlyeq \Upsilon(\Lambda^\star_{-1})^{\ep\ev{}\ee}, \\
& {\mathrm{def}}(\Lambda_{-1}^\star)=-\ev\ee({\mathrm{def}}(\Lambda_1)-\ep).
\end{aligned}
\]
The last equation implies that $\ep=\epsilon^\star$. Then (1) immediately follows from Theorem \ref{p1}.
\end{proof}

The general case is much more complicated because many signs are involved. However, once we ignore them, there exists a duality between theta lifting and Gan-Gross-Prasad problem.

\begin{theorem}
 \begin{enumerate}
\item Let $\pi\in\E(\sp_{2n})$, and $\pi^\star\in \E(\sp_{2n^\star})$.
Then we have the following commutative diagram up to a twist of the sgn character on orthogonal groups:
  \[
\xymatrix{
\pi\in\cal{E}(\sp_{2n}) \ar[rrrr]^{\textrm{GGP}} \ar[dd]_{\cal{AC}\circ\clc} &&&&\bigoplus\pi^\star,\  \pi^\star\in \cal{E}(\sp_{2n^\star},s) \ar[dd]_{\cal{AC}\circ\clc} \\
\\
\rho\otimes\pi_{\Lambda_1}\otimes\pi_{\Lambda_{-1}}  \ar[rrrr]^{\textrm{ theta lfiting }} &&&&
\bigoplus\pi^{\star },\  \rho^\star\otimes\pi_{\Lambda^\star_1}\otimes\pi_{\Lambda^\star_{-1}}
}
\]
where the sum in the bottom right corner runs over representations such that
\begin{itemize}
\item $\rho=\prod_{[a],a\ne \pm 1}\pi[a]$ and $\rho^\star=\prod_{[a],a\ne \pm 1}\pi^\star[a]$
\item For each $[a]$, $\pi^\star[-a]$ appears in theta lifting of $\pi[a]$;
\item $\clc(\pi^\star_{\Lambda_1})$ appears in theta lifting of $\pi_{\Lambda_{-1}}$;
\item $\pi^\star_{\Lambda_{-1}}$ appears in theta lifting of $\clc(\pi_{\Lambda_1})$.
\end{itemize}

\item Let $\pi\in\E(\o^\epsilon_{2n})$, and $\pi^\star\in \E(\o^{\epsilon^\star}_{2n^\star+1})$.
Then we have the following commutative diagram up to a twist of the sgn character on orthogonal groups:
  \[
\xymatrix{
\pi\in\cal{E}(\o^\epsilon_{2n}) \ar[rrrr]^{\textrm{GGP}} \ar[dd]_{\cal{AC}\circ\clc} &&&&\bigoplus\pi^\star,\  \pi^\star\in \cal{E}(\o^{\epsilon^\star}_{2n^\star},s) \ar[dd]_{\cal{AC}\circ\clc} \\
\\
\rho\otimes\pi_{\Lambda_1}\otimes\pi_{\Lambda_{-1}}  \ar[rrrr]^{\textrm{ theta lfiting }} &&&&
\bigoplus\pi^{\star },\  \rho^\star\otimes\pi_{\Lambda^\star_1}\otimes\pi_{\Lambda^\star_{-1}}\otimes\iota^\star
}
\]
where the sum in the bottom right corner runs over representations such that
\begin{itemize}
\item $\rho=\prod_{[a],a\ne \pm 1}\pi[a]$ and $\rho^\star=\prod_{[a],a\ne \pm 1}\pi^\star[a]$
\item For each $[a]$, $\pi^\star[a]$ appears in theta lifting of $\pi[a]$;
\item $\clc(\pi^\star_{\Lambda_1})$ appears in theta lifting of $\pi_{\Lambda_1}$;
\item $\clc(\pi^\star_{\Lambda_{-1}})$ appears in theta lifting of $\pi_{\Lambda_{-1}}$.
\end{itemize}

\end{enumerate}
\end{theorem}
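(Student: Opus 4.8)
The statement is, in essence, a repackaging of Theorem \ref{ggpmain}: once the Gan--Gross--Prasad multiplicity $m_\epsilon(\pi,\pi^\star)$ is expressed combinatorially in terms of the symbols and partitions attached to $\pi$ and $\pi^\star$ via the canonical Lusztig correspondence, one must recognise the resulting conditions as exactly the theta-support conditions of Theorem \ref{p}, Theorem \ref{p1} and Theorem \ref{p2}, with the Alvis--Curtis duality $\cal{AC}$ responsible for the passage from the Gan--Gross--Prasad side to the theta side (this is the general form of the phenomenon already visible in Theorem \ref{8.3}). So the plan is: (i) apply $\clc$ to $\pi$ and to $\pi^\star$, writing $\clc(\pi)=\rho\otimes\pi_{\Lambda_1}\otimes\pi_{\Lambda_{-1}}$ and $\clc(\pi^\star)=\rho^\star\otimes\pi_{\Lambda_1^\star}\otimes\pi_{\Lambda_{-1}^\star}$ (with the extra sign $\iota^\star$ in case (2)); (ii) use that $\cal{AC}$ intertwines $\clc$ with the Alvis--Curtis duality on $\CE(C_{G^*}(s),1)$ up to a global sign, and that on unipotent data Alvis--Curtis duality acts by $\Lambda\mapsto{}^{\bd}\Lambda$ on the symplectic/orthogonal symbols and by $\lambda\mapsto\lambda^t$ on the general linear and unitary blocks; (iii) match the conditions of Theorem \ref{ggpmain} term by term against the theta-support conditions after $\bd$-dualising the symbols.

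For the term-by-term matching: the $[a]$-block condition in Theorem \ref{ggpmain} (that $\lambda[a]$ and $\lambda^\star[\mp a]$ are $2$-transverse, resp.\ close, according to the parity of $\#[a]$) is, by the observation in Section \ref{sec:L-s} that $\lambda^t$ and $\lambda_\star^t$ are $2$-transverse together with the description of the theta correspondence for the dual pairs $(\GGL_m,\GGL_{m'})$ and $(\UU_m,\UU_{m'})$, equivalent to ``$\pi^\star[\mp a]$ occurs in the theta lift of $\pi[a]$'' once the transpose built into $\cal{AC}$ is absorbed. The $\preccurlyeq$-inequalities and defect equalities on $\Upsilon(\Lambda_{\pm1})$ and $\Upsilon(\Lambda_{\pm1}^\star)$ in Theorem \ref{ggpmain} become, upon replacing each symbol by its $\bd$-dual (which exchanges $\Upsilon(\cdot)^{+}$ with ${}^t\Upsilon(\cdot)^{-}$ and fixes the defect), precisely the defining inequalities of the sets $\mathcal{B}^{\pm}_{n,n'}$ of Theorem \ref{p}; invoking Theorem \ref{p1} and Theorem \ref{p2} to pass between a symbol and its associated symplectic unipotent representation via $\CL_1$, this identifies the two remaining conditions with ``$\clc(\pi^\star_{\Lambda_1})$ occurs in the theta lift of $\pi_{\Lambda_{\mp1}}$'' and ``$\pi^\star_{\Lambda_{-1}}$ (resp.\ $\clc(\pi^\star_{\Lambda_{-1}})$) occurs in the theta lift of $\clc(\pi_{\Lambda_1})$ (resp.\ $\pi_{\Lambda_{-1}}$)''. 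The residual scalars --- the discriminant $\epsilon$ indexing the union, and $\epsilon^\star,\iota^\star$ in case (2) --- are read off from the sign relations in Theorem \ref{ggpmain} and Corollary \ref{oddg}, exactly as in the proof of Theorem \ref{8.3}. Case (2) is formally identical, with $\o^{\epsilon^\star}_{2n^\star+1}$ in place of $\sp_{2n^\star}$, the sign $\iota^\star$ tracked through Corollary \ref{oddg}(i), and the $[a]$-blocks matched by ``$\pi^\star[a]$ occurs in the theta lift of $\pi[a]$'' since the dual pair $(\sp,\o_{\rm even})$ does not twist $[a]\mapsto[-a]$.

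The main obstacle is the sign bookkeeping in this matching: Theorem \ref{ggpmain} writes the inequalities directly on $\Upsilon(\Lambda_i)$ with superscripts depending on the auxiliary signs $\ep,\ev$ and on $\pd,\sd$ of the several symbols, whereas Theorem \ref{p} writes them on transposes with a fixed sign pattern, and one must verify that $\bd$-duality absorbs precisely this discrepancy, the leftover being a twist by $\rm{sgn}$ on the orthogonal factors. Since the assertion only claims commutativity of the diagram \emph{up to such a twist}, it suffices to establish the equivalence of the underlying supports; this localises the delicate computation to the defect congruences already recorded in Theorem \ref{thm7.1} and in the construction of the functors $\CD^*_\epsilon$ in Section \ref{sec66}, so no new combinatorial input beyond those is needed.
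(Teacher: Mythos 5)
Your proposal is correct and follows essentially the route the paper itself takes: the theorem is obtained by combining the combinatorial multiplicity formula of Theorem \ref{ggpmain} with the explicit theta-correspondence descriptions (Theorems \ref{p}, \ref{p1}, \ref{p2} and \ref{main}), matching supports after applying $\cal{AC}$ (which acts by ${}^{\bd}\Lambda$ on symbols and by transpose on the $\GGL/\UU$ blocks) and absorbing all residual signs into the ``up to a twist of sgn'' caveat, exactly as the paper does explicitly in the unipotent case (Theorem \ref{8.3}).
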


\subsection{A description of the first descent case}

Although the description of the multiplicity in the finite Gan-Gross-Prasad problem (as shown in Theorem \ref{ggpmain}) is quite complicated, the first descent case of the problem is simple. This case is similar to the first occurrence case of theta lifting. We have already constructed a functor $\CD^*_\epsilon$ for basic representations, and we can read the action of $\CD^*_\epsilon$ for all representations by referring to Theorem \ref{ggpmain}. To calculate the first descent of an irreducible representation $\pi$, we need to find the representations that live in the smallest groups, and satisfy the conditions in Theorem \ref{ggpmain}. This process is a direct but complex combinatorial calculation, which we will leave to the readers. Here, we present the results.

Let $\Lambda=\left(\begin{smallmatrix}a_1,\cdots,a_m\\b_1,\cdots,b_{m'}\end{smallmatrix}\right)$ be a symbol. Set
\[
\zeta(\Lambda)=
\left\{
\begin{array}{ll}
\{+\}& \textrm{ if } a_1>b_1\\
\{-\}& \textrm{ if } a_1<b_1\\
\{\pm\}& \textrm{ if } a_1=b_1\\
\end{array}
\right.
\]
Assume that $G_n=\sp_{2n}$, and $\rho\otimes\pi_{\Lambda_1}\otimes\pi_{\Lambda_{-1}}\in \CE(C_{G_n^*}(s))$. Then we set
\[
 \begin{aligned}
\CD^*_\epsilon(\rho\otimes\pi_{\Lambda_1}\otimes\pi_{\Lambda_{-1}})=
\begin{cases}
\bigoplus\rho^\star\otimes\pi_{\Lambda^\star_1}\otimes\pi_{\Lambda^\star_{-1}} & \textrm{ if }\epsilon=\ep\ev\epsilon_{\rho}\epsilon_{\rho^\star};\\
0& \textrm{ otherwise, }
\end{cases}
 \end{aligned}
\]
where the direct sum runs over irreducible representations $\rho^\star\otimes\pi_{\Lambda^\star_1}\otimes\pi_{\Lambda^\star_{-1}} $ such that
\begin{itemize}
\item $\rho^\star=\rho^-_1$;
\item $-\ep\in \zeta({}^{\bd}\Lambda_1)$, and $\ev\in \zeta({}^{\bd}\Lambda_{-1})$;
\item The symbol $\Lambda_1^\star$ is obtained by following these steps:  (1) Take the Alvis-Curtis dual ${}^\bd\Lambda_{-1}$, (2) Removing the maximal number in the top (resp. bottom) line in ${}^\bd\Lambda_{-1}$ if $\ev=+$ (resp. $\ev=-$ ), (3) Swapping the arrays of $\Lambda_{-1}$ if $\ev\pd(\Lambda_{-1})=+$, (4) Retake the Alvis-Curtis dual;

\item The symbol $\Lambda_{-1}^\star$ is obtained by following these steps:  (1) Take the Alvis-Curtis dual ${}^\bd\Lambda_{1}$, (2) Removing the maximal number in the top (resp. bottom) line in ${}^\bd\Lambda_{1}$ if $-\ep=+$ (resp. $-\ep=-$ ), (3) Swapping the arrays of $\Lambda_{1}$ if $\ee\ev\pd(\Lambda_{-1})=+$, (4) Retake the Alvis-Curtis dual.
\end{itemize}
Assume that $G_n=\o^\epsilon_{2n}$, and $\rho\otimes\pi_{\Lambda_1}\otimes\pi_{\Lambda_{-1}}\in \CE(C_{G_n^*}(s))$. Then we set
\[
\CD^*_{\epsilon'}(\rho\otimes\pi_{\Lambda_1}\otimes\pi_{\Lambda_1})=
\begin{cases}
\bigoplus\rho^\star\otimes\pi_{\Lambda^\star_1}\otimes\pi_{\Lambda^\star_{-1}}\otimes\iota^\star & \textrm{ if }\epsilon'=\ep\ev\epsilon_{\rho}\epsilon_{\rho^\star};\\
0& \textrm{ otherwise, }
\end{cases}
\]
where the direct sum runs over irreducible representations $\rho^\star\otimes\pi_{\Lambda^\star_1}\otimes\pi_{\Lambda^\star_{-1}}\otimes\iota^\star$ such that
\begin{itemize}
\item $\rho^\star=\rho_1$;
\item $\ep\in \zeta({}^{\bd}\Lambda_1)$, and $\ev\in \zeta({}^{\bd}\Lambda_{-1})$;
\item The symbol $\Lambda_1^\star$ is obtained by following these steps: (1) Take the Alvis-Curtis dual ${}^\bd\Lambda_{1}$, (2) Removing the maximal number in the top (resp. bottom) line in ${}^\bd\Lambda_{1}$ if $\ep=+$ (resp. $\ep=-$ ), (3) Swapping the arrays of $\Lambda_{1}$ if $\ep\pd(\Lambda_{1})=+$ (4) Retake the Alvis-Curtis dual;
\item The symbol $\Lambda_{-1}^\star$ is obtained by following these steps: (1) Take the Alvis-Curtis dual ${}^\bd\Lambda_{-1}$, (2) Removing the maximal number in the top (resp. bottom) line in ${}^\bd\Lambda_{-1}$ if $\ev=+$ (resp. $\ev=-$ ), (3) Swapping the arrays of $\Lambda_{-1}$ if $\ev\pd(\Lambda_{-1})=+$ (4) Retake the Alvis-Curtis dual;
    \item $\iota^\star=-\wi_{\rho^\star}\ep\ee^{\rm{rank}(\Lambda_{-1})}$.
\end{itemize}
Assume that $G_n=\o^\epsilon_{2n+1}$, and $\rho\otimes\pi_{\Lambda_1}\otimes\pi_{\Lambda_{-1}}\otimes\iota\in \CE(C_{G_n^*}(s))$. Then we set
\[
\CD^*_{\epsilon'}(\rho\otimes\pi_{\Lambda_1}\otimes\pi_{\Lambda_1}\otimes\iota)=
\begin{cases}
\bigoplus\rho^\star\otimes\pi_{\Lambda^\star_1}\otimes\pi_{\Lambda^\star_{-1}} & \textrm{ if }\epsilon'=\ep\ev\epsilon\epsilon_{\rho^\star};\\
0& \textrm{ otherwise, }
\end{cases}
\]
where the direct sum runs over irreducible representations $\rho^\star\otimes\pi_{\Lambda^\star_1}\otimes\pi_{\Lambda^\star_{-1}} $ such that
\begin{itemize}
\item $\rho^\star=\rho_1$;
\item $\ep\in \zeta({}^{\bd}\Lambda_1)$, and $\ev\in \zeta({}^{\bd}\Lambda_{-1})$;
\item let $\iota'=\iota\wi_{\rho}\ee^{\rm{rank}(\Lambda_{-1})}$;
\item The symbol $\Lambda_1^\star$ is obtained by following these steps:
 (1) Take the Alvis-Curtis dual ${}^\bd\Lambda_{1}$, (2) Removing the maximal number in the top (resp. bottom) line in ${}^\bd\Lambda_{1}$ if $\ep=+$ (resp. $\ep=-$ ), (3) Swapping the arrays of $\Lambda_{1}$ if $\ep\iota'=-$ (4) Retake the Alvis-Curtis dual;
 \item The symbol $\Lambda_{-1}^\star$ iis obtained by following these steps:
 (1) Take the Alvis-Curtis dual ${}^\bd\Lambda_{-1}$, (2) Removing the maximal number in the top (resp. bottom) line in ${}^\bd\Lambda_{1}$ if $\ev=+$ (resp. $\ev=-$ ), (3) Swapping the arrays of $\Lambda_{-1}$ if $\iota'\epsilon\epsilon_{\rho}\ep\ee=-$, (4) Retake the Alvis-Curtis dual.

\end{itemize}
For basic representations, the definition of $\CD^*_\epsilon$ coincides with $\CD^*_\epsilon$ defined in Section \ref{sec7}.

In most cases, the first descent of an irreducible representation is also irreducible. However, there is a non-irreducible first descent case that occurs only in a specific situation. In the $\CD^*_\epsilon$ functor, we need to remove the largest element of some symbols. In most cases, the largest element is unique, which is the irreducible first descent case. However, when the largest elements are not unique, we need to choose one of them. Each choice will then give us an irreducible constituent of the first descent.

\begin{example}
Consider the trivial representation $\bf{1}$ of $\sp_2\fq$, which corresponds to the symbol
\[
\Lambda_1=\begin{pmatrix}
1\\
-
\end{pmatrix}, \quad\textrm{with }
{}^{\bd}\Lambda_1=\begin{pmatrix}
1,0\\
1
\end{pmatrix}.
\]
In this case,  $\ep$ can be both $+$ and $-$. Since the representation ${\bf 1}$ does not have the $(-1)$-part and the empty $(-1)$-part corresponds to the trivial symbol
     \[
 \begin{pmatrix}
-\\
-
\end{pmatrix},
\]
 the sign $\ev$ can be both $+$ and $-$.  Then we have the following descent diagram
\[
\xymatrix{
&  {\bf{1}} \ar[dl] \ar[dr]  &  \\
\pi_{\Lambda_{-1}^{\prime }} \oplus \pi_{\Lambda_{-1}^{\star }}  \ar[d] & & \pi_{\Lambda_{-1}^{\prime t}}\oplus\pi_{\Lambda_{-1}^{\star t }} \ar[d] \\
\bf{1} \oplus \bf{1} & & \bf{1} \oplus \bf{1}
}
\]
where
\[
\Lambda_{-1}'=\begin{pmatrix}
1\\
0
\end{pmatrix}\
\textrm{ and }\
\Lambda_{-1}^*=\begin{pmatrix}
1,0\\
-
\end{pmatrix}.
\]
The representations $\pi_{\Lambda_{-1}^{\prime }}$ and $\pi_{\Lambda_{-1}^{\star}}$ in the descent of $\bf{1}$ correspond to the pairs $(\ep,\ev)=(-,+)$ and $(\ep,\ev)=(+,-)$, respectively. Note that $\pi_{\Lambda_{-1}^{\prime }}$ and $\pi_{\Lambda_{-1}^{\star}}$ have different cuspidal supports. Namely,  $\pi_{\Lambda_{-1}^{\star}}$ itself is cuspidal, while the cuspidal support of $\pi_{\Lambda_{-1}^{\prime}}$ is the trivial representation of $\sp_0\fq$.
Thus in general  different choices of $\ep$ in the descent may give rise to representations with different cuspidal supports.
\end{example}

Combining Theorem \ref{main} and Theorem \ref{ggpmain}, we find that the diagram \eqref{cm2} is not only commutative for basic representations, but also for all representations.
\begin{corollary}\label{8.6}
 Let $\pi\in \E(G)$ where $G$ is either a symplectic group, odd orthogonal group or a even orthogonal group. Then we have the following commutative diagram
 \begin{equation}\label{cm3}
\xymatrix{
&\pi^*\ar[rrr]^{\Theta^*_{n,n'}} \ar[dd]_{\CD_\epsilon^*}& &&\pi^{\prime *} \ar[dd]^{\CD^*_\epsilon}\\
\pi \ar[ur]^{\CL^\rm{can}}\ar[dd]_{\CD_{\ell,\epsilon}}\ar[rrr]^{\Theta_{n,n'}}&&&\pi' \ar[ur]^{\CL^\rm{can}} \ar[dd]^{\CD_{\ell',\epsilon}}& \\
&\pi^{\star*} \in \CD^*_\epsilon(\pi^*) \ar[rrr]^{\Theta^*_{m,m'}}& &&\pi^{\star\prime*}\in \CD^*_\epsilon(\pi^{\prime*})\\
\pi^\star\in \CD_{\ell,\epsilon}(\pi)\ar[ur]^{\CL^\rm{can}}\ar[rrr]^{\Theta_{m,m'}}&&&\pi^{\star\prime}\in \CD_{\ell,\epsilon}(\pi')\ar[ur]^{\CL^\rm{can}}&\\
}.
\end{equation}
\end{corollary}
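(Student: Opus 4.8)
The plan is to deduce commutativity of the whole cube \eqref{cm3} from three facts already at our disposal: the cube \eqref{cm2} commutes for basic representations (the corollary closing the previous section), Theorem \ref{main}, and Theorem \ref{ggpmain}; a diagram chase exploiting that every arrow labelled $\clc$ is a bijection will then upgrade this from the basic case to all representations. Throughout, $\Theta_{n,n'}$ and $\CD_{\ell,\epsilon}$ are read as the first theta lift and the first descent (possibly reducible), and ``commutativity'' means the two sets of irreducible constituents on the boundary paths match up. It is convenient to argue face by face: the cube of \eqref{cm3} has six faces, the two theta-versus-Lusztig squares (top and bottom), the two descent-versus-Lusztig squares (left and right), the outer theta-descent square (front), and the inner starred theta-descent square (back).

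First I would dispose of the top and bottom faces. The square $\clc\circ\Theta_{n,n'}=\Theta^*_{n,n'}\circ\clc$ attached to $\pi$, and likewise the one attached to $\pi^\star$, are exactly conditions (2) and (3) of Theorem \ref{main}, unwound through the explicit formula for $\Theta^*$ recorded in the construction of $\CD^*_\epsilon$; these hold for \emph{all} representations, the passage from the basic case being already carried out inside the proof of Theorem \ref{main} using Proposition \ref{w2} and the compatibility of $\clc$ with parabolic induction. The same mechanism --- now invoking Proposition \ref{so2} and Proposition \ref{7.21} in place of Proposition \ref{w2} --- propagates the outer theta-descent square \eqref{cm1} from basic representations to every Harish-Chandra series, which supplies the front face.

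Next I would establish the left and right faces, and this is where Theorem \ref{ggpmain} enters. The point is that the general-representation functor $\CD^*_\epsilon$ was \emph{defined} by reading off Theorem \ref{ggpmain}: the first descent $\CD_{\ell,\epsilon}(\pi)$ is the sum of the irreducible $\pi^\star$ sitting in the smallest group with $m_\epsilon(\pi,\pi^\star)\neq 0$, and Theorem \ref{ggpmain} rewrites $m_\epsilon(\pi,\pi^\star)\neq 0$ as the displayed combinatorial relations between $(\rho,\Lambda_1,\Lambda_{-1},\iota)$ and $(\rho^\star,\Lambda_1^\star,\Lambda_{-1}^\star,\iota^\star)$ in terms of the auxiliary signs $\ep,\ev$. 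Minimising the rank subject to those relations forces $\rho^\star\in\{\rho_1,\rho_1^-\}$ and forces $\Lambda^\star_{\pm1}$ to be obtained from $\Lambda_{\mp1}$ (respectively $\Lambda_{\pm1}$) by the prescription ``take the Alvis--Curtis dual, delete the extreme entry selected by $\ep$ or $\ev$, swap the rows according to the relevant parity sign, undualise'' --- which is precisely the definition of $\CD^*_\epsilon$. Matching these two descriptions yields $\clc\circ\CD_{\ell,\epsilon}=\CD^*_\epsilon\circ\clc$ on $\pi$, and the identical argument applied to $\pi'$ gives the right face. The hard part here, and in fact the only substantial computation in the whole proof, will be the bookkeeping that reconciles all the signs ($\pd$, $\sd$, $\zeta$, $\epsilon_\rho$, $\wi_\rho$, $\ee$, $\ep$, $\ev$, $\iota'$) appearing in Theorem \ref{ggpmain} with those in the recipe defining $\CD^*_\epsilon$; it is lengthy but entirely mechanical, and I would present only the representative cases, leaving the remaining ones to the reader as is done for similar verifications earlier in the paper.

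Finally, once the top, bottom, left, right and front faces of \eqref{cm3} are in hand, the remaining back face $\Theta^*_{m,m'}\circ\CD^*_\epsilon=\CD^*_\epsilon\circ\Theta^*_{n,n'}$ follows formally: transporting $\pi$ around the cube along the two boundary paths, applying the five known commutativities and cancelling the bijections $\clc$ forces the two starred paths to agree. This establishes that all six faces of \eqref{cm3} commute, which is the assertion of the corollary.
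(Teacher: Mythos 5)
Your proposal is correct and follows essentially the same route as the paper, which offers no argument beyond the one-line assertion that the corollary results from combining Theorem \ref{main} (giving the theta--Lusztig faces) with Theorem \ref{ggpmain} (from which the general $\CD^*_\epsilon$ is read off, giving the descent--Lusztig faces), with the remaining combinatorial bookkeeping left to the reader. Your face-by-face unwinding, including deducing the back face by a diagram chase through the bijections $\clc$, is precisely the intended elaboration of that sentence.
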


\begin{corollary}
Let $G$ be a symplectic group or orthogonal group. Then we have
\[
\begin{array}{c|cccc}
G& \pi                                 &\sgn\pi       &\pi^c             &\chi\cdot\pi\\ \hline
\sp_{2n}        & \pi_{\rho,\Lambda_1,\Lambda_{-1}} &\empty & \pi_{\rho,\Lambda_1,\Lambda_{-1}^t} &\empty\\
\o^\epsilon_{2n}        & \pi_{\rho,\Lambda_1,\Lambda_{-1}} &\pi_{\rho,\Lambda_1^t,\Lambda_{-1}^t} & \pi_{\rho,\Lambda_1,\Lambda_{-1}^t} &\pi_{\rho',\Lambda'_{-1},\Lambda'_{1}}\\
\o^\epsilon_{2n+1}        & \pi_{\rho,\Lambda_1,\Lambda_{-1},\iota} &\pi_{\rho,\Lambda_1,\Lambda_{-1},-\iota} & &\pi_{\rho',\Lambda_{-1},\Lambda_{1},\iota'\iota}\\
\end{array}
\]
where we set
\[
\Lambda_{\pm1}'=
\left\{
\begin{array}{ll}
\Lambda_{\pm1} & \textrm{ if }\pd(\Lambda_1)\pd(\Lambda_{-1})=+;\\
\Lambda_{\pm1}^t & \textrm{ if }\pd(\Lambda_1)\pd(\Lambda_{-1})=-,\\
\end{array}
\right.
\]
and $\iota'=\chi_G(-I)$.

\end{corollary}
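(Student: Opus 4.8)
The plan is to read the table off the first-descent diagram \eqref{cm3}, which by Corollary \ref{8.6} commutes for \emph{all} irreducible representations, after disposing of the easy entries and settling the base case of basic representations by hand. First I would note that two entries need nothing new: for a symplectic group $\pi^c=\pi_{\rho,\Lambda_1,\Lambda_{-1}^t}$ is Corollary \ref{l4}(1), and for an odd orthogonal group $\sgn\pi=\pi_{\rho,\Lambda_1,\Lambda_{-1},-\iota}$ is Corollary \ref{l4}(2); both hold for every Lusztig correspondence, hence for $\clc$. (There is no further linear character to consider on $\sp_{2n}(\Fq)$, and $\o_{2n+1}$ carries no $c$-involution not already inner.)

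For the base case I would take $\pi$ basic, i.e. $\pi=\pkh$ or $\pkhsgn$, and pin down $\clc$ on $\sgn\pi$, $\pi^c$ and $\chi_G\cdot\pi$ using: the conservation relations and first occurrence indices of Propositions \ref{cus1}, \ref{cus2} and Corollary \ref{oddg}; the first-descent computations of Theorems \ref{d2}, \ref{deven}, \ref{dodd}; the central values $\pi(-I)$ from Proposition \ref{sp-1} and Corollary \ref{oddg}(2); and the fact, recalled in Section \ref{sec4.5}, that the ambiguity of $\clc$ on basic representations is resolved by the first theta lifting. The only input that is not of Harish-Chandra type is the change of the semisimple class: $\chi_G$ is the spinor norm, whose restriction to the identity component corresponds on the dual group to multiplication of $s$ by the nontrivial central involution, that is, to $a\mapsto -a$ on eigenvalues. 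This produces $\rho\mapsto\rho^-$ on the $[a]$-parts and, on the $\pm1$-parts, interchanges $G^*_{[1]}(s)$ with $G^*_{[-1]}(s)$, forcing the swap $\Lambda_1\leftrightarrow\Lambda_{-1}$, while the value $\chi_G(-I)$, governed by \eqref{spinor}, contributes the factor $\iota'=\chi_G(-I)$ in the odd orthogonal $\iota$-slot.

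For the inductive step I would fix $\pi\in\CE(G_n)$, let $\ell$ be its first-descent index and let $\pi^\star$ be an irreducible constituent of $\CD_{\ell,\epsilon}(\pi)$; by Proposition \ref{fd1} the representation $\pi^\star$ is basic when $\pi$ is, and in general $\clc$ is known on $\pi^\star$ by induction on $\rank G$. The three operations $\sigma\mapsto\sgn\sigma$, $\sigma\mapsto\sigma^c$, $\sigma\mapsto\chi_G\sigma$ are compatible with parabolic induction (they amount to twisting the cuspidal datum), and Theorems \ref{d2}, \ref{deven}, \ref{dodd} describe exactly how they commute with the first descent: $\sgn$ commutes on the nose ($\CD^{\rm{B}}_{\ell,\e}(\sgn\pi)=\sgn\CD^{\rm{B}}_{\ell,\e}(\pi)$), the $c$-twist changes the discriminant index ($\CD^{\rm{B}}_{\ell,-\e}(\pi^c)$ recovers $\CD^{\rm{B}}_{\ell,\e}(\pi)$), and the spinor-norm twist commutes up to a controlled discriminant shift. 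Applying $\clc$ and using that \eqref{cm3} commutes, the image of $\sgn\pi$, $\pi^c$ or $\chi_G\cdot\pi$ under $\clc$ is then determined by the inductively known image of $\CD_{\ell,\pm\epsilon}(\pi)$ together with the explicit combinatorial form of the dual functor $\CD^*_\epsilon$ recorded in Section \ref{sec7}. Chasing the signs $\sd(\Lambda_{\pm1})$ and $\pd(\Lambda_{\pm1})$ through $\CD^*_\epsilon$ then forces the transpose on $\Lambda_1$ and $\Lambda_{-1}$ precisely when $\pd(\Lambda_1)\pd(\Lambda_{-1})=-$ (the case in which $G^*_{[1]}(s)$ and $G^*_{[-1]}(s)$ have opposite type), for the even orthogonal $\chi_G$-row; for the symplectic and odd orthogonal rows the analogous chase reproduces the formulas obtained above.

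The main obstacle will be exactly this sign bookkeeping in the orthogonal cases: one must simultaneously track (i) the shift $a\mapsto -a$ of the semisimple class and the resulting $\rho\mapsto\rho^-$ and swap of the $\pm1$-parts, (ii) the defect shifts imposed by $\CD^*_\epsilon$ and by the first theta lifting, and (iii) the discriminant and parity signs $\sd$, $\pd$ together with the value $\chi_G(-I)$ from \eqref{spinor}, and then verify that all of these conspire to give exactly the transpose rule governed by $\pd(\Lambda_1)\pd(\Lambda_{-1})$ and the $\iota$-value $\iota'\iota$. Once the basic case is checked directly and the compatibility of $\sgn$, $c$ and $\chi_G$ with both parabolic induction and the first descent is in place, the propagation along \eqref{cm3} is routine.
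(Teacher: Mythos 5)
Your proposal is correct in outline and, for the fourth column ($\chi_G\cdot\pi$), follows exactly the paper's argument: descend, invoke Theorem \ref{deven}(ii)/Theorem \ref{dodd}(iv) extended beyond basic representations, and induct on the rank using the commutativity of \eqref{cm3} together with the combinatorial form of $\CD^*_\epsilon$. The easy entries (symplectic $\pi^c$ and odd orthogonal $\sgn\pi$ via Corollary \ref{l4}) are also handled as in the paper. Where you genuinely diverge is in the two even orthogonal entries. For $\sgn\pi$ the paper does not descend from $\pi$: it first produces an irreducible representation $\pi'$ of a \emph{larger} odd orthogonal group whose first descent is $\pi$ (running $\CD^*_\epsilon$ backwards by adding entries to the symbols), and then reads off $\clc(\sgn\pi)$ from the known odd orthogonal description of $\sgn\pi'$ via Theorem \ref{dodd}(iii); your version instead descends from $\pi$ to a smaller odd orthogonal group and recovers $\clc(\sgn\pi)$ from the inductively known $\clc(\sgn\pi^\star)$ by matching through $\CD^*_\epsilon$ (using the $\iota^\star$- and $\epsilon'$-conditions to force both transposes), which is a legitimate alternative as long as you check that the descent data singles out one member of $[\pi]$ also in the degenerate cases $\Lambda_{\pm1}=\Lambda_{\pm1}^t$ or trivial symbols. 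For $\pi^c$ the paper avoids descent altogether: it uses that $(\pi'')^c$ occurs in the theta lift of $\pi^c$ (Pan's Lemma 5.3) together with the already-global Theorem \ref{main}(3), so that the symplectic row settles the even orthogonal one with no need to extend Theorem \ref{deven}(iii) past basic representations; your route needs exactly that extension (plausible, since the conjugation-by-$\rm{CO}$ argument in the proof of Theorem \ref{deven}(iii) does not use basicness, but it is an extra verification) plus the same uniqueness check among the candidates $\Lambda_{-1}$ versus $\Lambda_{-1}^t$. So: same inductive descent framework, with the trade-off that the paper's ascent/theta detours import statements already proved for all representations, while your uniform descent argument is conceptually cleaner but shifts the burden onto generalizing the basic-case descent theorems and onto careful sign bookkeeping, which you rightly flag as the main obstacle.
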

\begin{proof}

We will start by providing the first three columns of the table. The symplectic and odd orthogonal cases have already been proven in Corollary \ref{l4}.

Next, we need to calculate $\sgn\pi$ of $\o^\epsilon_{2n}\fq$. We claim that there exists an irreducible representation $\pi'$ of an odd orthogonal group such that the first descent $\CD^{\rm{B}}_{\ell,\epsilon'}(\pi')=\pi$. In fact, by Corollary \ref{8.6}, we know that $\CD^{\rm{B}}_{\ell,\epsilon}(\pi')=\pi$ if and only if $\CD_\epsilon^*(\clc(\pi'))=\clc(\pi)$. Recall that $\CD_\epsilon^*$ is defined as taking the Alvis-Curtis dual of certain symbols and removing certain elements. To find $\pi'$, we can do the opposite, which is to add some elements. By doing this, we can find an irreducible representation $\pi'$.
Using Theorem \ref{dodd} (iii), we have $\CD^{\rm{B}}_{\ell,\epsilon'}(\sgn\pi')=\sgn\pi$. Although Theorem \ref{dodd} (iii) is only for basic representation, it is not difficult to generalize the same statement to all representations. Thus, the description of $\sgn\pi$ follows from the description of $\sgn\pi'$ and the definition of $\CD^*_\epsilon$.

Let us now consider $\pi^c$ of $\o^\epsilon_{2n}\fq$. Let $\pi''$ be an irreducible representation of a symplectic group such that $\pi''$ appears in the theta lifting of $\pi$. Then by \cite[Lemma 5.3]{P3}, we know that $(\pi'')^c$ appears in the theta lifting of $\pi^c$. Hence the description of $\pi^c$ follows from the description of $(\pi'')^c$ and  Theorem \ref{main} (3).

We now turn to proving the fourth column of the table. Let $G$ be an odd or even orthogonal group. Consider the first descent $\CD^{\rm{B}}_{\ell,\epsilon^\star}(\pi)=\pi^\star$ of $\pi$, where $\pi^\star$ is a representation of $G^\star$ (a smaller orthogonal group than $G$), which may not be irreducible. By Theorem \ref{deven} (ii) and Theorem \ref{dodd} (iv) (which can be generalized to all representations), we have $\CD^{\rm{B}}_{\ell,\epsilon^\star}(\chi_G\cdot\pi)=\chi_{G^\star}\cdot\pi^\star$. Then, we can obtain the description of $\chi_G\cdot\pi$ by induction on $n$.
\end{proof}

\begin{remark}
The above description of representations of even orthogonal group is compatible with the fact
\[
(\chi_G\cdot\pi^c)^c=\sgn\chi_G\cdot\pi.
\]
\end{remark}%

\end{document}